\newcommand{\cB}{\mathscr{B}}
\newcommand{\cZ}{\mathscr{Z}}
\newcommand{\cR}{\mathscr{R}}
\newcommand{\ott}{[0,T]}
\newcommand{\sst}{\Sigma^*}
\DeclareMathOperator{\img}{Img}
\DeclareMathOperator{\expect}{{\mathbb E}}
\newcommand{\eps}{\varepsilon}
\newcommand{\eqcolon}{\mathrel{\mathord{=}\raise.2\p@\hbox{:}}}
\newcommand{\coloneq}{\mathrel{\raise.2\p@\hbox{:}\mathord{=}}}
\newcommand{\der}{\delta}
\newcommand{\dd}{\mathrm{d}}
\newcommand{\RR}{\mathbb{R}}
\newcommand{\CC}{\mathscr{C}}
\newcommand{\CCC}{\CC\!\CC}
\newcommand{\norm}[1]{\lVert #1\rVert}
\newcommand{\E}{{\mathbb E}}
\newcommand{\R}{{\mathbb R}}
\newcommand{\cac}{\mathscr C}
\newcommand{\cj}{\mathscr J}
\newcommand{\cn}{\mathscr N}
\newcommand{\laa}{\Lambda}
\newcommand{\vp}{\varphi}
\newcommand{\lp}{\left(}
\newcommand{\rp}{\right)}
\newcommand{\lc}{\left[}
\newcommand{\rc}{\right]}
\newcommand{\lcl}{\left\{}
\newcommand{\rcl}{\right\}}
\newtheorem{theorem}{\protect\theoremname}[section]
\newtheorem{definition}[theorem]{\protect\definitionname}
\newtheorem{proposition}[theorem]{Proposition}
\newtheorem{lemma}[theorem]{Lemma}
\newtheorem{remark}[theorem]{Remark}
\newtheorem{corollary}[theorem]{Corollary}
\newtheorem{hypothesis}[theorem]{Hypothesis}
\newcommand{\theoremname}{}
\newcommand{\definitionname}{}
  \renewcommand{\theoremname}{Theorem}%
  \renewcommand{\definitionname}{Definition}%
  \renewcommand{\theoremname}{Théoreme}%
  \renewcommand{\definitionname}{Définition}%
\begin{document}

\title{Rough Sheets}
\author{K.~Chouk$^{a}$, M.~Gubinelli$^{a,b}$\\
{\small
(a) CEREMADE \& CNRS UMR 7534}\\{\small  Universit\'e Paris--Dauphine, France
}\\{\small
(b) Institut Universitaire de France
}\\{\small \texttt{\{chouk,gubinelli\}@ceremade.dauphine.fr}
}}
\date{\today}
\maketitle

\begin{abstract}
Rough sheets are two-parameter analogs of rough paths. In this work
the theory of integration over functions of two parameters is extended
to cover the case of irregular functions by developing an
appropriate notion of rough sheet. The main
application is to give a path by path construction of the stochastic integral in the plane and obtain a stratonovich change of variables formula.
\end{abstract}
\tableofcontents

\section{Introduction}
For processes indexed by one dimensional parameter Lyons' theory of
rough paths~\cite{LyonsBook,Lyons1998} provide an effective approach to the understanding of
relatively complex maps like the stochastic integral or the It\^o map
which sends the data of an SDE to its solution. These insights were
instrumental in proving new results or simplifying the proofs of some
important results in stochastic analysis. 
Just to mention some application: path-wise solutions of SDEs, the
support theorem for diffusions, differentiation of the It\^o map,  
the existence of path-wise
versions for stochastic currents and the solution of an ODE over
Brownian paths driven by a non-adapted vector-field. The theory, being
independent from notions like martingales, adaptness, etc\dots is
straightforwardly extended to other, more complex, noises like
fractional Brownian motions.   

\medskip
In~\cite{[Gubinelli-2004]} Gubinelli introduces the notion of \emph{controlled path}  in order to abstract the basic structures allowing the integration of rough signals and pointed out the existence of a map (the \emph{sewing map}) which turns out to be at the ``core'' of the integration process.  
\medskip

In the present paper we study a multi parameter extension to the controlled path theory of~\cite{[Gubinelli-2004]} and inspired by the structures which one encounters when trying to solve hyperbolic equations driven by irregular signals.

More precisely the example what we have in mind is the stochastic wave equation driven by a fractional Brownian sheet. This equation has been already studied in~\cite{Wave} when the Hurst parameters of the fractional Brownian sheet are strictly bigger than $\frac{1}{2}$ by using the two dimensional Young integration theory. In the cited paper the authors obtain a global existence result in this case in $(1+1)$-dimension. 

Our work is a (partially successful) attempt to extend this previous work for the case where the Hurst parameters are less than $\frac{1}{2}$ and for that we will give a definition of a \emph{rough sheet}, which is the basic objects underlying multi-parameter integration suitable to build a theory of path-wise integration over the fractional Brownian sheet. Unfortunately the construction of the rough integral in the multiparametrs setting give rise to a very complex algebraic structure which is not well understood at the moment. Due to this difficulties we are not able to construct a full--fledged theory capable of handling solutions to the stochastic wave equation 
 for that reason we will prefer to restrict ourself to the construction of the integral for a restricted family of integrands and to leave the study of the wave equation to further investigations. 
 
 More precisely the main result of this paper can be resumed in the following theorem 
\begin{theorem}
Let $\gamma_1,\gamma_2>1/3$, then there exist a complete metric space $\mathscr R^{\gamma_1,\gamma_2}$ and two continuous application $\mathscr I_a:C_b^8(\mathbb R)\times \mathscr R^{\gamma_1,\gamma_2}\to\CCC_{2,2}^{\gamma_1,\gamma_2}$ (see the equation~\eqref{eq:Holder space} for the exact definition of $\CCC_{2,2}^{\gamma_1,\gamma_2}$), $a=1,2$ such that :
$$
\mathscr I_1(\varphi,\mathbb X)_{s_1s_2;t_1t_2}=\int_{s_1}^{s_2}\int_{t_1}^{t_2}\varphi(x_{st})\partial_s\partial_t x_{st}\dd s\dd t
$$
and 
$$
\mathscr I_2(\varphi,\mathbb X)_{s_1s_2;t_1t_2}=\int_{s_1}^{s_2}\int_{t_1}^{t_2}\varphi(x_{st})\partial_sx_{st}\partial_tx_{st}\dd s\dd t
$$
for all $(s_1,s_2,t_1,t_2)\in[0,1]^4$ and for every smooth sheet $x$ with $\mathbb X$ is the rough sheet associated. Moreover if $x$ is a fractional Brownian sheet with Hust parameter $\gamma_1,\gamma_2>1/3$ then it can be enhanced in a rough sheet $\mathbb X$ and then we obtain in this case the following Stratonovich formula :
$$
\der \varphi(x)=\mathscr I_{1}(\varphi',\mathbb X)+\mathscr I_{2}(\varphi'',\mathbb X).
$$ 
\end{theorem}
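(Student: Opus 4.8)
The plan is to transpose the rough-path construction of~\cite{[Gubinelli-2004]} to two parameters, replacing the one-parameter sewing map by the two-parameter reconstruction statement developed above and the tensor algebra by the algebra of iterated rectangular integrals. First I would define $\mathscr{R}^{\gamma_1,\gamma_2}$ as the set of tuples $\mathbb X$ consisting of the sheet $x$ together with the finite family of iterated rectangular integrals required by the threshold $\gamma_1,\gamma_2>1/3$, subject to (i) the analytic constraint that each component be Hölder of the appropriate bi-degree with respect to rectangular increments, and (ii) the algebraic constraint that the rectangular increment of each component be expressed through the lower-order components by the Chen-type identities. I would metrize $\mathscr{R}^{\gamma_1,\gamma_2}$ by the sum over components of the homogeneous Hölder distances between their differences; completeness follows because a Cauchy sequence converges componentwise in Hölder norm while the Hölder bounds and the polynomial algebraic identities are closed conditions.

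\textbf{Controlled sheets and the integration map.} Next I would introduce the notion of a sheet controlled by $\mathbb X$: a function $y$ on $[0,1]^2$ with derivative fields whose rectangular increments match, to the relevant order, combinations of the components of $\mathbb X$ with Hölder remainders. For $\varphi\in C_b^8(\mathbb R)$ one checks that $\varphi(x)$, together with the data representing $\varphi'(x)\,\partial_s\partial_t x$ and $\varphi''(x)\,\partial_s x\,\partial_t x$, is a controlled sheet --- the eight derivatives being needed so that the bi-directional Taylor expansion of $\varphi$ along the two coordinate directions and the mixed directions closes with a remainder summable against the partition. Given a controlled sheet one then forms the compensated double Riemann sum over a grid --- the naive sum of $y$ times rectangular increments of $x$, corrected by the higher terms built from the derivative fields and the components of $\mathbb X$ --- and applies the two-parameter sewing lemma to obtain a limit in $\CCC_{2,2}^{\gamma_1,\gamma_2}$ as the mesh vanishes. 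I would define $\mathscr I_1(\varphi,\mathbb X)$ and $\mathscr I_2(\varphi,\mathbb X)$ as the values produced by pairing the controlled sheet $\varphi(x)$ against the $\partial_s\partial_t x$-type and the $\partial_s x\,\partial_t x$-type component of $\mathbb X$, respectively. Continuity in $(\varphi,\mathbb X)$ then follows from the quantitative estimates in the sewing lemma together with the local Lipschitz dependence of the controlled-sheet structure maps on $\mathbb X$ and on $\varphi\in C_b^8$.

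\textbf{Consistency, density and the Stratonovich formula.} When $x$ is smooth its canonical lift has the classical iterated integrals as components, the compensated sums are ordinary Riemann sums plus vanishing corrections, and the sewing lemma returns the Riemann--Lebesgue integral, which gives the two displayed formulas. For the change of variables formula I would compute the rectangular increment of $\mathscr I_1(\varphi',\mathbb X)+\mathscr I_2(\varphi'',\mathbb X)$: by construction it equals, up to a remainder that is $o(|s_2-s_1|^{\gamma_1}|t_2-t_1|^{\gamma_2})$ uniformly, a combination of $\varphi'(x)$ and $\varphi''(x)$ against the rectangular increments of the level-one and level-two components of $\mathbb X$, and the Chen relations are exactly what makes this combination telescope to the rectangular increment of $\varphi(x)$; uniqueness in the sewing lemma then forces $\mathscr I_1(\varphi',\mathbb X)+\mathscr I_2(\varphi'',\mathbb X)=\der\varphi(x)$. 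For smooth $x$ this is the classical identity $\partial_s\partial_t(\varphi\circ x)=\varphi'(x)\,\partial_s\partial_t x+\varphi''(x)\,\partial_s x\,\partial_t x$ integrated over rectangles, and since both sides are continuous in $\mathbb X$ it extends to every limit of smooth lifts. Finally, to enhance a fractional Brownian sheet $x$ with $\gamma_1,\gamma_2>1/3$ I would mollify $x$ to $x^\eps$, lift it canonically to $\mathbb X^\eps$, and show $(\mathbb X^\eps)$ is Cauchy in $\mathscr{R}^{\gamma_1,\gamma_2}$: each component lies in a fixed Gaussian chaos, so its rectangular increments have $L^2(\Omega)$-norm controlled through the product covariance of the fBm sheet --- with scaling $|s_2-s_1|^{\gamma_1 k_1}|t_2-t_1|^{\gamma_2 k_2}$, the condition $\gamma_i>1/3$ being exactly what makes the relevant multiple kernel integrals finite --- and likewise for $\mathbb X^\eps-\mathbb X^{\eps'}$; hypercontractivity plus a multi-parameter Garsia--Rodemich--Rumsey argument give almost surely the required Hölder regularity and the convergence, and the algebraic relations, valid for each $\mathbb X^\eps$, pass to the limit. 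Applying the extended change of variables identity to this $\mathbb X$ yields the Stratonovich formula.

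\textbf{Main obstacle.} The crux is the first step together with the two-parameter sewing lemma on which it rests: pinning down the correct minimal family of iterated rectangular integrals and the precise set of Chen/shuffle identities they must satisfy so that a reconstruction statement with a genuine uniqueness property holds in two parameters --- where horizontal, vertical and rectangular defects must be controlled simultaneously, unlike the single coboundary of the one-parameter sewing lemma --- and so that the telescoping used for the change of variables formula actually closes. As noted, this combinatorial and algebraic structure grows rapidly with the levels and is not yet well understood, which is precisely why only the two integrands above are treated; by contrast the continuity estimates, the Gaussian computations, and the density argument are comparatively routine bookkeeping.
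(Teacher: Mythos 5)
Your road map matches the paper's overall architecture: an algebraic class of rough sheets carrying a finite family of iterated rectangular integrals subject to Chen-type relations, a notion of controlled sheet, a two-parameter reconstruction/sewing map built from one-directional $\Lambda$-maps to handle horizontal, vertical, and rectangular defects simultaneously, and a Gaussian/GRR analysis to enhance the fBm sheet and pass the Stratonovich identity to the limit. You also correctly locate the central structural obstacle (identifying the right family of objects and identities in two parameters), which the paper confronts by imposing Hypothesis~\ref{hyp:chen-2d}.

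There is, however, one genuine gap that makes the proposal a road map rather than a proof, and it sits precisely where you write ``one checks that $\varphi(x)$ \dots is a controlled sheet.'' In the paper this is Theorem~\ref{th:stab}, and it is by far the most delicate step. The issue is that the rectangular increment of $\varphi(x)$ does not expand cleanly in the components of $\mathbb X$ by a naive bi-directional Taylor argument: the terms involving $\dd_1x\,\dd_2x$ are not captured by a controlled-path ansatz in either direction separately. The paper resolves this by introducing an \emph{additional} layer of algebraic objects beyond those needed to define the integral (Hypotheses~\ref{hyp:algebric-assumption}, \ref{hyp:objects-MLO}, \ref{final-alg-ass}: $G_a^{xx},H_a^{xx},I_a^{xx},J_a^{xx},K_a^{xx},L_a^{xx},M_a^{xx},N_a^{xx},O_a^{xxx},P_a^{xx},Q_a^{xx}$), rewriting the controlled-sheet condition in a form avoiding the problematic mixed differential, and then comparing $\varphi(x)$ against the piecewise-linear interpolants $x^{1},x^{2},x^{12}$ so that the remainder $R$ localizes to $R^1+R^2-R^{12}$ (Proposition~\ref{eq:rem}), each of which can be estimated by the one-dimensional change-of-variables formula combined with explicit Taylor-remainder identities (Lemmas~\ref{comp-shar-1}, \ref{stab lemma}). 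Without this machinery the map $\mathscr I_a$ cannot be evaluated at $\varphi(x)$, so the statement is not established. Your claim that completeness of $\mathscr R^{\gamma_1,\gamma_2}$ follows because the algebraic constraints are ``polynomial'' and hence closed is also not literally true as stated — several of the relations (e.g.\ item 4 of Hypothesis~\ref{final-alg-ass}) involve the sewing operator $\Lambda_a$ itself — although the closedness argument does go through using continuity of $\Lambda_a$ on the relevant Hölder spaces.

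A smaller point of divergence: you sketch two routes to the Stratonovich formula — a direct telescoping argument via Chen relations and uniqueness in the sewing lemma, and a density argument passing from smooth lifts. The paper only uses the second (Theorem~\ref{theorem:Stratonovich formula-rough-case} relies on the continuity estimate of Theorem~\ref{th:cont-int} and the convergence of $\mathbb X^N$ from Theorem~\ref{conv-th}), and the direct telescoping argument would in fact require precisely the control of $\varphi(x)$ as a controlled sheet, so it does not circumvent the gap above. The mollification vs.\ Fourier-cutoff choice for regularizing the fBm sheet is inessential; the paper's spectral cutoff via the harmonizable representation~\eqref{eq:harmonic-rep} is convenient because the Wick contractions reduce to explicit kernel integrals (Lemma~\ref{lemma:basic-bound-integral}), but mollification would serve equally well.
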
   

We remark that this result is used in~\cite{Ito plane} to deal with the multi-parametric Skorohod integral defined in~\cite{TVp1,TVp2} and that an explicit formula which give a link between the two notion of integration is obtained.

During the elaboration of this paper two other approaches have been developed to deal 
with singular parabolic SPDE using rough path ideas. 
The first one is due to P.~Imkeller, N.~Perkowsky and M.~Gubinelli~\cite{Para} 
in which the authors use Bony's paraproducts~\cite{bony} to set up a controlled structure which allow to handle product of distributions in enough generality to be useful to solve PDEs driven by singular signals. A second and more powerful approach  has been introduced by M.Hairer in~\cite{hairer_theory_2013} where the author builds a complete non-linear theory for distributions which can be analyzed in terms of a basic set of simple objects called a \emph{model}. 

These two extension of rough path theory are very efficient to deal with parabolic SPDE driven by a very irregular multi parameter noise but at the moment while seems possible to adapt these two approaches to handle hyperbolic problems it seems not very clear what would be the final form of a successful theory. In this respect our study and the objects we introduce seems to be needed, maybe in a slightly different form, in order to build a complete theory of hyperbolic rough equations similar to the non-linear stochastic heat equation. Our efforts then can be seen as a first step in this direction.

\medskip

\noindent

\textbf{Plan.} 
This note is structured as follows. In Sec.~\ref{sec:one-dim} we
recall the basic setup of~\cite{[Gubinelli-2004]} which allows to embed
the theory of rough paths in a theory of integration of
``generalized differentials'', called 
$k$-\emph{increments}.  In Sec.~\ref{sec:two-dim} we
introduce and study a complex of 2d increments (or
\emph{biincrements}) suitable
to analyze 2d integrals and show the existence of a
2d $\Lambda$-map and of an abstract integration theory (in the sense
of convergence of Riemann sums of particular biincrements).

In Sec.~\ref{sec:two-d-young} we use the theory outlined in
Sec.~\ref{sec:two-dim} to generalize Young theory of integration to two
dimensions. Like in the one-dimensional setting this should be seen as
a first (mostly pedagogical)  step towards a full theory of
rough sheets.

In Sec.~\ref{sec:2-d-dissection} we proceed to the dissection of a 2d
integral with the purpose of exposing the constituent elements of the
would-be rough sheet. All the computations will be done in the smooth
setting, emphasizing the algebraic aspects and the respective r\^oles
of the various objects.

In Sec.~\ref{sec:rough} the definition of the rough sheet will
be given, and on~\ref{sec:controlled-sheets} a space of sheets \emph{controlled} by a rough sheet,
will be introduced and we will show how to obtain an integration
theory for them.

In Sec~\ref{sec:stability} we show that for a smooth function $\varphi$ that 
$\varphi(x)$ is controlled by $x$ if $x$ is a rough sheet and then obtain some continuity result
for the integral constructed in this case using the procedure developed in~\ref{sec:rough}.

In Sec~\ref{sec:construction} we show that the fractional Brownian motion can be enhanced in a rough sheet and then we obtain a Stratonovich change of variable formula in 
this case.

\section{Algebraic integration in one dimension}
\label{sec:one-dim}

The integration theory introduced in~\cite{[Gubinelli-2004]} is based on an
algebraic structure, which turns out to be useful for computational
purposes, but has also its own interest. Since this setting is quite
non-standard, compared with the one developed in~\cite{[LionsStFlour]},
and since it lay at the base of our approach to 2d integrals 
 we will recall briefly here its main features.  

\subsection{Increments}

Let $T>0$ be an arbitrary positive real number.  For any
vector space $V$
we introduce a cochain complex $(\CC_*(V),\delta)$ as follows. a
\emph{$k$-increment} with values in $V$ is a function
$g : [0,T]^{k} \to V$, such that $g_{t_1 \cdots t_{k}} = 0$
whenever $t_i = t_{i+1}$ for some $i=1,\dots,k$. Denote with
$\CC_k(V)$ the corresponding set. On $k$-increments, define a
the following coboundary operator $\delta$:
\begin{equation}
  \label{eq:coboundary}
\delta : \CC_k(V) \to \CC_{k+1}(V) \qquad 
(\delta g)_{t_1 \cdots t_{k+1}} = \sum_{i=1}^{k+1} (-1)^{i+1} g_{t_1
  \cdots \hat t_i \cdots t_{k+1}}  
\end{equation}
where $\hat t_i$ means that this particular argument is omitted.  It
is easy to verify that $\delta \delta = 0$. We will denote $\cZ
\CC_k(V) = \CC_k(V) \cap \text{Ker}\delta$ and $\cB \CC_k(V) :=
\CC_k(V) \cap \text{Im}\delta$, respectively the spaces of
\emph{$k$-cocycles} and of \emph{$k$-coboundaries} following standard
conventions of homological
algebra. We will write $\CC_k$  when the underlying vector space is $\RR$. 

Some simple examples of actions of $\der$ are obtained by letting 
$g\in\CC_1(V)$ and $h\in\CC_2(V)$. Then, for any $t,u,s\in\ott$, we have
\begin{equation}
\label{eq:simple_application}
  (\der g)_{ts} = g_t - g_s, 
\quad\mbox{ and }\quad  
(\der h)_{tus} = h_{ts}-h_{tu}-h_{us}.
\end{equation}

The complex $(\CC_*(V),\delta)$ is \emph{acyclic}, i.e. 
$\cZ \CC_{k+1}(V) = \cB \CC_{k}(V)$ for any $k\ge 0$ or otherwise stated, the sequence
\begin{equation}
\label{eq:exact_sequence}
 0 \rightarrow \RR \rightarrow \CC_1(V)
 \stackrel{\der}{\longrightarrow} \CC_2(V) \stackrel{\der}{\longrightarrow} \CC_3(V) \stackrel{\der}{\longrightarrow} \CC_4(V) \rightarrow \cdots 
\end{equation}
is exact.

This exactness implies that all the elements
$h \in\CC_2(V)$ such that $\der h= 0$ can be written as $h = \der f$
for some (non unique) $f \in \CC_1(V)$. Thus we get a heuristic 
interpretation of $\der |_{\CC_1(V)}$:  it measures how much a
given 1-increment  is far from being an {\it exact} increment of a
function (i.e. a finite difference).

For our discussion only $k$-increments with $k \le 3$ will be
relevant. When $V$ is a Banach space with norm $|\cdot|$ we measure the size of these increments by H\"older norms
defined in the following way: for $f \in \CC_2(V)$ let
$$
\norm{f}_{\mu} \equiv
\sup_{s,t\in\ott}\frac{|f_{st}|}{|t-s|^\mu},
\quad\mbox{and}\quad
\CC_2^\mu(V)=\lcl f \in \CC_2(V);\, \norm{f}_{\mu}<\infty  \rcl.
$$
In the same way for $h \in \CC_3(V)$ set
\begin{eqnarray}
  \label{eq:normOCC2}
  \norm{h}_{\gamma,\rho} &=& \sup_{s,u,t\in\ott} 
\frac{|h_{sut}|}{|u-s|^\gamma |t-u|^\rho}\\
\|h\|_\mu &\equiv &
\inf\left \{\sum_i \|h_i\|_{\rho_i,\mu-\rho_i} ;\, h =
 \sum_i h_i,\, 0 < \rho_i < \mu \right\} ,\nonumber
\end{eqnarray}
where the last infimum is taken over all sequences $\{h_i \in \CC_3(V) \}$ such that $h
= \sum_i h_i$ and for all choices of the numbers $\rho_i \in (0,z)$.
Then  $\|\cdot\|_\mu$ is easily seen to be a norm on $\CC_3(V)$, and we set
$$
\CC_3^\mu(V):=\lcl h\in\CC_3(V);\, \|h\|_\mu<\infty \rcl.
$$

Let $\CC_3^{1+}(V) = \cup_{\mu > 1} \CC_3^\mu(V)$. Analogous meaning
should be given to $\cZ \CC_2^\mu(V), \dots$.

From now on $V$ will be a generic Banach space.

The following proposition is a basic result which is at the core of
 our approach to path-wise integration:
\begin{proposition}[The $\Lambda$-map]
\label{prop:Lambda}
There exists a unique linear map $\Lambda: \cZ \CC^{1+}_3(V)
\to \CC_2^{1+}(V)$ such that 
$
\delta \Lambda  = 1_{\cZ \CC_3(V)}.
$
Furthermore, for any $\mu > 1$ this map is continuous from $\cZ \CC^{\mu}_2(V)$
to $\CC_1^{\mu}(V)$ and we have
\begin{equation}\label{ineqla}
\|\Lambda h\|_{\mu} \le \frac{1}{2^\mu-2} \|h\|_{\mu} ,\qquad h \in  \cZ \CC^{1+}_2(V) 
\end{equation}
\end{proposition}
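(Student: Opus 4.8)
\medskip\noindent\textbf{Proof plan.} I would treat this as a ``sewing'' statement, separating it into a soft uniqueness part and a constructive existence part, with the sharp constant dropping out of the construction. Uniqueness is immediate from exactness: if $\Lambda,\Lambda'$ both satisfy $\der\Lambda=\der\Lambda'=1_{\cZ\CC_3(V)}$ with values in $\CC_2^{1+}(V)$, then for $h\in\cZ\CC_3^{1+}(V)$ one has $\der(\Lambda h-\Lambda' h)=0$, so by exactness of~\eqref{eq:exact_sequence} there is $\phi\in\CC_1(V)$ with $\Lambda h-\Lambda' h=\der\phi$; but $\Lambda h-\Lambda' h\in\CC_2^{\mu}(V)$ for some $\mu>1$, so by~\eqref{eq:simple_application} the function $\phi$ obeys $|\phi_t-\phi_s|\le C|t-s|^{\mu}$ with $\mu>1$, hence is constant, $\der\phi=0$, and $\Lambda=\Lambda'$.

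For existence, fix $h\in\cZ\CC_3^{\mu}(V)$ with $\mu>1$. By exactness pick \emph{any} $f\in\CC_2(V)$ with $\der f=h$ (not H\"older a priori; it will be corrected). For a partition $\Pi=\{s=r_0<\dots<r_m=t\}$ of $[s,t]$ set $J^{\Pi}_{ts}=\sum_i f_{r_{i+1}r_i}$; by~\eqref{eq:simple_application} deleting one interior point $r_j$ changes $J^{\Pi}_{ts}$ by exactly $-h_{r_{j+1}r_jr_{j-1}}$. I would run this along the dyadic partitions $\Pi^{n}$ of $[s,t]$: passing from $\Pi^{n}$ to $\Pi^{n+1}$ inserts the $2^{n}$ midpoints $m^{n}_k$ of the level-$n$ intervals, so
\[
J^{\Pi^{n+1}}_{ts}-J^{\Pi^{n}}_{ts}=-\sum_{k=0}^{2^{n}-1}h_{t^{n}_{k+1}\,m^{n}_k\,t^{n}_k}.
\]
For any splitting $h=\sum_i h_i$ each summand here is at most $\|h_i\|_{\rho_i,\mu-\rho_i}(|t-s|2^{-n-1})^{\mu}$ --- the two H\"older exponents adding up to $\mu$ --- whence $\Abs{J^{\Pi^{n+1}}_{ts}-J^{\Pi^{n}}_{ts}}\le 2^{-\mu}2^{-n(\mu-1)}|t-s|^{\mu}\sum_i\|h_i\|_{\rho_i,\mu-\rho_i}$. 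Since $\mu>1$ this is summable in $n$; summing the geometric series, using $J^{\Pi^{0}}_{ts}=f_{ts}$, and taking the infimum over splittings of $h$, the sequence $J^{\Pi^{n}}_{ts}$ converges to a limit $L_{ts}$ with $\Abs{f_{ts}-L_{ts}}\le\frac{1}{2^{\mu}-2}\|h\|_{\mu}|t-s|^{\mu}$. I would then define $(\Lambda h)_{ts}:=f_{ts}-L_{ts}$. Changing the primitive $f$ by a coboundary $\der\phi$ ($\phi\in\CC_1(V)$) alters $J^{\Pi}_{ts}$ only by the telescoping sum $\phi_t-\phi_s$, which cancels in $f_{ts}-L_{ts}$; thus $\Lambda h$ is well defined, linear in $h$, lies in $\CC_2^{\mu}(V)$, and satisfies~\eqref{ineqla}.

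The remaining and genuinely delicate point is to check $\der\Lambda h=h$, equivalently that the correction $L$ is additive, $L_{ts}=L_{tu}+L_{us}$ for $s<u<t$; this is the one place where the cocycle condition $\der h=0$ must be used. My plan is to upgrade the convergence: show that $L_{ts}$ is in fact the limit of $J^{\Pi}_{ts}$ along \emph{every} sequence of partitions of $[s,t]$ with mesh tending to $0$, not just the dyadic one. Invoking $\der h=0$, one runs the standard Young/sewing argument --- repeatedly delete the interior point whose removal costs the least, a cost that by a counting argument is of order $m^{-\mu}$ for a partition with $m$ intervals, hence summable since $\mu>1$ --- to conclude that $\{J^{\Pi}_{ts}\}_\Pi$ is a Cauchy net, so all such approximating sequences share the single limit $L_{ts}$. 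Additivity of $L$ then follows by restricting to partitions of $[s,t]$ that contain $u$ and splitting the sum at $u$, giving $\der L=0$ and $\der\Lambda h=\der f=h$. In summary: the dyadic estimate delivers both the regularity bound~\eqref{ineqla} and a natural candidate for $\Lambda h$ almost for free, and the real work --- the main obstacle --- is showing that this candidate actually inverts $\der$, which is exactly where one must exploit the full strength of the hypothesis $h\in\cZ\CC_3$, through the partition-independence of the Riemann-sum limit.
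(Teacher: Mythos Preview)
Your argument is correct and reproduces the standard sewing-lemma proof from~\cite{[Gubinelli-2004]}, which is exactly what the paper cites in place of its own proof; the dyadic estimate yielding the sharp constant $(2^\mu-2)^{-1}$ and the closure of the argument via partition-independence of the Riemann sums are both as in that reference. One minor quibble: the cocycle condition $\der h=0$ is not used \emph{only} in the additivity step---you already invoke it at the outset, via exactness, to produce the primitive $f$ with $\der f=h$, and every subsequent partition estimate (dyadic or general) rests on this same identity.
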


\begin{proof}
See~\cite{[Gubinelli-2004]}.
\end{proof}
\vspace{0.3cm}

Let
$
\cR^\mu(V) = \{ g \in \CC_2(V) : \der g \in \CC_3^{\mu}(V) \}
$. 
When $\mu > 1$
this is the subspace of 1-increments whose coboundary is small enough
to be in the domain of $\Lambda$. $\cR^{1+}(V)$ is defined as the
union of all $\cR^{\mu}(V)$ for $\mu > 1$.

An immediate implication of Prop.~\ref{prop:Lambda} is the following
algorithm for a canonical decomposition of the elements of $\cR^{1+}(V)$:

\begin{corollary}
\label{cor:lambda-inversion}
Take an element $g\in\cR^\mu(V)$ for
$\mu>1$. Then $g$ can be decomposed in a unique way as
$
g=\der f+ \Lambda \delta g
$,
where $f\in\CC_1(V)$. If moreover $g \in \CC_2^{1+}(V)$ then $f=0$ and $g =
\Lambda \delta g$: the coboundary $\delta$ is invertible (as a linear
map) in $\CC_1^{1+}(V)$.
\end{corollary}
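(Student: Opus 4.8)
The plan is to deduce the statement directly from Proposition~\ref{prop:Lambda} and the acyclicity of $(\CC_*(V),\delta)$; the argument is entirely formal. First I would check that $\Lambda\delta g$ is meaningful: for $g\in\cR^\mu(V)$ with $\mu>1$ one has $\delta(\delta g)=0$ because $\delta\delta=0$, so $\delta g\in\cZ\CC_3(V)$, and by the definition of $\cR^\mu(V)$ it lies in $\CC_3^\mu(V)\subseteq\CC_3^{1+}(V)$. Hence $\delta g$ is in the domain of $\Lambda$, and Proposition~\ref{prop:Lambda} gives $\Lambda\delta g\in\CC_2^\mu(V)\subseteq\CC_2^{1+}(V)$, with the bound~\eqref{ineqla} available if needed.

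Next I would build $f$. Since $\delta\Lambda=1_{\cZ\CC_3(V)}$ and $\delta g\in\cZ\CC_3(V)$, we get $\delta(g-\Lambda\delta g)=\delta g-\delta\Lambda\,\delta g=\delta g-\delta g=0$, so $g-\Lambda\delta g\in\cZ\CC_2(V)$. By acyclicity, $\cZ\CC_2(V)=\cB\CC_1(V)$ (exactness of~\eqref{eq:exact_sequence} at $\CC_2(V)$), hence $g-\Lambda\delta g=\delta f$ for some $f\in\CC_1(V)$, i.e. $g=\delta f+\Lambda\delta g$. For uniqueness I would note that $\Lambda\delta g$ is the value of the canonical map $\Lambda$ at the fixed element $\delta g$, hence determined by $g$; therefore $\delta f=g-\Lambda\delta g$ is determined, and $f$ is pinned down up to an additive constant (the kernel of $\delta$ on $\CC_1(V)$, cf.~\eqref{eq:exact_sequence}).

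For the second claim, suppose moreover $g\in\CC_2^{1+}(V)$. By the first step $\Lambda\delta g\in\CC_2^{1+}(V)$ as well, so $\delta f=g-\Lambda\delta g\in\CC_2^{1+}(V)$, say $\delta f\in\CC_2^\nu(V)$ with $\nu>1$; also $\delta(\delta f)=0$. It then remains only to see that a $2$-increment $h$ with $\delta h=0$ and $\|h\|_\nu<\infty$ ($\nu>1$) must vanish: from $\delta h=0$ one gets the additivity $h_{ts}=h_{tu}+h_{us}$, so subdividing $[s,t]$ into $n$ equal pieces $s=u_0<u_1<\dots<u_n=t$ and telescoping gives $h_{ts}=\sum_{i=1}^{n}h_{u_iu_{i-1}}$, hence $|h_{ts}|\le n\,\|h\|_\nu\,(|t-s|/n)^{\nu}=\|h\|_\nu\,|t-s|^{\nu}\,n^{1-\nu}\to 0$ as $n\to\infty$. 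Thus $\delta f=0$, the constant $f$ may be taken to be $0$, and $g=\Lambda\delta g$; this last identity is exactly the asserted invertibility of $\delta$ (with inverse $\Lambda$) on increments of Hölder exponent larger than $1$.

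I do not expect a genuine obstacle here: the whole proof is a formal consequence of Proposition~\ref{prop:Lambda} and the exact sequence~\eqref{eq:exact_sequence}. The two points that call for a little care are the harmless indeterminacy of $f$ by an additive constant — so that ``uniqueness'' should be read for the splitting $g=(\delta f)+(\Lambda\delta g)$, or after normalising $f$ — and the elementary subdivision estimate above, which is the one place where the hypothesis ``exponent $>1$'' is actually used.
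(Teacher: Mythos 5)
Your proof is correct and takes the same route as the paper's: apply $\delta$, use exactness of $(\CC_*(V),\delta)$ to extract $f$ from $g-\Lambda\delta g\in\cZ\CC_2(V)$, and for the $\CC_2^{1+}$ case conclude that $\delta f$ must vanish because it is a cocycle with H\"older exponent larger than one. Your subdivision/telescoping estimate is simply the explicit proof of the fact the paper invokes tacitly (that a $\gamma$-H\"older function with $\gamma>1$ is constant), and your remark about the additive-constant indeterminacy of $f$ is the correct reading of ``unique'' here.
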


\begin{proof}
By assumption there are no ambiguity to define $\Lambda\der g$  and by definition we have that $\der(g-\Lambda\der g)=0$ and then there exist $f\in\CC_1(V)$ such that 
$ \der f=g-\Lambda\der g$. In the case when $g\in\CC^{1+}_2$ we see that $f$ is a $\gamma$-H\"older function  for some $\gamma>1$ and then $f_t=f_0$ which gives our result.

\end{proof}

At this point the relation of the structure we introduced with
the problem of integration of irregular functions can be still quite
obscure to the non-initiated reader. However something interesting is
already going on and the previous corollary has a very nice
consequence which is the subject of the next corollary.

\begin{corollary}[Integration of small increments]
\label{cor:integration}
For any 1-increment $g\in\CC_2 (V)$, such that $\der g\in\CC_1^{1+}$
set
$
\delta f = (1-\Lambda \delta) g 
$,
then
$$
(\delta f)_{ts} = \lim_{|\Pi_{ts}| \to 0} \sum_{i=0}^n g_{t_i\, t_{i+1}}
$$
where the limit is over all partitions $\Pi_{ts} = \{t_0=t,\dots,
t_n=s\}$ of $[t,s]$ as the size of the partition goes to zero. The
1-increment $\delta f$ is the indefinite integral of the 1-increment $g$.
\end{corollary}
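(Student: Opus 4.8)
The idea is to combine Corollary 1.8 with the explicit formula $\delta f = (1-\Lambda\delta)g$ and to recognize the right-hand side as a telescoping sum that is stable under refinement of the partition. First I would observe that $\delta f$ as defined is genuinely an exact increment of a function: by Corollary 1.8 applied to $g\in\cR^{1+}(V)$, we have $g = \delta f + \Lambda\delta g$ with $f\in\CC_1(V)$, so $(1-\Lambda\delta)g = \delta f$ is the coboundary of a $1$-increment, which is what justifies the notation. The content of the statement is the identification of $(\delta f)_{ts}$ as the limit of Riemann-type sums $\sum_i g_{t_i t_{i+1}}$.

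The key step is the following elementary but crucial identity. Fix a partition $\Pi_{ts}=\{t=t_0,\dots,t_n=s\}$ and write $J(\Pi_{ts}) = \sum_{i=0}^{n-1} g_{t_i t_{i+1}}$. Since $\delta f$ is a coboundary, $\sum_{i} (\delta f)_{t_i t_{i+1}}$ telescopes to $(\delta f)_{ts}$, so
\[
J(\Pi_{ts}) - (\delta f)_{ts} \;=\; \sum_{i=0}^{n-1}\bigl(g - \delta f\bigr)_{t_i t_{i+1}} \;=\; \sum_{i=0}^{n-1} (\Lambda\delta g)_{t_i t_{i+1}}.
\]
Thus it suffices to show that $\sum_{i}(\Lambda\delta g)_{t_i t_{i+1}} \to 0$ as $|\Pi_{ts}|\to 0$. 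Here I would use that $\Lambda\delta g \in \CC_2^\mu(V)$ for some $\mu>1$ (this is exactly the continuity statement in Proposition 1.6: $\delta g\in\cZ\CC_3^\mu(V)$ with $\mu>1$, hence $\Lambda\delta g\in\CC_2^\mu(V)$ with $\|\Lambda\delta g\|_\mu \le \frac{1}{2^\mu-2}\|\delta g\|_\mu$). Then $|(\Lambda\delta g)_{t_i t_{i+1}}| \le \|\Lambda\delta g\|_\mu |t_{i+1}-t_i|^\mu \le \|\Lambda\delta g\|_\mu |\Pi_{ts}|^{\mu-1} |t_{i+1}-t_i|$, and summing over $i$ gives a bound $\|\Lambda\delta g\|_\mu |\Pi_{ts}|^{\mu-1} |t-s|$, which tends to $0$ since $\mu-1>0$. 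This proves the convergence and simultaneously identifies the limit as $(\delta f)_{ts}$.

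The only genuine subtlety — and the step I would be most careful about — is the hypothesis as stated: the corollary says $\delta g\in\CC_1^{1+}$, which literally reads as a condition on a $3$-increment living in a space of $2$-increments; I would read this as the intended hypothesis $\delta g \in \CC_3^{1+}(V)$ (equivalently $g\in\cR^{1+}(V)$), which is precisely what makes $\Lambda\delta g$ well-defined and what the telescoping argument above needs. Once that is fixed, everything is routine: the proof is essentially two lines, the telescoping identity plus the H\"older estimate on $\Lambda\delta g$ coming from~\eqref{ineqla}. No compactness or completeness is needed here, only the a priori bound from the $\Lambda$-map, which is the whole point of introducing $\Lambda$ in the first place.
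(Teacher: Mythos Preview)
Your proof is correct and follows essentially the same approach as the paper: decompose $g = \delta f + \Lambda\delta g$, telescope the $\delta f$ part, and use the $\CC_2^\mu$ regularity of $\Lambda\delta g$ with $\mu>1$ to kill the remainder sum. You are also right that the hypothesis should read $\delta g \in \CC_3^{1+}(V)$ rather than $\CC_1^{1+}$; the paper's statement (and its proof, which writes $\Lambda\delta g \in \CC_1^{1+}(V)$ instead of $\CC_2^{1+}(V)$) contains the same indexing slips you noticed.
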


Even if the result is already in~\cite{[Gubinelli-2004]} we would like to repeat
the proof since it is quite illuminating and it will be source of
inspiration when proving a similar statement in the 2d setting.

\begin{proof} 

Just consider the equation $g = \delta f + \Lambda \delta g$ and write
\begin{equation*}
  \begin{split}
S_\Pi & = \sum_{i=0}^n g_{t_i\, t_{i+1}} 
=
\sum_{i=0}^n (\delta f)_{t_i\, t_{i+1}} 
+
\sum_{i=0}^n (\Lambda \delta g)_{t_i\, t_{i+1}} 
 =
(\delta f)_{ts}
+
\sum_{i=0}^n (\Lambda \delta g)_{t_i\, t_{i+1}} 
      \end{split}
\end{equation*}
and observe that, due to the fact that $\Lambda \delta g \in
\CC_1^{1+}(V)$ the last sum converges to zero. 
\end{proof}

\subsubsection{Computations in $\CC_*$}\label{cpss}

If $V$ is an associative algebra 
the complex $(\CC_*,\delta)$ is an (associative, non-commutative)
graded algebra once endowed with the following product:
for  $g\in\CC_n(V)$ and $h\in\CC_m(V)$ let  $gh \in \CC_{n+m-1}(V)$
the element defined by
\begin{equation}\label{cvpdt}
(gh)_{t_1,\dots,t_{m+n+1}}=
g_{t_1,\dots,t_{n}} h_{t_{n},\dots,t_{m+n-1}},
\quad
t_1,\dots,t_{m+n}\in\ott.
\end{equation}

The coboundary $\delta$ act as a graded derivation with respect to the
algebra structure. In particular we have the following useful properties.

\begin{proposition}\label{difrul}
The following differentiation rules hold:
\begin{enumerate}
\item
Let $g,h$ be two elements of $\CC_1(V)$. Then
\begin{equation}\label{difrulu}
\der (gh) = \der g\,  h + g\, \der h.
\end{equation}
\item
Let $g \in \CC_1(V)$ and  $h\in \CC_2(V)$. Then
$$
\der (gh) = -\der g\, h + g \,\der h, \qquad
\der (hg) = \der h\, g  +h \,\der g.
$$
\end{enumerate}
\end{proposition}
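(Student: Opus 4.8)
The plan is to prove both rules by a direct expansion from the definitions, since there is no analytic content here — only the combinatorics of the coboundary \eqref{eq:coboundary} and of the product \eqref{cvpdt}. Both sides of each identity are bilinear in $(g,h)$ and, evaluated on a fixed tuple of times, are \emph{finite} linear combinations of monomials of the form $g_{\cdots}h_{\cdots}$ determined by the values of $g$ and of $h$ on sub-tuples; so it is enough to fix an arbitrary tuple, expand each side using \eqref{eq:coboundary} and \eqref{cvpdt}, and check that the two sums agree after the telescoping cancellations.

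For item (1): with $g,h\in\CC_1(V)$ one has $(gh)_t=g_th_t$, hence $(\der(gh))_{ts}=g_th_t-g_sh_s$; the classical Leibniz rearrangement $g_th_t-g_sh_s=(g_t-g_s)h_s+g_t(h_t-h_s)$ is, once one reads off \eqref{cvpdt} (the $\CC_1$-factor being evaluated at the index it shares with the other factor), exactly $(\der g\,h)_{ts}+(g\,\der h)_{ts}$, i.e.\ \eqref{difrulu}.

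For item (2): take $g\in\CC_1(V)$, $h\in\CC_2(V)$, so that $gh$ and $hg$ lie in $\CC_2(V)$ with their values read off from \eqref{cvpdt}; apply $\der$ on a generic triple via \eqref{eq:coboundary}, substitute, and regroup. Comparing with the expansions of $\der g\,h$, $g\,\der h$ (resp.\ $\der h\,g$, $h\,\der g$) obtained from \eqref{cvpdt} together with \eqref{eq:coboundary} applied to $\der g\in\CC_2(V)$ and $\der h\in\CC_3(V)$ yields the two stated identities, signs included. Conceptually these signs are exactly the ones making $\der$ a graded derivation for the product \eqref{cvpdt}, the degree attached to a $k$-increment being $k-1$; this accounts for the $-$ in front of $\der g\,h$ when $g$ stands to the left of the degree-two factor $h$, and for its absence in $\der(hg)=\der h\,g+h\,\der g$.

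I do not anticipate any real obstacle: the whole proof is the sign-and-index bookkeeping in the expansion of \eqref{eq:coboundary}, which is precisely the information the proposition records. (Alternatively one could invoke the general fact that the coboundary of a simplicial-type cochain complex is automatically a graded derivation for the associated cup-type product; but for a self-contained treatment, and in order to pin down the sign conventions, the explicit line-by-line check is preferable.)
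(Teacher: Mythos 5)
Your direct expansion of item (1) — the telescoping rearrangement $g_th_t-g_sh_s=(g_t-g_s)h_s+g_t(h_t-h_s)$ followed by reading off the product convention \eqref{cvpdt} with the $\CC_1$-factor sitting on the shared index — is exactly the paper's own proof, and like the paper you leave item (2) to an identical but longer sign-and-index check. The graded-derivation gloss on the signs is a fair heuristic (with the small caveat that by your own convention $h\in\CC_2$ has degree $k-1=1$, not two), so the proposal is correct and takes essentially the same approach as the paper.
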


\begin{proof}
We will just prove (\ref{difrulu}), the other relations being equally trivial:
if $g,h\in\CC_1(V)$, then
$$
\lc \der (gh) \rc_{ts}
= g_th_t-g_sh_s
=g_t\lp h_t-h_s \rp +\lp  g_t-g_s\rp h_s\\
=g_t \lp \der h \rp_{ts}+ \lp \der h \rp_{ts} g_s,
$$
which proves our claim.

\end{proof}

The iterated integrals of smooth functions on $\ott$ are of course
particular cases of elements of $\CC$ which will be of interest for
us. Let us recall  some basic  rules for these objects:
consider $f,g\in\CC_1^\infty$, where $\CC_1^\infty$ is the set of
smooth functions from $\ott$ to $\R$. Then the integral $\int f dg$, 
that we will denote by  
$\cj(f dg )$, can be considered as an element of
$\CC_2^\infty$. That is, for $s,t\in\ott$, we set 
$$
 \cj_{ts}(f \dd g)
=
\left(\int f \dd g  \right)_{ts} = \int_s^t  f_u \dd g_u .
$$
The multiple integrals can also be defined in the following way:
given a smooth element $h \in \CC_2^\infty$ and $s,t\in\ott$, we set
$$
\cj_{ts}(h \dd g )\equiv
\left(\int h \dd g \right)_{ts} = \int_s^t h_{us} \dd g_u  .
$$
In particular, the double integral $\cj_{ts}( f^1 df^2 df^3)$ is defined, for $f^1,f^2,f^3\in\cac_0^\infty$, as
$$
\cj_{ts}( f^1 \dd f^2 \dd f^3)
=\lp \int f^1 \dd f^2 \dd f^3  \rp_{ts}
= \int_s^t \cj_{us}\lp f^1\, \dd f^2  \rp \dd f_u^3  .
$$
Now, suppose that the $n$th order iterated integral of $f^1 df^2\cdots
df^n$, still denoted by the expression $\cj(f^1  df^2\cdots df^n)$, has been defined for 
$f^1,f^2,\cdots, f^n\in\cac_1^\infty$. Then, if $f^{n+1}\in\cac_1^\infty$, we set
\begin{equation}\label{multintg}
\cj_{ts}(f^1 \dd f^{2}\cdots \dd f^n  \dd f^{n+1})
=
\int_s^t   \cj_{us}\lp f^1 \dd f^2\cdots \dd f^n \rp\, \dd f_u^{n+1},
\end{equation}
which defines the iterated integrals of smooth functions recursively.
Observe that a $n$th order integral $\cj(\dd f^1 \dd f^2 \cdots df^n)$ could be defined along the same lines.

\medskip

The following relations between multiple integrals and the operator $\der$ will also be useful in the remainder of the paper:
\begin{proposition}\label{dissec}
Let $f,g$ be two elements of $\CC_1^\infty$. Then, recalling the convention 
(\ref{cvpdt}); it holds that
$$
\der f = \cj( \dd f), \qquad 
\der\lp \cj(f \dd g)\rp = 0, \qquad 
\der\lp \cj (\dd g \dd f)\rp =  (\der g) (\der f) = \cj(\dd g) \cj(\dd f),
$$
and, in general,
\begin{equation}
  \label{eq:dissect-multiple}
 \der \lp \cj( \dd f^n \cdots \dd f^1)\rp  = 
 \sum_{i=1}^{n-1} 
\cj\lp \dd f^n \cdots \dd f^{i+1}\rp \cj\lp \dd f^{i}\cdots \dd f^1\rp.  
\end{equation}
\end{proposition}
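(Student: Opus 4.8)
We want to establish four identities for smooth $f,g \in \CC_1^\infty$, all of which should follow from the recursive definition of iterated integrals \eqref{multintg}, the differentiation rules of Proposition~\ref{difrul}, and direct inspection of the coboundary $\delta$ on $\CC_2$ via \eqref{eq:simple_application}. The first identity, $\der f = \cj(\dd f)$, is essentially a matter of unwinding notation: by the single-integral convention $\cj_{ts}(\dd f) = \int_s^t \dd f_u = f_t - f_s = (\der f)_{ts}$, so nothing is really to prove here beyond recognizing $\cj(\dd f)$ as a $1$-fold integral. The second identity, $\der(\cj(f\,\dd g)) = 0$, is the statement that the indefinite integral $u \mapsto \cj_{u0}(f\,\dd g)$ is an exact increment: indeed $\cj_{ts}(f\,\dd g) = \int_s^t f_u\,\dd g_u = F_t - F_s$ where $F_u = \int_0^u f_r\,\dd g_r$, and then $\der(\der F) = 0$ by $\der\der = 0$. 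Alternatively, one checks directly via \eqref{eq:simple_application} that for $h_{ts} = \cj_{ts}(f\,\dd g)$ one has $h_{ts} - h_{tu} - h_{us} = \int_s^t - \int_u^t - \int_s^u = 0$ by additivity of the Riemann--Stieltjes integral over the partition point $u$.

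For the third identity, I would compute $\der(\cj(\dd g\,\dd f))$ by writing, for $s,u,t \in \ott$ and using \eqref{eq:simple_application} with $h_{ts} = \cj_{ts}(\dd g\,\dd f) = \int_s^t \cj_{rs}(\dd g)\,\dd f_r = \int_s^t (g_r - g_s)\,\dd f_r$:
\begin{equation*}
(\der h)_{tus} = h_{ts} - h_{tu} - h_{us} = \int_s^t (g_r - g_s)\,\dd f_r - \int_u^t (g_r - g_u)\,\dd f_r - \int_s^u (g_r - g_s)\,\dd f_r.
\end{equation*}
Splitting $\int_s^t = \int_s^u + \int_u^t$ and cancelling the $\int_s^u$ terms leaves $\int_u^t (g_r - g_s)\,\dd f_r - \int_u^t (g_r - g_u)\,\dd f_r = (g_u - g_s)\int_u^t \dd f_r = (g_u - g_s)(f_t - f_u) = (\der g)_{us}(\der f)_{tu}$, which is exactly $((\der g)(\der f))_{tus}$ in the convention \eqref{cvpdt}. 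The final rewriting $(\der g)(\der f) = \cj(\dd g)\cj(\dd f)$ is then just the first identity applied twice.

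The general formula \eqref{eq:dissect-multiple} is the heart of the proposition, and I would prove it by induction on $n$, the cases $n = 1, 2$ being covered above. Assume it holds for $n$; apply $\der$ to $\cj_{ts}(\dd f^{n+1}\cdots\dd f^1) = \int_s^t \cj_{rs}(\dd f^n \cdots \dd f^1)\,\dd f^{n+1}_r$. The cleanest route is to recognize this as $\der(K g)$ where $K \in \CC_2^\infty$ is the multiple integral $\cj(\dd f^n\cdots\dd f^1)$ and $g = f^{n+1}$, and to apply the product rule $\der(Kg) = \der K\, g + K\,\der g$ from part (2) of Proposition~\ref{difrul} — one must be slightly careful about which variant of the rule applies given how $\cj_{ts}(K\,\dd g)$ depends on the base point $s$, so I expect the main obstacle to be bookkeeping the base-point dependence of multiple integrals under $\der$ rather than any genuine difficulty. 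Expanding $\der K$ by the inductive hypothesis and reassembling the product $\cj(\dd f^n\cdots\dd f^{i+1})\cj(\dd f^i\cdots\dd f^1)$ terms together with the new boundary term $\cj(\dd f^n\cdots\dd f^1)\cj(\dd f^{n+1})$ (coming from $K\,\der g$) yields exactly the right-hand side of \eqref{eq:dissect-multiple} with $n$ replaced by $n+1$. Since everything is smooth, all integrals and limits are classical and no analytic subtlety intervenes; the content is purely the combinatorics of the coboundary acting on the simplex, together with the associative-algebra structure on $\CC_*$ described in \S\ref{cpss}.
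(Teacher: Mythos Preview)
Your approach is correct and is essentially the paper's: the paper also declares the identities elementary and only spells out the third one by a direct expansion of $(\der h)_{tus}$, exactly as you do; the general formula is left to the reader, and your induction on $n$ via the recursion $\cj_{ts}(K\,\dd f^{n+1})=\int_s^t K_{rs}\,\dd f^{n+1}_r$ together with $K_{rs}-K_{ru}=(\der K)_{rus}+K_{us}$ is the natural way to carry it out.

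One bookkeeping slip to fix. Your $n=2$ computation correctly produces $(g_u-g_s)(f_t-f_u)=(\der g)_{us}(\der f)_{tu}$, but under convention \eqref{cvpdt} one has $((\der g)(\der f))_{tus}=(\der g)_{tu}(\der f)_{us}$, so what you obtained is actually $((\der f)(\der g))_{tus}$, not $((\der g)(\der f))_{tus}$ as you claim. The same swap recurs in your inductive step: the boundary term coming from $K_{us}(\der f^{n+1})_{tu}$ is $\cj(\dd f^{n+1})\,\cj(\dd f^{n}\cdots\dd f^{1})$ in the product convention, not $\cj(\dd f^{n}\cdots\dd f^{1})\,\cj(\dd f^{n+1})$. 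The paper's own proof of the $n=2$ case contains a compensating typo (it writes the integrand as $\int_s^t \dd g_u\,(f_u-f_s)$ rather than $\int_s^t (g_u-g_s)\,\dd f_u$), which is why its displayed answer matches the stated proposition; once the factor order is tracked consistently your argument goes through without change.
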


\begin{proof}
Here again, the proof is elementary, and we will just show the third of these relations: we have, for $s,t\in\ott,$
$$
\cj_{ts} (\dd g \dd f)
= \int_s^t \dd g_u (f_u-f_s)
= \int_s^t \dd g_u f_u - K_{ts},
$$
with $K_{ts}=(g_t-g_s)f_s$. The first term of the right hand side is easily seen to be in $\ker\der_{\sst_2}$. Thus
$$
\der\lp \cj (\dd g \dd f)\rp_{tus} =
\lp\der K\rp_{tus} = [g_t-g_u][f_u-f_s],
$$
which gives the announced result.

\end{proof}

\subsubsection{Dissection of an integral}
\label{sec:dissection}

To grasp the algorithm underling the rough-path approach to integrals
over irregular functions we will exercise ourselves on the
deconstruction of a ``classic'' integral.  

With the notations of Sec.~\ref{cpss} in mind, we will split the integral
$\int \vp(x) \dd x=\cj(\vp(x) \dd x)$ for a smooth function $x\in\cac_1$
into ``more elementary'' components. This decomposition suggest the
right structure for the 1d rough paths. A similar exercise for 2d
integrals will be very important in understanding the correct
structure of the rough sheets.

\bigskip
The first idea one can have in mind in order to analyze $\cj(\vp(x) \dd x )$ is to
perform an expansion around the increment $\dd x$. By Taylor expansion
we have
\begin{equation}
  \label{eq:dec-1}
\cj\lp  \varphi(x) \dd x \rp = \varphi(x) \cj( \dd x)   + \cj\lp \dd\varphi(x)  \dd x \rp.
\end{equation}
The first term in the r.h.s will be considered ``elementary'' and not
elaborated further. Note that it is defined independently of the
regularity of $x$ since $\varphi(x) \cj( \dd x)  =  \varphi(x)\delta x$. 

Moreover,
as a 1-increment it is easy to see that the second term in the
r.h.s. of eq.~(\ref{eq:dec-1}) is smaller than the first but more
problematic
and we proceed to its \emph{dissection} by
the application of 
$\der$: invoking Proposition~\ref{dissec}, we get that
\begin{equation}
  \label{eq:dec-2}
\der\lp \cj (\dd\varphi(x) \dd x  )\rp =  \der (\varphi(x)) \der x ,
\end{equation}
Now the r.h.s. is well defined independently of the regularity of $x$
since
$$
[\der (\varphi(x)) \der x]_{tus} = (\vp(x_t)-\vp(x_u))(x_u-x_s).
$$

Since $x$ is smooth and assuming that $\vp$ is differentiable, then $ \cj (\dd\varphi(x) \dd x  ) \in \CC_1^{1+}$  (since actually it
belongs to $\CC_1^2$ by easy bounds on the iterated integral). 
Then as a consequence of Corollary~\ref{cor:lambda-inversion} we have that
\begin{equation}
  \label{eq:int-rep1}
 \cj (\dd\varphi(x) \dd x  ) = \Lambda \der \left[ \cj (\dd\varphi(x)
  \dd x  )\right] = - \Lambda[\der (\varphi(x)) \der x].  
\end{equation}
and, as a result, the following expression for the original integral holds
\begin{equation}\label{defyg}
\cj ( \varphi(x) \dd x  )
= \vp(x) \der x  
-\laa\lp \, \der(\varphi(x))\der x \rp = (1-\Lambda \delta)
[\varphi(x) \delta x].
\end{equation}

Eq.~(\ref{defyg}) shows that the ordinary integral on the l.h.s. is
equivalent to an expression in the r.h.s which does not depend any
more on any differentiability assumptions on $x$, indeed the r.h.s
makes sense, for example, when $\vp \in \text{Lip}$ and $x \in \CC^\gamma_0$
for any $\gamma > 1/2$: the only thing we have to check is that $\der
(\varphi(x)) \der x \in \CC_2^{1+}$ but under these assumptions we have
$$
|(\varphi(x_t)-\varphi(x_u))(x_u-x_s)| \le L_\varphi \|x\|_\gamma^2
\|\varphi(x)\|_\gamma |t-u|^\gamma |u-s|^\gamma 
$$
where $\|\cdot\|_\gamma$ is the ordinary $\gamma$-H\"older norm on
functions and $L_\varphi$ is the Lipshitz norm of $\varphi$. 
In this case we can \emph{define} the integral in the l.h.s. as being
equivalent to the well-defined r.h.s. and this new integral is
essentially the integral introduced by Young in~\cite{Young}. 
What is really relevant to our discussion is to note that the integral
can, in this case, be completely recovered from the 1-increment
$\varphi(x) \der x$. 

However, the procedure can be continued further on by the next step in
the Taylor expansion of the integral~(\ref{eq:dec-1}), which reads, for $s,t\in\ott$,
$$
\int_s^t \lc \dd_u \vp(x_u) \rc
=\int_s^t   \vp'(x_u) \dd x_u
= \vp'(x_s) [x_t-x_s] +\int_s^t  \left(\int_s^u \vp'(x_v) \dd x_v\right)  \dd x_u ,
$$
or according to the notations of Section~\ref{cpss},
\begin{equation}
  \label{eq:dec-3}
\der\vp(x)=
\cj \lp \dd\varphi(x)\rp  =  \cj \lp \varphi'(x) \dd x \rp
= \varphi'(x) \cj \lp \dd x \rp  
+ \cj \lp  d\varphi'(x) \dd x \rp.
\end{equation}
Injecting this equality in equation~(\ref{eq:dec-1}), thanks to 
(\ref{multintg}), we obtain
\begin{equation}
  \label{eq:dec-4}
\cj \lp \varphi(x) \dd x \rp = \varphi(x) \cj\lp \dd x\rp  
+ \varphi'(x) \cj\lp \dd x \dd x \rp 
+\cj\lp  \dd \varphi'(x) \dd x  \dd x  \rp.
\end{equation}
In the two first terms in the r.h.s of eq.~(\ref{eq:dec-4}) the
function $\varphi(x)$ has ``pop out'' form the integral, so we
consider them elementary (in a sense we will discuss below). Again, the last
term in the r.h.s. can be seen to belong to $\CC_1^{1+}$ (since
actually, in this smooth setting, it belongs to $\CC_1^3$). Then
in analogy with eq.~(\ref{eq:int-rep1}) we can represent it in terms
of its image under $\delta$ as
\begin{equation}
  \label{eq:int-rep2}
\cj\lp  \dd \varphi'(x) dx  dx  \rp = -\Lambda \delta \cj\lp  \dd \varphi'(x)
\dd x  \dd x  \rp =- \Lambda [\cj(  \dd \varphi'(x)) \cj(
\dd x  \dd x  ) + \cj( \dd \varphi'(x)
\dd x) \cj(  \dd x )]
\end{equation}
where we acted with $\delta$ upon the triple iterated integral
according to Prop.~\ref{dissec}. Concerning the argument of $\Lambda$
in this last equation, we note the following two facts: $\cj(
\dd \varphi'(x)) = \der \varphi'(x)$ while the double iterated integral
$\cj(\dd \varphi'(x) dx)$  appears in the Taylor expansion for
$\der \varphi(x)$:
$$
\der \varphi(x) = \cj(\dd \varphi(x)) = \cj(\varphi'(x)\dd x) = \varphi'(x)
\cj(\dd x) + \cj(\dd \varphi'(x) \dd x)
$$
so
\begin{equation}
  \label{eq:taylor-1}
\cj(\dd\varphi'(x) \dd x) = \der \varphi(x) - \varphi'(x) \der x.  
\end{equation}
Then we can rewrite eq.~(\ref{eq:int-rep2}) as
\begin{equation}
  \label{eq:int-rep3}
\cj\lp  \dd \varphi'(x) \dd x  \dd x  \rp = -\Lambda [\der \varphi'(x) \cj(
\dd x  \dd x  ) + (\der \varphi(x) - \varphi'(x) \der x) \cj(  \dd x )]
\end{equation}
and finally we have obtained another expression for the integral
$\cj(\varphi(x) \dd x)$:
\begin{equation}
  \label{eq:integral-step2}
  \begin{split}
\cj(\varphi(x) \dd x) & = \varphi(x) \der x + \varphi'(x) \cj(\dd x \dd x) 
\\ & \qquad   -\Lambda [\der \varphi'(x) \cj(
\dd x  \dd x  ) + (\der \varphi(x) - \varphi'(x) \der x) \cj(  \dd x )]
\\ & = (1-\Lambda \der) [\varphi(x) \der x + \varphi'(x) \cj(\dd x \dd x)]
  \end{split}
\end{equation}
where to go from the first equation to the second we used the
algebraic relation
\begin{equation}
\label{eq:area-rel}
\der \cj(\dd x \dd x) =\cj(\dd x) \cj(\dd x).
\end{equation}
Up to this point all we got are another equivalent expression for the
classic integral in the r.h.s. of eq.~(\ref{eq:int-rep3}). 

It is a very remarkable basic result of rough path theory that the
r.h.s. of eq.~(\ref{eq:int-rep3}) makes sense for paths $x$ which are
very irregular like the sample paths of Brownian motion (which
a.s. are not H\"older continuous for any index greater than $1/2$), 
once we have
at our disposal also a 1-increment  $\cj(dx dx)$ which is sufficiently
small and satisfy eq.~(\ref{eq:area-rel}). Heuristically,
in this situation the formula says that the 1-increment $\varphi(x)
\der x$ can be ``corrected'' or ``renormalized'' by adding the
correction $\varphi'(x) \cj(dx dx)$ so that it becomes integrable (in the sense of Corollary~\ref{cor:integration}).

In the cases where this
correction belongs to $\CC_1^{1+}$ we have
$
(1-\Lambda \delta)[\varphi'(x) \cj(\dd x \dd x)] = 0
$
so eq.~(\ref{eq:int-rep3}) becomes again eq.~(\ref{eq:int-rep1}) and
we reobtain the Young integral.
 
It is worth noticing at that point that the  integral, as
\emph{defined} by eq.~(\ref{eq:int-rep3}), has now to
be understood as an integral over the (step-2) rough path $(x, \cj(\dd x\,
dx))$~\cite{[Gubinelli-2004]} and it coincide with the notion of  integral
over a rough path given by Lyons in~\cite{Lyons1998}. 

This algorithm has an obvious extension to higher orders if we
assume that a reasonable definition of the iterated integrals 
$\cj(\dd x\, \dd x\cdots \dd x)$ can be given. To proceed further however we
need the notion of \emph{geometric} rough path (for more details
on this notion see~\cite{Lyons1998}) which must be  exploited
crucially to show that some terms are small and belongs to the domain
of $\Lambda$.

Note that we have worked in the scalar setting (i.e. all the object we
considered are real-valued). Willing to add some notational burden it
is easy to see that this section has an equivalent formulation in the
vector case (when $x$ takes values on $\RR^n$ and $\varphi$ is a
(smooth) differential from on $\RR^n$). Indeed all the theory is
interesting and useful especially in the vector case.
This explain the fact that we
do not considered techniques like the Doss-Sussmann approach to define
one dimensional integrals since they are essentially limited to the
scalar setting (where every reasonable differential form $\varphi$ is
exact) and do not have a vectorial counterpart.

\section{The increment complex in two dimensions}
\label{sec:two-dim}

In this paper we are interested in particular two-dimensional integrals which can take two basic forms which in general are not equivalent.
If $f,g : \RR^2 \to \RR$ are regular enough we can define the two
dimensional integral of $f$ wrt. $g$ as
\begin{equation}
\label{eq:2d-integral}
  \iint_{(s_1,t_1)}^{(s_2,t_2)} f \dd g := \int_{s_1}^{s_2} \dd s
  \int_{t_1}^{t_2} \dd t f(s,t) \partial_1 \partial_2 g(s,t) 
\end{equation}
where $\partial_1$ and $\partial_2$ are the partial derivatives
wrt. the first and the second coordinate, respectively.
Another possible and nonequivalent basic integral in two-dimension is
given, for a triple of functions $f,g,h$, by
\begin{equation*}
  \iint_{(s_1,t_1)}^{(s_2,t_2)} f \dd_1 g \dd_2 h :=
 \iint_{(s_1,t_1)}^{(s_2,t_2)} f(s,t) \partial_1 g(s,t) \partial_2
 h(s,t)  \, \dd s\dd t
\end{equation*}

Then
\begin{equation}
\label{eq:2d-increment}
  \iint_{(s_1,t_1)}^{(s_2,t_2)} \dd g =: (\der g)(s_1,t_1,s_2,t_2)
\end{equation}
defines the coboundary map $\der$ for
functions of two parameters:
\begin{equation}
  \label{eq:2d-delta}
  \begin{split}
(\der g)(s_1,t_1,s_2,t_2) & =
g(s_2,t_2)-g(s_1,t_2)-g(s_2,t_1)+g(s_1,t_1) 
\\ &= [g(s_2,t_2)-g(s_1,t_2)]-[g(s_2,t_1)-g(s_1,t_1)]    
  \end{split}
\end{equation}
which is just the composition of two finite-difference operator in the
two directions. These integrals are to be considered as continuous functions of two points $(s_1,t_1)$ and $(s_2,t_2)$ on the plane which vanishes whenever $s_1=s_2$ or $t_1=t_2$. This preliminary observation leads to the following general
construction for a 2d cochain complex suitable for the analysis of these two-parameter integrals.

\bigskip
Fix a positive real $T$ and
let $\CCC_{k,l}(V)$ the space of continuous functions from $\ott^k \times
\ott^l \to V$, $V$ some vector space such that 
$$
g_{(s_1,\dots,s_k) (t_1,\dots,t_l)} = 0
$$ 
whenever $s_i = s_{i+1}$ or $t_i = t_{i+1}$. We will write $\CCC_{k,l}
= \CCC_{k,l}(\RR)$.

For $\CCC_{k}(V) =
\CCC_{k,k}(V)$ we will use the natural identification with the space
of continuous functions from $(\ott^2)^k \to V$. These will play the
rôle of 2d $k$-increments: they are functions of $k$ points in the
square $\ott^2$ such that they become zero whenever two contiguous
arguments have one coordinate in common. 

Note that $\CCC_{k,l} = \CC_k \otimes \CC_l$ and in
general
\begin{equation}
  \label{eq:isomorphism}
\CCC_{k,l}(V) = \CC_k \otimes \CC_l \otimes V  
\end{equation}
We will call the elements of $\CCC_{k,l}(V)$ $(k,l)$-biincrements and
the elements of $\CCC_k(V)$ $k$-biincrements. 
Moreover we introduce one-dimensional coboundaries $\der_1, \der_2$
which acts as described in Sec.~\ref{sec:one-dim} on the biincrements
view as functions of the first set, or of the second set of arguments,
i.e. they acts on the first or second $\CC_*$ factor according to
factorization of eq.~(\ref{eq:isomorphism}).
To be concrete
$$
\der_1 : \CCC_{k,l}(V) \to \CCC_{k+1,l}(V)
$$
$$
\der_2 : \CCC_{k,l}(V) \to \CCC_{k,l+1}(V)
$$
and for example, if $g \in \CCC_{k,l}(V)$ then
$$
(\der_1 g)_{(s_1,\cdots,s_{k+1}),(t_1,\cdots,t_l)} = \sum_{i=1}^{k+1} (-1)^{i+1}
g_{(s_1,\cdots,\hat s_i,\cdots,s_{k+1}),(t_1,\cdots,t_l)}
$$
where, as usual, the notation $\hat s_i$ means that the corresponding
argument is omitted. It is easy to see that $\der_1$ and $\der_2$
commute and that 
$$
\der = \der_1 \der_2 : \CCC_k(V) \to \CCC_{k+1}(V)
$$
is a coboundary, i.e. satisfy the equation $\der \der = 0$. Moreover,
if $g \in \CCC_{k,l}(V)$ we have
$$
(\der g)_{(s_1,\cdots,s_{k+1}),(t_1,\cdots,t_{l+1})}  =
\sum_{i=1}^{k+1} \sum_{j=1}^{l+1} (-1)^{i+j}
g_{(s_1,\cdots,\hat s_i,\cdots,s_{k+1}),(t_1,\cdots,\hat t_j,\cdots,t_{j+1})} 
$$

Then $(\CCC_*(V),\der)$ is a cochain complex. It will be important to
note that its cohomology is not trivial and that it will play a rôle
in our subsequent results.

\subsection{Cohomology of $(\CCC_*,\der)$}

The complex $(\CCC_*(V),\der)$ is the diagonal of the following
commutative diagram
\begin{equation}
  \label{eq:cd}
\begin{CD}
\CCC_{1,1}(V) @>\der_1>> \CCC_{2,1}(V) @>\der_1>> \CCC_{3,1}(V)
@>\der_1>> \cdots\\
@V\der_2VV @V\der_2VV @V\der_2VV \\
\CCC_{1,2}(V) @>\der_1>> \CCC_{2,2}(V) @>\der_1>> \CCC_{3,2}(V)
@>\der_1>> \cdots\\
@V\der_2VV @V\der_2VV @V\der_2VV \\
\CCC_{1,3}(V) @>\der_1>> \CCC_{2,3}(V) @>\der_1>> \CCC_{3,3}(V)
@>\der_1>> \cdots\\
@V\der_2VV @V\der_2VV @V\der_2VV \\
\end{CD}  
\end{equation}

We are mainly interested in the first cohomology group
$$
H_1(\CCC,\der) = \frac{\cZ \CCC_1(V)}{\cB \CCC_1(V)}
$$ 
where as before we denote $\cZ \CCC_k(V) = \text{Ker}
\der|_{\CCC_k(V)}$ and $\cB \CCC_k(V) = \text{Im}
\der|_{\CCC_{k-1}(V)}$, the spaces of \emph{$k$-bicocycles} and
\emph{$k-1$-bicoboundaries}, respectively.

To compute the cohomology consider applications $\sigma_1 : \CCC_{k,l}(V)
\to \CCC_{k-1,l}(V)$ (for $k \ge 1$) and $\sigma_2
: \CCC_{k,l}(V) \to \CCC_{k,l-1}(V)$ (for $l \ge 1$) which fix the first argument on
each direction to the (arbitrary) value $0$. For example:
$$
(\sigma_1 g)_{(s_1,\dots,s_{k-1})(t_1,\dots,t_l)} = g_{(0,s_1,\dots,s_{k-1})(t_1,\dots,t_l)}
$$ 
and a similar equation for $\sigma_2$. Then we have the homotopy formulas
$$
\sigma_i \der_i - \der_i \sigma_i = 1, \qquad \text{for i=1,2}
$$
which are at the origin of the exactness of the one-dimensional
complexes forming the rows and the columns of the diagram~(\ref{eq:cd}).
Let $k \ge 1$. Take $a \in \cZ \CCC_{k}(V)$ and let
$$
b = a - \sigma_1 \der_1 a - \sigma_2 \der_2 a
$$
and, using the homotopy formulas,  verify that
$$
\der_1 b = \der_1 a - \der_1 a - \der_1 \sigma_2 \der_2 a = - \sigma_2 \der a = 0
$$
since $\der_1$ commutes with $\sigma_2$. Similarly $\der_2 b = 0$. Then $b
\in \text{Ker} \der_1 \cap \text{Ker} \der_2$ which means that we can
write
$$
 b = \der_1 s_1 b = \der_1 \sigma_1 \der_2 \sigma_2 b 
$$
but since operators with different indexes commutes we can always
rewrite this as
$$
 b = \der_1  \der_2 \sigma_1 \sigma_2 b = \der \sigma_1 \sigma_2 b 
$$
so that $b \in \cB \CCC_k(V)$. Next, note that $\sigma_1 \der_1 a \in
\text{Ker} \der_2$, since
$
\der_2 \sigma_1 \der_1 a = \sigma_1 \der a = 0
$
so $\sigma_1 \der_1 a = \der_2 \sigma_2 \sigma_1 \der_1 a$. Then for any $a \in \cZ
\CCC_k(V)$ we have the decomposition
\begin{equation}
  \label{eq:decomposition-of-bicocycles}
a = \der q + \der_1 \sigma \der_2 a + \der_2 \sigma \der_1 a  
\end{equation}
for some $q \in \CCC_{k-1}(V)$ where we let $\sigma = \sigma_1 \sigma_2 = \sigma_2 \sigma_1$.

\subsection{Computations in $\CCC_{*,*}$}
For $a \in \CCC_{n,m}$, $b \in \CCC_{k,l}$ we can define
the (noncommutative, associative) product $ab \in \CCC_{n+k-1,m+l-1}$ as
\begin{equation*}
ab_{(s_1,\dots,s_{n+k-1})(t_1,\dots,t_{m+l-1})}
= 
a_{(s_1,\dots,s_{n})(t_1,\dots,t_{m})} b_{
(s_{n},\dots,s_{n+k-1})(t_{m},\dots,t_{l+m-1})}.
\end{equation*}
For example for $a \in \CCC_{2,1}$, $b \in \CCC_{1,2}$
we have
$
  (ab)_{(s_1,s_2)(t_1,t_2)} = a_{(s_1,s_2) t_1} b_{s_2 (t_1,t_2)}
$.
This definition is suited to work well with the action of
$\der_1,\der_2$, for example we have that, if $a,b \in \CCC_{1,*}$:
\begin{equation*}
  \der_1 (a \der_1 b) =  \der_1 a \der_1 b, \qquad \der_1 ( \der_1 b a)
  = - \der_1 b \der_1 a
\end{equation*}
If $a,b \in \CCC_{1}$:
\begin{equation*}
  \der_2 (a \der_1 b) = \der_2 a \der_1 b + a \der b,
\end{equation*}
\begin{equation*}
  \der(\der_1 a \der_2 b) = \der_1 ((\der_2 \der_1 a) \der_2 b) +
  \der_1 a (\der_2 \der_2 b)) = \der_1 (\der a \der_2 b) =  -\der a \der b
\end{equation*}
and
\begin{equation*}
  \der(\der_2 b \der_1 a) = - \der b \der a
,\qquad \der (a\der b) = \der (\der a b) = \der a \der b
\end{equation*}
as can be easily checked by a direct computation.
In the two parameters setting  we will improve our algebraic structure by adding a new type of product for $f\in\CCC_{m,n}$ and $g\in\CCC_{m,l}$ we define $f\circ_1g\in\CCC_{m,n+l}$ by
$$
f\circ_1g_{(s_1s_2...s_m)(t_1t_2...t_{n+l-1})}=f_{(s_1s_2...s_m)(t_1t_2...t_n)}g_{(s_1s_2...s_n)(t_n,t_{n+1}...t_{n+l-1})}
$$
and an analogue definition in the second direction .
\bigskip

For a two-dimensional quantity like the basic integral in
eq.(\ref{eq:2d-integral}) we can write down the  following relation
\begin{equation}
\label{eq:int-decomp}
  \begin{split}
  \iint_{(s_1,t_1)}^{(s_2,t_2)} & \left[\iint_{(s_1,t_1)}^{(s,t)}
    \dd f(u,v)\right] \dd g(s,t) 
\\ & =
      \iint_{(s_1,t_1)}^{(s_2,t_2)} [f(s,t)
      -f(s_1,t)-f(s,t_1)+f(s_1,t_1)]\dd g(s,t)
\\ & = \iint_{(s_1,t_1)}^{(s_2,t_2)} f(s,t)\dd g(s,t)
 + f(s_1,t_1) (\der g)(s_1,t_1,s_2,t_2)
\\ & -
\int_{t_1}^{t_2} f(s_1,t) \dd_2 [g(s_2,t)-g(s_1,t)]
\\ & -
\int_{s_1}^{s_2} f(s,t_1) \dd_1 [g(s,t_2)-g(s,t_1)]
  \end{split}
\end{equation}
which will play the same rôle as eq.~\eqref{eq:dec-1} in the
one-dimensional setting.

\bigskip
As an example of the formalism set up up to now we can consider the
decomposition of eq.~\eqref{eq:int-decomp} for the (two-dimensional) iterated integral
$\iint df dg$ of two smooth elements $f,g \in \CCC_1$. In
compact notation it reads:
\begin{equation*}
\iint f \dd g = - f \der g   + \int f \dd_1 \der_2 g + \int f \dd_2
 \der_1 g  + \iint \dd f \dd g 
\end{equation*}
where we understand the integrals as functions of both the extremes of integration.
Note that 
$$
\der \iint \dd f \dd g = \der (f \der g) = \der f \der g
$$ 
so
\begin{equation}
  \label{eq:an-decomp}
\iint f \dd g - \int f \dd_1 \der_2 g - \int f \dd_2
 \der_1 g \in \ker \der.  
\end{equation}
Actually the three factors in eq.~(\ref{eq:an-decomp}) corresponds
exactly to the decomposition~\eqref{eq:decomposition-of-bicocycles}
 since
\begin{equation*}
  \iint f \dd g = \der \iint_{*,*} f \dd g, \qquad
\int f \dd_1 \der_2 g = \der_1 \int_{*} f \dd_1 \der_2 g
,
\qquad
\int f \dd_2 \der_1 g = \der_2 \int_{*} f \dd_2 \der_1 g
\end{equation*}
where the star in the integral sign denote that the lower integration
point has been fixed arbitrarily (e.g. to $0 \in \ott$).

Another relevant remark is to note that the antisymmetric element $\omega^a = \der_1 f
\der_2 g - \der_2 f \der_1 g$ satisfy $\der \omega^a = 0$. Indeed
\begin{equation*}
\der_1 \omega^a =   - \der_1 f
\der g - \der f \der_1 g
,\qquad
\der \omega^a = -\der f \der g + \der f \der g = 0
\end{equation*}
according to the rules established above. For the symmetric
counterpart $\omega^s = \der_1 f
\der_2 g + \der_2 f \der_1 g$ we have
\begin{equation*}
\der_1 \omega^s =   - \der_1 f
\der g + \der f \der_1 g
,\qquad
\der \omega^s = -\der f \der g - \der f \der g = - 2 \der f \der g  
\end{equation*}

\subsection{Splitting and other operations}

Each of the vector spaces $\CCC_{k,m}(V)$ is naturally isomorphic  to
either $\CC_k(\CC_m(V))$ or to $\CC_m(\CC_k(V))$: consider each
$k,m$-biincrement either as a $k$-increment in the first direction
with values in $m$-increments in the other direction or vice-versa. The
multiplication in $\CCC_{*,*}$ is compatible with these isomorphism.

In what follows it will be useful to introduce a one-dimensional
\emph{splitting} map $S$  which sends
products $ab \in \CC_2(V)$ for $a \in \CC_1(V)$ and $b \in \CC_1(V)$ to the elementary
tensor $S(ab) = a\otimes b \in \CC_1(V) \stackrel{V}{\otimes}
\CC_1(V)$ where the tensor product is over the algebra $V$. 
The map is the extended by linearity to
the subspace $\mathscr{M}_2$ of $\CC_2$ generated by the linear
combinations of products of two elements of $\CC_1$.
Elements
of $\CC_1(V) \stackrel{V}{\otimes}
\CC_1(V)$ are just functions $(t,u,v,s) \mapsto c_{tuvs}$ of four
arguments which are 1-increments in the couple $(t,u)$ and in the
couple $(v,s)$ but which may be non-zero for $u=v$. The multiplication
map $\mu: \CC_1(V) \stackrel{V}{\otimes}
\CC_1(V) \to \CC_2(V)$ just sends each $c$ to the 2-increment $(t,u,s)
\mapsto a_{tuus}$ and is the inverse of $S$: $\mu \circ S (a) = a$ for
any $a \in \mathscr{M}_2$.
 
 We will denote
$S_1: \CCC_{2,k}(V) \to \CC_{1}(\CC_k(V))
\stackrel{\CC_k(V)}{\otimes_1} \CC_{1}(\CC_k(V))$ and $S_2:
\CCC_{k,2}(V) \to \CC_{1}(\CC_k(V)) \stackrel{\CC_k(V)}{\otimes_2}
\CC_{1}(\CC_k(V))$
the splitting maps according to the first or the second
direction. These are understood according to the above isomorphism
$\CC_{k,m}(V)  \simeq \CC_{k}(\CC_m(V))  \simeq \CC_{m}(\CC_k(V))$ and
the index $1,2$ on the tensor product remember in which of the two
directions the splitting has taken place.

\subsection{Abstract integration in $\CCC_*$}

From now on we assume that $V$ is a Banach space with norm
$|\cdot|$. When they appears tensor product will be understood
according to the projective topology.

Let us introduce the following norms, for any $g \in \CCC_2(V)$
\begin{equation}\label{eq:Holder space}
 \|g\|_{z_1,z_2} \coloneq \sup_{s,t \in \ott^2} \frac{|g_{(s_1,t_1)(s_2,t_2)}|}{|s_1-t_1|^{z_1} |s_2-t_2|^{z_2}} 
\end{equation}
 and for $h \in \CCC_3(V)$
\begin{equation*}
 \|h\|_{\gamma_1,\gamma_2,\rho_1,\rho_2} \coloneq \sup_{s,u,t \in \ott^2}
 \frac{|h_{(s_1,u_1,t_1)(s_2,u_2,t_2)}|}{\prod_{i=1,2} |s_i-u_i|^{\gamma_i} |u_i-t_i|^{\rho_i}} 
\end{equation*}
and
\begin{equation*}
\|h\|_{z_1,z_2} \coloneq \inf \left\{\sum_i
\|h_i\|_{\gamma_{1,i},\gamma_{2,i},z_1-\gamma_{1,i},z_2-\gamma_{2,i}}
\;\Bigg|\; h = \sum_i h_i, \; \gamma_{j,i} \in (0,z_i), \; j=1,2  \right\}  
\end{equation*}
and the corresponding subspaces $\CCC^{z_1,z_2}_2(V)$,
$\CCC^{\gamma_1,\gamma_2,\rho_1,\rho_2}_3(V)$ and
$\CCC^{z_1,z_2}_3(V)$. Moreover we say that $f\in\CCC_1^{\rho_1,\rho_2}$ if
\begin{equation}\label{eq:norm-def}
\mathscr N_{\rho_1,\rho_2}(f)=||\der f||_{\rho_1,\rho_2}+||\der_1f||_{\rho_1,0}+||\der_2f||_{0,\rho_2}+||f||_{\infty}<+\infty
\end{equation}
with 
$$
||\der_1f||_{\rho_1,0}=\sup_{(s_1,s_2,t_1)\in[0,T]^3}\frac{|\der_1f_{s_1s_2t_1}|}{|s_2-s_1|^{\rho_1}}
$$
and similar definition in the second direction.
The main feature of the space $\CCC^{z_1,z_2}_2$ is that
$\CCC^{z_1,z_2}_2  \cap \ker \der_1 = \{0 \}$ if $z_1 > 1$ and
$\CCC^{z_1,z_2}_2  \cap \ker \der_2 = \{0 \}$ if $z_2 > 1$.  This
implies that the equation $\der a = 0$ has only a trivial solution $a
= 0$ if we require $a \in \CCC^{z_1,z_2}_1$ with $z_1,z_2 > 1$.

Let $\CCC^{1+}_i(V) = \cup_{z_1>1,z_2>1} \CCC^{z_1,z_2}_i(V)$, $i=1,2$.

Note that we have the isomorphism $\CCC_{a,b}^{z_1,z_2}(V) \simeq
\CC_a^{z_1}(\CC_b^{z_2}(V))  \simeq \CC_b^{z_2}(\CC_a^{z_1}(V))$ for
$a,b = 0,1,2$ and $z_1,z_2 \ge 0$.

Before stating the main result of this section we introduce two
versions of the one-dimensional $\Lambda$ map of
Prop.~\ref{prop:Lambda}, acting on the two different coordinates.  

\begin{lemma}
$\label{lem2}$  $\Lambda_1 : \cB_2 \CCC^{w_1,z_2}_{3,a}(V) \to
\CCC^{w_1,z_2}_{2,a}(V)$ for $a=1,2,3$ with $w_1>1$ such that $\der_1 \Lambda_1 = 1$ and
\begin{equation}
  \label{eq:bound-lambda1}
\|\Lambda_1 h\|_{z_1,w_2} \le C_{z_1} \|h\|_{z_1,w_2}  
\end{equation}
and an analogous bound for $\Lambda_2$.
\end{lemma}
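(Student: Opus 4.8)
The plan is to build $\Lambda_1$ by applying the one-dimensional $\Lambda$-map of Proposition~\ref{prop:Lambda} ``fiberwise'' in the first coordinate, exploiting the isomorphism $\CCC_{k,a}^{z_1,z_2}(V) \simeq \CC_k^{z_1}\bigl(\CC_a^{z_2}(V)\bigr)$ recorded just above the statement. Concretely, given $h \in \cB_2\CCC^{w_1,z_2}_{3,a}(V)$ with $w_1>1$, I view $h$ as an element of $\CC_3^{w_1}(W)$ where $W = \CC_a^{z_2}(V)$ is itself a Banach space (with the norm $\norm{\cdot}_{z_2}$ on $a$-increments). The hypothesis $h \in \cB_2$, i.e.\ $h \in \img\der_1$, combined with $\der_1\der_1=0$, is exactly the statement that $h$ is a $\der_1$-cocycle, i.e.\ $h \in \cZ\CC_3^{w_1}(W)$ in the fiber picture. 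Hence Proposition~\ref{prop:Lambda}, applied over the Banach space $W$, produces a unique $\Lambda_1 h \in \CC_2^{w_1}(W) = \CCC_{2,a}^{w_1,z_2}(V)$ with $\der_1 \Lambda_1 h = h$ and $\norm{\Lambda_1 h}_{w_1} \le \frac{1}{2^{w_1}-2}\norm{h}_{w_1}$ — which unwinds to the bound $\norm{\Lambda_1 h}_{z_1,w_2} \le C_{z_1}\norm{h}_{z_1,w_2}$ in the notation of the statement (after renaming the free exponents appropriately, the relevant directional exponent being the one that is $>1$).

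The key steps, in order, are: (i) fix the fiber Banach space $W = \CC_a^{z_2}(V)$ and check that $\CCC_{3,a}^{w_1,z_2}(V) \simeq \CC_3^{w_1}(W)$ isometrically, so that the two H\"older norms match up; (ii) verify that membership in $\cB_2\CCC_{3,a}$ translates, under this identification, into membership in $\cZ\CC_3(W)$ — here one uses that $\der_1$ acts only on the first set of arguments and commutes with everything happening in the fiber, so $\img\der_1 \subseteq \ker\der_1$ is just the acyclicity of the one-dimensional complex from eq.~\eqref{eq:exact_sequence}; (iii) invoke Proposition~\ref{prop:Lambda} over $W$ to get $\Lambda_1 := \Lambda_W$ with $\der_1\Lambda_1 = 1$ and the quantitative bound; (iv) transport the continuity estimate \eqref{ineqla} back through the isomorphism to obtain \eqref{eq:bound-lambda1}; (v) note that $\Lambda_1$ so defined automatically respects the finer index structure (it maps $\CCC_{3,a} \to \CCC_{2,a}$ for each $a=1,2,3$ because the fiber index $a$ is untouched), and that uniqueness is inherited from the uniqueness in Proposition~\ref{prop:Lambda}. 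The construction of $\Lambda_2$ is identical with the roles of the two coordinates exchanged, using instead $\CCC_{a,k}^{z_1,w_2}(V) \simeq \CC_k^{w_2}\bigl(\CC_a^{z_1}(V)\bigr)$.

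The main (and only real) obstacle is a bookkeeping one: making sure the directional H\"older norms $\norm{\cdot}_{z_1,z_2}$ and the decomposition-infimum norm on $\CCC_3$ genuinely correspond, under the tensor/fiber isomorphism, to the one-parameter norms $\norm{\cdot}_\mu$ and $\norm{\cdot}_{\gamma,\rho}$ used in Proposition~\ref{prop:Lambda} applied over $W$. One must be a little careful that the infimum defining $\norm{h}_{z_1,z_2}$ on three-biincrements, when restricted to splitting the exponent only in the first direction, reproduces exactly the one-dimensional infimum norm with Banach target $W$ — but since the second-direction exponent $z_2$ is held fixed throughout, this is immediate. Everything else — linearity, the identity $\der_1\Lambda_1=1$, uniqueness, and the bound with constant $C_{z_1} = \tfrac{1}{2^{w_1}-2}$ — is a direct transcription of Proposition~\ref{prop:Lambda}, so no new analysis is needed.
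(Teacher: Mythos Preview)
Your proposal is correct and follows essentially the same approach as the paper: reduce to the one-dimensional $\Lambda$-map of Proposition~\ref{prop:Lambda} in the first direction while treating the second direction as a parameter. The only cosmetic difference is that you package this via the isomorphism $\CCC_{3,a}^{w_1,z_2}(V)\simeq\CC_3^{w_1}(W)$ with $W=\CC_a^{z_2}(V)$ and apply Proposition~\ref{prop:Lambda} once over the Banach space $W$, whereas the paper freezes the second-direction arguments $(s_2,u_2,t_2)$ pointwise, applies the one-dimensional $\Lambda$ with values in $V$ for each such tuple, and then reads off the bi-H\"older bound; the content is identical.
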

\begin{proof}
If we fix $s_2,u_2,t_2 \in \ott$ we can consider $h^{s_2 u_2 t_2} \in \CC_2(V)$ such that
$$
(s_1,u_1,t_1) \mapsto h^{s_2,u_2,t_2}_{s_1,u_1,t_1} = h_{(s_1,u_1,t_1)}^{(s_2,u_2,t_2)}
$$
and note that $h^{s_2u_2t_2} \in \cB \CC^{z_1}_2(V)$ since $\der h^{s_1 u_1
  t_1} = 0$ so that it is in the
range of the one-dimensional $\Lambda$ of Prop.~\ref{prop:Lambda}  and
\begin{equation}
\label{eq:bound-lambda1-proof}
  |(\Lambda h^{s_2,u_2,t_2})_{s_1 t_1}| \le C_{z_1}
   \|h^{s_2,u_2,t_2}\|_{z_1} |s_1-t_1|^{z_1}.
\end{equation}
Then define $\Lambda_1$ as
$$
(\Lambda_1 h)_{(t_1,s_1),(t_2,u_2,s_2)} \coloneq (\Lambda h^{s_2,u_2,t_2})_{s_1 t_1} 
$$
and note that the bound~(\ref{eq:bound-lambda1-proof}) implies
eq.~(\ref{eq:bound-lambda1}).
Proceeding similarly one can prove a similar statement about
$\Lambda_2$.
\end{proof}

Then it holds the analogous of the one-dimensional result:
\begin{proposition}
There exists a unique map $\Lambda : \cB \CCC^{1+}_2(V) \to
\CCC^{1+}_1(V)$ such that $\der \Lambda = 1$. Moreover
if $z_1,z_2 > 1$, $h \in \cB \CCC_2^{z_1,z_2}(V)$
then $$
\|\Lambda h\|_{z_1,z_2} \le \frac{1}{2^{z_1}-2} \frac{1}{2^{z_2}-2} \|h\|_{z_1,z_2}.
$$  
\end{proposition}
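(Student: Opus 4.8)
The plan is to construct $\Lambda$ by composing the two one‑dimensional partial maps $\Lambda_1$ and $\Lambda_2$ supplied by Lemma~\ref{lem2}, using the cohomological decomposition~\eqref{eq:decomposition-of-bicocycles} to organise the bookkeeping, and then to prove uniqueness separately using the fact that $\CCC^{z_1,z_2}_2\cap\ker\der_1=\{0\}$ for $z_1>1$.

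\textbf{Existence.} Let $h\in\cB\CCC^{z_1,z_2}_2(V)$ with $z_1,z_2>1$, so $h=\der g$ for some $g\in\CCC_1(V)$ and in particular $\der h=0$. The idea is to kill the two coboundaries $\der_1,\der_2$ one direction at a time. First note that $\der_1(\der_2 h)=\der h=0$, so $\der_2 h\in\cB_1\CCC_{3,\ast}$ lies in the domain of $\Lambda_1$ from Lemma~\ref{lem2}; set $g_1=\Lambda_1\der_2 h\in\CCC^{z_1,z_2}_{2,3}(V)$, which satisfies $\der_1 g_1=\der_2 h$ and $\|g_1\|_{z_1,z_2}\le\frac{1}{2^{z_1}-2}\|h\|_{z_1,z_2}$ (the constant in Lemma~\ref{lem2} is exactly $1/(2^{z_1}-2)$ by Prop.~\ref{prop:Lambda}). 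Now I would examine $h-\der_2 g_1'$ for an appropriate partial antiderivative, but cleaner is: apply $\Lambda_2$ to a suitable element so that the result $k:=\Lambda_2 g_1$ (after checking $g_1$ lands in $\ker\der_2\cap\cB$ — which follows because $\der_2 g_1$ can be computed from $\der_2\der_1 g_1=\der h=0$) satisfies $\der k=\der_2\der_1 k=\der_2 g_1=h$ once one verifies the intermediate identities. Concretely I would set $\Lambda h:=\Lambda_2\Lambda_1\der_2 h$ (equivalently $\Lambda_1\Lambda_2\der_1 h$, and one should check these agree, which holds because $\Lambda_1$ and $\Lambda_2$ act on independent coordinates), check $\der\Lambda h=\der_1\der_2\Lambda_2\Lambda_1\der_2 h$ and unwind using $\der_1\Lambda_1=1$, $\der_2\Lambda_2=1$, the commutation of $\der_1$ with $\Lambda_2$ and of $\der_2$ with $\Lambda_1$, together with $\der_1\der_2 h=0$ (as $h\in\ker\der$ is really the hypothesis $\der g$ gives us — note $\der h=\der\der g=0$). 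The norm estimate is then the product of the two one‑dimensional bounds, giving exactly $\frac{1}{2^{z_1}-2}\frac{1}{2^{z_2}-2}\|h\|_{z_1,z_2}$.

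\textbf{Uniqueness.} Suppose $\Lambda h$ and $\Lambda' h$ both lie in $\CCC^{1+}_1(V)$ and satisfy $\der(\cdot)=h$. Then $a:=\Lambda h-\Lambda' h\in\CCC^{z_1,z_2}_1(V)$ for some $z_1,z_2>1$ and $\der a=0$. As observed in the paragraph preceding this proposition, $\der a=\der_1\der_2 a=0$ together with $\der_2 a\in\CCC^{z_1,z_2}_{2}$ (in the first coordinate with $z_1>1$) forces $\der_2 a=0$ by $\CCC^{z_1,z_2}_2\cap\ker\der_1=\{0\}$; then $\der_2 a=0$ with $z_2>1$ and the analogous statement in the second coordinate forces $a=0$. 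Hence $\Lambda$ is unique.

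\textbf{Main obstacle.} The delicate point is not the norm arithmetic but verifying that at each stage the intermediate biincrement genuinely lands in the domain of the next partial $\Lambda$ — i.e. that it is a partial coboundary with small enough Hölder exponent in the relevant direction — and that the two possible orders of applying $\Lambda_1$ and $\Lambda_2$ produce the same answer. This amounts to a careful chase through the commuting diagram~\eqref{eq:cd} using the homotopy relations $\sigma_i\der_i-\der_i\sigma_i=1$ and the identities $\der_1\Lambda_1=1$, $\der_2\Lambda_2=1$, $[\der_1,\Lambda_2]=[\der_2,\Lambda_1]=0$; once these commutations are pinned down the rest is routine.
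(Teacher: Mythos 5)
Your overall plan --- compose the partial maps $\Lambda_1,\Lambda_2$ from Lemma~\ref{lem2} and invoke cohomological triviality of highly regular elements for uniqueness --- is the right one, and your uniqueness sketch is in the right spirit. The existence construction, however, is wrong, and the error is precisely an observation you failed to make. You correctly note $\der h=\der\der g=0$ and conclude $\der_2 h\in\ker\der_1$, and then propose $\Lambda h:=\Lambda_2\Lambda_1\der_2h$. But for $h=\der g=\der_1\der_2g$ one has not only $\der h=0$ but already $\der_2h=\der_2\der_1\der_2g=\der_1\der_2^2g=0$ (and likewise $\der_1h=0$). So your input $\der_2h$ is identically zero and your candidate $\Lambda h$ vanishes identically --- certainly not a right inverse of $\der$. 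The same defect shows up formally: unwinding $\der\Lambda h=\der_1\der_2\Lambda_2\Lambda_1\der_2h$ using $\der_1\Lambda_1=1$, $\der_2\Lambda_2=1$ and the commutation of $\der_2$ with $\Lambda_1$ gives $\der_2\Lambda_2\der_2h=\der_2h$, not $h$. The extra $\der_2$ also raises the number of arguments in direction $2$ by one while $\Lambda_1$ and $\Lambda_2$ each lower one index by one, so $\Lambda_2\Lambda_1\der_2 h$ lands one slot too high in direction $2$ relative to where $\Lambda h$ must live.

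The correct move is simply to drop the extra $\der_2$. Since $h\in\img\der$ already satisfies $\der_1h=\der_2h=0$, $h$ itself lies in the domain of $\Lambda_2$; set $\Lambda h:=\Lambda_1\Lambda_2h$. Then $\der_1\Lambda_2h=\Lambda_2\der_1h=0$ by linearity and commutation puts $\Lambda_2h$ in the domain of $\Lambda_1$, and $\der\Lambda h=\der_2\der_1\Lambda_1\Lambda_2h=\der_2\Lambda_2h=h$ using $\der_1\Lambda_1=1$ and $\der_2\Lambda_1=\Lambda_1\der_2$. With this replacement your norm estimate (the product $\frac{1}{2^{z_1}-2}\frac{1}{2^{z_2}-2}$ of the two one-dimensional constants) and your uniqueness argument go through essentially as you wrote them, and one deduces a posteriori that $\Lambda_1\Lambda_2=\Lambda_2\Lambda_1$ on the common domain.
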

\begin{proof} 
Since $h \in \text{Img} \der$ we have $\der_1 h = \der_2 h = 0$. Then
let $\Lambda h = \Lambda_1 \Lambda_2 h$ which is meaningful since the
$\der_1 \Lambda_2 h=\Lambda_2 \der_1 h=0$ (by linearity) and the
requirement on the regularity is satisfied.
Then
$$
\der \Lambda h = \der_2 \der_1 \Lambda_1 \Lambda_2 h = \der_2
\Lambda_2 h = h
$$
and
$$
\|\Lambda h\|_{z_1,z_2} = \|\Lambda_1 \Lambda_2 h\|_{z_1,z_2} \le
C_{z_1} \|\Lambda_2 h\|_{z_1,z_2} \le C_{z_1} C_{z_2} \|h\|_{z_1,z_2}.
$$
Uniqueness depends on the fact that $z_1,z_2 > 1$. Using the
uniqueness it is easy to deduce that $\Lambda = \Lambda_2 \Lambda_1$,
i.e. the one-dimensional maps commute (when they can be both applied).
\end{proof}

We can already state an interesting result about integration of
``small'' biincrements.

\begin{corollary}[2d integration]
\label{cor:2d-integration}
Let $a \in \CCC_{2,2}(V)$ such that $\der_1 a \in \CCC_{3,2}^{z_1,*}$,
$\der_2 a \in \CCC_{2,3}^{*,z_2}$, $\der a \in \CCC_{2,2}^{z_1,z_2}$
with $z_1,z_1 > 1$. There exists $f \in \CCC_{1,1}(V)$ such that 
$$
\der f = (1-\Lambda_1 \der_1) (1-\Lambda_2 \der_2) a
$$  
and
$$
\lim_{|\Pi|\to 0} \sum_{i,j}
a_{(t^1_{i+1},t^1_{i}){(t^2_{j+1},t^2_{j})}} = (\der f)_{(t^1,s^1),(t^2,s^2)}
$$
where the limit is taken over partitions $\Pi = \{ (t^1_i, t^2_j)_{i,j} \}$  of the square
$[t^1,t^2]\times [s^1,s^2]$ into boxes whose maximum size $|\Pi|$ goes
to zero.
\end{corollary}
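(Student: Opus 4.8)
The plan is to reproduce the scheme of the proof of Corollary~\ref{cor:integration}, but with the two one--dimensional $\Lambda$--maps of Lemma~\ref{lem2} applied successively, one per direction. Set $c:=(1-\Lambda_2\der_2)a$ and
\[
b:=(1-\Lambda_1\der_1)c=(1-\Lambda_1\der_1)(1-\Lambda_2\der_2)a\in\CCC_{2,2}(V).
\]
First one checks that $b$ is well defined. The inner map applies because $\der_2 a\in\CCC_{2,3}^{*,z_2}$ with $z_2>1$ by hypothesis and $\der_2(\der_2 a)=0$ (recall $\der_2\der_2=0$), so $\der_2 a$ is in the domain of $\Lambda_2$; and, since $\der_1$ commutes with $\Lambda_2$,
\[
\der_1 c=\der_1 a-\der_1\Lambda_2\der_2 a=\der_1 a-\Lambda_2\der a
\]
lies in $\CCC_{3,2}^{z_1,*}$ with $z_1>1$ (using the hypotheses on $\der_1 a$ and $\der a$ and the bound of Lemma~\ref{lem2}) and satisfies $\der_1(\der_1 c)=0$, so it is in the domain of $\Lambda_1$.

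Next one establishes the representation $b=\der f$. From $\der_1\Lambda_1=1$ on the range of $\der_1$ one gets $\der_1 b=\der_1 c-\der_1\Lambda_1(\der_1 c)=0$; and since $\der_2$ commutes with $\der_1$ and $\Lambda_1$ while $\der_2\Lambda_2=1$ on the range of $\der_2$,
\[
\der_2 b=(1-\Lambda_1\der_1)\bigl(\der_2 a-\der_2\Lambda_2\der_2 a\bigr)=(1-\Lambda_1\der_1)\cdot 0=0 .
\]
In particular $\der b=\der_1\der_2 b=0$, so $b\in\cZ\CCC_2(V)$, and feeding the relations $\der_1 b=\der_2 b=0$ into the decomposition~\eqref{eq:decomposition-of-bicocycles} (equivalently: apply $\sigma_2$ and use $\sigma_2\der_2-\der_2\sigma_2=1$ to write $b=-\der_2\sigma_2 b$; then $\der_1\sigma_2 b=\sigma_2\der_1 b=0$ and the homotopy in the first direction give $\sigma_2 b=-\der_1\sigma_1\sigma_2 b$, whence $b=\der(\sigma_1\sigma_2 b)$) yields $b=\der f$ with $f:=\sigma_1\sigma_2 b\in\CCC_{1,1}(V)$. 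This is the first assertion.

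For the Riemann--sum statement, write $a-b=\Lambda_2\der_2 a+\Lambda_1\der_1 c$. By the bounds of Lemma~\ref{lem2}, the term $\Lambda_2\der_2 a$ has H\"older exponent $z_2>1$ in the second direction and the term $\Lambda_1\der_1 c$ has H\"older exponent $z_1>1$ in the first direction, each with exponent at least $1$ in the complementary direction (this is where the ``$*$'' regularities enter). Hence, for any grid partition $\Pi$ of the box,
\[
\Bigl|\sum_{i,j}(a-b)_{(t^1_{i+1},t^1_i)(t^2_{j+1},t^2_j)}\Bigr|\le C\,|\Pi|^{\min(z_1,z_2)-1}\longrightarrow 0\qquad\text{as }|\Pi|\to 0,
\]
because a one--dimensional sum $\sum_k|\Delta_k|^{z}$ with $z>1$ is $O(|\Pi|^{z-1})$ while the complementary sum $\sum_k|\Delta_k|^{z'}$ with $z'\ge 1$ stays bounded by a power of the side length. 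On the other hand $\der$ telescopes exactly over the grid, so $\sum_{i,j}(\der f)_{(t^1_{i+1},t^1_i)(t^2_{j+1},t^2_j)}=(\der f)_{(t^1,s^1)(t^2,s^2)}$ for every $\Pi$. Adding the two contributions gives the stated convergence of $\sum_{i,j}a_{(t^1_{i+1},t^1_i)(t^2_{j+1},t^2_j)}$ to $(\der f)_{(t^1,s^1)(t^2,s^2)}$.

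The algebraic part (well--definedness of $b$, the identities $\der_i b=0$, the representation $b=\der f$) is a purely formal consequence of the commutation relations among $\der_1,\der_2,\Lambda_1,\Lambda_2,\sigma_1,\sigma_2$ and of Lemma~\ref{lem2}, and the telescoping is immediate; the one genuinely delicate point I expect is the last step, where one must control the double Riemann sums of $a-b$ by keeping track of the H\"older exponents in \emph{both} directions at once and, in particular, make sure that the exponent of each remainder in the direction complementary to its ``good'' one is at least $1$, so that the corresponding one--dimensional sum stays bounded — this is precisely where the assumptions $z_1,z_2>1$ together with the regularity encoded by the stars are used essentially.
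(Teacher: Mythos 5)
The first half of your argument (well-definedness of $b$, the identities $\der_1 b=\der_2 b=0$, and the conclusion $b=\der f$ via the homotopy operators $\sigma_i$) is correct and matches the paper's reasoning.

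The Riemann-sum half has a genuine gap. You decompose $a-b=\Lambda_2\der_2 a+\Lambda_1\der_1 c$ and then estimate the double sum by claiming each remainder "has exponent at least $1$ in the complementary direction (this is where the `$*$' regularities enter)." Nothing in the hypotheses supports this: the hypothesis $\der_2 a\in\CCC_{2,3}^{*,z_2}$ only asserts that the first-direction exponent is \emph{some} (unspecified) positive number, and in the applications (e.g.\ the proof of Proposition~\ref{mixed-Young-2D}) it is routinely strictly less than $1$. Concretely, $\Lambda_2\der_2 a$ satisfies $\der_1\Lambda_2\der_2 a=\Lambda_2\der a\neq 0$, so it is not exact in direction~$1$, and its first-direction H\"older exponent is only the $*$ from the hypothesis; for a uniform partition of $[t^1,s^1]$ into $n$ pieces the sum $\sum_i|\Delta_i|^{*}$ scales like $n^{1-*}$, which diverges when $*<1$. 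Your proposed bound $|\Pi|^{\min(z_1,z_2)-1}$ therefore does not follow.

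The paper avoids this by choosing a different grouping: write
$a=\der f+r_1+r_2+r$ with $r_1=(1-\Lambda_1\der_1)\Lambda_2\der_2 a$, $r_2=(1-\Lambda_2\der_2)\Lambda_1\der_1 a$ and $r=\Lambda\der a$. (Your two pieces equal $r_1+r$ and $r_2$ respectively; the paper keeps $r_1$ and $r$ separate.) The crucial structural facts are $\der_1 r_1=0$ and $\der_2 r_2=0$: each $r_a$ is \emph{exact} in the direction it is rough in, so that the Riemann sum telescopes in that direction to a single increment over the whole side, eliminating any need for a summability bound there. What remains is a one-dimensional sum in the complementary direction with exponent $z_a>1$, which goes to $0$; and $r=\Lambda\der a$ has exponents $(z_1,z_2)$ with both $>1$, so the genuine double sum over $r$ vanishes as well. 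In short, the role of the $*$ exponent is merely to ensure that $\Lambda_1,\Lambda_2$ are applicable via Lemma~\ref{lem2}, not to make a one-dimensional sum converge; the convergence in the "$*$" direction is obtained by exact telescoping, not by an estimate. Replacing your two-term grouping by the paper's three-term one, and using exactness of $r_1,r_2$ in place of your exponent claim, repairs the proof.
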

\begin{proof}
The required conditions on $a$ ensure that the 1-biincrement 
$$
h = (1-\Lambda_1 \der_1) (1-\Lambda_2 \der_2) a = a -\Lambda_1 \der_1
a - \Lambda_2 \der_2 a + \Lambda \der a
$$  
is well defined. By direct computation we have that
$$
\der_1 h = \der_1 (1-\Lambda_1 \der_1) (1-\Lambda_2 \der_2) a =
(\der_1 - \der_1) (1-\Lambda_2 \der_2) a = 0
$$
and $\der_2 h = 0$. So $h$ must be in the image of $\der$, i.e. there
exists $f$ such that $h = \der f$. This proves the first claim.

To prove the convergence of the sums consider the above
decomposition
$$
a = \der f + \Lambda_2 \der_2 a + \Lambda_1 \der_1 a -\Lambda \der a
$$
written as
$
a = \der f + r_1 + r_2 + r
$
where
$$
r_1 = (1-\Lambda_1\der_1)\Lambda_2 \der_2 a,
\qquad r_2 =  
(1-\Lambda_2\der_2) \Lambda_1 \der_1 a 
\qquad r =  \Lambda \der a.
$$
Note that $r_1 \in \CCC_{2,2}^{*,z_2}$, $\der_1 r_1 = 0$ and $r_2 \in
\CCC_{2,2}^{z_1,*}$ and $\der_2 r_2 = 0$. 

Then let
\begin{equation*}
  \begin{split}
S_{\Pi} & = \sum_{i,j} a_{(t^1_{i+1},t^1_{i}){(t^2_{j+1},t^2_{j})}}  
\\
 & = \sum_{i,j} (\der f)_{(t^1_{i+1},t^1_{i}){(t^2_{j+1},t^2_{j})}} 
+ \sum_{i,j} (r_1)_{(t^1_{i+1},t^1_{i}){(t^2_{j+1},t^2_{j})}} 
\\ & \qquad + \sum_{i,j} (r_2)_{(t^1_{i+1},t^1_{i}){(t^2_{j+1},t^2_{j})}} 
+ \sum_{i,j} (r)_{(t^1_{i+1},t^1_{i}){(t^2_{j+1},t^2_{j})}}  
  \end{split}
\end{equation*}
and note that, using the fact that $\der f$ is an exact biincrement
$$
\sum_{i,j} (\der f)_{(t^1_{i+1},t^1_{i}){(t^2_{j+1},t^2_{j})}} = 
 (\der f)_{(t^1,s^1){(t^2,s^2)}}
$$
and
$$
 \sum_{i,j} (r_1)_{(t^1_{i+1},t^1_{i}){(t^2_{j+1},t^2_{j})}} = \sum_{j} (r_1)_{(t^1,s^1){(t^2_{j+1},t^2_{j})}} 
$$
since $\der_1 r_1 = 0$ (i.e. $r_1$ is an exact increment in the
direction $1$).
In the same way
$$
\sum_{i,j} (r_2)_{(t^1_{i+1},t^1_{i}){(t^2_{j+1},t^2_{j})}}  = 
\sum_{i} (r_2)_{(t^1_{i+1},t^1_{i}){(t^2,s^2)}} 
$$
Then
$$
 \left|\sum_{j} (r_1)_{(t^1,s^1){(t^2_{j+1},t^2_{j})}}\right| \le
 \|r_1\| \sum_j |t^2_{j+1}-t^2_j|^{z_2} \to 0
$$
and
$$
\left|\sum_{i} (r_2)_{(t^1_{i+1},t^1_{i}){(t^2,s^2)}}\right| \le
\|r_2\| \sum_i |t^1_{i+1}-t^1_i|^{z_1} \to 0
$$
and finally
$$
\left|  \sum_{i,j} (r)_{(t^1_{i+1},t^1_{i}){(t^2_{j+1},t^2_{j})}}  \right|
\le \|r\|  \sum_{i,j} |t^1_{i+1}-t^1_i|^{z_1} |t^2_{j+1}-t^2_j|^{z_2} \to 0
$$
as $|\Pi| \to 0$ which proves our claim.
\end{proof}

\section{Two-dimensional Young theory}
\label{sec:two-d-young}
\begin{proposition}
\label{prop:young2d}
Let $f \in \CCC^{\gamma_1,\gamma_2}_{1,1}$, $g \in \CCC^{\rho_1,\rho_2}_{1,1}$
with $\gamma_1 + \rho_1 = z_1 > 1$, $\gamma_2 + \rho_2 = z_2 >
1$. Then $\der f \der g \in \CCC^{z_1,z_2}_{2} \cap \img \der$ and the
integral $\iint f \dd g$ can be defined as
\begin{equation}
  \label{eq:Young2d}
  \begin{split}
  \iint f \dd g  & = (1-\Lambda_1 \der_1)(1-\Lambda_2 \der_2) (f \der
  g)
\\
& = - f \der g + \Lambda (\der f \der g) + \int f \dd_1 \der_2
  g +  \int f \dd_2 \der_1 g     
  \end{split}
\end{equation}
where $\int f \dd_1 \der_2
  g ,  \int f \dd_2 \der_1 g$ are standard Young integrals. 
Moreover we can define also the integral $\iint \dd_1 f \dd_2 g$ as
\begin{equation}
  \label{eq:young2d-mixed}
  \iint \dd_1 f \dd_2 g = (1-\Lambda_1 \der_1) (1-\Lambda_2 \der_2) (\der_1 f
  \der_2 g) .
\end{equation}
and for $\rho_1,\rho_2>1/2$ we can define 
\begin{equation}
\iint f\dd_1g\dd_2g= (1-\Lambda_1 \der_1) (1-\Lambda_2 \der_2)(f\der_1g\der_2g) 
\end{equation}
\end{proposition}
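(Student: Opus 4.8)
The plan is to realise each of the three integrals as an application of the abstract integration result Corollary~\ref{cor:2d-integration}, taking for the biincrement $a\in\CCC_{2,2}$ the choices $a=f\der g$, $a=\der_1 f\,\der_2 g$ and $a=f\,\der_1 g\,\der_2 g$, respectively. In every case the content of the argument is to check the three hypotheses of that corollary, i.e.\ that $\der_1 a$, $\der_2 a$ and $\der a$ lie in the H\"older classes required there with indices $z_1,z_2>1$; once this is done the corollary provides an $F\in\CCC_{1,1}$ with $\der F=(1-\Lambda_1\der_1)(1-\Lambda_2\der_2)a$ and with $\der F$ equal to the limit of the box Riemann sums of $a$, and we simply \emph{define} the integral to be $\der F$. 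That this coincides with the classical double integrals~\eqref{eq:2d-integral} when $f$ and $g$ are smooth follows because in that case $(\der g)_{(s_i,s_{i+1})(t_j,t_{j+1})}=\int_{s_i}^{s_{i+1}}\!\int_{t_j}^{t_{j+1}}\partial_1\partial_2 g$, with the analogous identities for the other two choices of $a$, so the Riemann sums converge (by the usual Riemann-sum convergence for a continuous integrand) to $\iint f\,\dd g$, $\iint\dd_1 f\,\dd_2 g$ and $\iint f\,\dd_1 g\,\dd_2 g$.

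To verify the hypotheses I would use the graded Leibniz rules for $\der_1,\der_2$ collected in Section~\ref{sec:two-dim} together with $\der_i\der_i=0$. For $a=f\der g$ they give $\der_1 a=\der_1 f\,\der g$, $\der_2 a=\der_2 f\,\der g$ and $\der a=\der f\,\der g$ (so in particular $\der f\,\der g=\der(f\der g)\in\img\der$), and the required bounds then follow from submultiplicativity of the H\"older seminorms together with $\gamma_i+\rho_i=z_i>1$, the point being that the two factors occupy distinct ``gaps'' of the multi-increment so that their regularity exponents simply add. The case $a=\der_1 f\,\der_2 g$ is entirely parallel: one finds $\der_1 a=-\der_1 f\,\der g$, $\der_2 a=\der f\,\der_2 g$ and $\der a=-\der f\,\der g$, and again in each direction the two relevant exponents add up to $z_i>1$.

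The integral $\iint f\,\dd_1 g\,\dd_2 g$ is the delicate one, and it is here that the extra hypothesis $\rho_1,\rho_2>1/2$ enters. With $a=f\,\der_1 g\,\der_2 g$, applying $\der_1$ kills the $\der_1\der_1 g$ contribution and leaves $\der_1 a=\der_1 f\,\der_1 g\,\der_2 g\pm f\,\der_1 g\,\der g$; in the second summand the factors $\der_1 g$ and $\der g$ each contribute only the exponent $\rho_1$ in the first variable, across two consecutive gaps, so this term has regularity $2\rho_1$ in the first direction and one needs $2\rho_1>1$ in order to place it in the domain of $\Lambda_1$. Symmetrically $\der_2 a$ contains a term requiring $2\rho_2>1$, while $\der a=\der_1\der_2 a$ produces both of these terms together with $\der f\,\der_1 g\,\der_2 g$, which only needs $\gamma_i+\rho_i>1$. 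This is precisely the two-dimensional shadow of the reason why the classical one-dimensional Young integral $\int g\,\dd g$ is defined only when $g$ has H\"older regularity greater than $1/2$, and the careful enumeration and estimation of all the terms produced by $\der=\der_1\der_2$ acting on the triple product $f\,\der_1 g\,\der_2 g$ is the step I expect to be the main obstacle.

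It remains to derive the second, explicit form of $\iint f\,\dd g$ in~\eqref{eq:Young2d}. Expanding $(1-\Lambda_1\der_1)(1-\Lambda_2\der_2)=1-\Lambda_1\der_1-\Lambda_2\der_2+\Lambda\der$ (using that $\der_1$ and $\Lambda_2$ commute, so $\Lambda_1\der_1\Lambda_2\der_2=\Lambda_1\Lambda_2\der_1\der_2=\Lambda\der$) and inserting the identities $\der_1(f\der g)=\der_1 f\,\der g$, $\der_2(f\der g)=\der_2 f\,\der g$ and $\der(f\der g)=\der f\,\der g$ obtained above, one gets
$$
(1-\Lambda_1\der_1)(1-\Lambda_2\der_2)(f\der g)=f\der g-\Lambda_1(\der_1 f\,\der g)-\Lambda_2(\der_2 f\,\der g)+\Lambda(\der f\,\der g).
$$
Since $\der g=\der_1\der_2 g=\der_2\der_1 g$, the one-dimensional Young formula~\eqref{defyg} applied in each direction identifies $f\der g-\Lambda_2(\der_2 f\,\der g)=\int f\dd_2\der_1 g$ and $f\der g-\Lambda_1(\der_1 f\,\der g)=\int f\dd_1\der_2 g$ as ordinary Young integrals; substituting these back yields $(1-\Lambda_1\der_1)(1-\Lambda_2\der_2)(f\der g)=-f\der g+\Lambda(\der f\,\der g)+\int f\dd_1\der_2 g+\int f\dd_2\der_1 g$, which is the second expression in~\eqref{eq:Young2d}.
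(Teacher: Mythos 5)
The paper leaves Proposition~\ref{prop:young2d} without a written proof, moving straight to Corollary~\ref{cor:young-sums} and the next proposition; but your argument is correct and is exactly the one the paper implicitly intends, namely verifying the regularity hypotheses of Corollary~\ref{cor:2d-integration} for $a=f\der g$, $a=\der_1 f\,\der_2 g$ and $a=f\,\der_1 g\,\der_2 g$ via the Leibniz rules of Sec.~\ref{sec:two-dim}, then expanding $(1-\Lambda_1\der_1)(1-\Lambda_2\der_2)$ and using the one-dimensional Young formula~\eqref{defyg} in each coordinate to recover the second line of~\eqref{eq:Young2d}. Two small inaccuracies worth noting but not affecting the argument: your sign conventions in the Leibniz computations are not quite those the paper would produce (you flag this yourself with the $\pm$), and in the analysis of $\der a$ for $a=f\der_1 g\der_2 g$ the full list of summands is a bit longer than you indicate (it also contains a term $f\,\der g\,\der g$ requiring simultaneously $2\rho_1>1$ and $2\rho_2>1$), though the conclusion about which regularity conditions are needed is unchanged.
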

\begin{corollary}
\label{cor:young-sums}
Under the hypothesis of the previous proposition, the two-dimensional
sums of the increments $f \der g$ converge:
  \begin{equation}
    \label{eq:young-sums}
\lim_{|\Pi_{z,w}|\to 0} \sum_{i,j} (f\der g)_{x_i
  x_{i+1};y_{j} y_{j+1}} = \left(\iint f \dd g\right)_{z,w}.    
  \end{equation}
where the partition $\Pi_{z,w}$ is taken on the square $(z,w)$, $z_i \in
\RR^2$.
\end{corollary}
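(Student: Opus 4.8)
The statement is an immediate corollary of the abstract two-dimensional integration result, Corollary~\ref{cor:2d-integration}, so the plan is to apply the latter to the single $2$-biincrement $a:=f\,\der g\in\CCC_{2,2}(V)$ and then read off the limit from Proposition~\ref{prop:young2d}. Concretely, I would first check the three hypotheses of Corollary~\ref{cor:2d-integration} for $a$: that $\der a\in\CCC^{z_1,z_2}_{2,2}$, $\der_1 a\in\CCC^{z_1,*}_{3,2}$ and $\der_2 a\in\CCC^{*,z_2}_{2,3}$, with $z_1:=\gamma_1+\rho_1>1$ and $z_2:=\gamma_2+\rho_2>1$ (the two strict inequalities being exactly the hypotheses on $f,g$).

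The verification is pure bookkeeping with the product rules of Section~\ref{sec:two-dim}. For $\der a$ one has $\der(f\,\der g)=\der f\,\der g$, which is already shown in the first line of Proposition~\ref{prop:young2d} to lie in $\CCC^{z_1,z_2}_2\cap\img\der$. For $\der_1 a$, write $\der g=\der_1\der_2 g$ with $\der_2 g\in\CCC_{1,2}$ and use $\der_1(u\,\der_1 v)=\der_1 u\,\der_1 v$ (valid for $u,v\in\CCC_{1,*}$) to get $\der_1 a=\der_1 f\,\der g\in\CCC_{3,2}$; evaluating this product and using $\|\der_1 f\|_{\gamma_1,0}<\infty$ (from $f\in\CCC^{\gamma_1,\gamma_2}_{1,1}$, cf.~\eqref{eq:norm-def}) together with $\|\der g\|_{\rho_1,\rho_2}<\infty$ (from $g\in\CCC^{\rho_1,\rho_2}_{1,1}$) produces the pointwise bound $|s_2-s_1|^{\gamma_1}|s_3-s_2|^{\rho_1}|t_2-t_1|^{\rho_2}$; since $\gamma_1+\rho_1=z_1$ this is a single admissible summand for the norm $\|\cdot\|_{z_1,\rho_2}$ on $\CCC_{3,2}$, hence $\der_1 a\in\CCC^{z_1,\rho_2}_{3,2}\subset\CCC^{z_1,*}_{3,2}$. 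The mirror-image computation with $\der g=\der_2\der_1 g$ gives $\der_2 a=\der_2 f\,\der g\in\CCC^{\rho_1,z_2}_{2,3}\subset\CCC^{*,z_2}_{2,3}$.

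Granting the hypotheses, Corollary~\ref{cor:2d-integration} yields $F\in\CCC_{1,1}(V)$ with $\der F=(1-\Lambda_1\der_1)(1-\Lambda_2\der_2)(f\,\der g)$ and $\sum_{i,j}(f\,\der g)_{(t^1_{i+1},t^1_i)(t^2_{j+1},t^2_j)}\to(\der F)_{(t^1,s^1)(t^2,s^2)}$ as $|\Pi|\to0$; and $\der F=\iint f\dd g$ is exactly the defining identity~\eqref{eq:Young2d}, which closes the argument. The one point that is not completely automatic — and the one I would flag as the real (if modest) obstacle — is the reconciliation of the two Riemann sums: the sum in~\eqref{eq:young-sums} samples $f$ at a corner $(x_i,y_j)$ of each box, whereas the sum delivered by Corollary~\ref{cor:2d-integration} samples the biincrement $f\,\der g$ at the opposite corner. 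To see that the two have the same limit I would estimate the corner-exchange error
\[
\sum_{i,j}\big(f(x_i,y_j)-f(x_{i+1},y_{j+1})\big)\,(\der g)(x_i,y_j,x_{i+1},y_{j+1}),
\]
splitting the bracket as $\der_1 f$ plus $\der_2 f$; for each fixed row (resp.\ column) the resulting partial sum is a one-dimensional Young--Riemann sum which, by the $1$d estimates of Section~\ref{sec:one-dim}, is bounded by a constant times $|x_{i+1}-x_i|^{z_1}$ (resp.\ $|y_{j+1}-y_j|^{z_2}$); summing over the remaining index, the total error is $O\big(\sum_i|x_{i+1}-x_i|^{z_1}+\sum_j|y_{j+1}-y_j|^{z_2}\big)$, which vanishes as $|\Pi|\to0$ since $z_1,z_2>1$. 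Hence the sum in~\eqref{eq:young-sums} converges to $\iint f\dd g$.
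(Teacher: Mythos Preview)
Your approach is exactly what the paper intends: Corollary~\ref{cor:young-sums} is stated without proof because it follows immediately from Corollary~\ref{cor:2d-integration} applied to $a=f\,\der g$ together with the defining formula~\eqref{eq:Young2d}, and your verification of the three hypotheses on $\der a,\der_1 a,\der_2 a$ is correct. The corner-exchange concern you raise is a notational artifact --- the argument in the proof of Corollary~\ref{cor:2d-integration} is insensitive to which ordering of the partition points one uses in the Riemann sum --- so that paragraph is unnecessary; note also that your direct termwise estimate there would not yield the claimed $O\big(\sum_i|x_{i+1}-x_i|^{z_1}+\sum_j|y_{j+1}-y_j|^{z_2}\big)$ bound as written, since for instance the cross-factor $\sum_j|y_{j+1}-y_j|^{\rho_2}$ need not stay bounded when $\rho_2<1$ (a correct version would first pass to the one-dimensional Young integral along each row/column and then sum).
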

\begin{proposition}
Under the assumption of the Proposition \eqref{prop:young2d} we have that:
\begin{equation}
\begin{split}
\iint f\dd_1g\dd_2g&= (1-\Lambda_1 \der_1) (1-\Lambda_2 \der_2)(f\der_2g\der_1g)\\&= (1-\Lambda_1 \der_1) (1-\Lambda_2 \der_2)(f\der_1g\bullet\der_2g)
\end{split}
\end{equation}
where $(\der_1g\bullet\der_2g)_{s_1s_2t_1t_2}=\der_1g_{s_1s_2t_1}\der_2g_{s_1t_1t_2}$
\end{proposition}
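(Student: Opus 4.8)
The plan is to show that the operator $P:=(1-\Lambda_1\der_1)(1-\Lambda_2\der_2)$ takes one and the same value on the three $2$-biincrements
\[
A:=f\,\der_1 g\,\der_2 g,\qquad B:=f\,\der_2 g\,\der_1 g,\qquad C:=f\,(\der_1 g\bullet\der_2 g)
\]
of $\CCC_{2,2}(V)$. Since $PA=\iint f\,\dd_1 g\,\dd_2 g$ by Proposition~\ref{prop:young2d}, establishing $PA=PB=PC$ is exactly the assertion, so everything reduces to $P(A-B)=P(A-C)=0$.

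The first thing I would check is that $B$ and $C$, just like $A$, satisfy the hypotheses of Corollary~\ref{cor:2d-integration}: $\der_1(\,\cdot\,)\in\CCC_{3,2}^{z_1,*}$, $\der_2(\,\cdot\,)\in\CCC_{2,3}^{*,z_2}$, $\der(\,\cdot\,)\in\CCC_{3,3}^{z_1,z_2}$ with $z_1,z_2>1$. For $B$ this is Proposition~\ref{prop:young2d} with the two coordinates interchanged. For $C$ one expands $\der_1 C$, $\der_2 C$, $\der C$ with the Leibniz rules for $\der_1,\der_2$ of Section~\ref{sec:two-dim} and estimates using $f\in\CCC_{1,1}^{\gamma_1,\gamma_2}$, $g\in\CCC_{1,1}^{\rho_1,\rho_2}$, the conditions $\gamma_i+\rho_i>1$ and the extra assumption $\rho_i>1/2$. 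Consequently $PB$ and $PC$ are well defined and $\der_1 A,\der_1 B,\der_1 C$ (resp. $\der_2 A,\der_2 B,\der_2 C$) all lie in the domain of $\Lambda_1$ (resp. $\Lambda_2$).

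Next I would compute the differences algebraically. With $\der g=\der_1\der_2 g$, which satisfies $\der_1(\der g)=\der_2(\der g)=0$, a direct manipulation of the product rules gives
\[
A-C=f\,(\der_1 g\circ_1\der g),\qquad B-C=f\,(\der_2 g\circ_2\der g),
\]
where $(\der_1 g\circ_1\der g)_{(s_1,s_2)(t_1,t_2)}=(\der_1 g)_{s_1 s_2 t_1}(\der g)_{s_1 s_2 t_1 t_2}$, and symmetrically for $\circ_2$. Each difference is thus a \emph{product of two increments of $g$}: using $|(\der_1 g)_{s_1 s_2 t_1}|\le\|\der_1 g\|_{\rho_1,0}\,|s_2-s_1|^{\rho_1}$ together with $|(\der g)_{s_1 s_2 t_1 t_2}|\le\|\der g\|_{\rho_1,\rho_2}\,|s_2-s_1|^{\rho_1}|t_2-t_1|^{\rho_2}$ one gets $A-C\in\CCC_{2,2}^{2\rho_1,\rho_2}$ and, symmetrically, $B-C\in\CCC_{2,2}^{\rho_1,2\rho_2}$; the decisive gain is $2\rho_1>1$ and $2\rho_2>1$.

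Finally I would show $P(A-C)=0$ by a partial sewing: set $X:=(1-\Lambda_2\der_2)(A-C)$, legitimate by the first step, so that $\der_2 X=\der_2(A-C)-\der_2\Lambda_2\der_2(A-C)=0$. Since $\Lambda_2$ acts only on the second coordinate it does not spoil the first-coordinate regularity, so $X\in\CCC_{2,2}^{2\rho_1,*}$ with $2\rho_1>1$, and (again by the first step, together with $\der_1\Lambda_2=\Lambda_2\der_1$) $\der_1 X$ has first-coordinate H\"older exponent $>1$. Regarding the first coordinate as a one-dimensional variable with values in the Banach space of increments in the second coordinate, $X$ is then a one-dimensional increment lying in $\CC_2^{1+}$ whose coboundary is in the domain of $\Lambda$; Corollary~\ref{cor:lambda-inversion} applied in that coordinate forces $(1-\Lambda_1\der_1)X=0$, i.e. $P(A-C)=0$ and $PA=PC$. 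The same argument with the roles of the coordinates exchanged — now using $B-C\in\CCC_{2,2}^{\rho_1,2\rho_2}$ with $2\rho_2>1$ and that $\Lambda_1,\Lambda_2$ commute, so $P=(1-\Lambda_2\der_2)(1-\Lambda_1\der_1)$ — gives $P(B-C)=0$, hence $PB=PC$; with $PA=\iint f\,\dd_1 g\,\dd_2 g$ this is the statement. The step I expect to be the real work is the first one: checking the Corollary~\ref{cor:2d-integration} regularity of $C=f\,(\der_1 g\bullet\der_2 g)$, and, implicitly, verifying that the partially sewn biincrement $X=(1-\Lambda_2\der_2)(A-C)$ retains the first-coordinate exponent $2\rho_1>1$ — that is, that the genuinely second-order character of $A-C$ survives application of $\Lambda_2$. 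These are routine manipulations with the $\der_1,\der_2$ Leibniz rules and the H\"older estimates, but they are where the hypotheses $\rho_i>1/2$ and $\gamma_i+\rho_i>1$ are actually used.
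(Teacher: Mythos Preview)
Your plan is correct and the key algebraic identities $A-C=f\,(\der_1 g\circ_1\der g)\in\CCC_{2,2}^{2\rho_1,\rho_2}$ and $B-C=f\,(\der_2 g\circ_2\der g)\in\CCC_{2,2}^{\rho_1,2\rho_2}$ are exactly right; the argument that $X=(1-\Lambda_2\der_2)(A-C)$ retains first-coordinate exponent $2\rho_1$ and hence is killed by $(1-\Lambda_1\der_1)$ via Corollary~\ref{cor:lambda-inversion} goes through once you check (as you anticipate) that $\der_2(A-C)$ and $\der(A-C)$ also carry the $2\rho_1$ first-coordinate regularity, which is a straightforward Leibniz computation.

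Your route is, however, genuinely different from the paper's. The paper never invokes Corollary~\ref{cor:lambda-inversion} directly on $A-C$; instead it uses Corollary~\ref{cor:2d-integration} to represent each of $PA$, $PB$, $PC$ as a limit of two-parameter Riemann sums, and then compares the sums: for instance $\sum_{i,j}(A-C)_{s_i s_{i+1} t_j t_{j+1}}=\sum_{i,j} f_{s_i t_j}\,\der_1 g_{s_i s_{i+1} t_j}\,\der g_{s_i s_{i+1} t_j t_{j+1}}$, then takes the $|\Pi_2|\to 0$ limit first --- recognising the inner sum as a one-dimensional Young integral $\int_{t_1}^{t_2} f_{s_i t}\,\der_1 g_{s_i s_{i+1} t}\,\dd_t\der_1 g_{s_i s_{i+1} t}$ of size $\lesssim (s_{i+1}-s_i)^{2\rho_1}$ --- and finally lets $|\Pi_1|\to 0$ using $2\rho_1>1$. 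Both arguments exploit the same extra factor of $|s_2-s_1|^{\rho_1}$ hidden in $A-C$, but the paper cashes it out through iterated Riemann sums and one-dimensional Young theory, while you cash it out operator-theoretically through $\Lambda_2$-invariance of first-coordinate regularity and the one-dimensional inversion corollary. Your approach is arguably cleaner and closer to the spirit of the $\Lambda$-calculus; the paper's approach has the advantage of making the link with Riemann sums explicit, which is what the subsequent Proposition~\ref{mixed-Young-2D} needs anyway.
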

\begin{proof}
Let $a=f\der_2g\bullet\der_1g$. By a simple computation we have that $\der_1a\in\CCC_{3,2}^{\gamma_1+\rho_1,*}$ , $\der_2a\in\CCC_{2,3}^{*,\gamma_2,\rho_2}$  and $\der a\in\CCC_{3,3}^{\gamma_1+\rho_1,\gamma_2+\rho_2}$ then the Corollary~\eqref{cor:2d-integration} gives 
\begin{equation}
  \begin{split}
  (1-\Lambda_1 \der_1) (1-\Lambda_2 \der_2)(f\der_2g\bullet\der_1g)_{s_1s_2t_1t_2}&=\lim_{|\Pi|\to0}\sum_{\Pi}f_{s_it_j}\der_2g_{s_it_jt_{j+1}}\der_1g_{s_is_{i+1}t_j} 
\\&= \lim_{|\Pi|\to0}\sum_{\Pi}f_{s_it_j}\der_1g_{s_is_{i+1}t_j}\der_2g_{s_{i+1}t_jt_{j+1}}-f_{s_it_j}\der_1g_{s_is_{i+1}t_j}\der g_{s_is_{i+1}t_jt_{j+1}}
 \end{split}
\end{equation}
where $\Pi :=\{(s_i,t_j)\}_{i,j}$ is a partition of $[s_1,s_2]\times[t_1,t_2]$ and we denote also by $\Pi_1=(s_i)_i$ and $\Pi_2=(t_j)_j$ a respectively partition of $[s_1,s_2]$ and $[t_1,t_2]$. Once again by the Corollary ~\eqref{cor:2d-integration} we have that 
$$
\lim_{|\Pi|\to0}\sum_{\Pi}f_{s_it_j}\der_1g_{s_is_{i+1}t_j}\der_2g_{s_{i+1}t_jt_{j+1}}=(\iint f\dd_1g\dd_2g)_{s_1s_2t_1t_2}
$$
and then the one dimensional Young theory of integration gives
$$
\lim_{|\Pi_2|\to0} \sum_{\Pi_2}f_{s_it_j}\der_1g_{s_is_{i+1}t_j}\der g_{s_is_{i+1}t_jt_{j+1}}=\int_{t_1}^{t_2}f_{s_it}\der_1g_{s_is_{i+1}t}\dd_t\der_1g_{s_is_{i+1}t}
$$
where $\left|\int_{t_1}^{t_2}f_{s_it}\der_1g_{s_is_{i+1}t}\dd_t\der_1g_{s_is_{i+1}t}\right| \lesssim  (s_{i+1}-s_{i})^{2\rho_1}(t_2-t_1)^{\rho_2}$. 
Finally using the fact that $\rho_1,\rho_2>1/2$ gives:
$$
\lim_{|\Pi_{1}|\to0}\lim_{|\Pi_2|\to0}\sum_{|\Pi|}f_{s_it_j}\der_1g_{s_is_{i+1}t_j}\der g_{s_is_{i+1}t_jt_{j+1}}=0
$$
Putting all these last equation together give us the second line of our proposition, the first part is given by the same argument. 
\end{proof}
\begin{proposition}\label{proposition:Stratonovich-Young-formula}
Let $x\in\CCC^{\alpha,\beta}_{1,1}$ and $\varphi\in C^{4}(\mathbb R)$ then for $\alpha,\beta>1/2$ the following  change of variable formula :
$$
\der\varphi(x)=\iint\varphi'(x)\dd x+\iint\varphi''(x)\dd_1x\dd_2x
$$
hold.
\end{proposition}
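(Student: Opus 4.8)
The identity is the two–parameter version of the one–dimensional Young chain rule, and my plan is to prove it first for smooth sheets, where it is plain calculus, and then to obtain the general case by approximation in a slightly weaker Hölder topology.

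\textbf{Smooth case.} If $x$ is smooth then so is $(s,t)\mapsto\varphi(x_{st})$, and the planar fundamental theorem of calculus together with $\partial_t\varphi(x_{st})=\varphi'(x_{st})\partial_tx_{st}$ give
$$
\der\varphi(x)_{s_1s_2;t_1t_2}=\int_{s_1}^{s_2}\int_{t_1}^{t_2}\partial_s\partial_t\varphi(x_{st})\dd s\dd t=\int_{s_1}^{s_2}\int_{t_1}^{t_2}\bigl(\varphi''(x_{st})\partial_sx_{st}\partial_tx_{st}+\varphi'(x_{st})\partial_s\partial_tx_{st}\bigr)\dd s\dd t ,
$$
which by \eqref{eq:2d-integral} and the definition following it equals $\iint\varphi''(x)\dd_1x\dd_2x+\iint\varphi'(x)\dd x$, read as classical integrals. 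To close the smooth case I would then note that for smooth $x$ these classical integrals coincide with the Young–type integrals of Proposition~\ref{prop:young2d}: by Corollary~\ref{cor:young-sums} and Corollary~\ref{cor:2d-integration} both are the common limit of the two–dimensional Riemann sums $\sum_{i,j}\varphi'(x_{s_it_j})(\der x)_{s_is_{i+1};t_jt_{j+1}}$, respectively $\sum_{i,j}\varphi''(x_{s_it_j})(\der_1x)_{s_is_{i+1};t_j}(\der_2x)_{s_i;t_jt_{j+1}}$.

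\textbf{Passage to the limit.} Fix $\alpha'\in(1/2,\alpha)$ and $\beta'\in(1/2,\beta)$. Mollification and interpolation make smooth sheets dense in $\CCC^{\alpha,\beta}_{1,1}$ for the weaker $\CCC^{\alpha',\beta'}_{1,1}$–topology; given $x^n\to x$ with $x^n$ smooth I would pass to the limit in the smooth–case identity written for each $x^n$. For that one needs the three maps $y\mapsto\der\varphi(y)$, $y\mapsto\iint\varphi'(y)\dd y$, $y\mapsto\iint\varphi''(y)\dd_1y\dd_2y$ to be continuous from $\CCC^{\alpha',\beta'}_{1,1}$ into a fixed $\CCC^{z_1,z_2}_{2}$ with $z_1,z_2>1$. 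This continuity is assembled from three ingredients: the composition estimate, that $y\mapsto\varphi(y),\varphi'(y),\varphi''(y)$ are continuous into $\CCC^{\alpha',\beta'}_{1,1}$ (here $\varphi',\varphi''\in C^2$, i.e.\ $\varphi\in C^4$, is exactly what is used); the continuity of $\der$ and of the bilinear products $(f,g)\mapsto f\der g$, $(f,g)\mapsto f\der_1g\der_2g$, whose coboundaries $\der(f\der g)\sim\der f\,\der g$, etc., land in Hölder classes of exponents $2\alpha',2\beta'>1$; and the boundedness of $(1-\Lambda_1\der_1)(1-\Lambda_2\der_2)$, which comes from the $\Lambda$–bounds of Lemma~\ref{lem2}. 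Letting $n\to\infty$ then yields the formula.

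\textbf{Main obstacle.} The real work is the composition estimate — that $\der\varphi'(y)$, $\der_1\varphi'(y)$, $\der_2\varphi'(y)$, and the analogues for $\varphi''$, depend continuously on $y$ in the relevant Hölder norms — which one proves by a second–order Taylor expansion of $\der\varphi'(y)_{s_1s_2;t_1t_2}$ about the corner $y_{s_1t_1}$, controlled by the uniform bounds on $\|y^n\|_\infty$ and on the increments of $y^n$ supplied by $\CCC^{\alpha',\beta'}$–convergence. One could also argue directly, without approximation: by Proposition~\ref{prop:young2d} the right–hand side equals $(1-\Lambda_1\der_1)(1-\Lambda_2\der_2)\bigl(\varphi'(x)\der x+\varphi''(x)\der_1x\der_2x\bigr)$, and since $\der\varphi(x)$ is an exact biincrement it is fixed by that operator, so Corollary~\ref{cor:2d-integration} reduces the claim to the vanishing of the Riemann sums of $\der\varphi(x)-\varphi'(x)\der x-\varphi''(x)\der_1x\der_2x$; a box–wise Taylor expansion to third order with $C^4$ remainder expresses this, on each box, through monomials $p^aq^bw^c$ in $p=(\der_1x)_{s_is_{i+1};t_j}$, $q=(\der_2x)_{s_i;t_jt_{j+1}}$, $w=(\der x)_{s_is_{i+1};t_jt_{j+1}}$ with $a+c\ge2$ or $b+c\ge2$ — the terms $\varphi'(x_{s_it_j})w$ and $\varphi''(x_{s_it_j})pq$ having cancelled — and each sums to zero by an iterated summation: one direction treated as a convergent Riemann sum of a one–dimensional Young integral (which consumes only a bounded power there and leaves a factor $|\Delta s_i|^{2\alpha}$ or $|\Delta t_j|^{2\beta}$), then the other as a geometric sum using $2\alpha,2\beta>1$. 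Either route meets the same sticking point: a crude estimate $\lesssim|\Delta s_i|^{\mu_1}|\Delta t_j|^{\mu_2}$ does not control the double sum for cross terms such as $\der_1x\,\der x$, since $\alpha,\beta>1/2$ only gives $\mu_1+\mu_2>3/2$, and it is precisely this mixed summation — one direction as a genuine integral, the other by summability — that makes the argument close.
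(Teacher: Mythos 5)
Your proposal is correct in outline, but it does not coincide with the paper's route. The paper gives a \emph{direct} Riemann‑sum proof: it telescopes $\der\varphi(x)$ over a product partition, applies the one–dimensional Young chain rule on each vertical strip via the identity $\der\varphi(x)_{s_is_{i+1}t_jt_{j+1}}=(\der_2\int_1\varphi'(x)\dd_1x)_{s_is_{i+1}t_jt_{j+1}}$, splits the result into two families $a_{ij}+b_{ij}$, and uses the one–dimensional $\Lambda$–decomposition together with an iterated-limit argument (first in one parameter, then in the other) to identify $\sum a_{ij}\to\iint\varphi''(x)\dd_1x\dd_2x$ and $\sum b_{ij}\to\iint\varphi'(x)\dd x$. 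Your primary route is an approximation argument (mollify $x$, prove continuity of both sides in a weaker Hölder topology, pass to the limit); your secondary ``direct'' route is a box‑wise Taylor expansion plus the observation that $\der\varphi(x)$ is fixed by $(1-\Lambda_1\der_1)(1-\Lambda_2\der_2)$, reducing everything to the vanishing of the Riemann sums of $\der\varphi(x)-\varphi'(x)\der x-\varphi''(x)\der_1x\der_2x$. The direct route is closest in spirit to the paper: both in the end rely on iterated summation where one direction is consumed as a convergent Young sum and the other by $2\alpha>1$ (resp.\ $2\beta>1$), and you correctly identify that this mixed summation is the only nontrivial point. The paper's formulation buys a bit of economy by black‑boxing the one–dimensional chain rule on each strip, so the only Taylor expansion needed is the one already baked into the one‑dimensional theory; your raw box‑wise expansion has to track all monomials $p^aq^bw^c$ and check, term by term, which direction to sum first and that the Young Riemann sums stay uniformly bounded (this uniform bound, not merely convergence, is what lets the iterated limit identify the joint limit). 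Your approximation route is a genuine alternative but heavier: it needs a composition lemma showing $y\mapsto\varphi'(y),\varphi''(y)$ is continuous $\CCC_{1,1}^{\alpha',\beta'}\to\CCC_{1,1}^{\alpha',\beta'}$, which is essentially the paper's Lemma~\ref{lemma:h-p-r} proved in the rough regime and never stated in this simpler Young setting.

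Two small corrections to the write‑up. First, in the passage‑to‑the‑limit paragraph you assert the three maps are continuous ``into a fixed $\CCC^{z_1,z_2}_2$ with $z_1,z_2>1$''; that is not so — $\der\varphi(y)$ and the two integrals live in $\CCC^{\alpha',\beta'}_2$ with $\alpha',\beta'<1$. What needs exponents above $1$ is only the argument fed to the $\Lambda$'s, i.e.\ $\der_1$, $\der_2$, $\der$ of $\varphi'(y)\der y$ and $\varphi''(y)\der_1y\der_2y$, which by the product rules land in $\CCC^{2\alpha',\ast}$, $\CCC^{\ast,2\beta'}$, $\CCC^{2\alpha',2\beta'}$. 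Second, in the direct route the $C^4$ remainder is not a pure polynomial in $p,q,w$: after cancelling $p^k$ and $q^k$ one is left with differences like $[\varphi^{(4)}(u+\theta(p+q+w))-\varphi^{(4)}(u+\theta p)]p^4$, which have the form $\omega(|q|+|w|)\,|p|^4$ with $\omega$ only a modulus of continuity. These are still killable (sum in $i$ first, then $j$), but the order of iterated summation has to be chosen per term, and one should say explicitly why interchanging with the known joint limit is licit — namely, because the full sum is constant and the pieces are bounded uniformly in the finer partition by the one‑dimensional sewing estimate. Neither point affects the soundness of the plan; they are the places a referee would press.
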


\begin{proof}
Let $\Pi_1=(s_i)_i$  a partition of $[s_1,s_2]$ and  $\Pi_2=(t_j)_j$ of $[t_1,t_2]$  then
\begin{equation*}
 \begin{split}
\der\varphi(x)_{s_1s_2t_1t_2}&=\sum_{\Pi_1\Pi_2}\der\varphi(x)_{s_is_{i+1}t_jt_{j+1}}\\&
=\sum_{\Pi_1\Pi_2}\left(\der_2\int_1\varphi'(x)\dd_1x\right)_{s_is_{i+1}t_jt_{j+1}}\\&
=\sum_{\Pi_1\Pi_2}\int_{s_i}^{s_{i+1}}\der_2\varphi'(x)_{st_jt_{j+1}}\dd_sx_{st_{j+1}}+\int_{s_i}^{s_{i+1}}\varphi'(x_{st_j})\dd_s\der_2 x_{st_jt_{j+1}}\\&
=\sum_{\Pi_1\Pi_2}a_{ij}+b_{ij}
\end{split}
 \end{equation*}
where we have used the one dimensional change of variable formula for the Young integral.
Now let us treat the first term of our sum
 \begin{equation*}
  \begin{split}
a_{ij}&=\int_{s_i}^{s_{i+1}}\der_2\varphi'(x)_{st_jt_{j+1}}\dd_sx_{st_{j+1}}\\&
=\der_2\varphi'(x)_{s_it_jt_{j+1}}\der_1x_{s_is_{i+1}t_{j+1}}+\Lambda_1(\der\varphi'(x)\der_1x)_{s_is_{i+1}t_{j}t_{j+1}}
\\&=\left(\int_{t_j}^{t_{j+1}}\varphi''(x_{s_{i}t})\dd_tx_{{s_i}t}\right)\der_1x_{s_is_{i+1}t_{j+1}}\\&+\Lambda_2\der_2(1-\Lambda_1\der_1)(\der_2\varphi'(x)\der_1x)_{s_is_{i+1}t_jt_{j+1}}
-\Lambda\der(\der_2\varphi'(x)\der_1x)_{s_is_{i+1}t_jt_{j+1}}
  \end{split}
 \end{equation*}
Is not difficult to see that 
\begin{equation}\label{eq:young-form-1}
\lim_{|\Pi|\to0}\sum_{\Pi_1\Pi_2}\Lambda_2\der_2(1-\Lambda_1\der_1)(\der_2\varphi'(x)\der_1x)_{s_is_{i+1}t_jt_{j+1}}=0
\end{equation}
and
\begin{equation}\label{eq:young-form-2}
\lim_{|\Pi|\to0}\sum_{\Pi_1\Pi_2}\Lambda\der(\der_2\varphi'(x)\der_1x)_{s_is_{i+1}t_jt_{j+1}}=0
\end{equation}
then is remind to treat the term $\left(\int_{t_j}^{t_{j+1}}\varphi''(x_{s_{i}t})\dd_tx_{{s_i}t}\right)\der_1x_{s_is_{i+1}t_{j+1}}$ and for that we have the following expansion :
\begin{equation}\label{eq:young-form-3}
\left(\int_{t_j}^{t_{j+1}}\varphi''(x_{s_{i}t})\dd_tx_{{s_i}t}\right)\der_1x_{s_is_{i+1}t_{j+1}}=\varphi''(x_{s_it_j})\der_2x_{s_it_jt_{j+1}}\der_1x_{s_is_{i+1}t_j}+r^{\flat}_{s_it_jt_{j+1}}\der_1x_{s_is_{i+1}t_j}
\end{equation}
where $r^{\flat}=\der_2\varphi'(x)-\varphi''(x)\der_2x\in\CCC_{1,2}^{*,2\beta}$ such that $\der_1r^{\flat}\in\CCC_{2,2}^{\alpha,2\beta}$ which gives : 
\begin{equation*}
\lim_{|\Pi_1|\to0}\sum_{\Pi_1}r^{\flat}_{s_it_jt_{j+1}}\der_1x_{s_is_{i+1}t_j}=\int_{s_1}^{s_2}r^{\flat}_{st_jt_{j+1}}\dd_sx_{st_{j+1}}
\end{equation*}
where the limit satisfy the upper bound $|\int_{s_1}^{s_2}r^{\flat}_{st_jt_{j+1}}\dd_sx_{st_{j+1}}|\lesssim_{\alpha,\beta}(s_2-s_1)^{\alpha}(t_2-t_1)^{2\beta}$
these allow us to conclude that :
\begin{equation}\label{eq:young-form-4}
\lim_{|\Pi_2|\to0}\lim_{|\Pi_1|\to0}\sum_{\Pi_1\Pi_2}r^{\flat}_{s_it_jt_{j+1}}\der_1x_{s_is_{i+1}t_j}=0.
\end{equation}
Then putting together equation \eqref{eq:young-form-1} , \eqref{eq:young-form-2} , \eqref{eq:young-form-3} and \eqref{eq:young-form-4} we obtain that :
$$
\left(\iint\varphi''(x)\dd_1x\dd_2x\right)_{s_1s_2t_1t_2}=\lim_{|\Pi_2|\to0}\lim_{|\Pi_1|\to0}\sum_{\Pi_1\Pi_2}a_{ij}
$$
the second sum $\sum_{\Pi_1\Pi_2}b_{ij}$ is more simple to compute indeed :
\begin{equation*}
\begin{split}
\sum_{\Pi_1\Pi_2}b_{ij}&=\sum_{\Pi_1\Pi_2}\int_{s_i}^{s_{i+1}}\varphi'(x_{st_j})\dd_s\der_2 x_{st_jt_{j+1}}\\&
=\sum_{\Pi_1\Pi_2}\varphi'(x_{s_it_j})\der x_{s_is_{i+1}t_jt_{j+1}}+\Lambda_2\der_2(1-\Lambda_1\der1)(\varphi'(x)\der x)-\Lambda\der(\varphi'(x)\der x)
\end{split}
\end{equation*}
and then 
\begin{equation*}
\begin{split}
\lim_{|\Pi|\to0} \sum_{\Pi_1\Pi_2}b_{ij}&=\lim_{|\Pi|\to0} \sum_{\Pi_1\Pi_2}\varphi'(x_{s_it_j})\der x_{s_is_{i+1}t_jt_{j+1}}\\&
=\left(\iint\varphi'(x)\dd x\right)_{s_1s_2t_1t_2}
\end{split}
\end{equation*}
and this gives the needed result.
\end{proof}
We have also this immediate generalization for the Young integral.
\begin{proposition}
\label{mixed-Young-2D}
Let $y\in\CCC_{1,1}^{\gamma_1,\gamma_2}$,$x\in\CCC_{1,1}^{\rho_1,\rho_2}$ and $z\in\CCC_{1,1}^{\beta_1,\beta_2}$ such that $\gamma_i+\rho_i>1$,$\beta_i+\gamma_i>1$ and $\beta_i+\rho_i>1$ for $i=1,2$ then we can define
$$
\iint y\dd_1x\dd_2z=(1-\Lambda_1\der_1)(1-\Lambda_2\der_2)(y\der_1x\der_2z)
$$
$$
\iint y\dd_2z\dd_1x=(1-\Lambda_1\der_1)(1-\Lambda_2\der_2)(y\der_2z\der_1x)
$$
and
$$
\iint y\dd_1x\bullet \dd_2z=(1-\Lambda_1\der_1)(1-\Lambda_2\der_2)(y\der_1x\bullet\der_2z)
$$
and we have the following identity 
$$
\iint y\dd_1x\dd_2z=\iint y\dd_2z\dd_1x=\iint y\dd_1x\bullet \dd_2z
$$
Moreover we have the following Riemann sums representation for our integrals
\begin{equation*}
\begin{split}
\int_{s_1}^{s_2}\int_{t_1}^{t_2}y_{st}\dd_sx_{st}\dd_tz_{st}&=\lim_{|\pi|\to0}\sum_{i,j}y_{s_it_j}\der_1x_{s_is_{i+1}t_j}\der_2z_{s_{i+1}t_jt_{j+1}}
\\&=\lim_{|\pi|\to0}\sum_{i,j}y_{s_it_j}\der_1x_{s_is_{i+1}t_j}\der_2z_{s_it_jt_{j+1}}
\\&=\lim_{|\pi|\to0}\sum_{i,j}y_{s_it_j}\der_2z_{s_it_jt_{j+1}}\der_1x_{s_is_{i+1}t_{j+1}}
\end{split}
\end{equation*}
\end{proposition}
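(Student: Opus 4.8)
The plan is to apply the abstract $2$d integration Corollary~\ref{cor:2d-integration} to each of the three $1$-biincrements
\[
a_{1}=y\,\der_{1}x\,\der_{2}z,\qquad a_{2}=y\,\der_{2}z\,\der_{1}x,\qquad a_{3}=y\,\der_{1}x\bullet\der_{2}z\ \in\ \CCC_{2,2},
\]
and then to identify the three resulting objects. Writing $a_{k}$ out with the multiplication and $\bullet$-multiplication rules of this section one gets, e.g.,
\[
(a_{1})_{(s_{1},s_{2})(t_{1},t_{2})}=y_{s_{1}t_{1}}\,\der_{1}x_{s_{1}s_{2}t_{1}}\,\der_{2}z_{s_{2}t_{1}t_{2}},\qquad
(a_{3})_{(s_{1},s_{2})(t_{1},t_{2})}=y_{s_{1}t_{1}}\,\der_{1}x_{s_{1}s_{2}t_{1}}\,\der_{2}z_{s_{1}t_{1}t_{2}}.
\]

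The only substantial step is to compute $\der_{1}a_{k}$, $\der_{2}a_{k}$ and $\der a_{k}=\der_{1}\der_{2}a_{k}$ using the product rules for $\der_{1},\der_{2}$ established above together with $\der_{1}\der_{1}=\der_{2}\der_{2}=0$ and $\der_{1}\der_{2}=\der$. One checks that every term of $\der_{1}a_{k}$ carries, in the first direction, the increment of $x$ paired with one increment of $y$ or $z$ (so an exponent $\gamma_{1}+\rho_{1}$ or $\rho_{1}+\beta_{1}$) and a single increment in the second direction, that every term of $\der_{2}a_{k}$ carries in the second direction a pairing of exponent $\gamma_{2}+\beta_{2}$ or $\rho_{2}+\beta_{2}$ and a single increment in the first, and that $\der a_{k}$ carries pairings of both types in both directions. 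Since by hypothesis $\gamma_{1}+\rho_{1},\rho_{1}+\beta_{1},\gamma_{2}+\beta_{2},\rho_{2}+\beta_{2}>1$, this gives $\der_{1}a_{k}\in\CCC_{3,2}^{z_{1},*}$, $\der_{2}a_{k}\in\CCC_{2,3}^{*,z_{2}}$, $\der a_{k}\in\CCC_{3,3}^{z_{1},z_{2}}$ with $z_{1}=\min(\gamma_{1}+\rho_{1},\rho_{1}+\beta_{1})>1$ and $z_{2}=\min(\gamma_{2}+\beta_{2},\rho_{2}+\beta_{2})>1$. Corollary~\ref{cor:2d-integration} then yields $f_{k}\in\CCC_{1,1}$ with $\der f_{k}=(1-\Lambda_{1}\der_{1})(1-\Lambda_{2}\der_{2})a_{k}$ and with the two-parameter Riemann sums $\sum_{i,j}(a_{k})_{(s_{i},s_{i+1})(t_{j},t_{j+1})}$ converging to $\der f_{k}$; reading off $a_{1},a_{3},a_{2}$ produces exactly the three displayed sum representations, and we \emph{define} the three integrals to be $\der f_{1},\der f_{3},\der f_{2}$.

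To see that these coincide I would argue as in Corollary~\ref{cor:lambda-inversion} rather than via a second family of Riemann sums. A direct subtraction gives
\[
(a_{1}-a_{3})_{(s_{1},s_{2})(t_{1},t_{2})}=y_{s_{1}t_{1}}\,\der_{1}x_{s_{1}s_{2}t_{1}}\,\der z_{s_{1}s_{2}t_{1}t_{2}},\qquad
(a_{2}-a_{3})_{(s_{1},s_{2})(t_{1},t_{2})}=y_{s_{1}t_{1}}\,\der x_{s_{1}s_{2}t_{1}t_{2}}\,\der_{2}z_{s_{1}t_{1}t_{2}},
\]
whence $a_{1}-a_{3}\in\CCC_{2,2}^{\rho_{1}+\beta_{1},\beta_{2}}$ and $a_{2}-a_{3}\in\CCC_{2,2}^{\rho_{1},\rho_{2}+\beta_{2}}$, with $\rho_{1}+\beta_{1}>1$ and $\rho_{2}+\beta_{2}>1$. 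For the first, $h:=(1-\Lambda_{1}\der_{1})(1-\Lambda_{2}\der_{2})(a_{1}-a_{3})$ is well defined (because of the extra $\der z$ factor, the $\der_{1}$- and $\der_{2}$-coboundaries of $a_{1}-a_{3}$ still have combined exponent $>1$ in the relevant direction), satisfies $\der_{1}h=0$ by the same computation as in the proof of Corollary~\ref{cor:2d-integration}, and has first-direction H\"older exponent $\ge\rho_{1}+\beta_{1}>1$; hence $h=0$ by the property $\CCC^{z_{1},z_{2}}_{2}\cap\ker\der_{1}=\{0\}$ valid for $z_{1}>1$. This is the identity $\iint y\,\dd_{1}x\,\dd_{2}z=\iint y\,\dd_{1}x\bullet\dd_{2}z$, and $\iint y\,\dd_{2}z\,\dd_{1}x=\iint y\,\dd_{1}x\bullet\dd_{2}z$ follows symmetrically by applying $\CCC^{z_{1},z_{2}}_{2}\cap\ker\der_{2}=\{0\}$ ($z_{2}>1$) to $a_{2}-a_{3}$.

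The main obstacle is purely bookkeeping: carrying out the $\der_{1},\der_{2}$-expansions of $a_{1},a_{2},a_{3}$ without sign or index slips, and in particular tracking at which corner of each grid box the factors $y$, $\der_{1}x$, $\der_{2}z$ are anchored, so that the three Riemann sums of the statement appear verbatim rather than merely up to lower-order corrections. Beyond Corollary~\ref{cor:2d-integration} and the triviality of $\CCC^{z,\cdot}_{2}\cap\ker\der_{1}$ for $z>1$, no further analytic input is needed.
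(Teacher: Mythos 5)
Your first part — computing $\der_1 a_k,\der_2 a_k,\der a_k$ for $k=1,2,3$, identifying their H\"older classes, and invoking Corollary~\ref{cor:2d-integration} to obtain both the definitions and the three Riemann-sum representations — is the same as the paper's. Where you depart from the paper is the identification of the three integrals. The paper works directly with the two-parameter Riemann sums of $a^1-a^3$, first sends $|\pi_2|\to 0$ (turning each inner sum into a one-dimensional Young integral bounded by $(s_{i+1}-s_i)^{\rho_1+\beta_1}$), and then lets $|\pi_1|\to 0$ to get $0$ since $\rho_1+\beta_1>1$, using that the iterated limit must agree with the joint limit. You instead apply the sewing operators directly to the difference, noting that $(a_1-a_3)_{(s_1,s_2)(t_1,t_2)}=y_{s_1t_1}\der_1x_{s_1s_2t_1}\der z_{s_1s_2t_1t_2}\in\CCC_{2,2}^{\rho_1+\beta_1,\beta_2}$, that $h:=(1-\Lambda_1\der_1)(1-\Lambda_2\der_2)(a_1-a_3)$ retains first-direction exponent $\rho_1+\beta_1>1$ and satisfies $\der_1 h=0$, and then invoking the triviality $\CCC^{z_1,z_2}_2\cap\ker\der_1=\{0\}$ for $z_1>1$; and symmetrically (in direction $2$) for $a_2-a_3$. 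Both arguments hinge on the same excess regularity $\rho_1+\beta_1>1$ (resp.\ $\rho_2+\beta_2>1$); your route is purely algebraic and stays inside the $\Lambda$-calculus, at the cost of having to re-verify that $\Lambda_1\der_1,\Lambda_2\der_2,\Lambda\der$ are all applicable to the difference $a_1-a_3$ (which you assert a bit quickly — one should actually expand $\der_1(a_1-a_3)$ and $\der_2(a_1-a_3)$ and check each cross-term has total exponent $>1$, which it does), whereas the paper's Riemann-sum route avoids that bookkeeping but imports one-dimensional Young theory and an iterated-limit manipulation. Both are correct; your version is a bit cleaner to state once the hypotheses of Corollary~\ref{cor:2d-integration} for $a_1-a_3$ are checked.
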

\begin{proof}
Let $a^1=y\der_1x\der_2z$, $a^2=y\der_2z\der_1x$ and $a^3=y\der_1x\bullet\der_2z$ then by a simple computation we have that 
$$
\der_1a^1=-\der_1y\der_1x\der_2z-y\der_1x\der z\in\CCC_{3,2}^{\min(\gamma_1+\rho_1,\rho_1+\beta_1),\beta_2}
$$
$$
\der_2a^1=-\der_2y\der_1x\der_2z-y\der x\der_2z\in\CCC_{2,3}^{\rho_1,\min(\gamma_2+\beta_2,\rho_2+\beta_2)}
$$
and
$$
\der a^1=\der y\der_1x\der_2z+\der_1y\der x\der_2z+\der_2y\der_1x\der z+y\der x\der z\in\CCC_{3,3}^{\min(\gamma_1+\rho_1,\rho_1+\beta_1),\min(\gamma_2+\beta_2,\rho_2+\beta_2)}
$$
 then $a^1$ satisfies the assumption of the Corollary~\eqref{cor:2d-integration} which also true for $a^2$ and $a^3$ by a similar computation. Then the integral $\iint y\dd_1x\dd_2z$, $\iint y\dd_2z\dd_1x$ and $\iint y\dd_1x\bullet\dd_2z$ are well defined and we have that
$$
(\iint y\dd_2z\dd_1x)_{s_1s_2t_1t_2}=\lim_{|\pi|\to0}\sum_{i,j}y_{s_it_j}\der_1x_{s_is_{i+1}t_j}\der_2z_{s_{i+1}t_jt_{j+1}}
$$
$$
(\iint y\dd_2z\dd_1x)_{s_1s_2t_1t_2}=\lim_{|\pi|\to0}\sum_{i,j}y_{s_it_j}\der_2z_{s_it_jt_{j+1}}\der_1x_{s_is_{i+1}t_{j+1}}
$$
and
$$
(\iint y\dd_1x\bullet \dd_2z)_{s_1s_2t_1t_2}=\lim_{|\pi|\to0}\sum_{i,j}y_{s_it_j}\der_1x_{s_is_{i+1}t_j}\der_2z_{s_it_jt_{j+1}}
$$
To prove that this last integral coincide it suffices to show that the difference between the Riemann sum vanish when the mesh of the partition go to zero. In fact we have that 
\begin{equation}
\begin{split}
\lim_{|\pi_2|\to0}\sum_{i,j}(a^1-a^3)_{s_is_{i+1}t_jt_{j+1}}&=\lim_{|\pi_2|\to0}\sum_{i,j}y_{s_it_j}\der_1x_{s_is_{i+1}t_j}\der z_{s_is_{i+1}t_jt_{j+1}}
\\&=\sum_i\int_{t_1}^{t_2}y_{s_it}\der_1x_{s_is_{i+1}t}\dd_t\der_1z_{s_is_{i+1}t}
\end{split}
\end{equation}
Using the fact that $|\int_{t_1}^{t_2}y_{s_it}\der_1x_{s_is_{i+1}t}\dd_tz_{s_is_{i+1}t}|\lesssim(s_{i+1}-s_i)^{\rho_1+\beta_1}$ we obtain that : 
$$
\lim_{|\pi_1|\to0}\lim_{|\pi_2|\to0}\sum_{i,j}(a^1-a^3)_{s_is_{i+1}t_jt_{j+1}}=0
$$ 
which gives the following equality $\iint y\dd_1x\dd_2z=\iint y\dd_1x\bullet \dd_2z$. The other identity is obtained by exactly the same computations.
\end{proof}

\section{Analysis of a two-parameter integral}
\label{sec:2-d-dissection}
Following the informal approach of sec.~\ref{sec:dissection} we would
like to get inspired for a general construction from the
\emph{analysis} of a concrete two-dimensional integral. Then consider
$\iint \varphi(x) \dd x$ for a smooth surface $x : \RR^2 \to \RR$ and a
smooth function $\varphi : \RR \to \RR$. The decomposition of
eq.(\ref{eq:int-decomp}) gives
$$
\iint \varphi(x) \dd x = - \varphi(x) \int \dd x 
+ \int_1 \varphi(x) \int_2 \dd x
+ \int_2 \varphi(x) \int_1 \dd x
+ \iint \dd\varphi(x) \dd x
$$
where, for example, we used the notation $\int_1 \varphi(x) \int_2
\dd x$ to mean
$$
\left(\int_1 \varphi(x) \int_2 \dd x\right)_{(s_1,s_2;t_1,t_2)} :=
\int_{s_1}^{t_1} \varphi(x_{u_1,s_2}) \int_{s_2}^{t_2} \partial_1
\partial_2 x(u_1,u_2) \dd u_1 \dd u_2 
$$
and the others analogous expressions.

This decomposition is at the origin of Prop.\ref{prop:young2d} when
the demanded regularity is satisfied since the iterated integral is
given by the formula
$$
\iint \dd\varphi(x) dx = - \Lambda \left[\der \varphi(x) \der x \right].
$$

To proceed further we note that
\begin{equation}
  \label{eq:dec2-2}
\iint \dd\varphi(x) = \iint \varphi'(x) \dd x +  \iint \varphi''(x) (\dd_1 x
\dd_2 x)    
\end{equation}
(recall that $\dd = \dd_1 \dd_2$ is not a  derivation). Take the first term
in the r.h.s. and use again eq.(\ref{eq:int-decomp}) to write
\begin{equation}
  \label{eq:dec2-3}
 \iint \varphi'(x) \dd x = - \varphi'(x) \int \dd x 
+ \int_1 \varphi'(x) \int_2 \dd x 
+  \int_2 \varphi'(x) \int_1 \dd x
+ \iint \dd\varphi'(x) \dd x
\end{equation}
and a similar equation for the term $\iint \varphi''(x) \dd_1x \dd_2 x$.
Then
\begin{equation}
  \label{eq:dec2-4}
  \begin{split}
\iint \dd\varphi(x) \dd x = \iint \varphi'(x) \dd x \dd x + \iint \varphi''(x)
(\dd_1 x \dd_2 x) \dd x.    
  \end{split}
\end{equation}
For simplicity we treat explicitly the first term in the r.h.s, the
analysis  of the second being similar.

Using eq.(\ref{eq:dec2-3}) and the definition of iterated integral, we have
\begin{equation}
  \label{eq:dec2-5}
 \iint \varphi'(x) \dd x \dd x = - \varphi'(x) \int \dd x \dd x 
+ \int_1 \varphi'(x) \int_2 \dd x \dd x 
+  \int_2 \varphi'(x) \int_1 \dd x \dd x
+ \iint \dd\varphi'(x) \dd x \dd x
\end{equation}
This expression seems complicated, however it shows that, in
order to control the l.h.s. we need two ingredients:
\begin{itemize}
\item[1)] Being able to define \emph{essentially one-dimensional
    integrals} like  
\begin{equation}
    \label{eq:boundary-integrals}
\int_1 \varphi(x) \int_2 \dd x, \int_1 \varphi'(x) \int_2 \dd x \dd x,
\int_1 \varphi''(x) \int_2 \dd_1x \dd_2 \dd x, \dots
  \end{equation}
\item[2)] Control the \emph{remainders} given by the three-fold
  iterated integrals
$$
\mathscr{R} := \iint \dd\varphi'(x) \dd x \dd x
\qquad
\mathscr{\widetilde R} := \iint \dd\varphi''(x) \dd_1 x \dd_2 x \dd x.
$$
\end{itemize}

\subsubsection{The boundary integrals}
We call integrals like those appearing in
eq.(\ref{eq:boundary-integrals}) \emph{boundary integrals} to
emphasize their one-dimensional nature (which, we hope,  will be clear
from what follows). Their appearance is characteristic of the
multidimensional setting and it is  linked to the cohomological
structure of the complex $(\CCC_*,\der)$ studied in
Sec.~\ref{sec:two-dim}. 
Take the first of them and expand it according to the (one-dimensional) eq.(\ref{eq:dec-1}):
\begin{equation}
  \label{eq:dec2-6}
\int_1 \varphi'(x) \int_2 \dd x \dd x = \varphi'(x) \iint \dd x \dd x + \int_1 \dd_1
\varphi'(x) \int_2 \dd x \dd x  
\end{equation}
Next, apply $\der_1$ to the second term in the r.h.s.:
\begin{equation}
  \label{eq:dec2-6-bis}
- \der_1 \int_1 \dd_1
\varphi'(x) \int_2 \dd x \dd x = \int_1 \dd_1
\varphi'(x) \int_1 \int_2 \dd x \dd x + \int_1 \dd_1
\varphi'(x) \int_2 \dd x \int_1 \dd x
\end{equation}
The first term in the r.h.s. is \emph{simple} since it is equivalent
to $\der_1 \varphi'(x) \iint \dd x \dd x$, and can be controlled with some
regularity of $\varphi'$ and the a-priori knowledge of $\iint \dd x \dd x$.
The second needs to be further expanded as
\begin{equation}
  \label{eq:dec2-7}
  \begin{split}
\int_1  \dd_1
\varphi'(x)  \int_2 \dd x \int_1 \dd x
& =\int_1 
\varphi''(x) \dd_1x \int_2 \dd x \int_1 \dd x
\\ &= 
\varphi''(x) \int_1  \dd_1x \int_2 \dd x \int_1 \dd x
+ \int_1 \dd_1 \varphi''(x)  \dd_1x \int_2 \dd x \int_1 \dd x
 \end{split}
\end{equation}
Again, the first term in the r.h.s do not pose any further problem so
we continue to study only the last one. Set
$$
\mathscr{A}_1 :=  \int_1 \dd_1 \varphi''(x)   \dd_1x \int_2 \dd x \int_1 \dd x
$$
We apply to $\mathscr{A}_1$ the splitting
operator $S_1$ obtaining
\begin{equation}
  \label{eq:split}
 S_1 \mathscr{A}_1 = 
\int_1 \dd_1 \varphi''(x) \dd_1 x \int_2 \dd x \otimes_1 \int_1 \dd x  
\end{equation}
Let us clarify the meaning of this last term: in the first direction
is the tensor product of two 1-increments, while in the second
direction is a 1-increment. Taking four points $(u_1,u_2,u_3,u_4)$ in
direction 1 and two $(v_1,v_2)$ in direction 2, its value is given by
the expression
$$
\int_{u_3}^{u_4}\int_{v_1}^{v_2}\left(\int_{u_1}^{u_2}\int_{v_1}^{v}\left(\int_{u_1}^{u}\int_{u_1}^{a}\dd_b\varphi"(x_{bv_1})\dd_ax_{av_1}\right)\dd_{uv}x_{uv}\right)\dd_{st}x_{st}
$$
As the reader can easily check, by setting $u_2=u_3$ we reobtain
$\mathscr{A}_1$.

Denote with $\der_1 \otimes_1 1$ the action of the $\der_1$ operator
on the first factor of an element of $\CC_1 \otimes \CC_1$, where
again the $1$ as index of the tensor operation denote that it acts on
tensor products according to the first direction.
Apply  $\der_1 \otimes_1 1$ to the r.h.s of
eq.~(\ref{eq:split}) last term in the r.h.s to obtain
\begin{equation*}
  \begin{split}
- (\der_1 \otimes_1 1) &  \int_1 \dd_1 \varphi''(x)   \dd_1x \int_2 \dd x \otimes_1 \int_1
\dd x
  =
- \left[\der_1   \int_1 \dd_1 \varphi''(x)   \dd_1x \int_2 \dd x\right] \otimes_1 \int_1
\dd x
\\ &  = \int_1 \dd_1 \varphi''(x)  \int_1 \dd_1x \int_2 \dd x \otimes_1 \int_1
\dd x
 +
\int_1\dd_1 \varphi''(x)   \dd_1x \int_1 \int_2 \dd x \otimes_1 \int_1
\dd x
\end{split}    
\end{equation*}
Hence let
$$
\mathscr{AA}_1 := \int_1 \dd_1 \varphi''(x)  \int_1 \dd_1x \int_2 \dd x
\otimes_1 \int_1
\dd x = \der_1 \varphi''(x) \int_1 \dd_1x \int_2 \dd x\otimes_1 \int_1 \dd x
$$
and (cfr. eq.~(\ref{eq:taylor-1}))
$$
\mathscr{AB}_1 := \int_1 \dd_1 \varphi''(x)   \dd_1x \int_1 \int_2 \dd x
\otimes_1 \int_1 \dd x
= \left( \der_1 \varphi''(x) - \varphi''(x) \der_1 x \right)
\int_1 \int_2 \dd x \otimes_1 \int_1
dx
$$
which depend only on the function $\varphi''(x)$ and on the
``splitted'' iterated
integrals
$$
\int_1 \dd_1x \int_2 \dd x\otimes_1 \int_1 \dd x, \qquad
\int_1 \int_2 \dd x\otimes_1 \int_1
\dd x
$$
Since we are working under smoothness conditions is possible to
recover  $\mathscr{A}_1$ by applying $\Lambda_1 \otimes_1 1$:
$
\mathscr{A}_1 =- (\Lambda_1 \otimes_1 1)(\mathscr{AA}_1+\mathscr{AB}_1)
$.
Moreover 
$$
\mathscr{C}_1 := \int_1
\dd_1 \varphi''(x)\dd_1x = 
\varphi''(x) \int_1 \dd_1x \int_2\dd x \int_1 \dd x
+ \mu_1 \mathscr{A}_1
$$
where recall that $\mu_1$ is the multiplication in the first direction
which is the inverse operation of the splitting $S_1$.
Then we can recover also the term $\int_1  \dd_1
\varphi'(x)  \int_2 \dd x \int_1\dd x$ by eq.(\ref{eq:dec2-6}) and an
application of $\Lambda_1$. Finally we have obtained the following
expression for the boundary term:
$$
\int_1 \varphi'(x) \int_2\dd x \dd x = \varphi'(x) \iint \dd x \dd x -\Lambda_1 
\left[  
\der_1 \varphi'(x) \int_1 \int_2 \dd x \dd x + \mathscr{C}_1
\right]
$$
where the r.h.s. depends only on a finite number of iterated integrals
of $x$.

\subsubsection{The remainders $\mathscr{R}$,$\mathscr{\widetilde R}$}
The three-fold iterated integral $\iint d\varphi'(x) dx dx$ will be
analyzed in terms of its image under $\der$:
\begin{equation}
  \label{eq:dec2-r1}
  \begin{split}
\der   \iint \dd\varphi'(x) \dd x \dd x   
& = \iint \dd\varphi'(x) \iint \dd x \dd x + \iint \dd\varphi'(x) \dd x  \iint \dd x
\\ &\qquad  + \iint \dd\varphi'(x) \int_1 \dd x \int_2 \dd x
+ \iint \dd\varphi'(x) \int_2 \dd x \int_1 \dd x
  \end{split}
\end{equation}
The first term is readily controlled in the same spirit as
above. Consider the fourth (the third is similar) which we will write
as
\begin{equation}
  \label{eq:dec2-r2}
  \begin{split}
\iint \dd\varphi'(x) \int_2 \dd x \int_1 \dd x 
& = \int_1 \dd_1 [\der_2 \varphi'(x)] \int_2 \dd x \int_1 \dd x 
 \\ &    
= \int_1 \der_2 [\varphi''(x) \dd_1 x] \int_2 \dd x \int_1 \dd x 
 \\ &    
= \int_1 \der_2 [\varphi''(x)] \dd_1 x \int_2 \dd x \int_1 \dd x 
+ \int_1 \varphi''(x) \dd_1 \der_2 x \int_2 \dd x \int_1 \dd x 
 \\ &    
= \mathscr{D}_1 + \mathscr{E}_1
  \end{split}
\end{equation}
Expanding the integral of $\der_2 [\varphi''(x)] \dd_1 x$ we get
\begin{equation}
  \label{eq:dec2-r3}
  \begin{split}
 \mathscr{D}_1 & =  \int_1 \der_2 [\varphi''(x)] \dd_1 x \int_2 \dd x
 \int_1 \dd x
\\ & =  \der_2 [\varphi''(x)] \int_1  \dd_1 x \int_2 \dd x
 \int_1 \dd x +  \int_1 \dd_1 \der_2 [\varphi''(x)] \dd_1 x \int_2 \dd x
 \int_1 \dd x
  \end{split}
\end{equation}
Again we apply $(\der_1 \otimes_1 1)S_1$ to the second term:
\begin{equation}
  \label{eq:dec2-r4}
  \begin{split}
-(\der_1 & \otimes_1 1) S_1 \int_1  \dd_1 \der_2 [\varphi''(x)] \dd_1 x \int_2 \dd x
 \int_1 \dd x 
\\ &  =
\int_1 \dd_1 \der_2 [\varphi''(x)] \int_1\dd_1 x \int_2 \dd x
\otimes_1 \int_1 \dd x +
\int_1 \dd_1 \der_2 [\varphi''(x)]  \dd_1 x \int_1 \int_2 \dd x
\otimes_1 \int_1 \dd x     
\\ &  =: \mathscr{DA}_1 + \mathscr{DB}_1
  \end{split}
\end{equation}
From which we get:
\begin{equation}
  \label{eq:dec2-r5}
 \mathscr{D}_1 = \der_2 [\varphi''(x)] \int_1  \dd_1 x \int_2 \dd x
 \int_1 \dd x -\mu_1 [\Lambda_1 \otimes_1 1 ] (\mathscr{DA}_1 + \mathscr{DB}_1)  
\end{equation}
And in the same way:
\begin{equation}
  \label{eq:dec2-r6}
 \mathscr{E}_1 = \varphi''(x) \int_1 \dd_1 \der_2 x \int_2 \dd x
 \int_1 \dd x -\mu_1 [\Lambda_1 \otimes_1 1 ] (\mathscr{EA}_1 + \mathscr{EB}_1)  
\end{equation}
with
$$
\mathscr{EA}_1 := \der_1 [\varphi''(x)] \int_1 \dd_1 \der_2 x \int_2 \dd x
\otimes_1 \int_1 \dd x 
$$
and
$$
\mathscr{EB}_1 :=
\int_1 d_1 \varphi''(x)  \dd_1 \der_2 x \int_1 \int_2 \dd x
\otimes_1 \int_1 \dd x.  
$$
Note that
\begin{equation}
  \label{eq:dec2-r5}
  \begin{split}
\mathscr{F}_1 & := \mathscr{DB}_1 + \mathscr{EB}_1  
 \\ & =
\der_2 \left[\int_1 \dd_1 \varphi''(x)  \dd_1  x\right] \int_1 \int_2 \dd x
 \otimes_1 \int_1 \dd x  
 \\ & =
\der_2 \left[\der_1 \varphi'(x) - \varphi''(x) \der_1 x\right] \int_1 \int_2 \dd x
 \otimes_1 \int_1 \dd x.  
  \end{split}
\end{equation}
Together
eq.(\ref{eq:dec2-r2}),~(\ref{eq:dec2-r5})~(\ref{eq:dec2-r6})~(\ref{eq:dec2-r5})
imply the representation:
\begin{equation}
  \label{eq:dec2-r6}
  \begin{split}
\iint \dd\varphi'(x) \int_2 \dd x \int_1 \dd x & =
 \der_2 [\varphi''(x)] \int_1  \dd_1 x \int_2 \dd x
  \int_1 \dd x 
+ \varphi''(x) \int_1  \dd_1 \der_2 x \int_2 \dd x
  \int_1 \dd x
\\ & \qquad - \mu_1 [\Lambda_1 \otimes_1 1 ] (\mathscr{DA}_1 + \mathscr{EA}_1 + \mathscr{F}_1).       
  \end{split}
\end{equation}
The only term, appearing in $\mathscr{R}$ which is left is the second
term in eq.(\ref{eq:dec2-r1}): $\iint d\varphi'(x) dx \iint dx$. This
term can be handled together a similar term appearing in the
expansion of $\mathscr{\widetilde R}$ which reads:
$$
\iint \dd\varphi''(x) \dd_1x \dd_2 x \iint \dd x
$$
(all the other terms in  $\mathscr{\widetilde R}$ are handled as
above).
Indeed we have that the sum of $\iint d\varphi'(x) dx$ and $\iint
d\varphi''(x) d_1x d_2 x$ which are nontrivial two-dimensional
iterated integrals, appear in the expansion for $\der \varphi(x)$:
\begin{equation}
  \label{eq:dec2-rr1}
  \begin{split}
\der \varphi(x) & =  
 - \varphi'(x) \iint \dd x 
+ \int_1 \varphi'(x) \int_2 \dd x +  \int_2 \varphi'(x) \int_1 \dd x
\\ & \quad - \varphi''(x) \iint \dd_1 x \dd_2 x   
+ \int_1 \varphi''(x) \int_2 \dd_1x \dd_2x +  \int_2 \varphi''(x) \int_1 \dd_1x \dd_2x
\\ & \quad + \iint \dd\varphi'(x) \dd x
+ \iint d\varphi''(x) \dd_1x \dd_2x
  \end{split}
\end{equation}
In this expression all the terms, except the last two can be expressed,
following the approach we used for the boundary integrals above, as
functional of a small number of integrals of $x$.

\subsection{Rough sheet}\label{sec:rough}
We have shown in this section how the 2-dimensional integral $\iint
\varphi(x) \dd x$ admits to be expressed as a well behaved functional
$F$ of
a family $\mathbb{X}$ of iterated integrals of $x$ and  of $\varphi(x)$. This
functional can then be extended to more irregular
functions $x$, not necessarily smooth, in two ways:
\begin{itemize}
\item[a)] \emph{Algebraic approach.} We are given a family $\mathbb{X}$ of biincrements which
  satisfy algebraic conditions analogous to that which allowed us to
  perform the computations in this section. In this case, the integral
  can be defined by the same functional $F$. The algebraic relations
  are then needed to show that such a definition is consistent with
  our notion of integral (e.g. that this integral is in the kernel of
  both $\der_1$ and $\der_2$).

\item[b)] \emph{Geometric approach.} We are able to show that, there exists a sequence of
  families $\mathbb{X}^n$ obtained by iterated integrals over smooth
  2-dimensional functions $x^n$, which converges, under
  suitable topologies on the biincrements, to a limiting family
  $\mathbb{X}$. Then by the continuity of the map $F$ we are able to
  identify the limit of $\iint \varphi(x^n) \dd x^n$ and to consider it
  as an extension of the integral over smooth 2-parameter
  functions. This is analogous to the \emph{geometric} theory of rough paths.
\end{itemize}

Already in the one-dimensional theory the
two approaches can give different notions of integrals. 

\subsubsection{Algebraic assumption and Boundary integrals}
\begin{hypothesis}
\label{hyp:chen-2d}
If $a=1,2$ then $\hat a = 2,1$.
Assume that $\alpha,\beta > 1/3$ .
Now as in the one parameter case we assume the existence of some algebraic object
\begin{equation*}
\begin{split}
&A^x,A^\omega  \in \CCC_{2,2}^{\alpha,\beta}; \quad
B^{xx}_{1},B_{1}^{x\omega} \in \CCC_{2,2}^{2\alpha,\beta};\quad B_2^{xx},B_{2}^{x\omega}\in\CCC_{2,2}^{\alpha,2\beta}  
\\&C^{xx},C^{\omega x},C^{x\omega},C^{\omega\omega}\in \CCC_{1,1}^{2\alpha,2\beta}; \quad
D^{xx}_{1},D_{1}^{\omega x},D_1^{\omega\omega}\in (\CC_{2}^{\alpha}\otimes_1 \CC_{2}^{\alpha})(\CC_1^{2\beta}),\quad D^{xx}_{2},D_{2}^{\omega x},D_2^{\omega\omega}\in (\CC_{2}^{\beta}\otimes_2 \CC_{2}^{\beta})(\CC_1^{2\alpha})\end{split}
\end{equation*}
$$
E^{xxx}_{1},E_{1}^{x\omega x},E_{1}^{xx\omega},E_1^{x\omega\omega}\in (\CC_{2}^{2\alpha}\otimes_1 \CC_{1}^{\alpha})(\CC_1^{2\beta}); \quad
F^{xxx}_{1},F_{1}^{x\omega x},F_1^{x\omega\omega},F^{xx\omega}_1 \in (\CC_{2}^{2\alpha}\otimes_1 \CC_{2}^{\alpha})(\CC_1^{3\beta})
$$
$$
E^{xxx}_{2},E_{2}^{x\omega x},E_{2}^{xx\omega},E_2^{x\omega\omega}\in (\CC_{2}^{2\beta}\otimes_2 \CC_{1}^{\beta})(\CC_1^{2\alpha}); \quad
F^{xxx}_{2},F_{2}^{x\omega x},F_2^{x\omega\omega},F^{xx\omega}_2 \in (\CC_{2}^{2\beta}\otimes_2 \CC_{2}^{\beta})(\CC_1^{3\alpha})
$$
satisfying the following equations
\begin{enumerate}
\item $A^{x} = \der x$ ,
\item $\der_{a} B^{xx}_{a} = (\der_{a} x) A^{x}$, $\der_{\hat a} B^{xx}_{a} =- \mu_{\hat a} D_{\hat a}^{xx}$ 
\item $\der_{a} C^{xx} = \mu_{a} D^{xx}_{a}$ 
\item $(1\otimes_{a}\der_{a}) D^{xx}_{a} = 0$, $\der_{\hat a} D^{xx}_{a} = A^{x} \otimes_{a} A^{x}$, 
\item$(1\otimes_{a}\der_{a})E_{a}^{xxx}=0$, $(\der_{a}\otimes_{a} 1)E^{xxx}=\der_{a} x\otimes D^{xx}_{a} $ , $\der_{\hat a} E^{xxx}_{a} = F^{xxx}_{a} + B^{xx}_{a} \otimes_{a} A^x$,
\item$\der_{a} A^{\omega}=0$
\item$\der_{a} B^{x \omega}_{a}=(\der_{a} x)A^{\omega}$, $\der_{\hat a} B_{a}^{x\omega}=\mu_{\hat a} D^{x\omega}_{a}$
\item$\der_{a} C^{\omega x} =\mu_{a} D^{\omega x}_{a}$
\item$(1\otimes_{a}\der_{a})D^{\omega x}_{a}=0$, $\der_{\hat a}D^{\omega x}_{a} = A^{\omega}\otimes_{a}A^{x}$
\item$(1\otimes_{a}\der_{a})E_{a}^{x\omega x}=0$, $(\der_{a}\otimes 1)E^{x\omega x} = \der_{a} x\otimes_{a} D_{a}^{\omega x}$, $\der_{\hat a} E_{a}^{x\omega x} =   F_{a}^{x\omega x}+B_{a}^{x\omega}\otimes_{a} A^{\omega}$
\item $\der_{a} C^{x\omega} = \mu_{a} D^{x\omega}_{a}$ 
\item $(1\otimes_{a}\der_{a}) D^{x\omega}_{a} = 0$, $\der_{\hat a} D^{x\omega}_{a} = A^{x} \otimes_{a} A^{\omega}$, 
\item$(1\otimes_{a}\der_{a})E_{a}^{xx\omega}=0$, $(\der_{a}\otimes_{a} 1)E^{xx\omega}=\der_{a} x\otimes D^{x\omega}_{a} $ , $\der_{\hat a} E^{xx\omega}_{a} = F^{xx\omega}_{a} + B^{xx}_{a} \otimes_{a} A^\omega$,
\item $\der_{a} C^{\omega\omega} = \mu_{a} D^{\omega\omega}_{a}$ 
\item $(1\otimes_{a}\der_{a}) D^{\omega\omega}_{a} = 0$, $\der_{\hat a} D^{\omega\omega}_{a} = A^{\omega} \otimes_{a} A^{\omega}$, 
\item$(1\otimes_{a}\der_{a})E_{a}^{x\omega\omega}=0$, $(\der_{a}\otimes_{a} 1)E^{x\omega\omega}=\der_{a} x\otimes D^{\omega\omega}_{a} $ , $\der_{\hat a} E^{x\omega\omega}_{a} = F^{x\omega\omega}_{a} + B^{x\omega}_{a} \otimes_{a} A^\omega$,
\end{enumerate}
\begin{remark}
When $x$ is a smooth sheet we can choose this algebraic object as the following iterated integral :
 \begin{enumerate}
\item $(A^{x})_{s_{1}s_{2}t_{1}t_{2}}=(\iint \dd x)_{s_{1}s_{2}t_{1}t_{2}}= \der x_{s_{1}s_{2}t_{1}t_{2}}$
\item$(A^{\omega})_{s_{1}s_{2}t_{1}t_{2}}= (\iint \dd \omega)_{s_{1}s_{2}t_{1}t_{2}}=\int_{s_{1}}^{s_{2}}\int_{t_{1}}^{t_{2}}\dd_{s}x_{st}\dd_{t}x_{st}$
\item$(B_{1}^{xx})_{s_{1}s_{2}t_{1}t_{2}}= (\int_1 \dd_1 x\int_2 \dd x)_{s_{1}s_{2}t_{1}t_{2}}=\int_{s_{1}}^{s_{2}}\int_{t_{1}}^{t_{2}}\der_{1}x_{s_{1}st_{1}}\dd_{st}x_{st}$
\item$(B_{1}^{x\omega})_{s_{1}s_{2}t_{1}t_{2}}= (\int_1 \dd_1 x\int_2 \dd \omega)_{s_{1}s_{2}t_{1}t_{2}}=\int_{s_{1}}^{s_{2}}\int_{t_{1}}^{t_{2}}\der_{1}x_{s_{1}st_{1}}\dd_{s}x_{st}\dd_{t}x_{st}$
\item$(C^{xx})_{s_{1}s_{2}t_{1}t_{2}}= (\iint  \dd x \dd x)_{s_{1}s_{2}t_{1}t_{2}}=\iint_{(s_{1},t_{1})}^{(s_{2},t_{2})}\left(\iint_{(s_{1},t_{1})}^{(s,t)}\dd_{rr'}x_{rr'}\right)\dd_{st}x_{st}$
\item$(C^{\omega x})_{s_{1}s_{2}t_{1}t_{2}}= (\iint \dd \omega  \dd x)_{s_{1}s_{2}t_{1}t_{2}}=\iint_{(s_{1},t_{1})}^{(s_{2},t_{2})}\left(\iint_{(s_{1},t_{1})}^{(s,t)}\dd_{r}x_{rr'}\dd_{r'}x_{rr'}\right)\dd_{st}x_{st}$
\item$(D_{1}^{xx})_{s_{1}s_{2}s_{3}s_{4}t_{1}t_{2}}= (\int_2 \int_1 \dd x \otimes_1 \int_1 \dd x)_{s_{1}s_{2}s_{3}s_{4}t_{1}t_{2}}=\iint_{(s_{3},t_{1})}^{(s_{4},t_{2})}\left(\iint_{(s_{1},t_{1})}^{(s_{2},t)}d_{rr'}x_{rr'}\right)\dd_{st}x_{st}$
\item$(E_{1}^{xxx})_{s_{1}s_{2}s_{3}s_{4}t_{1}t_{2}}= (\int_1 \dd_1 x\int_2 \dd x\otimes_1 \int_1 \dd x)_{s_{1}s_{2}s_{3}s_{4}t_{1}t_{2}}=\iint_{(s_{3},t_{1})}^{(s_{4},t_{2})}\left(\iint_{(s_{1},t_{1})}^{(s_{2},t)}\der_{1}x_{s_{1}rt_{1}}d_{rr'}x_{rr'}\right)\dd_{st}x_{st}$
\item 
\begin{equation*}
\begin{split}
&(F_{1}^{xxx})_{s_{1}s_{2}s_{3}s_{4}t_{1}t_{2}t_{3}}=(\iint\dd x \int_2  \dd x \otimes_1 \int_1 \dd x)_{s_{1}s_{2}s_{3}s_{4}t_{1}t_{2}t_{3}}\\&=-\iint_{(s_{3},t_{2})}^{(s_{4},t_{3})}(\iint_{(s_{1},t_{2})}^{(s_{2},t)}(\iint_{(s_{1},t_{1})}^{(r,t_{2})}\dd_{ab}x_{ab})\dd_{rr'}x_{rr'})\dd_{st}x_{st}
\end{split}
\end{equation*}
\end{enumerate}
\end{remark}
\end{hypothesis}
In this section we assume the previous hypothesis to be  true and we will give a "reasonable" construction  of the following boundary integrals :
$$
\int_a y\int_{\hat a} \dd x,\quad
\int_a y\int_{\hat a} \dd x \dd x,
$$
$$
\int_a y\int_{\hat a} \dd{\omega} ,\quad
\int_a y\int_{\hat a} \dd{\omega} \dd x,
$$
which allow us to construct the space of two  parameters controlled sheet and integrate them.
We begin by recalling the notion of a one dimensional  controlled path which the space of the sheet $y$ satisfying the following assumption :
$$
 \der _{a}y=y^{x_{a}}\der_{a}x+y^{\sharp a} , \quad y^{x_{a}}\in \CCC_{1,1}^{\alpha,\beta}, \quad (y^{\sharp 1},y^{\sharp2})\in\CCC_{2,1}^{2\alpha,\beta}\times\CCC_{1,2}^{\alpha,2\beta}
$$
where $x\in \mathscr C^{\alpha,\beta}_{1,1}$,  $a\in\{1,2\}$ and we denoted by $\mathscr Q_x^{\alpha,\beta}$ this space. Now we will set out a permutation lemma that is useful to conduct the computation in the following. 
\begin {lemma}
\label{lemma:mu-lambda-commute}
We have for $h\in (\CC_{2} \otimes_{a} \CC_{2})(\CC_{2})$ the following identity:
\begin{equation}
\der_{a}\mu_a h = \mu_{a}(\der_{a}\otimes_{a}1)h-\mu_a(1\otimes_{a}\der_a)h .
\end{equation}
\end {lemma}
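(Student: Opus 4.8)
The identity is the two–parameter incarnation of the fact, recalled in Section~\ref{cpss}, that the coboundary is a graded derivation for the convolution product on $\CC_{*}$: here $\mu_{a}$ is exactly the product in the direction $a$ (the gluing inverse to the splitting $S_{a}$), whereas the direction $\hat a$ is inert, so one may view $h$ as an element of $(\CC_{2}\otimes_{a}\CC_{2})(W)$ with values in the fixed space $W=\CC_{2}(V)$ and prove a purely one–dimensional statement. My plan is to verify it by a short direct computation on points, and then remark on the conceptual shortcut.

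First I would fix conventions. Dropping the inert $\hat a$–arguments, an element of $\CC_{2}\otimes_{a}\CC_{2}$ is a map $(p,q,r,s)\mapsto h_{(pq)(rs)}$ which is a $2$–increment in $(p,q)$ and in $(r,s)$ and need not vanish for $q=r$, and the gluing reads $(\mu_{a}h)_{pqs}=h_{(pq)(qs)}\in\CC_{3}(W)$. Then I would expand the left–hand side with the one–dimensional coboundary~\eqref{eq:coboundary}, obtaining $(\der_{a}\mu_{a}h)_{s_{1}s_{2}s_{3}s_{4}}$ as an alternating sum of four terms, each of the form $h$ evaluated at two pairs sharing one of the middle points $s_{2},s_{3}$. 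Next I would expand the two right–hand pieces: $\der_{a}\otimes_{a}1$ differentiates the left factor and then $\mu_{a}$ glues, giving three terms; $1\otimes_{a}\der_{a}$ differentiates the right factor and then $\mu_{a}$ glues, giving three terms. Subtracting, the two terms in which the two pairs do \emph{not} share a middle point — that is, $h$ evaluated at $(s_{1}s_{2})(s_{3}s_{4})$ — cancel, and the remaining four terms, with their signs, are precisely the expansion of $\der_{a}\mu_{a}h$. Alternatively, once one grants the graded–derivation property of $\der$ on $\CC_{*}(W)$ (the differentiation rules of Proposition~\ref{difrul} and their $\CCC_{*,*}$ analogues), the lemma is immediate: a $2$–increment has odd degree, which is the source of the minus sign in front of $\mu_{a}(1\otimes_{a}\der_{a})h$.

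I do not expect a real obstacle. The only delicate point is bookkeeping — tracking, after each of the three operations, which of the four points plays the r\^ole of the glued junction and where the alternating signs land — so that the cancellation of the ``detached'' terms and the matching of the rest are manifest. With the conventions above in place this is a couple of lines.
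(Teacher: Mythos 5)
The paper states Lemma~\ref{lemma:mu-lambda-commute} without proof, so there is nothing in the text to compare against; one can only check your argument on its own merits, and it is correct. Suppressing the inert $\hat a$--arguments and writing $(\mu_a h)_{t_1t_2t_3}=h_{(t_1t_2)(t_2t_3)}$, the coboundary formula~\eqref{eq:coboundary} gives
\begin{equation*}
(\der_a\mu_a h)_{t_1t_2t_3t_4}=h_{(t_2t_3)(t_3t_4)}-h_{(t_1t_3)(t_3t_4)}+h_{(t_1t_2)(t_2t_4)}-h_{(t_1t_2)(t_2t_3)},
\end{equation*}
while
\begin{equation*}
\bigl(\mu_a(\der_a\otimes_a1)h\bigr)_{t_1t_2t_3t_4}=h_{(t_2t_3)(t_3t_4)}-h_{(t_1t_3)(t_3t_4)}+h_{(t_1t_2)(t_3t_4)}
\end{equation*}
and
\begin{equation*}
\bigl(\mu_a(1\otimes_a\der_a)h\bigr)_{t_1t_2t_3t_4}=h_{(t_1t_2)(t_3t_4)}-h_{(t_1t_2)(t_2t_4)}+h_{(t_1t_2)(t_2t_3)};
\end{equation*}
the detached term $h_{(t_1t_2)(t_3t_4)}$ cancels in the difference and the remaining four terms match, exactly as you describe. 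Your conceptual shortcut is also right in substance: for $a\in\CC_n$, $b\in\CC_m$ and the product~\eqref{cvpdt}, a direct expansion gives the graded Leibniz rule $\der(ab)=\der a\cdot b+(-1)^{n-1}a\cdot\der b$, so the factor $a\in\CC_2$ of degree $1$ produces the minus sign. One caveat worth keeping: the signs stated in Proposition~\ref{difrul} and in~\eqref{eq:simple_application} are not consistent with the convention~\eqref{eq:coboundary}, so invoking Proposition~\ref{difrul} verbatim would be risky; your pointwise computation, which never leaves the convention of~\eqref{eq:coboundary}, is self-contained and safe.
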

And then we have the following construction for the Boundary integral :
\begin{proposition}\label{boundry}
Assume that the hypothesis~\ref{hyp:chen-2d} to be true, and let $y\in\mathscr Q^{\alpha,\beta}_{x}$. Then we define the boundary integral by :
\begin{enumerate}
\item $\int_{a} y\int_{\hat a} \dd x := yA^x + y^{x_a}B_a^{xx}+\Lambda_a[y^{\sharp a}A^x + \der_a y^{x_a}B_{a}^{xx}]$,
\item$\int_{a} y\int_{\hat a} \dd x\dd x := yC^{xx} + \Lambda_{a} [\der_{a} yC^{xx} + y^{x}\mu_{a}E_{a}^{xxx} + \mu_{a}(\Lambda_{a}\otimes_{a} 1)(y^{\sharp a}D_{a}^{xx} + \der_{a} y^{x_{a}} E_{a}^{xxx})]$,
\item $\int_{a} y\int_{\hat a} \dd\omega := yA^\omega + y^{x_a}B_a^{x\omega}+\Lambda_a[y^{\sharp a }A^\omega + y^{x}B_{a}^{x\omega}]$
\item $\int_{a} y\int_{\hat a} \dd\omega \dd x := yC^{\omega x} + \Lambda_{a}[\der_{a} yC^{\omega x} + y^{x}\mu_a E_{a}^{x\omega x} + \mu_{a}(\Lambda_{a}\otimes_{a}1)(y^{\sharp a}D_{a}^{\omega x} + \der_{a}y^{x_a}E_{a}^{x\omega x})]$
\item $\int_{a} y\int_{\hat a} \dd x \dd\omega := yC^{x\omega} + \Lambda_{a}[\der_{a} yC^{x\omega} + y^{x}\mu_a E_{a}^{xx\omega} + \mu_{a}(\Lambda_{a}\otimes_{a}1)(y^{\sharp a}D_{a}^{x\omega} + \der_{a}y^{x_a}E_{a}^{xx\omega})]$
\item $\int_{a} y\int_{\hat a} \dd \omega \dd\omega := yC^{\omega\omega} + \Lambda_{a}[\der_{a} yC^{\omega\omega} + y^{x}\mu_a E_{a}^{x\omega\omega} + \mu_{a}(\Lambda_{a}\otimes_{a}1)(y^{\sharp a}D_{a}^{\omega\omega} + \der_{a}y^{x_a}E_{a}^{x\omega\omega})]$
\end{enumerate}
Moreover all these formulas have meaning and when $x$ is differentiable we can choose the rough sheet so that they coincide well with their definition in the  Riemann-Stieltjes case, which justifies the notation.
\end{proposition}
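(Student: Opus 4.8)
The plan is to prove two things separately: that each of the six formulas is meaningful — every $\Lambda_a$, and every nested $\Lambda_a\otimes_a 1$, is applied to a biincrement lying in the kernel of the relevant coboundary and having Hölder regularity strictly larger than $1$ in the $a$-direction — and that, in the smooth case with the rough sheet chosen as in the Remark, each formula evaluates to the honest iterated boundary integral. The six cases are structurally identical, so I would carry out the argument in detail only for $\int_a y\int_{\hat a}\dd x$ and for $\int_a y\int_{\hat a}\dd x\dd x$ and note that the other four follow verbatim after replacing $A^x,B^{xx}_a,C^{xx},D^{xx}_a,E^{xxx}_a$ by the correspondingly decorated objects. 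One identity is used throughout: applying $\der_a$ to the controlled expansion $\der_a y = y^{x_a}\der_a x + y^{\sharp a}$ and using $\der_a\der_a = 0$, $\der_a\der_a x = 0$ and the graded Leibniz rule of Proposition~\ref{difrul} yields $\der_a y^{\sharp a} = \der_a y^{x_a}\,\der_a x$.

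For $\int_a y\int_{\hat a}\dd x$, set $h = y^{\sharp a}A^x + \der_a y^{x_a}B^{xx}_a$. Both summands are products of Hölder biincrements whose combined $a$-direction exponents are of order $3\alpha$ (of order $3\beta$ when $a=2$), hence exceed $1$ precisely because $\alpha,\beta>1/3$, so the regularity required by Lemma~\ref{lem2} is present; and computing $\der_a h$ from $A^x=\der x$ (so $\der_a A^x=0$), the relation $\der_a B^{xx}_a=(\der_a x)A^x$, the Leibniz rule and $\der_a y^{\sharp a}=\der_a y^{x_a}\der_a x$, the terms cancel in pairs and $\der_a h=0$, so $\Lambda_a h$ is well defined. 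For $\int_a y\int_{\hat a}\dd x\dd x$ the check has two layers. First, the inner argument $g = y^{\sharp a}D^{xx}_a + \der_a y^{x_a}E^{xxx}_a$ must be annihilated by $1\otimes_a\der_a$, which is immediate from $(1\otimes_a\der_a)D^{xx}_a = 0$ and $(1\otimes_a\der_a)E^{xxx}_a = 0$ since $y^{\sharp a}$ and $\der_a y^{x_a}$ enter the first tensor slot; and by $\der_a\otimes_a 1$, for which one applies the Leibniz rule on the first slot together with $\der_a y^{\sharp a}=\der_a y^{x_a}\der_a x$ and the relation $(\der_a\otimes_a 1)E^{xxx}=\der_a x\otimes_a D^{xx}_a$, so that the two $D^{xx}_a$-contributions cancel. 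The combined $a$-exponent of $g$ is again of order $3\alpha>1$, so $\Lambda_a\otimes_a 1$ applies. Second, one must see that the outer bracket $\der_a y\,C^{xx} + y^x\mu_a E^{xxx}_a + \mu_a(\Lambda_a\otimes_a 1)g$ lies in $\ker\der_a$: push $\der_a$ through the two $\mu_a$'s by the commutation identity $\der_a\mu_a = \mu_a(\der_a\otimes_a 1) - \mu_a(1\otimes_a\der_a)$ of Lemma~\ref{lemma:mu-lambda-commute}, then invoke $\der_a C^{xx} = \mu_a D^{xx}_a$, the identities $(1\otimes_a\der_a)E^{xxx}_a=0$ and $(\der_a\otimes_a 1)E^{xxx}=\der_a x\otimes_a D^{xx}_a$, the relation $(\der_a\otimes_a 1)(\Lambda_a\otimes_a 1)g = g$, and the controlled expansion of $y$; everything telescopes to $0$. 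I expect this last telescoping — matching every term across two nested layers of $\mu_a$ and $\Lambda_a$ while keeping the graded-Leibniz signs straight — to be the main obstacle of the proof; the remaining four formulas demand the same bookkeeping with the analogously decorated relations of Hypothesis~\ref{hyp:chen-2d}.

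For the identification with Riemann--Stieltjes integrals, take $x$ smooth and the rough sheet given by the iterated integrals of the Remark, so that $A^x,B^{xx}_a,C^{xx},D^{xx}_a,E^{xxx}_a,\dots$ are honest iterated integrals and $y,y^{x_a},y^{\sharp a}$ are smooth; let $J_a = \int_a y\int_{\hat a}\dd x$ be the genuine boundary integral. Being a boundary integral, $J_a$ is exact in the $a$-direction, so $\der_a J_a = 0$; and since $x$ is smooth, $J_a - yA^x - y^{x_a}B^{xx}_a$, regarded as a function of its $a$-direction arguments with the others frozen, lies in $\CC_1^{1+}$. Computing $\der_a(J_a - yA^x - y^{x_a}B^{xx}_a) = -\der_a(yA^x + y^{x_a}B^{xx}_a)$ from $\der_a A^x = 0$, $\der_a B^{xx}_a = (\der_a x)A^x$ and the controlled expansion of $y$ returns $y^{\sharp a}A^x + \der_a y^{x_a}B^{xx}_a$, so the one-dimensional $\Lambda$-inversion of Corollary~\ref{cor:lambda-inversion}, applied in the $a$-direction, forces $J_a - yA^x - y^{x_a}B^{xx}_a = \Lambda_a(y^{\sharp a}A^x + \der_a y^{x_a}B^{xx}_a)$, which is formula~(1). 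For $\int_a y\int_{\hat a}\dd x\dd x$ one repeats the argument with an additional inner use of Corollary~\ref{cor:lambda-inversion} after the splitting $S_a$ — exactly the chain of identities carried out in Section~\ref{sec:2-d-dissection} for the integrand $\varphi'(x)$, now run with a general controlled $y$ in place of it — observing only that in the smooth setting every object on which $\Lambda_a$ or $\Lambda_a\otimes_a 1$ acts is $\CC_1^{1+}$ in the $a$-direction and is therefore recovered from its $\der_a$ (respectively $(\der_a\otimes_a 1)$) image. The remaining formulas follow by the same computation after substituting $A^\omega$ for $A^x$ and replacing one or both copies of $x$ by $\omega$.
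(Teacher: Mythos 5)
Your proposal is essentially correct and lines up with the paper's own argument, but you organize it somewhat differently. The paper proves each formula by first working in the smooth case: it expands the honest iterated boundary integral, identifies the remainder $r$ (respectively $\mathscr H$), computes $\der_a r$, applies the one-dimensional $\Lambda$-inversion of Corollary~\ref{cor:lambda-inversion} in the $a$-direction, and only then remarks in one sentence that the resulting formula persists under Hypothesis~\ref{hyp:chen-2d} alone. You instead lead with the abstract verification — checking directly from the algebraic relations that the arguments of $\Lambda_a$ and $\Lambda_a\otimes_a 1$ lie in the appropriate kernel (via the Leibniz rule, the identity $\der_a y^{\sharp a}=\der_a y^{x_a}\der_a x$, and Lemma~\ref{lemma:mu-lambda-commute}) and have exponent $>1$ — and only afterwards run the smooth-case identification, which is word-for-word the paper's computation. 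Your reorganization makes explicit the check that the paper compresses into the remark ``thanks to the Lemma~\ref{lemma:mu-lambda-commute} it also satisfies the required algebraic conditions,'' which is where a reader unfamiliar with the argument would get stuck; on the other hand, you do not carry the nested telescoping of formula (2) through to the end, openly flagging it as the main technical burden, which is exactly the step the paper also suppresses. Two cosmetic slips worth fixing: the remainder $J_a - yA^x - y^{x_a}B^{xx}_a$ is a $2$-increment in the $a$-direction and so belongs to $\CC_2^{1+}$, not $\CC_1^{1+}$; and you may want to record that $\der_a h = 0$ for $h = y^{\sharp a}A^x + \der_a y^{x_a}B^{xx}_a$ follows precisely because both applications of Leibniz produce the same degree-$4$ term $\der_a y^{x_a}\,\der_a x\,A^x$ with opposite signs, relying on associativity of the $\CCC$-product.
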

\begin{proof}
We will only prove the first two formula, for the others we have identical proofs. Let now $x$ a smooth sheet and $y\in\mathscr Q_x^{\alpha,\beta}$, then we have easily the following expansion 
\begin{equation*} 
\begin{split}
\left(\int_1 y\int_{2} \dd x\right)_{s_{1}s_{2}t_{1}t_{2}} &\coloneq \iint_{(s_{1},t_{1})}^{(s_{2},t_{2})} y_{s{t_1}} \dd_{st}x_{st}
= \int_{s_1}^{s_2}  y_{s{t_1}}\dd_1\der_{2}  x_{st_{1}t_{2}}\\& = y_{s_{1}t_1}(A^{x})_{s_{1}s_{2}t_{1}t_2} +y^{x}_{s_{1}t_1} (B_{1}^{xx})_{s_{1}s_{2}t_{1}t_{2}} +\int_{s_{1}}^{s_2}  y^{\sharp1}_{s_{1}st_{1}}  \dd_ {s}\der_{2} x_{st_{1}t_2}
\end{split}
\end{equation*} 
where $A^x=\der x$ and $(B_1^{xx})_{s_1s_2t_1t_2}=\int_{s_1}^{s_2}\int_{t_1}^{t_2}\der_1x_{s_1st_1}\dd_{st}x_{st}$. Now is easy to check that  $r_{s_{1}s_{2}t_{1}t_{2}}\coloneq\int_{s_{1}}^{s_{2}}y^{\sharp1}_{s_{1}st_{1}}d_{s}\der_{2}x_{st_{1}t_{2}}\in\CC_{2,2}^{3\alpha,\beta}$ and $\der_{1}r=y^{\sharp1}\der x+\der_{1}y^{x_{1}}B^{xx}_{1}\in\CCC_{3,2}^{3\alpha,\beta}$. So finally we obtain :
$$
r=\Lambda_{1}[y^{\sharp1}\der x+\der_{1}y^{x_{1}}B^{xx}_{1}]
$$
and thus 
$$
\int_{1} y\int_{2} \dd x := yA^x + y^{x_1}B_{1}^{xx}+\Lambda_{1}[y^{\sharp 1}A^x + \der_{1} y^{x_{1}}B_{1}^{xx}]
$$
Now we remark that this formula is still valid when $x$ satisfy only the assumption~\eqref{hyp:chen-2d}. Now we will focus on the second equation which requires a different argument. A quick computation gives :
\begin{equation*}
\begin{split}
\left(\int_{1} y\int_{2}\dd x\dd x\right)_ {s_{1}s_{2}t_{1}t_{2}}&\coloneq \iint_{(s_{1},t_{1})}^{(s_{2},t_{2})}\iint_{(s_{1},t_{1})}^{(s,t)} y_{rt_{1}} \dd_{rr'}x_{rr'} \dd_{st}x_{st}
\\&=y_{s_{1}t_{1}}(C^{xx})_{s_{1}s_{2}t_{1}t_{2}}+\iint_{(s_{1},t_{1})}^{(s_{2},t_{2})}\iint_{(s_{1},t_{1})}^{(s,t)} \der_{1} y_{s_1rt_{1}} \dd_{rr'}x_{rr'} \dd_{st}x_{st}
\end{split}
\end{equation*}
The first term of this last equation is well understood, we will focus on the second term denoted by $ \mathscr H $ in the sequel. We observed that $\mathscr H\in\CCC_{2,2}^{3\alpha,2\beta}$, then:  
\begin{equation*}
\begin{split}
\der_{1}\mathscr H_{s_{1}s_{2}s_{3}t_{1}t_{2}}&= \der_{1}y_{s_{1}s_{2}t_{1}}C^{xx}_{s_{2}t_{1}s_{3}t_{2}}+y^{x}_{s_{1}t_{1}}\iint_{(s_{2},t_{1})}^{(s_{3},t_{2})}\iint_{(s_{1},t_{1})}^{(s_{2},t)}\der_{1} x_{s_{1}rt_{1}} d_{rr'}x_{rr'} d_{st}x_{st} \\&+\iint_{(s_{2},t_{1})}^{(s_{3},t)}\iint_{(s_{1},t_{1})}^{(s_{2},t)} y^{\sharp1}_{s_{1}rt_{1}}  d_{rr'}x_{rr'} d_{st}x_{st} . 
\end{split}
\end{equation*}
Now, if we put 
\begin{equation*}
\mathscr A_{s_{1}s_{2}s_{3}t_{1}t_{2}}:= \iint_{(s_{2},t_{1})}^{(s_{3},t)}\iint_{(s_{1},t_{1})}^{(s_{2},t)} y^{\sharp1}_{s_{1}rt_{1}}  d_{rr'}x_{rr'} d_{st}x_{st}
\end{equation*}
we obtain
\begin{equation*}
\begin{split}
(\der_{1}\otimes_{1} 1)\mathscr S_1 (\mathscr A_1)_{s_{1}s_{2}vs_{3}s_{4}t_{1}t_{2}} &=\iint_{(s_{3},t_{1})}^{(s_{4},t_{2})} \iint_{(s_{2},t_{1})}^{(v,t)}  y^{\sharp}_{s_{1}rt_{1}}-y^{\sharp1}_{s_{2}rt_{1}}\dd_{rr'}x_{rr'}\dd_{st}x_{st}\\
&=y_{s_{1}s_{2}t_{1}}^{\sharp1}\iint_{(s_{3},t_{1})}^{(s_{4},t_{2})}\iint_{(s_{2},t_{1})}^{(v,t)}\dd_{rr'}x_{rr'}\dd_{st}x_{st} \\&+\iint_{(s_{3},t_{1})}^{(s_{4},t_{2})}\iint_{(s_{2},t_{1})}^{(v,t)}\der_{1}y^{\sharp1}_{s_{1}s_{2}rt_{1}}\dd_{rr'}x_{rr'}\dd_{st}x_{st} 
\\&=y_{s_{1}s_{2}t_{1}}^{\sharp1}\iint_{(s_{3},t_{1})}^{(s_{4},t_{2})}\iint_{(s_{2},t_{1})}^{(v,t)}\dd_{rr'}x_{rr'}\dd_{st}x_{st}
\\&+\der_{1}y^{x}_{s_{1}s_{2}t_{1}}\iint_{(s_{3},t_{1})}^{(s_{4},t_{2})}\iint_{(s_{2},t_{1})}^{(v,t)}\der_{1}x_{s_{2}rt_{1}}\dd_{rr'}x_{rr'}\dd_{st}x_{st}
\end{split}
\end{equation*}
Then if we recall that the two last iterated integrals are denoted respectively by $D_{1}^{xx}$ and $E_{1}^{xxx}$ we obtain:
\begin{equation*}
\mathscr S_{1}\mathscr A_{1}=(\Lambda_{1}\otimes_{1}1)[y^{\sharp1}D_{1}^{xx}+\der_{1}y^{x_{1}}E_{1}^{xxx}]
\end{equation*}
then
\begin{equation*}
\mathscr H=\Lambda_{1} [\der_{1} yC^{xx} + y^{x_{1}}\mu_{1}E_{1}^{xxx} + \mu_{1}(\Lambda_{1}\otimes_{1} 1)(y^{\sharp 1}D_{1}^{xx} + \der_{1} y^x_{1} E_{1}^{xxx})]
\end{equation*}
This last equation gives us the second formula when $x$ is a smooth sheet, in the general case when $x$ satisfies only the Hypothesis~\ref{hyp:chen-2d}. It's easy to see that all terms or we apply $\Lambda_{1}$ enjoy the regularity and thanks to the Lemma~\ref{lemma:mu-lambda-commute} it also satisfy the required algebraic conditions (i.e. : $\der_{1} [yC^{xx} + y^{x_{1}}\mu_{1}E_{1}^{xxx} + \mu_{1}(\Lambda_{1}\otimes_{1} 1)(y^{\sharp 1}D_{1}^{xx} + \der_{1} y^x_{1} E_{1}^{xxx})]=0$) and this concludes the proof.
\end{proof}

\subsubsection{Controlled Sheet}
\label{sec:controlled-sheets}

\begin{definition}
Let $x\in\CCC_{1,1}$ such that $\der x\in\CCC_{2,2}^{\alpha,\beta}$, and assume the algebraic hypothesis~\ref{hyp:chen-2d} to be true then we define the space of the two parameter controlled sheet $\mathscr K_{x}^{\alpha,\beta}$  by $y\in\mathscr K_{x}^{\alpha,\beta}$ if :
\begin{enumerate}
\item$\der y = -y^{x}A^x-y^{\omega}A^{\omega}+\sum_{a=1,2}(\int_{a} y^{x}\int_{\hat a}\dd x +\int_{a} y^{\omega}\int_{\hat a}\dd\omega)+y^\sharp$
\item$y,y^{x} ,y^{\omega}\in\mathscr Q_{x}^{\alpha,\beta}$
\end{enumerate}
\end{definition}

\begin{theorem}\label{th:integral}
For $\alpha ,\beta>1/3, x\in\CCC_{1,1}$ such that $\der x\in\CCC_{2,2}^{\alpha,\beta}$ and assume that the hypothesis~\eqref{hyp:chen-2d} is true then for $y\in\mathscr K_{x}^{\alpha,\beta}$ we define the increment  $\iint y \dd x\in \CCC_{2,2}^{\alpha,\beta}$ and $\iint y\dd\omega\in\CCC_{2,2}^{\alpha,\beta}$ by:
$$
\iint y dx := -yA^{x}+ y^{x}C^{xx}+y^{\omega}C^{\omega x}+\sum_{a= 1,2}\left(\int_{a} y \int_{\hat a}\dd x +\int_{a} y^{x}\int_{\hat a}\dd x\dd x +\int_{a} y^{\omega}\int_{\hat a} d\omega \dd x\right)+r^{\flat}
$$
where
$$
r^{\flat,\omega} =\Lambda \left[\der y A^{x} -\der \left(-y^{x}C^{xx}-y^{\omega} C^{\omega x} +\sum_{a = 1,2}(\int_{a} y^{x}\int_{\hat a}\dd x\dd x+\int_{a} y^{\omega} \int_{\hat a} d\omega \dd x\right)\right]
$$
and
$$
\iint y d\omega := -yA^{\omega}+ y^{x}C^{x\omega}+y^{\omega}C^{\omega \omega}+\sum_{a= 1,2}\left(\int_{a} y \int_{\hat a}\dd \omega+\int_{a} y^{x}\int_{\hat a}\dd x\dd \omega +\int_{a} y^{\omega}\int_{\hat a} d\omega \dd \omega\right)+r^{\flat,\omega}
$$
with
$$
r^{\flat} =\Lambda \left[\der y A^{\omega} -\der \left(-y^{x}C^{x\omega}-y^{\omega} C^{\omega \omega} +\sum_{a = 1,2}(\int_{a} y^{x}\int_{\hat a}\dd x\dd \omega+\int_{a} y^{\omega} \int_{\hat a} d\omega \dd \omega\right)\right]
$$
Theses two formula are well defined moreover if $x$ is a smooth sheet then this two definition coincide with that given by the Riemann-Stieltjes theory of integration.
\end{theorem}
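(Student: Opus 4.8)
The plan is to transport to the plane the one–dimensional argument of Corollary~\ref{cor:lambda-inversion} together with the dissection of Section~\ref{sec:dissection}, using the cohomology of $(\CCC_*,\der)$ developed in Section~\ref{sec:two-dim}. Write $M:=-y^{x}C^{xx}-y^{\omega}C^{\omega x}+\sum_{a=1,2}\bigl(\int_{a}y^{x}\int_{\hat a}\dd x\dd x+\int_{a}y^{\omega}\int_{\hat a}\dd\omega\dd x\bigr)$, so that $r^{\flat}=\Lambda[\der y\,A^{x}-\der M]$ and $\der y\,A^{x}-\der M\in\CCC_{3}$. To give the formula a meaning the only point is to check that $\der y\,A^{x}-\der M$ lies in the domain of $\Lambda$, i.e. that it is killed by $\der_{1}$ and by $\der_{2}$ (hence lies in $\cB\CCC_{3}$) and has Hölder exponent strictly larger than $1$ in each of the two directions. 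The annihilation is formal: since $A^{x}=\der x=\der_{1}\der_{2}x$ and $\der y=\der_{1}\der_{2}y$ we get $\der_{1}A^{x}=\der_{1}\der y=0$ from $\der_{1}\der_{1}=0$, hence $\der_{1}(\der y\,A^{x})=0$ by the two–dimensional Leibniz rule for the $\CCC_{*,*}$ product; likewise $\der_{1}\der M=\der_{1}\der_{1}\der_{2}M=0$, and symmetrically for $\der_{2}$. So $\der y\,A^{x}-\der M\in\ker\der_{1}\cap\ker\der_{2}$.

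The real work, and the step where both $\alpha,\beta>1/3$ and the algebraic relations of Hypothesis~\ref{hyp:chen-2d} are essential, is the regularity estimate. I would expand $\der y\,A^{x}$ using the controlled–sheet identity $\der y=-y^{x}A^{x}-y^{\omega}A^{\omega}+\sum_{a}\bigl(\int_{a}y^{x}\int_{\hat a}\dd x+\int_{a}y^{\omega}\int_{\hat a}\dd\omega\bigr)+y^{\sharp}$, and expand $\der M$ using the $\CCC_{*,*}$–analogue of Proposition~\ref{difrul}, the explicit formulas of Proposition~\ref{boundry} for $\int_{a}y^{x}\int_{\hat a}\dd x\dd x$ and $\int_{a}y^{\omega}\int_{\hat a}\dd\omega\dd x$, and Lemma~\ref{lemma:mu-lambda-commute}. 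The mechanism is that every summand carrying Hölder exponent $\le1$ in some direction — the ``level $\le 2$'' products such as $y^{x}A^{x}A^{x}$, $y^{\omega}A^{\omega}A^{x}$, $\der_{a}y^{x}\,B_{a}^{xx}A^{x}$, $y^{x}\mu_{a}E_{a}^{xxx}$ and their $\omega$–labelled analogues — reappears once more with the opposite sign once the Chen–type identities are invoked. For example relations~(3)--(4) give $\der C^{xx}=\der_{2}\mu_{1}D_{1}^{xx}=\mu_{1}(A^{x}\otimes_{1}A^{x})=A^{x}A^{x}$, so the term $y^{x}\der C^{xx}$ produced by $\der(y^{x}C^{xx})$ cancels the term $(-y^{x}A^{x})A^{x}$ produced by $\der y\,A^{x}$; relations~(2) and~(5) do the same for the terms attached to $B^{xx}_{a}$ and $E^{xxx}_{a}$, and the $\omega$–labelled relations~(6)--(10) for the $y^{\omega}$ half. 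After all cancellations only terms built from the ``defects'' $y^{\sharp}$, $y^{\sharp a}$ (the remainders of $y,y^{x},y^{\omega}$ as controlled paths in each direction), $\der_{a}y^{x}$, $\der_{a}y^{\omega}$ and $\der_{a}$ of the $\Lambda_{a}$–parts of the boundary integrals survive, each paired with one of $A^{x},A^{\omega},B^{xx}_{a},B^{x\omega}_{a},C^{xx},D^{xx}_{a},E^{xxx}_{a},\dots$; inspecting the Hölder exponents — the worst ones are of the form $3\alpha$ in the first direction and $3\beta$ in the second — shows the resulting $\CCC_{3}$–element lies in $\CCC_{3}^{z_{1},z_{2}}$ with $z_{1},z_{2}>1$, precisely because $\alpha,\beta>1/3$ (this is the exact place where $1/3$ enters, just as in the one–dimensional integral built from a second–order object). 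Hence $r^{\flat}=\Lambda[\cdots]$ is well defined and, by the continuity bound for the two–dimensional $\Lambda$, lies in $\CCC_{2,2}^{z_{1},z_{2}}$ for some $z_{1},z_{2}>1$, in particular in $\CCC_{2,2}^{\alpha,\beta}$. Since $yA^{x}\in\CCC_{2,2}^{\alpha,\beta}$, $y^{x}C^{xx},y^{\omega}C^{\omega x}\in\CCC_{2,2}^{2\alpha,2\beta}\subset\CCC_{2,2}^{\alpha,\beta}$ and the boundary integrals are in $\CCC_{2,2}^{\alpha,\beta}$ by Proposition~\ref{boundry}, we conclude $\iint y\,\dd x\in\CCC_{2,2}^{\alpha,\beta}$. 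A similar but separate computation — applying $\der_{1}$ and $\der_{2}$ directly to the right–hand side of the formula and using Hypothesis~\ref{hyp:chen-2d} and Lemma~\ref{lemma:mu-lambda-commute} once more — shows $\iint y\,\dd x\in\ker\der_{1}\cap\ker\der_{2}$, so it is an exact biincrement whose Riemann sums telescope in the sense of Corollary~\ref{cor:2d-integration}.

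It remains to identify the formula with the Riemann--Stieltjes integral when $x$ is smooth; this is the two–dimensional version of the dissection of Section~\ref{sec:2-d-dissection}, carried out for an arbitrary smooth controlled sheet $y$ rather than for $\varphi(x)$. Starting from eq.~\eqref{eq:int-decomp} applied to $\iint y\,\dd x$ one peels off the boundary integrals $\int_{a}y\int_{\hat a}\dd x$ (by the computation in the proof of Proposition~\ref{boundry}), then substitutes $\der y=-y^{x}A^{x}-y^{\omega}A^{\omega}+\cdots$ and applies eq.~\eqref{eq:int-decomp} again to the resulting $\iint\dd y\,\dd x$–type terms, peeling off $\int_{a}y^{x}\int_{\hat a}\dd x\dd x$ and $\int_{a}y^{\omega}\int_{\hat a}\dd\omega\dd x$; along the way the iterated integrals of $x$ that appear are recognized as the objects $A^{x},A^{\omega},B^{xx}_{a},C^{xx},D^{xx}_{a},E^{xxx}_{a},\dots$ listed in the Remark following Hypothesis~\ref{hyp:chen-2d}. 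The three–fold iterated integral left at the end belongs to $\CCC_{3}^{z_{1},z_{2}}$ with $z_{1},z_{2}>1$ by the elementary size bounds on iterated integrals of smooth functions, so by the $\CCC_{3}$–version of Corollary~\ref{cor:lambda-inversion} it equals $\Lambda$ applied to its image under $\der$, which after collecting terms is exactly $r^{\flat}$; reassembling the pieces gives the displayed formula, and since in the smooth case every step is an identity between the classical integral and the right–hand side, the two notions agree. The statement for $\iint y\,\dd\omega$ is obtained verbatim, using relations~(11)--(16) of Hypothesis~\ref{hyp:chen-2d} in place of~(1)--(5) and the boundary integrals $\int_{a}y\int_{\hat a}\dd\omega$, $\int_{a}y^{x}\int_{\hat a}\dd x\dd\omega$, $\int_{a}y^{\omega}\int_{\hat a}\dd\omega\dd\omega$.

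The main obstacle is the bookkeeping in the second paragraph: one must track simultaneously the $\der_{1}/\der_{2}$ bigrading, the $\otimes_{a}/\mu_{a}/\Lambda_{a}$ structure carried by the $D$– and $E$–terms, and the Hölder exponents of every summand, and verify that invoking all of Hypothesis~\ref{hyp:chen-2d} together with Lemma~\ref{lemma:mu-lambda-commute} removes every term of exponent $\le1$. Conceptually this is no more than the analysis of Section~\ref{sec:2-d-dissection}, but it is genuinely long, and getting all the signs and the $\otimes_{a}$–placements right is where a careful proof has to spend its effort.
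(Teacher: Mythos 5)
Your proposal is correct and matches the paper's proof: both derive the displayed formula in the smooth case from the two-dimensional decomposition and the boundary integrals of Proposition~\ref{boundry}, then justify the general case by checking that the argument of $\Lambda$ is annihilated by $\der_1,\der_2$ and lies in $\CCC_{3,3}^{3\alpha,3\beta}$ after the cancellations enforced by Hypothesis~\ref{hyp:chen-2d}. The only difference is that the paper carries out the cancellation bookkeeping explicitly, arriving at a closed-form expression for $r$, whereas you describe the mechanism qualitatively; the cancellations you single out (e.g.\ $y^{x}\der C^{xx}$ against $(-y^{x}A^{x})A^{x}$) are exactly the ones the paper's computation produces.
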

\begin{proof}
Let $x$ a differentiable sheet then 
\begin{equation*}
\iint y \dd x=-y\der x+\sum_{a=1,2}\int_{a}y\int_{\hat a}\dd x+\iint \dd y \dd x .
\end{equation*}
Now using the fact that $y\in\mathscr K_{x}^{\alpha,\beta}$ we have 
\begin{equation*}
\iint \dd y\dd x=-y^{x}C^{xx}-y^{\omega}C^{\omega x}\sum_{a=1,2}\left(\int_{a}y^{x}\int_{\hat a}\dd x\dd x+\int_{a}y^{\omega}\int_{\hat a}\dd\omega \dd x\right)+\iint y^{\sharp}\dd x .
\end{equation*}
Finally if we remark that $\iint y^{\sharp}dx\in\CCC_{2,2}^{3\alpha,3\beta}$ we obtain 
\begin{equation*}
\iint y^{\sharp}dx:=\Lambda\left[\der\iint \dd y\dd x-\der\left(-y^{x}C^{xx}-y^{\omega}C^{\omega x}\sum_{a=1,2}(\int_{a}y^{x}\int_{\hat a}\dd x\dd x+\int_{a}y^{\omega}\int_{\hat a}\dd\omega \dd x)\right)\right]
\end{equation*}
This give us the formula when $x$ is smooth. Now we have to check that last formula have meaning in general case in other word we must show that we can apply $\Lambda$ for $r:=\der\iint \dd y\dd x-\der(-y^{x}C^{xx}-y^{\omega}C^{\omega x}\sum_{a=1,2}(\int_{a}y^{x}\int_{\hat a}\dd x\dd x+\int_{a}y^{\omega}\int_{\hat a}\dd\omega \dd x))$ , for this we will make some preliminary computation.
$$
\der\iint \dd y\dd x=\der y\der x
$$
$$
\der\left(y^{x}C^{xx}\right)=\der_{2}(-\der_{1}y^{x}C^{xx}+y^{x}\der_{1}C^{xx})=\der y^{x}C^{xx}+y^{x}\der x\der x-\der_{1}y^{x}\der_{2}C^{xx}-\der_{2}y^{x}\der_{1}C^{xx}
$$
\begin{equation*}
\begin{split}
\der\int_{a}d_{a}y^{x}\int_{\hat a}\dd x\dd x&=\der_{\hat a}\left(\der_{a}y^{x}C^{xx}+y^{xx_{a}}\mu_{a}E^{xxx}_{a}+(\Lambda_{a}\otimes_{a}1)\left[\der_{a}y^{xx_{a}}E_{a}^{xxx}+y^{x\sharp a}D_{a}^{xx}\right]\right)
\\&=-\der y^{x}C^{xx}+\der_{a}y^{x}\der_{\hat a}C^{xx}-\der_{\hat a}y^{xx_{a}}\mu_{a}E_{a}^{xxx}+y^{xx_{a}}(B_{a}^{xx}\der x+\mu_{a}F_{a}^{xxx})\\&+\mu_{a}(\Lambda_{a}\otimes_{a}1)[y^{x\sharp a}\der x\otimes_{a}\der x+ \der_{a}y^{xx_{a}}B_{a}^{xx}\otimes_{a}\der x-\der_{\hat a}y^{x\sharp a}D_{a}^{xx}+\der_{a}y^{xx_{a}}F_{a}^{xxx}
\\&-\der y^{xx_{a}}E_{a}^{xxx}]\\&=-\der y^{x}C^{xx}+\der_{a}y^{x}\der_{\hat a}C^{xx}-\der_{\hat a}y^{xx_{a}}\mu_{a}E_{a}^{xxx}+y^{xx_{a}}\mu_{a}F_{a}^{xxx}+(\int_{a}d_{a}y^{x}\int_{\hat a}dx)\der x\\&+(\Lambda_{a}\otimes_{a}1)\left[-\der_{\hat a}y^{x\sharp a}D_{a}^{xx}+\der_{a}y^{xx_{a}}F_{a}^{xxx}-\der y^{xx_{a}}E_{a}^{xxx}\right]
\end{split}
\end{equation*}
It is easy to see that  we have a similar equation if we replace $\dd x$ by $\dd\omega:=\dd_{1}x \dd_{2}x$. Finally we obtain 
\begin{equation*}
\begin{split}
r=&y^{\sharp}\der x+\der y^{x}C^{xx}+\der y^{\omega}C^{\omega x}
+\sum_{a=1,2}(\der_{\hat a}y^{xx_{a}}\mu_{a}E_{a}^{xxx}-y^{xx_{a}}\mu_{a}F_{a}^{xxx}+\der_{\hat a}y^{\omega x_{a}}\mu_{a}E_{a}^{x\omega x}
-y^{\omega x_{a}}\mu_{a}F_{a}^{x\omega x}\\&+(\Lambda_{a}\otimes_{a}1)[\der_{\hat a}y^{x\sharp a}D_{a}^{xx}-\der_{a}y^{xx_{a}}F_{a}^{xxx}
+\der y^{xx_{a}}E_{a}^{xxx}+\der_{\hat a}y^{\omega \sharp a}D_{a}^{\omega x}-\der_{a}y^{\omega x_{a}}F_{a}^{x\omega x}+\der y^{\omega x_{a}}E_{a}^{x\omega x}])
\end{split}
\end{equation*}
and this allow us to say that $r\in\CCC_{3,3}^{3\alpha,3\beta}$ which finishes the proof. 
\end{proof}

\begin{remark}
We observe that this definition of the two parameter integral is not consistent with the definition of the controlled sheet, indeed if $y\in\mathscr K_x^{\alpha,\beta}$ then the element $z\in \CCC_{1,1}$ defined by $z_{0t}=z_{s0}=0$ and $\der z=\iint y\dd x$ is not in general a controlled sheet.
\end{remark}

\subsection{Stability under mapping by regular functions}

\label{sec:stability}
 
In this section we show that $\varphi(x)\in \mathscr K^{\alpha,\beta}_{x}$ under more algebraic and geometric assumptions. To prove this result we will proceed by linear approximation the problem is that the terms which contain $\dd_1x\dd_2x$ does not approximate well to bypass this difficulty we will start by giving an alternative expression for the space $\mathscr K^{\alpha,\beta}_x$ 
\begin{hypothesis}
\label{hyp:algebric-assumption}
Let $\alpha , \beta >1/3$ and $a=1,2$.  Assume that there exist 
$$
G_{a}^{xx}\in\CC_{2,2}^{\alpha,\beta} ,\quad H_{a}^{xx}\in\CC_{2,2}^{\hat a\alpha,a\beta}, \quad I_{a}^{xx}\in\CC_{2,2}^{\hat a\alpha,a\beta} ,\quad J_{a}^{xx}\in\CC_{2,2}^{2\alpha,2\beta} ,\quad
$$
with the convention if $a=1$ then $\hat a=2$ and conversely. And we assume that this object satisfy the following relation:
\begin{enumerate}
\item $\der_{a}G_{a}^{xx}=\der_{a}I_{\hat a}^{xx}=0$
\item$\der_{a}H_{a}^{xx}=G_{a}^{xx}\der_{a}x$
\item$\der_{a} J_{a}^{xx}=I_{\hat a}^{xx}\der_{a}x$
\item $I_{a}^{xx}=B_{a}^{xx}+C^{xx}$
\item $A^{\omega}=1/2\der x^{2}-\iint xdx=G_{a}^{xx}-I_{a}^{xx}$
\item $B_{a}^{x\omega}=H_{a}^{xx}- J_{a}^{xx}$
\end{enumerate}
\end{hypothesis}
\begin{remark}
In regular case this last iterated integral are given by :
\begin{enumerate}
\item$(G_{1}^{xx})_{s_{1}s_{2}t_{1}t_{2}}=\int_{s_{1}}^{s_{2}}\der_{2}x_{st_{1}t_{2}}\dd_{s}x_{st_{2}}$
\item$(H_{1}^{xx})_{s_{1}s_{2}t_{1}t_{2}}=\int_{s_{1}}^{s_{2}}\der_{1}x_{s_{1}st_{1}}\der_{2}x_{st_{1}t_{2}}\dd_{s}x_{st_{2}}$
\item$(I_{2}^{xx})_{s_{1}s_{2}t_{1}t_{2}}=\iint_{(s_{1},t_{1})}^{(s_{2},t_{2})}\der_{2}x_{st_{1}t}\dd_{st}x_{st}$
\item$(J_{1}^{xx})_{s_{1}s_{2}t_{1}t_{2}}=\int_{s_{1}}^{s_{2}}\der_{1}x_{s_{1}st_{1}}\int_{t_{1}}^{t_{2}}\der_{2}x_{st_{1}t}\dd_{st}x_{st}$
\end{enumerate}
\end{remark}
Now under these assumption we give an alternative expression for the space $\mathscr K_{x}^{\alpha,\beta} $.
\begin{proposition}
Assume the Hypothesis~\ref{hyp:algebric-assumption} and~\ref{hyp:chen-2d} are true then  for $y\in\mathscr Q_{x}^{\alpha,\beta}$ we have : 
\begin{enumerate}
\item$\int_{a} y\der_{\hat a}x\dd_{a}x:=yG_{a}^{xx}+y^{xa}H_{a}^{xx}+\Lambda_{a}[y^{\sharp a}G_{a}^{xx}+\der_{a}y^{xa}H_{a}^{xx}]$
\item$\int_{a}y\int_{\hat a}\der_{\hat a}xdx:=yI_{\hat a}^{xx}+y^{xa} J_{a}^{xx}+\Lambda_{a}[y^{\sharp a}I_{\hat a}^{xx}+\der_{a}y^{xa}J_{a}^{xx}]$
\item$\int_{a}y\int_{\hat a}\dd\omega=\int_{a} y\der_{\hat a}xd_{a}x-\int_{a}y\int_{\hat a}\der_{\hat a}xdx$
\end{enumerate}
where 1 an 2 are well defined, moreover we can choose $G_a$, $H_a$ and $J_a$ such that the rough-integrals 1 and 2 coincide with their definition in the Riemann-Stieltjes case.
\end{proposition}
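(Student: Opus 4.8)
The plan is to imitate the proof of Proposition~\ref{boundry}. First I would treat a smooth sheet $x$, where the left-hand sides of items~1 and~2 are honest Riemann--Stieltjes integrals, and derive the stated closed formulas; then I would observe that these formulas continue to make sense once only Hypotheses~\ref{hyp:chen-2d} and~\ref{hyp:algebric-assumption} are assumed; item~3 is then a purely algebraic corollary.

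For item~1, take $a=1$ (the case $a=2$ is symmetric). Starting from the smooth integral $\int_{s_1}^{s_2}y_{st_1}\,\der_2x_{st_1t_2}\,\dd_sx_{st_2}$, I would insert the controlled expansion $y_{st_1}=y_{s_1t_1}+(\der_1y)_{s_1st_1}$ with $\der_1y=y^{x_1}\der_1x+y^{\sharp1}$ and read off the three resulting pieces: $y_{s_1t_1}$ times the level--one boundary increment $\int_1\der_2x\,\dd_1x=G_1^{xx}$; $y^{x_1}_{s_1t_1}$ times $\int_1\der_1x\,\der_2x\,\dd_1x=H_1^{xx}$; and a remainder $r_{s_1s_2t_1t_2}=\int_{s_1}^{s_2}y^{\sharp1}_{s_1st_1}\,\der_2x_{st_1t_2}\,\dd_sx_{st_2}$. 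A routine estimate gives $r\in\CCC_{2,2}^{3\alpha,\beta}$, so $r$ lies in the range of $\Lambda_1$ in the first direction; using the Leibniz rule for $\der_1$, the controlled structure of $y$, and the relations $\der_1G_1^{xx}=0$, $\der_1H_1^{xx}=G_1^{xx}\der_1x$ of Hypothesis~\ref{hyp:algebric-assumption}(1)--(2), I would compute $\der_1r=y^{\sharp1}G_1^{xx}+\der_1y^{x_1}H_1^{xx}$ and conclude, by Corollary~\ref{cor:lambda-inversion} applied in the first direction (Lemma~\ref{lem2}), that $r=\Lambda_1[y^{\sharp1}G_1^{xx}+\der_1y^{x_1}H_1^{xx}]$. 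Item~2 is word for word the same, with $G_a^{xx}$ replaced by $I_{\hat a}^{xx}=\int_{\hat a}\der_{\hat a}x\,\dd x$ and $H_a^{xx}$ by $J_a^{xx}$, now invoking $\der_aI_{\hat a}^{xx}=0$ and $\der_aJ_a^{xx}=I_{\hat a}^{xx}\der_ax$ from Hypothesis~\ref{hyp:algebric-assumption}(1),(3). To justify the ``Riemann--Stieltjes'' clause I would also check, via the two-parameter dissection rules (the analogue of Proposition~\ref{dissec}), that the explicit iterated integrals listed in the Remark preceding the statement do satisfy all of the relations (1)--(6) of Hypothesis~\ref{hyp:algebric-assumption} and reproduce the $A^\omega,B_a^{x\omega}$ of Hypothesis~\ref{hyp:chen-2d}.

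For a rough $x$ satisfying only Hypotheses~\ref{hyp:chen-2d} and~\ref{hyp:algebric-assumption}, the regularity declared there for $G_a^{xx},H_a^{xx},I_{\hat a}^{xx},J_a^{xx}$ combined with the controlled structure of $y\in\mathscr Q_x^{\alpha,\beta}$ shows that $y^{\sharp a}G_a^{xx}+\der_ay^{x_a}H_a^{xx}$ and $y^{\sharp a}I_{\hat a}^{xx}+\der_ay^{x_a}J_a^{xx}$ have H\"older exponent $3\alpha$ (for $a=1$) or $3\beta$ (for $a=2$) in the direction in which $\Lambda_a$ acts, hence lie in its domain by Lemma~\ref{lem2}; this makes the formulas of items~1 and~2 meaningful. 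Using once more the relations (1)--(3) together with $\der_a\Lambda_a=1$ I would then verify that $\der_a$ of the full right-hand side vanishes, so the object defined genuinely behaves as a boundary increment. Item~3 is then immediate: substituting $A^\omega=G_a^{xx}-I_{\hat a}^{xx}$ and $B_a^{x\omega}=H_a^{xx}-J_a^{xx}$ (Hypothesis~\ref{hyp:algebric-assumption}(5)--(6)) into the formula for $\int_ay\int_{\hat a}\dd\omega$ of Proposition~\ref{boundry} and using linearity of $\Lambda_a$ splits it exactly into (item~1)$\,-\,$(item~2).

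The main obstacle is the regularity bookkeeping: one must track carefully the H\"older exponents of the various products of biincrements in order to be sure that each argument of $\Lambda_a$ has exponent strictly above $1$ in the direction in which $\Lambda_a$ is applied, and --- for the smooth--sheet consistency --- verify by hand that the list of iterated integrals of the Remark satisfies the entire system (1)--(6). The algebraic hypothesis is needed precisely to guarantee, through the cancellation $\der_a\Lambda_a=1$, that the objects produced in the rough setting are annihilated by $\der_a$ and hence qualify as genuine boundary integrals.
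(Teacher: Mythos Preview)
Your proposal is correct and follows essentially the same route as the paper's own proof: work first in the smooth setting, expand $y$ along its controlled decomposition to extract the leading terms $yG_a^{xx}$, $y^{x_a}H_a^{xx}$ (resp.\ $yI_{\hat a}^{xx}$, $y^{x_a}J_a^{xx}$), identify the remainder $r$, compute $\der_a r$ via the algebraic relations of Hypothesis~\ref{hyp:algebric-assumption} to get $r=\Lambda_a[\cdots]$, and finally observe that the resulting closed formula still makes sense under the abstract hypotheses alone; item~3 then drops out of relations (5)--(6). The paper does exactly this (only for item~1 with $a=1$, declaring item~2 analogous and item~3 an ``immediate consequence'' of (5)--(6)), so your write-up is in fact more complete than the original, in particular in spelling out the regularity bookkeeping and the check that $\der_a$ annihilates the full right-hand side.
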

\begin{proof}
 We well only proof the first assertion (the proof of second assertion is similar). We assume that $x$ is smooth then we have :
$$
\left(\int_{1} y\der_{2}x\dd_{a}x\right)_{s_{1}s_{2}t_{1}t_{2}}:=\int_{s_{1}}^{s_{2}}y_{st_{1}}\der_{2}x_{st_{1}t_{2}}\dd_{s}x_{st}=y_{s_{1}t_{1}}(G_{1}^{xx})_{s_{1}s_{2}t_{1}t_{2}}+y^{x1}(H_{1}^{xx})_{s_{1}s_{2}t_{1}t_{2}}+r_{s_{1}s_{2}t_{1}t_{2}}
$$
where 
$$
r_{s_{1}s_{2}t_{1}t_{2}}:=\int_{s_{1}}^{s_{2}}y^{\sharp1}_{st_{1}t_{2}}\der_{2}x_{st_{1}t_{2}}\dd_{s}x_{st_{2}}
$$
Now is clear that $r\in\CC_{2,2}^{3\alpha,\beta}$ and :
$$
\der_{1}r=y^{\sharp1}G_{1}^{xx}+\der_{1}y^{x1}H_{1}^{xx}\in\CC_{3,2}^{3\alpha,\beta}
$$ 
and we get  
$$
r=\Lambda_{1}[y^{\sharp1}G_{1}^{xx}+\der_{1}y^{x1}H_{1}^{xx}].
$$
The proof of the last assertion is immediate consequence of the  assumption 5 and 6 of the hypothesis~\ref{hyp:algebric-assumption}. \end{proof}
This proposition allow us to say that $y\in \mathscr K_{x}^{\alpha,\beta}$ if and only if :
\begin{enumerate}
\item$y\in\mathscr Q_{x}^{\alpha,\beta}$
\item$\der y=-y^{x}\der x-y^{\omega}\left(1/2\der x^{2}-x\der x\right)+\sum_{a=1,2}\left(\int_{a}y^{x}\int_{\hat a}\dd x+\int_{a}y^{\omega}\der_{\hat a}x\dd_{a}x\right)+y^{\sharp}$
\end{enumerate}
where $
 y^{x},y^{\omega}\in\mathscr Q_{x}^{\alpha,\beta},\quad y^{\sharp}\in\CC_{2,2}^{\alpha,\beta}$
Now let us describe our strategy to prove that   $\varphi(x)\in \mathscr K_{x}^{\alpha,\beta}$. Let introduce the approximation:
$$
x^{1}_{st}=x_{0t}+s(x_{1t}-x_{0t})\quad,\quad x^{2}_{st}=x_{s0}+t(x_{s1}-x_{s0})
$$
$$
x^{12}_{st}=x_{00}+s(x_{10}-x_{00})+t(x_{01}-x_{00})+st\der x_{\square}
$$
where the $\square$ is the unit square. Now is clear if $\varphi\in C^{4}(\mathbb R,\mathbb R)$ then we have 
\begin{equation}
\label{eq:stab-first-exp}
\begin{split}
\der\varphi(x^{12})=&-\varphi'(x^{12})\der x-\varphi''(x^{12})(1/2\der(x^{12})^{2} -x^{12}\der x^{12})\\&+\sum_{a=1,2}\left(\int_{a}\varphi'(x^{12})\int_{\hat a}\dd x^{12}+\int_{a}\varphi''(x^{12})\der_{\hat a}x^{12}\dd_{a}x^{12}\right)+R^{12}
\end{split}
\end{equation}
where 
\begin{equation}
\label{rem-12}
\begin{split}
R^{12}&=\iint\varphi''(x^{12})\dd x^{12}\dd x^{12}+\iint\varphi'''(x^{12})\dd_1x^{12}\dd_2x^{12}\dd x^{12}+\iint\varphi'''(x^{12})\dd x^{12}\dd_1x^{12}\dd_2x^{12}
\\&+\iint\varphi^{(iv)}(x^{12})\dd_1x^{12}\dd_2x^{12}\dd_1x^{12}\dd_2x^{12}-\varphi''(x^{12})\iint\dd x^{12}\dd x^{12}-\sum_{a\in\{1,2\}}\int_a\dd_a\varphi''(x^{12})\int_{\hat a}\der_{\hat a}x^{12}\dd x^{12}
\end{split}
\end{equation}
Now our goal is to give similar formula for $x^{a}$ and to compare them with the same expansion for $x$. To do that we need to give a meaning for the boundary integral  appearing in the expansion of $x^a$ and for that we construct $B_{a}^{x^{\hat a}x^{\hat a}}$ , $G_{a}^{x^{\hat a}x^{\hat a}}$ and $H_{a}^{x^{\hat a}x^{\hat a}}$. But by formal computation we see that  
$$
(B_{2}^{x^{1}x^{1}})_{s_{1}s_{2}t_{1}t_{2}}=(s_{2}-s_{1})\left(\int_{t_{1}}^{t_{2}}\der_{2} x_{0t_{1}t}\dd_{t}\der_{1}x_{01t}+s_{1}\int_{t_{1}}^{t_{2}}\der x_{01t_{1}t}\dd_{t}\der_{1}x_{01t}\right)
$$
Then we define $B_2^{x^1x^1}$ by the following formula :
$$
(B_{2}^{x^{1}x^{1}})_{s_{1}s_{2}t_{1}t_{2}}:=(s_{2}-s_{1})\left(B_{2}^{xx}+s_{1}(K_{2}^{xx})_{01t_{1}t_{2}}\right)
$$
where
$$
(K_{2}^{xx})_{s_{1}s_{2}t_{1}t_{2}}:=\int_{t_{1}}^{t_{2}}\der x_{s_{1}s_{2}t_{1}t}\dd_{t}\der_{1}x_{s_{1}s_{2}t}
$$
$$ 
\der_{2}(K_{2}^{xx})_{s_{1}s_{2}t_{1}t_{2}t_{3}}=\der x_{s_{1}s_{2}t_{1}t_{2}}\der x_{s_{1}s_{2}t_{1}t_{3}}
$$
Similar computation allow us to define $(G_2^{x^1,x^1})$ and $H_2^{x^1x^1}$ in the following way :
$$
(G_{2}^{x^{1}x^{1}})_{s_{1}s_{2}t_{1}t_{2}}=(s_{2}-s_{1})((G_{2}^{xx})_{01t_{1}t_{2}}+(s_{2}-1)(L_{2}^{xx})_{01t_{1}t_{2}})
$$
$$
(H_{2}^{x^{1}x^{1}})_{s_{1}s_{2}t_{1}t_{2}}=(s_{2}-s_{1})(H_{2}^{xx})_{01t_{1}t_{2}}
+(s_{2}-1)(M_{2}^{xx})_{01t_{1}t_{2}}+s_{1}((N_{2}^{xx})_{01t_{1}t_{2}}+(s_{2}-1)(O_{2}^{xx})_{01t_{1}t_{2}}))
$$
where
$$
(L_{2}^{xx})_{s_{1}s_{2}t_{1}t_{2}}:=\int_{t_{1}}^{t_{2}}\der_{1}x_{s_{1}s_{2}t}\dd_{t}\der_{1}x_{s_{1}s_{2}t}
,\quad
(M_{2}^{xx})_{s_{1}s_{2}t_{1}t_{2}}:=\int_{t_{1}}^{t_{2}}\der x_{s_{1}s_{2}t_{1}t}\der_{1}x_{s_{1}s_{2}t}\dd_{t}x_{s_{2}t}
$$
and
$$
(O_{2}^{xxx})_{s_{1}s_{2}t_{1}t_{2}}:=\int_{t_{1}}^{t_{2}}\der x_{s_{1}s_{2}t_{1}t}\der_{1}x_{s_{1}s_{2}t}\dd_{t}\der_{1}x_{s_{1}s_{2}t}
$$
and this of course pushes us to give a more algebraic assumption on the sheet $x$:
\begin{hypothesis}
\label{hyp:objects-MLO}
Let $\alpha,\beta>1/3$,  $a=1,2$ and and assume the existence of :
$$
K_{a}^{xx},M_{a}^{xx}\in\CCC_{2,2}^{2\alpha,2\beta} ,\quad L_{a}^{xx}\in\CCC_{2,2}^{a\alpha,\hat a\beta},\quad O_{2}^{xxx}\in\CCC_{2,2}^{3\alpha,2\beta},\quad O_1^{xxx}\in\CCC_{2,2}^{2\alpha,3\beta}
$$
which satisfies the algebraic relation
\begin{enumerate}
\item$\der_{2}(K_{2}^{xx})_{s_{1}s_{2}t_{1}t_{2}t_{3}}=\der x_{s_{1}s_{2}t_{1}t_{2}}\der x_{s_{1}s_{2}t_{1}t_{3}}$
\item$\der_{2}L_{2}^{xx}=0$
\item$\der_{2}(M_{2}^{xx})_{s_{1}s_{2}t_{1}t_{2}t_{3}}=\der_2x_{s_{1}t_{1}t_{2}}(L_{2}^{xx})_{s_{1}s_{2}t_{2}t_{3}}$
\item$\der_{2}(N_{2}^{xx})_{s_{1}s_{2}t_{1}t_{2}t_{3}}=\der x_{s_{1}s_{2}t_{1}t_{2}}(G_{2}^{xx})_{s_{1}s_{2}t_{1}t_{2}}$
\item$\der_{2}(O_{2}^{xxx})_{s_{1}s_{2}t_{1}t_{2}}=\der x_{s_{1}s_{2}t_{1}t_{2}}(L_{2}^{xx})_{s_{1}s_{2}t_{2}t_{3}}$
\end{enumerate}
and same relation for $K_{1}^{xx}$, $M_{1}^{xx}$, $N_{1}^{xx}$ and $N_{1}^{xx}$.
\end{hypothesis}
\begin{remark}
Under this new hypothesis we have some fact :
\begin{enumerate}
\item$\der_{2} B^{x^{1}x^{1}}_{2}=\der x^{1}\der_{2}x^{2}$
\item$\der_{2} G^{x^{1}x^{1}}_{2}=0$
\item$\der_{2} H^{x^{1}x^{1}}_{2}=G^{x^{1}x^{1}}_{2}\der_{2}x^{2}$
\end{enumerate}
\end{remark}
This new hypothesis allows us to define
\begin{equation}
\label{eq:stab-1}
\small
R^1=\der\varphi(x^1)-\left(-\varphi'(x^{1})\der x^1-\varphi''(x^{1})(1/2\der(x^{1})^{2} -x^{1}\der x^{1})+\sum_{a=1,2}\left(\int_{a}\varphi'(x^{1})\int_{\hat a}\dd x^{1}+\int_{a}\varphi''(x^{1})\der_{\hat a}x^{1}\dd_{a}x^{1}\right)\right)
\end{equation}
and an analogue formula for $R^2$. Then we have the following relation between the remainder terms
\begin{proposition}\label{eq:rem}
Let $R^{1},R^{2}$ and $R^{12}$ given respectively by the equations~\eqref{eq:stab-1}and~\eqref{eq:stab-first-exp} and assume that hypothesis~\ref{hyp:objects-MLO} ,~\ref{hyp:algebric-assumption} and~\ref{hyp:chen-2d} are true then we have $R_{\square}=R^{1}_{\square}+R^{2}_{\square}-R^{12}_{\square}$ where 
$$
R=\varphi(x)-\left(-\varphi'(x)\der x-\varphi''(x)(1/2\der x^{2}-x\der x)+\sum_{a\in\{1,2\}}\int_a\varphi'(x)\int_{\hat a}\dd x+\int_a\varphi''(x)\der_{\hat a}x\dd_ax\right)
$$
\end{proposition}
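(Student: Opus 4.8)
Write the ``main part'' of the controlled--sheet expansion as
\[
M[y]=-\varphi'(y)\der y-\varphi''(y)\Big(\tfrac12\der y^2-y\der y\Big)+\sum_{a\in\{1,2\}}\Big(\int_a\varphi'(y)\int_{\hat a}\dd y+\int_a\varphi''(y)\,\der_{\hat a}y\,\dd_a y\Big),
\]
so that uniformly in $y\in\{x,x^1,x^2,x^{12}\}$ one has $R[y]=\der\varphi(y)-M[y]$, with $R=R[x]$, $R^1=R[x^1]$, $R^2=R[x^2]$, $R^{12}=R[x^{12}]$. The plan is to prove the identity summand by summand after two reductions. First, since $(\der\varphi(y))_\square=\varphi(y_{11})-\varphi(y_{10})-\varphi(y_{01})+\varphi(y_{00})$ depends only on the corner values of $y$, and the four sheets $x,x^1,x^2,x^{12}$ share those corner values by construction, the quantities $(\der\varphi(x))_\square,(\der\varphi(x^1))_\square,(\der\varphi(x^2))_\square,(\der\varphi(x^{12}))_\square$ all coincide; hence $R_\square=R^1_\square+R^2_\square-R^{12}_\square$ is equivalent to $M[x]_\square=M[x^1]_\square+M[x^2]_\square-M[x^{12}]_\square$, which I would check termwise. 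Second, I would record the elementary \emph{edge--coincidences} that follow at once from the defining formulas of $x^1,x^2,x^{12}$: being affine in $s$, $x^1$ coincides with $x$ on the edges $\{s=0\}$ and $\{s=1\}$; $x^2$ coincides with $x$ on $\{t=0\}$ and $\{t=1\}$; and $x^{12}$ coincides with $x^1$ on $\{t=0\}\cup\{t=1\}$ and with $x^2$ on $\{s=0\}\cup\{s=1\}$. In particular $(\der x)_\square=(\der x^1)_\square=(\der x^2)_\square=(\der x^{12})_\square$.

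The two scalar summands of $M$ are corner--local at $\square$: evaluating,
\[
\big(\varphi'(y)\der y\big)_\square=\varphi'(y_{00})(\der y)_\square,\qquad
\Big(\varphi''(y)\big(\tfrac12\der y^2-y\der y\big)\Big)_\square=\varphi''(y_{00})\Big(\tfrac12\big(y_{11}^2-y_{10}^2-y_{01}^2+y_{00}^2\big)-y_{00}(\der y)_\square\Big),
\]
so each takes one and the same value for $y\in\{x,x^1,x^2,x^{12}\}$ and the required identity collapses to $c=c+c-c$. For the $a=1$ boundary integrals I would use that, in the smooth setting, $\big(\int_1 z\int_2\dd w\big)_\square=\int_0^1 z_{s,0}\,\dd_s(w_{s,1}-w_{s,0})$ and $\big(\int_1 z\,\der_2 w\,\dd_1 w\big)_\square=\int_0^1 z_{s,0}\,(w_{s,1}-w_{s,0})\,\dd_s w_{s,1}$, and more generally that the rough formulas of Proposition~\ref{boundry} for $\int_1 z\int_2\dd x$ and $\int_1 z\,\der_2 x\,\dd_1 x$ (with $z\in\{\varphi'(x),\varphi''(x)\}$), evaluated at $\square$, depend only on the restriction of $x$ --- together with the pointwise composites $\varphi'(x),\varphi''(x)$ and their controlled--path data --- to the two horizontal edges $\{t=0\}\cup\{t=1\}$. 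This is because $\Lambda_1,\mu_1,\otimes_1$ act only in the first direction, hence commute with fixing the second--direction arguments to $0$ and $1$, while the algebraic objects $A^x,B_1^{xx},C^{xx},D_1^{xx},E_1^{xxx},G_1^{xx},H_1^{xx},J_1^{xx},\dots$ have at $\square$ values that are integrals along the horizontal edges. Combined with the edge--coincidences, this yields $\int_1\varphi'(x^2)\int_2\dd x^2\big|_\square=\int_1\varphi'(x)\int_2\dd x\big|_\square$ and $\int_1\varphi'(x^{12})\int_2\dd x^{12}\big|_\square=\int_1\varphi'(x^1)\int_2\dd x^1\big|_\square$ (and the analogues for the $\int_1\varphi''\,\der_2\,\dd_1$ term), so that for these summands the identity again collapses to $T=T'+T-T'$. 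The $a=2$ boundary integrals are treated symmetrically: at $\square$ they are functionals of $x$ restricted to the vertical edges $\{s=0\}\cup\{s=1\}$, where $x^1=x$ and $x^{12}=x^2$. Adding up the termwise identities gives $M[x]_\square=M[x^1]_\square+M[x^2]_\square-M[x^{12}]_\square$, hence the proposition.

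I expect the main obstacle to be precisely the verification that the rough boundary integrals are ``edge--local'' at $\square$ in the sense above. This is an index--bookkeeping exercise: one tracks the structure through the operations $\mu_a$, $\Lambda_a$ and $\otimes_a$ (using Lemmas~\ref{lem2} and~\ref{lemma:mu-lambda-commute}) and invokes the explicit smooth iterated--integral representations of the algebraic data listed in the Remarks following Hypotheses~\ref{hyp:chen-2d},~\ref{hyp:algebric-assumption},~\ref{hyp:objects-MLO}. I would carry this out first for smooth $x$ --- where $R,R^1,R^2,R^{12}$ are the classical remainders and the whole argument is rigorous --- and then obtain the general case either by density (all the maps producing the sheets $x^a$ and their algebraic data from $x$ are the same continuous operations, so an identity valid on smooth sheets passes to the limit) or, equivalently, by repeating the same bookkeeping purely at the algebraic level using the relations of Hypotheses~\ref{hyp:chen-2d},~\ref{hyp:algebric-assumption} and~\ref{hyp:objects-MLO}.
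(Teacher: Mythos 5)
Your proposal is correct and follows essentially the same route as the paper: reduce to the termwise identity $M[x]_\square=M[x^1]_\square+M[x^2]_\square-M[x^{12}]_\square$, observe that the scalar summands are corner-local, and observe that the boundary integrals at $\square$ are edge-local (so each summand collapses to the tautology $T=T'+T-T'$ via the edge-coincidences $x^1|_{\{s=0,1\}}=x|_{\{s=0,1\}}$, $x^2|_{\{t=0,1\}}=x|_{\{t=0,1\}}$, $x^{12}|_{\{t=0,1\}}=x^1|_{\{t=0,1\}}$, $x^{12}|_{\{s=0,1\}}=x^2|_{\{s=0,1\}}$). The paper implements the edge-locality step by rewriting, e.g., $\int_2\varphi'(x)\int_1\dd x$ as $(1-\Lambda_2\der_2)(\varphi'(x)\der x+\varphi''(x)B_2^{xx})$ and noting that at $s_1=0,s_2=1$ this is a one-dimensional functional of $x_{0\cdot}$, $\der x_{01\cdot\cdot}$ and $(B_2^{xx})_{01\cdot\cdot}$, all of which coincide with those of $x^1$; your formulation of the same point via ``$\Lambda_a,\mu_a,\otimes_a$ act only in direction $a$'' is equivalent. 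You are also a bit more complete than the paper in that you explicitly check the corner-locality of the two algebraic summands, which the paper leaves tacit.
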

\begin{proof}
The fact  that $x^1$and $x^2$ are respectively smooth in the first and second direction gives 
\begin{equation*}
\left(\int_{1}\varphi'(x^{1})\int_{2}\dd x^{1}\right)_{\square}=\left(\int_{1}\varphi'(x^{12})\int_{2}\dd x^{12}\right)_{\square}  ,\quad x^{l}_{ab}=x^{12}_{ab}
\end{equation*}
$$
\left(\int_2\varphi'(x^2)\int_1\dd x\right)_{\square}=\left(\int_2\varphi'(x^{12})\int_1\dd x^{12}\right)_{\square}
$$
For $(a,b)\in\{0,1\}^{2}$ and $l=1,2$. And of course similar equation for the boundary integrals given by $\int_1\varphi''(x^1)\der_2x^{1}\dd_2x^{1}$. On the other side we have by the definition of the functional $\Lambda_1$ that :
\begin{equation*}
\small
\begin{split}
\left(\int_{2}\varphi'(x)\int_{1}\dd x\right)_{\square}&=(1-\Lambda_{1}\der_1)(\varphi'(x)\der x+\varphi''(x)B_{2}^{xx})_{\square}=(1-\Lambda^{1d}\der^{1d})(\varphi'(x_{0.})\der x_{01..}+\varphi''(x_{0.})(B_{2}^{xx})_{01..})_{01}
\\&=(1-\Lambda^{1d}\der^{1d})(\varphi'(x_{0.}^{1})\der x_{01..}+\varphi''(x_{0.}^{1})(B_{2}^{x^{1}\tilde x^{1}})_{01..})_{01}
\\&=\left(\int_{2}\varphi'(x^{1})\int_{1}\dd x^{1}\right)_{\square}
\end{split}
\end{equation*}
and by similar argument we have also that 
\begin{equation*}
\left(\int_{2}\varphi''( x)\der_{1}x\dd_{2}x\right)_{\square}=\left(\int_{2}\varphi''(x^{1})\der_{1}x^{1}\dd_{2}\tilde x^{1}\right)_{\square}
\end{equation*}
Then putting these equation together we obtain the needed identity.
\end{proof}
Now to show that $R\in\CC_{2,2}^{2\alpha,2\beta}$ we have to gives a estimates for the remainder terms $R^{1}$ and $R^{2}$. At this point we give three technical lemma which help us to do this. 
\begin{lemma}\label{comp-shar-1}
Let $x\in\CCC_{2,2}$ and $\varphi\in C^{3}(\mathbb R)$, and we define $\nu_1(x)$ by :
\begin{equation*}
\begin{split}
\nu_1(x)=&-\varphi'(x_{00})\der x_{0101}-\varphi''(x_{00})\left(1/2\der x^{2}_{0101}-x_{00}\der x_{0101}\right)
\\&+\left(\int_{0}^{1}\varphi'(x_{00}+s\der_1x_{010})\dd s\right)\der x_{0101}
+\left(\int_{0}^{1}\varphi''(x_{00}+s\der_{1}x_{010})\dd s\right)\der_2x_{001}\der_1x_{011}
\\&+\left(\int_{0}^{1}\varphi''(x_{00}+s\der_1x_{010})s\dd s\right)\der x_{0101}\der_1x_{011}
\\&
-\left(\varphi'(x_{10})-\varphi'(x_{00})-\varphi''(x_{00})\der_1x_{010}\right)\der_{2}x_{101}
\end{split}
\end{equation*}
then the following inequality hold
$$
|\nu_1(x)|\lesssim\sup_{s\in[0,1]}|\varphi''(x_{s0})|(\der x_{0101})^{2}+\sup_{s\in[0,1]}|\varphi'''(x_{s0})||\der_1x_{010}\der_2x_{001}\der x_{0101}|
$$
\end{lemma}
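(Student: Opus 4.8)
The plan is to treat $\nu_1(x)$ as exactly what it is: an explicit elementary expression in $\varphi$ and in the four corner values $x_{00},x_{01},x_{10},x_{11}$ of the sheet on the unit square (it arises as the unit-square value of the remainder term $R^1$ of \eqref{eq:stab-1}, all the first-direction integrals having been carried out explicitly because $x^1$ is affine in $s$). Thus there is essentially no analysis to perform: the statement will follow from some monomial bookkeeping together with two one-variable Taylor estimates. Throughout I write $a=x_{00}$, $q=\der_1x_{010}=x_{10}-x_{00}$, $p=\der_2x_{001}=x_{01}-x_{00}$, $r=\der x_{0101}$, so that $x_{10}=a+q$, $x_{01}=a+p$, $x_{11}=a+p+q+r$, $\der_1x_{011}=q+r$, $\der_2x_{101}=p+r$, and I introduce the weighted averages $\Phi_0=\int_0^1\varphi'(a+sq)\dd s$, $\Phi_1=\int_0^1\varphi''(a+sq)\dd s$, $\widetilde\Phi_1=\int_0^1\varphi''(a+sq)\,s\dd s$.

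First I would record three elementary identities. By Fubini, $\Phi_0-\varphi'(a)=\int_0^1\bigl(\varphi'(a+sq)-\varphi'(a)\bigr)\dd s=q\int_0^1\varphi''(a+uq)(1-u)\dd u=q(\Phi_1-\widetilde\Phi_1)$. By the fundamental theorem of calculus, $\varphi'(x_{10})-\varphi'(x_{00})-\varphi''(x_{00})\der_1x_{010}=q\Phi_1-\varphi''(a)q=q(\Phi_1-\varphi''(a))$. Finally, expanding the corner values ($x^2$ denoting the squared sheet) gives $\tfrac12\der x^2_{0101}-x_{00}\der x_{0101}=pq+pr+qr+\tfrac12 r^2$.

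Then I would substitute these into the definition of $\nu_1(x)$ and collect terms. The six summands become, in order, $(\Phi_0-\varphi'(a))r$; $-\varphi''(a)\bigl(pq+pr+qr+\tfrac12 r^2\bigr)$; and, using $\der_1x_{011}=q+r$, $\der_2x_{101}=p+r$, the three pieces $\Phi_1\,p(q+r)$, $\widetilde\Phi_1\,r(q+r)$, $-q(\Phi_1-\varphi''(a))(p+r)$. After multiplying out, the five monomials $q\Phi_1 r$, $q\widetilde\Phi_1 r$, $\Phi_1 pq$, $\varphi''(a)pq$ and $\varphi''(a)qr$ each appear exactly twice with opposite sign and cancel, leaving the clean identity
\[
\nu_1(x)=\bigl(\Phi_1-\varphi''(a)\bigr)\,\der_2x_{001}\,\der x_{0101}+\bigl(\widetilde\Phi_1-\tfrac12\varphi''(a)\bigr)\,(\der x_{0101})^2 .
\]
To conclude, I would bound the two coefficients. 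Writing $\tfrac12\varphi''(a)=\varphi''(a)\int_0^1 s\dd s$ yields $\widetilde\Phi_1-\tfrac12\varphi''(a)=\int_0^1\bigl(\varphi''(a+sq)-\varphi''(a)\bigr)s\dd s$, hence $\bigl|\widetilde\Phi_1-\tfrac12\varphi''(a)\bigr|\le\sup_{s\in[0,1]}|\varphi''(x_{s0})|$ (the segment $[x_{00},x_{10}]$ is contained in $\{x_{s0}:s\in[0,1]\}$ by continuity of $s\mapsto x_{s0}$). The mean value theorem gives $|\Phi_1-\varphi''(a)|\le\tfrac12|\der_1x_{010}|\sup_{s\in[0,1]}|\varphi'''(x_{s0})|$. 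Plugging these into the identity above gives precisely the stated inequality.

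The main — and in fact only — obstacle is the cancellation step: one has to expand all six summands carefully and verify that every monomial whose size is merely comparable to $\der x_{0101}$ (the products $pq$, $qr$ and their $\Phi$-weighted variants) telescopes, so that what remains is genuinely of the order $(\der x_{0101})^2$ and $\der_1x_{010}\,\der_2x_{001}\,\der x_{0101}$. Everything surrounding that computation is routine, and in particular the two Taylor estimates require only $\varphi\in C^3$.
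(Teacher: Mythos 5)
Your proof is correct and follows essentially the same route as the paper's: both expand the six summands of $\nu_1(x)$ using the substitutions $x_{10}=a+q$, $x_{01}=a+p$, $x_{11}=a+p+q+r$ (including the identity $\tfrac12\der x^2_{0101}-x_{00}\der x_{0101}=pq+pr+qr+\tfrac12 r^2$), observe the cancellation of all monomials of lower order, and arrive at the identity $\nu_1(x)=(\Phi_1-\varphi''(x_{00}))\,\der_2x_{001}\,\der x_{0101}+(\widetilde\Phi_1-\tfrac12\varphi''(x_{00}))\,(\der x_{0101})^2$, after which the bounds are immediate from the mean value theorem and the intermediate value theorem. The paper reaches the same form via an integration-by-parts step for $\widetilde\Phi_1$ and writes the first coefficient as a double integral of $\varphi'''$ rather than invoking the MVT, but these are superficial differences in bookkeeping, not in method.
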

\begin{proof}
We begin by remark that
\begin{equation*}
\begin{split}
(\int_{0}^{1}& \varphi''(x_{00}+s\der_{1}x_{010})s\dd s)\der x_{0101}\der_1x_{011}= 
\\ &
 \left(\int_{0}^{1}\varphi''(x_{00}+s\der_1x_{010})s\dd s\right)(\der x_{0101})^{2}
+\left(\int_{0}^{1}\varphi''(x_{00}+s\der_1x_{010}s\dd s\right)\der_1 x_{010})\der x_{0101}
\\ &
=\left(\int_{0}^{1}\varphi''(x_{00}+s\der_1x_{010})s\dd s\right)(\der x_{0101})^{2}+
\varphi'(x_{10})\der x_{0101}-\left(\int_{0}^{1}\varphi'(x_{00}+s\der_1x_{010})\dd s\right)\der x_{0101}
\end{split}
\end{equation*}
and
\begin{equation*}
\der x^{2}_{0101}=2x_{00}\der x_{0101}+2\der_1x_{010}\der_2x_{101}+2\der_2x_{001}\der x_{0101}+(\der x_{0101})^{2}
\end{equation*}
injecting these two equality in the definition of $\nu_1(x)$ gives :
\begin{equation*}
\begin{split}
\nu_1(x)&=-(\varphi'(x_{10})-\varphi'(x_{00}))\der_2x_{001}+(\int_{0}^{1}\varphi''(x_{00}+s\der_1x_{010})ds)\der_1x_{011}\der_2x_{001}-\varphi''(x_{00})\der_2x_{001}\der y_{0101}\\&
+(\int_{0}^{1}\varphi''(x_{00}+s\der_1x_{010})s\dd s)(\der x_{0101})^{2}-1/2\varphi''(x_{00})(\der y_{0101})^{2}
\\&=(\int_{0}^{1}\int_{0}^{1}\varphi'''(x_{00}+ss'\der_1x_{010})s\dd s'\dd s)\der_1x_{010}\der_2 x_{001}\der x_{0101}\\&
+(\int_{0}^{1}\varphi''(x_{00}+s\der_1x_{010})sds)(\der x_{0101})^{2}-1/2\varphi''(x_{00})(\der x_{0101})^{2}
\end{split}
\end{equation*}
Then the desired inequality is a simple consequence of this equality.
\end{proof}
\begin{lemma}
\label{stab lemma}
Let $y\in\CCC_{1,1}$ and $\alpha,\beta>1/3$ such that $\der_2 y\in\CCC_{1,2}^{\alpha,\beta}$ , $\der y\in\CCC^{\alpha,\beta}_{2,2}$ moreover we assume that $y$ is smooth in the first direction and that there exists $\int_2\int_2d_2yd_2y\in\CCC_{1,2}^{\alpha.2\beta}$,$\int_2\int \dd y\dd_2y\in\CCC_{2,2}^{\alpha,2\beta}$,$H_2^{yy}=\int_2\der_2y\der_1y\dd_2y\in\CCC_{2,2}^{1,2\beta} $,$ G_2^{yy}=\int_2\der_1y\dd_2y\in\CCC_{2,2}^{1,\beta}$ and $B_2^{yy}=\int_2\dd_2y\int_1dy\in\CCC_{2,2}^{1,2\beta}$ satisfying the  algebraic relation :
\begin{enumerate}
\item $\der_2G_2^{yy}=0$
\item $\der H_2^{yy}=G_2^{yy}\der_2y$
\item $\der_1\int_2\int_2\dd_2y\dd_2y=B_2^{yy}+\int_2\int \dd y\dd_2y$
\item $\der_2B_2^{yy}=\der_2y\der y$
\item $\der_2\int_2\dd_2y\dd_2y=\der_2y\der_2y$
\item $\der_2\int\int_2\dd y\dd_2y=\der y\der_2y$
\item $\der_1y\int_2\int_2\dd_2y\dd_2y=\der_1y\circ_1\int_2\int \dd y\dd_2y-\int_2\der_2y\circ_2\der y\dd_2y+H_2^{yy}$
\item$\der_1y\der_2y=G_2^{yy}-\int_2\int \dd y\dd_2y$.
\end{enumerate}
Then for $\varphi\in C^{5}(\mathbb R)$ the following equality holds :
\begin{equation*}
\der\varphi'(y)=\int_2\varphi'(y)\int_1\dd y+\int_2\varphi''(y)\der_1y\dd_2y+\varphi'(y)^{\sharp1}\der_2y+r_1(y)
\end{equation*}
where 
\begin{equation*}
\begin{split}
r_1(y)=&\varphi''(y)^{\sharp1}\int_2\int_2\dd_2y\dd_2y+\varphi'''(y)(\der_1y\circ_1\int_2\int dyd_2y)\\&
+\Lambda_2[(\der_2\varphi'(y)^{\sharp1}-\varphi''(y)^{\sharp1}\der_2y-\varphi'''(y)\der_1y\circ_1\der y)\der_2y+\der_2\varphi''(y)^{\sharp1}\int_2\int_2\dd_2y\dd_2y\\&
+\varphi'''(y)(\der y\circ_1\int_2\int \dd y\dd_2y)+\der_2\varphi'''(y)(\der_1y\circ_1\int_2\int\dd y\dd_2y]
\end{split}
\end{equation*}
\end{lemma}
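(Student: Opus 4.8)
The plan is to establish the identity first in the model situation where $y$ is smooth in \emph{both} variables and all the iterated integrals are the genuine Riemann--Stieltjes ones, by an explicit computation, and then to remark that both sides are continuous functionals of the data $(y,\der_2 y,\der y,\int_2\int_2\dd_2 y\dd_2 y,\int_2\int\dd y\dd_2 y,G_2^{yy},H_2^{yy},B_2^{yy})$ in the respective H\"older topologies, so that the general case follows by density --- the same mechanism used to pass from smooth sheets to abstract objects in Proposition~\ref{boundry} and Theorem~\ref{th:integral}. A preliminary step, which I would dispatch first using $\varphi\in C^5(\mathbb R)$ and the computation rules of Section~\ref{sec:one-dim} in each variable, is that $\varphi'(y)$ and $\varphi''(y)$ are controlled by $y$ in both directions, so that the boundary integral $\int_2\varphi'(y)\int_1\dd y$ and the mixed integral $\int_2\varphi''(y)\der_1 y\dd_2 y$ on the right-hand side are meaningful; in the second direction this already requires the second-order datum $\int_2\int_2\dd_2 y\dd_2 y$ and is a baby case of the stability-under-composition argument of Section~\ref{sec:stability}.

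For the model computation I would follow the scheme of Proposition~\ref{proposition:Stratonovich-Young-formula}. Since $y$ is smooth in the first variable, $\der_1\varphi'(y)=\int_1\varphi''(y)\dd_1 y$, hence $\der\varphi'(y)=\der_2\big(\int_1\varphi''(y)\dd_1 y\big)$, and the one-dimensional product rule in the second variable (Proposition~\ref{difrul}) splits this into
\begin{equation*}
\der\varphi'(y)=\int_1\der_2\varphi''(y)\,\dd_1 y+\int_1\varphi''(y)\,\dd_1\der_2 y ,
\end{equation*}
two boundary-type expressions according to whether $\der_2$ falls on the integrand or on the integrator.

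On each of these two pieces I would perform the rough expansion in the second (rough) variable. Because $\beta$ may be as small as $1/3$, a second-order expansion is needed: using that $\varphi''(y)$ is controlled by $y$ in direction $2$ with Gubinelli derivative $\varphi'''(y)$, the dissection identities of Proposition~\ref{dissec} in the variable $t$, the Taylor relation $\int_2\dd_2\varphi'''(y)\dd_2 y=\der_2\varphi''(y)-\varphi'''(y)\der_2 y$, and the one-dimensional inversion Corollary~\ref{cor:lambda-inversion} applied through $\Lambda_2$ (legitimate because the residual triple integrals lie in $\CCC_{1,3}^{*,3\beta}$ and $3\beta>1$), one peels off the leading and second-order terms and sends the rest into the domain of $\Lambda_2$. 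Collecting, the leading pieces reassemble into the boundary integral $\int_2\varphi'(y)\int_1\dd y$ (recognized from its definition in Proposition~\ref{boundry} with $a=2$ and integrand $\varphi'(y)$, together with the algebraic relation between $\der y$ and $B_2^{yy}$, i.e.\ relation (4)) and into the mixed integral $\int_2\varphi''(y)\der_1 y\dd_2 y$ (recognized from the proposition following Hypothesis~\ref{hyp:algebric-assumption} with $a=2$ and integrand $\varphi''(y)$, using relations (1)--(2)); the term $\varphi'(y)^{\sharp 1}\der_2 y$ emerges as the remaining piece that is not absorbed into these two integrals; and everything else is $r_1(y)$. In this last step the algebraic relations of the lemma are used decisively: relation (3) to rewrite $\der_1\int_2\int_2\dd_2 y\dd_2 y$, relations (7) and (8) to rewrite the products $\der_1 y\int_2\int_2\dd_2 y\dd_2 y$ and $\der_1 y\,\der_2 y$ in terms of $H_2^{yy}$, $G_2^{yy}$ and $\int_2\int\dd y\dd_2 y$ --- which is exactly how the $\circ_1$-products in $r_1(y)$ are produced --- and relations (1), (2), (5), (6) to check that every argument of $\Lambda_2$ occurring in $r_1(y)$ indeed has direction-$2$ regularity strictly larger than $1$ (the non-small pieces cancel), so that $r_1(y)$ is well defined under the hypotheses alone.

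The genuinely hard part is bookkeeping rather than conceptual: across a large number of terms one must track which first-direction sharp remainder ($\varphi'(y)^{\sharp 1}$ or $\varphi''(y)^{\sharp 1}$) and which of the five iterated integrals each term attaches to, and invoke relations (1)--(8) in precisely the right combination so that (i) the coboundaries $\der_1(\cdot)$ and $\der_2(\cdot)$ of the collected pieces close up and (ii) the residual brackets are small enough for $\Lambda_2$. The subtle spot is the emergence of the $\circ_1$-products, where the ``geometric'' constraints (3), (7), (8) linking $B_2^{yy}$, $G_2^{yy}$, $H_2^{yy}$ and $\int_2\int\dd y\dd_2 y$ are indispensable. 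A last, routine-but-necessary check is the continuity of the boundary and mixed integrals in their data (immediate from their defining formulas and the continuity of $\Lambda_2$ coming from Proposition~\ref{prop:Lambda}), which is what licenses the density argument that carries the identity off the smooth case.
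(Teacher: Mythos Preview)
Your overall algebraic instincts (use the controlled structure in direction~2, invoke relations (1)--(8) to close up coboundaries and push residuals into the domain of $\Lambda_2$) are in the right spirit, but the \emph{architecture} of your argument is different from the paper's and contains a gap.

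\textbf{The density step is a genuine gap.} The lemma posits abstract objects $\int_2\int_2\dd_2 y\dd_2 y$, $G_2^{yy}$, $H_2^{yy}$, $B_2^{yy}$, $\int_2\int\dd y\dd_2 y$ satisfying only the listed algebraic relations and H\"older bounds; nothing says they are limits of the genuine iterated integrals of smooth approximations $y^n\to y$. Propositions~\ref{boundry} and Theorem~\ref{th:integral} do \emph{not} use density either: there the smooth computation motivates a formula which is then taken as a \emph{definition}, and one only checks that the formula is well-posed under the hypotheses. Here, by contrast, both sides are already defined and you must prove an equality; for that, ``smooth case $+$ continuity $+$ density'' would require a density statement that is neither assumed nor proved.

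\textbf{How the paper avoids this.} The paper reverses your order of operations and works directly with the abstract data. It starts from the one-dimensional \emph{rough} change-of-variable formula in the (rough) second direction,
\[
\der_2\varphi(y)=(1-\Lambda_2\der_2)\bigl[\varphi'(y)\der_2 y+\varphi''(y)\textstyle\int_2\int_2\dd_2 y\dd_2 y\bigr],
\]
and then applies $\der_1$, which commutes with $\Lambda_2$ (and hence with $1-\Lambda_2\der_2$). Using the product rules together with relation~(3), the bracket becomes
\[
\varphi'(y)\der y+\varphi''(y)B_2^{yy}+\der_1\varphi'(y)\der_2 y+\der_1\varphi''(y)\textstyle\int_2\int_2\dd_2 y\dd_2 y+\varphi''(y)\int_2\int\dd y\dd_2 y,
\]
and the first two terms are recognised as $\int_2\varphi'(y)\int_1\dd y$. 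Then relations~(7) and~(8) rewrite $\der_1\varphi'(y)\der_2 y$ and $\der_1\varphi''(y)\int_2\int_2\dd_2 y\dd_2 y$ so that $\int_2\varphi''(y)\der_1 y\dd_2 y$ emerges (via $G_2^{yy},H_2^{yy}$), the $\circ_1$-products appear, and the term $\int_2\der_2 y\circ_2\der y\dd_2 y$ is killed by $(1-\Lambda_2\der_2)$ since it already lies in $\CCC_{2,2}^{1,3\beta}$. What remains is exactly $\varphi'(y)^{\sharp1}\der_2 y+r_1(y)$, and a final expansion of $\der_2$ on the bracket yields the explicit $\Lambda_2[\cdots]$ in $r_1(y)$. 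Every step is a purely algebraic manipulation valid under the hypotheses; no approximation is invoked.

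In short: drop the density detour, start from the rough second-direction formula, and push $\der_1$ through $(1-\Lambda_2\der_2)$. Your checklist of which relations are needed where (especially (3), (7), (8) for the $\circ_1$-products, and (1), (2), (5), (6) for the $\Lambda_2$-admissibility) is accurate and is precisely what the direct computation uses.
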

\begin{proof}
By the one dimensional change of variable formula we have 
\begin{equation}
\der_2\varphi(y)=\int_2\varphi'(y)\dd_2y=(1-\Lambda_2\der_2)(\varphi(y)\der_2y+\varphi''(y)\int_2\int_2\dd_2y\dd_2y)
\end{equation}
Then if we apply $\der_1$ to this equation we get
\begin{equation}
\label{eq:exp-1-1}
\begin{split}
\der\varphi(y)&=(1-\Lambda_2\der_2)(\varphi'(y)\der y+\varphi''(y)B_2^{yy}+\der_1\varphi'(y)\der_2y+\der_1\varphi''(y)\int_2\int_2\dd_2y\dd_2y+\varphi''(y)\int_2\int \dd y\dd_2y)
\\&=\int_2\varphi'(y)\int_1\dd y+(1-\Lambda_2\der_2)(\der_1\varphi'(y)\der_2y+\der_1\varphi''(y)\int_2\int_2\dd_2y\dd_2y+\varphi''(y)\int_2\int \dd y \dd_2y)
\end{split}
\end{equation}
Expanding the two terms $\der_1\varphi'(y)\der_2y $, $\der_1\varphi''(y)\int_2\int_2\dd_2y\dd_2y$ and using the algebraic assumption gives   
\begin{equation}
\label{eq:exp-2}
\begin{split}
\der_1\varphi''(y)\der_2y+\varphi''(y)\int\int_2\dd y\dd_2y+\der_1\varphi''(y)\int_2\dd y\dd_2y&=\varphi''(y)G_2^{yy}+\varphi'''(y)H^{yy}_2-\varphi'''(y)\int_2\der_2y\circ_2\der y\dd_2y
\\&+(\varphi'(y)^{\sharp1}\der_2y
+\varphi''(y)^{\sharp1}\int_2\int_2\dd_2y\dd_2y+\\&\varphi'''(y)(\der_1y\circ_1\int_2\int \dd y\dd_2y)
\end{split}
\end{equation}
Now if we combine the fact that $(1-\Lambda_2\der_2)\int_2\der_2y\circ_2\der y\dd_2y=0$ (ie: $\int_2\der_2y\circ_2\der y\dd_2y\in\CCC_{2,2}^{1,3\beta}$ , $\der_2\int_2\der_2y\circ_2\der y\dd_2y\in\CCC_{2,3}^{1,3\beta}$) with equation~\eqref{eq:exp-1-1} and~\eqref{eq:exp-2} we obtain that
\begin{equation*}
\begin{split}
\der\varphi(y)&=\int_2\varphi'(y)\int_1\dd y+\int_2\varphi''(y)\der_1y\dd_2y+(1-\Lambda_2\der_2)(\varphi'(y)^{\sharp1}\der_2y
\\&+\varphi''(y)^{\sharp1}\int_2\int_2\dd_2y\dd_2y+\varphi'''(y)(\der_1y\circ_1\int_2\int \dd y\dd_2y) .
\end{split}
\end{equation*}
To obtain the needed result it suffice to expand the last term of this equality.
\end{proof}
\begin{lemma}
Let $y\in\CCC_{1,1}$  satisfying the assumption of lemma~\ref{stab lemma} and $\varphi\in C^{5}(\mathbb R)$ then we have the formula :
\begin{equation}
{\footnotesize
\begin{split}
&(\der_2\varphi'(y)^{\sharp1}-\varphi''(y)^{\sharp1}\der_2y-\varphi'''(y)\der_1y\circ_1\der y)_{s_1s_2t_1t_2}=\int_{[0,1]^{2}}k\varphi'''(y_{s_1t_1}+\overline k\der_1y_{s_1s_2t_1})\dd k\dd k'(\der y_{s_1s_2t_1t_2})^{2}
\\&+\int_{[0,1]^{2}}k\varphi^{(iv)}(y_{s_1t_1}+kk'\der_1y_{s_1s_2t_1})(\der_2y_{s_1s_2t_1}+kk'\der y_{s_1s_2t_1t_2})\dd k\dd k'(2\der_1y_{s_1s_2t_1}\der y_{s_1s_2t_1t_2}+(\der y_{s_1s_2t_1t_2})^2)
\\&+\int_{[0,1]^{3}}\tilde k\varphi^{(v)}(y_{s_1t_1}+\overline k\der_1y_{s_1s_2t_1}+k''\der_2y_{s_1t_1t_2}+\overline kk''\der y_{s_1s_2t_1t_2})(\der_2y_{s_1s_2t_1}+kk'\der y_{s_1s_2t_1t_2})^2\dd k\dd k' \dd k''(\der_1y_{s_1s_2t_2})^{2}
\end{split}
}
\end{equation}
where $\overline k=kk'$ and $\tilde k=k(1-k'')$ in particular this give us that $r_1(y)\in\CCC_{2,2}^{2\alpha,\beta}$ .
\end{lemma}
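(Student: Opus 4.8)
The plan is a completely explicit computation. I will first write the two ``sharp'' remainders in integral (Taylor) form, reduce every increment that occurs to the four atoms
$$
y_{s_1t_1},\qquad \der_1y_{s_1s_2t_1},\qquad \der_2y_{s_1t_1t_2},\qquad \der y_{s_1s_2t_1t_2},
$$
and then telescope, keeping track of the cancellations produced by the two counterterms $\varphi''(y)^{\sharp1}\der_2y$ and $\varphi'''(y)\,\der_1y\circ_1\der y$.

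First I would record, exactly as in the proof of Lemma~\ref{comp-shar-1}, that a double Taylor expansion gives
$$
\varphi'(y)^{\sharp1}_{s_1s_2t}=(\der_1y_{s_1s_2t})^2\int_{0}^{1}\int_{0}^{1}k\,\varphi'''\!\left(y_{s_1t}+kk'\der_1y_{s_1s_2t}\right)\dd k'\dd k
$$
and the same identity with $\varphi^{(iv)}$ replacing $\varphi'''$ for $\varphi''(y)^{\sharp1}$. Since $\der=\der_2\der_1$ one has $\der_1y_{s_1s_2t_2}=\der_1y_{s_1s_2t_1}+\der y_{s_1s_2t_1t_2}$, $y_{s_1t_2}=y_{s_1t_1}+\der_2y_{s_1t_1t_2}$ and $\der_2y_{s_2t_1t_2}=\der_2y_{s_1t_1t_2}+\der y_{s_1s_2t_1t_2}$; substituting these into $\der_2\varphi'(y)^{\sharp1}_{s_1s_2t_1t_2}=\varphi'(y)^{\sharp1}_{s_1s_2t_2}-\varphi'(y)^{\sharp1}_{s_1s_2t_1}$ and into $\varphi''(y)^{\sharp1}\der_2y$ (whose first–variable contraction point is $s_2$, so that it reads $\varphi''(y)^{\sharp1}_{s_1s_2t_1}\der_2y_{s_2t_1t_2}$) expresses the left–hand side entirely in terms of the four atoms above.

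The heart of the proof is then the telescoping. I would repeatedly apply the elementary identity $\varphi^{(n)}(a+h)-\varphi^{(n)}(a)=h\int_{0}^{1}\varphi^{(n+1)}(a+\lambda h)\dd\lambda$ to the differences of arguments of $\varphi'''$ and $\varphi^{(iv)}$ that appear, each application raising the order of the derivative by one and introducing one further auxiliary variable of integration. The decisive bookkeeping point is the cancellation of the low–order pieces: the terms proportional to $\varphi'''(y_{s_1t_1})\der_1y_{s_1s_2t_1}\der y_{s_1s_2t_1t_2}$ cancel against $-\varphi'''(y)\der_1y\circ_1\der y$, and the terms proportional to $\varphi^{(iv)}(y_{s_1t_1})(\der_1y_{s_1s_2t_1})^2$ times a single increment in the second direction cancel against the matching part of $-\varphi''(y)^{\sharp1}\der_2y$ — here one checks that the numerical coefficients $\tfrac12,\tfrac16,\dots$ produced by the various $\int_0^1 k^2\,\dd k$, $\int_0^1 k\,\dd k$ do add up to zero. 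What survives after these cancellations is precisely the stated three–term formula. I expect this combinatorial bookkeeping to be the main obstacle: one must make sure that no low–order term is left unpaired and that the leftover terms regroup exactly as claimed.

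Finally, the regularity assertion is read off from the formula. Since $\varphi\in C^{5}(\mathbb R)$ every integral prefactor is bounded, and each of the three surviving terms contains two factors from $\{\der_1y,\der y\}$ in the first direction (whence a factor $|s_1-s_2|^{2\alpha}$) and, after the cancellations, two factors vanishing as $t_1\to t_2$ — a power $(\der y)^2$, or $(\der_2y+kk'\der y)^2$, or $\der y$ together with the surviving second–direction increment from the counterterm — (whence a factor $|t_1-t_2|^{2\beta}$). Hence the left–hand side belongs to $\CCC_{2,2}^{2\alpha,2\beta}$. Inserting this into the expression for $r_1(y)$ of Lemma~\ref{stab lemma}, the two terms not under $\Lambda_2$ are products of biincrements with joint regularity at least $2\alpha$ in the first and $2\beta$ in the second variable, while the argument of $\Lambda_2$ is a $\der_2$–coboundary lying in $\CCC_{2,3}$ with second–variable regularity $3\beta>1$ (this is where $\beta>1/3$ is used), so that $\Lambda_2$ applies and sends it into $\CCC_{2,2}^{2\alpha,3\beta}$. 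Therefore $r_1(y)\in\CCC_{2,2}^{2\alpha,\beta}$, with room to spare.
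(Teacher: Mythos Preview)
Your overall strategy coincides with the paper's: write the sharp remainders as double Taylor integrals, reduce all increments to the four atoms $y_{s_1t_1}$, $\der_1y_{s_1s_2t_1}$, $\der_2y_{s_1t_1t_2}$, $\der y_{s_1s_2t_1t_2}$, and track the cancellations against the two counterterms. The regularity argument you give at the end is also the right one.

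However, what you describe as ``checking that the numerical coefficients $\tfrac12,\tfrac16,\dots$ add up to zero'' is not how the cancellation actually works, and this is the one place your sketch would get stuck if carried out. The counterterm $\varphi'''(y)\der_1y\circ_1\der y$ is the \emph{pointwise} value $\varphi'''(y_{s_1t_1})\der_1y_{s_1s_2t_1}\der y_{s_1s_2t_1t_2}$, whereas the competing low-order terms emerging from the expansion of $\der_2\varphi'(y)^{\sharp1}$ carry \emph{integrated} factors $\int k\,\varphi'''(y_{s_1t_1}+kk'\der_1y)\,\dd k\dd k'$ and, after one further Taylor step, a term of the form $\int k^2k'\,\varphi^{(iv)}(y_{s_1t_1}+kk'\der_1y)\,\dd k\dd k'\,(\der_1y)^2\der y$. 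The paper's key device is an integration by parts in $k$,
\[
\int_0^1 k^2k'\,\varphi^{(iv)}(y_{s_1t_1}+kk'\der_1y)\,\dd k\;\der_1y
=\varphi'''(y_{s_1t_1}+k'\der_1y)-2\int_0^1 k\,\varphi'''(y_{s_1t_1}+kk'\der_1y)\,\dd k,
\]
which converts this $\varphi^{(iv)}$ contribution back into $\varphi'''$ form. Only after this transformation do the various $\varphi'''$ pieces combine so that the pointwise counterterm $\varphi'''(y_{s_1t_1})\der_1y\der y$ is exactly subtracted and the residual difference $\varphi'''(y_{s_1t_1}+k\der_1y)-\varphi'''(y_{s_1t_1})$ gains an extra factor of $\der_1y$. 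By contrast, the cancellation of $\varphi''(y)^{\sharp1}\der_2y$ is an exact structural match (not a coefficient coincidence): the first-order part of $a$ in the $\der_2y$ direction reproduces $\varphi''(y)^{\sharp1}\der_2y$ on the nose. So the bookkeeping is not purely numerical; the integration-by-parts identity is the missing ingredient in your outline.
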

\begin{proof}
By the usual Taylor formula we have that 
\begin{equation}\label{cont:sha}
\begin{split}
\der_2\varphi'(y)^{\sharp1}_{s_1s_2t_1t_1}&=\int_{[0,1]^2}k(\varphi'''(y_{s_1t_2}+\overline k\der_1y_{s_1s_2t_2})-
\varphi'''(y_{s_1t_1}+\overline k\der_1y_{s_1s_2t_1}))dkdk'(\der_1y_{s_1s_2t_2})^{2}
\\&+2\int_{[0,1]^{2}}k\varphi'''(y_{s_1t_1}+\overline k\der_1y_{s_1s_2t_1})\dd k\dd k'\der_1y_{s_1s_2t_1}\der y_{s_1s_2t_1t_2}
\\&+\int_{[0,1]^{2}}k\varphi'''(y_{s_1t_1}+\overline k\der_1y_{s_1s_2t_1})\dd k\dd k'(\der y_{s_1s_2t_1t_2})^{2}
\end{split}
\end{equation}
Let denoted by $a_{s_1s_2t_1t_2}$ the first term in the r.h.s of this equation. Then if we remark that  $(\der_1y_{s_1s_2t_2})^2=(\der_1y_{s_1s_2t_1})^2+2\der_1y_{s_1s_2t_1}\der y_{s_1s_2t_1t_1}+(\der y_{s_1s_2t_1t_1})^2$ and use Taylor formula once again we obtain
\begin{equation}\label{cont:sha1}
\begin{split}
&a_{s_1t_1s_2t_2}=\int_{[0,1]^{2}}k\varphi^{(iv)}(y_{s_1t_1}+kk'\der_1y_{s_1s_2t_1})\dd k\dd k'\der_2y_{s_1t_1t_2}(\der_1y_{s_1s_2t_1})^{2}
\\&+\int_{[0,1]^{2}}k^2k'\varphi^{(iv)}(y_{s_1t_1}+kk'\der_1y_{s_1s_2t_1})\dd k\dd k'(\der_1y_{s_1s_2t_1})^{2}\der y_{s_1s_2t_1t_2}
\\&+\int_{[0,1]^{3}}\tilde k\varphi^{(v)}(y_{s_1t_1}+\overline k\der_1y_{s_1s_2t_1}+k''\der_2y_{s_1t_1t_2}+\overline kk''\der y_{s_1s_2t_1t_2})(\der_2y_{s_1s_2t_1}+kk'\der y_{s_1s_2t_1t_2})^2\dd k\dd k'\dd k''
\\&\times(\der_1y_{s_1s_2t_2})^{2}
\\&+\int_{[0,1]^{2}}k\varphi^{(iv)}(y_{s_1t_1}+kk'\der_1y_{s_1s_2t_1})(\der_2y_{s_1s_2t_1}+kk'\der y_{s_1s_2t_1t_2})\dd k\dd k'(2\der_1y_{s_1s_2t_1}\der y_{s_1s_2t_1t_2}+(\der y_{s_1s_2t_1t_2})^2)
\end{split}
\end{equation}
The two last terms in the r.h.s of this equation lie in the space $\CCC_{2,2}^{2\alpha,2\beta}$ then we will focus on the two first denoted respectively by $a^1_{s_1s_2t_1t_2}$ and $a^2_{s_1s_2t_1t_2}$. By integration by part formula we get 
$$
\int_{0}^{1}k^2k'\varphi^{(iv)}(y_{s_1t_1}+kk'\der_1y_{s_1s_2t_1})\dd k\der_1y_{s_1s_2t_1}=\varphi'''( y_{s_1t_1}+k'\der_1y_{s_1s_2t_1})-2\int_0^1k\varphi'''(y_{s_1t_1}+kk'\der_1y_{s_1s_2t_1})\dd k .
$$
Multiplying this equation by $\der_1y_{s_1s_2t_1}\der y_{s_1s_2t_1t_2}$ and integrating over $k'$ give us
\begin{equation}\label{cont:sha2}
\begin{split}
a^2_{s_1s_2t_1t_2}&=\int_{0}^{1}\varphi'''(y_{s_1t_1}+k'\der_1y_{s_1s_2t_1})\dd k'\der_1y_{s_1s_2t_1}\der y_{s_1s_2t_1t_2}
\\&-2\int_{[0,1]^{2}}k\varphi'''(y_{s_1t_1}+\overline k\der_1y_{s_1s_2t_1})\dd k\dd k'\der_1y_{s_1s_2t_1}\der y_{s_1s_2t_1t_2}
\end{split}
\end{equation}
on the other hand
\begin{equation}
\label{cont:sha3}
\begin{split}
a^1_{s_1s_2t_1t_2}&=\int_{[0,1]^2}\varphi^{(iv)}(y_{s_1t_1}+kk'\der_1y_{s_1s_2t_1})\dd k\dd k'(\der_1y_{s_1s_2t_1})^2\der_2y_{s_2t_1t_2}
\\&-\int_{[0,1]}\varphi'''(y_{s_1t_1}+k\der_1y_{s_1s_2t_1})\dd k\der_1y_{s_1s_2t_1}\der y_{s_1s_2t_1t_2}
\\&=(\varphi''(y)^{\sharp1}\der_2y)_{s_1s_2t_1t_2}-\int_{[0,1]}(\varphi'''(y_{s_1t_1}+k\der_1y_{s_1s_2t_1})-\varphi'''(y_{s_1t_1}))\dd k\der_1y_{s_1s_2t_1}\der y_{s_1s_2t_1t_2} .
\end{split}
\end{equation}
If we combine equations \eqref{cont:sha}~\eqref{cont:sha1}~\eqref{cont:sha2} and ~\eqref{cont:sha3} we obtain the needed result.
\end{proof}
Now what we have in mind is to take $y=x^1$ in this last two lemma but for this we need to construct $\int_2\int_2\dd_2x^1\dd_2x^1$ and $ \int_2\int \dd x^1\dd x^1$ then as usual we must add some algebraic conditions
\begin{hypothesis}
\label{final-alg-ass}
Let $\alpha,\beta>1/3$, $x\in\CCC_{1,1}$, $a=1,2$ and we assume the existence of
$P^{xx}_a=\int_a\int_ad_axd_ax\in\CCC_{1,2}^{*,2\beta}$ and $Q_a^{xx}=\int_a\int dxd_ax\in\CCC_{2,2}^{\alpha,2\beta}$ satisfying the algebraic relation 
\begin{enumerate}
\item$\der_aP_a^{xx}=\der_ax\der_ax$
\item$\der_{\hat a}P^{xx}_a=B_a^{xx}+Q_a^{xx}$
\item$\der_2Q_a^{xx}=\der x\der_ax$
\item$\der_{\hat a}xP^{xx}_a=\der_{\hat a}xQ_a^{xx}+H_a^{xx}-\int_a\der_ax\circ_a\der xd_ax$
\item$\der_{\hat a}x\der_ax=G_a^{xx}-Q_a^{xx}$
\end{enumerate}
where $\int_a\der_ax\circ_a\der x\dd_ax=\Lambda_a((\der_ax\circ_a\der x)\der_ax+\der_axQ_a^{xx}+\der xP_a^{xx}-\der x\circ_{\hat a}Q_a^{xx})$ and $B_a^{xx}$, $H_a^{xx}$ are the iterated integrals given respectively in the hypothesis~\ref{hyp:chen-2d} and~\ref{hyp:objects-MLO}. 
\end{hypothesis}
With this hypothesis we define 
$$
\left(\int_2\int_2\dd_2x^1\dd_2x^1\right)_{s_1s_2t_1t_2}=(P_2^{xx})_{0t_1t_2}+s((B_2^{xx})_{01t_1t_2}+(Q_2^{xx})_{01t_1t_2}+
(s-1)(K_2^{xx})_{01t_1t_2})
$$
and
$$
\left(\int_2\int \dd x^1\dd_2x^1\right)_{s_1s_2t_1t_2}=(s_2-s_1)((P_2^{xx})_{01t_1t_2}+(s_2-1)(K_2^{xx})_{01t_1t_2})
$$
Now all the ingredients are ready to prove our main result
\begin{theorem}
\label{th:stab}
Let $x\in\CCC_{1,1}^{\alpha,\beta}$ and $\varphi\in C^{5}(\mathbb R)$ such that hypothesis~\ref{hyp:chen-2d},~\ref{hyp:algebric-assumption},~\ref{hyp:objects-MLO} and~\ref{final-alg-ass} are satisfied  then $\varphi(x)\in\mathscr K_x^{\alpha,\beta}$ .
\end{theorem}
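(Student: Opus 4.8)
The plan is to verify the equivalent description of $\mathscr K_x^{\alpha,\beta}$ established in Section~\ref{sec:stability} (which rests on Hypotheses~\ref{hyp:chen-2d} and~\ref{hyp:algebric-assumption}): it is enough to produce $\varphi(x)^x,\varphi(x)^\omega\in\mathscr Q_x^{\alpha,\beta}$ and a remainder $\varphi(x)^\sharp$ of the regularity required there so that
\[
\der\varphi(x)=-\varphi(x)^{x}\der x-\varphi(x)^{\omega}\bigl(1/2\,\der x^{2}-x\der x\bigr)+\sum_{a=1,2}\Bigl(\int_{a}\varphi(x)^{x}\int_{\hat a}\dd x+\int_{a}\varphi(x)^{\omega}\der_{\hat a}x\dd_{a}x\Bigr)+\varphi(x)^{\sharp}.
\]
One takes $\varphi(x)^x=\varphi'(x)$ and $\varphi(x)^\omega=\varphi''(x)$. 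That $\varphi(x),\varphi'(x),\varphi''(x)\in\mathscr Q_x^{\alpha,\beta}$ follows by applying, separately in each coordinate, the one-dimensional stability of controlled paths under composition with a $C^k$ map: in the direction $a$ one declares the Gubinelli derivative of $\varphi(x)$ to be $\varphi'(x)$, and $\der_a\varphi(x)-\varphi'(x)\der_a x$ is $O(|\der_a x|^2)$ by Taylor's formula, hence of the required $(2\alpha,\beta)$- resp. $(\alpha,2\beta)$-regularity; the same argument applies to $\varphi'$ and $\varphi''$, using $\varphi\in C^5$. Since $\varphi'(x),\varphi''(x)\in\mathscr Q_x^{\alpha,\beta}$ every boundary integral above is well defined by Proposition~\ref{boundry} and its alternative form, and the theorem reduces to showing that the biincrement
\[
R:=\der\varphi(x)+\varphi'(x)\der x+\varphi''(x)\bigl(1/2\,\der x^{2}-x\der x\bigr)-\sum_{a=1,2}\Bigl(\int_{a}\varphi'(x)\int_{\hat a}\dd x+\int_{a}\varphi''(x)\der_{\hat a}x\dd_{a}x\Bigr)
\]
---which in the smooth setting reduces to a combination of the three- and four-fold iterated remainders analysed in Section~\ref{sec:2-d-dissection}---lies in $\CCC_{2,2}^{2\alpha,2\beta}$.

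To control $R$ I would localise. Fix a rectangle $Q=[s_1,s_2]\times[t_1,t_2]$ and introduce the partial interpolants $x^1$ (affine in the first variable), $x^2$ (affine in the second) and $x^{12}$ (bilinear), all agreeing with $x$ at the four corners of $Q$, together with the corresponding remainders $R^1,R^2,R^{12}$ of~\eqref{eq:stab-1} and~\eqref{eq:stab-first-exp}. Proposition~\ref{eq:rem} yields the exact identity $R_Q=R^1_Q+R^2_Q-R^{12}_Q$; its proof rests on the fact that $x^a$ coincides with $x$ (and with $x^{12}$) along the edges of $Q$ transverse to direction $a$, so that the boundary integrals built over those edges---being one-dimensional in that direction and defined through the one-dimensional $\Lambda$-calculus---are unchanged when $x$ is replaced by the interpolant. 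It therefore suffices to bound $|R^1_Q|$, $|R^2_Q|$ and $|R^{12}_Q|$ by $|s_2-s_1|^{2\alpha}|t_2-t_1|^{2\beta}$, uniformly in $Q$.

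The bilinear contribution $R^{12}$ is elementary. Since $\partial_1\partial_2 x^{12}$ is constant on $Q$, every iterated integral appearing in~\eqref{rem-12} can be evaluated in closed form, and each summand is a product of factors drawn from $\der x_{s_1s_2t_1t_2}$, $\der_1 x_{s_1s_2t_1}$, $\der_2 x_{s_1t_1t_2}$ and bounded derivatives of $\varphi$ along $x$; using $x\in\CCC_{1,1}^{\alpha,\beta}$, hence $|\der_1 x_{s_1s_2t_1}|\lesssim|s_2-s_1|^\alpha$, $|\der_2 x_{s_1t_1t_2}|\lesssim|t_2-t_1|^\beta$ and $|\der x_{s_1s_2t_1t_2}|\lesssim|s_2-s_1|^\alpha|t_2-t_1|^\beta$, one reads off $|R^{12}_Q|\lesssim|s_2-s_1|^{2\alpha}|t_2-t_1|^{2\beta}$, i.e. $R^{12}\in\CCC_{2,2}^{2\alpha,2\beta}$.

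The heart of the matter, and the step I expect to be the main obstacle, is the pair of mixed terms $R^1,R^2$; by symmetry I treat $R^1$. Because $x^1$ is affine, hence smooth, in the first direction, one can carry out the genuine one-dimensional change-of-variable calculus in that direction while only ever using the H\"older-$\beta$ regularity available in the second: one first applies the one-dimensional rough change of variables (legitimate since $\beta>1/3$) to $\der_2\varphi(x^1)$, then applies $\der_1$ and reorganises. This is precisely the content of Lemma~\ref{stab lemma}, which expresses $\der\varphi(x^1)$ as the $x^1$-version of the candidate expansion plus the term $\varphi'(x^1)^{\sharp1}\der_2 x^1$ and the explicit remainder $r_1(x^1)$. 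Carrying this through requires the interpolant-level objects $B_2^{x^1x^1},G_2^{x^1x^1},H_2^{x^1x^1},\int_2\int_2\dd_2 x^1\dd_2 x^1,\int_2\int\dd x^1\dd_2 x^1$ manufactured from the data of Hypotheses~\ref{hyp:chen-2d},~\ref{hyp:objects-MLO} and~\ref{final-alg-ass}, and one must check, using Lemma~\ref{lemma:mu-lambda-commute}, that every quantity to which a $\Lambda$ or a $\Lambda_a$ is applied has H\"older exponent strictly larger than $1$ in the relevant variable and satisfies the cocycle condition that makes the application legitimate. The final technical lemma then controls $r_1(x^1)$ (this is where $\varphi\in C^5$ enters), while Lemma~\ref{comp-shar-1} bounds the residual genuinely two-dimensional pieces of $R^1_Q$ by $\sup_s|\varphi''(x_{st_1})|\,(\der x_{s_1s_2t_1t_2})^2+\sup_s|\varphi'''(x_{st_1})|\,|\der_1 x_{s_1s_2t_1}\,\der_2 x_{s_1t_1t_2}\,\der x_{s_1s_2t_1t_2}|\lesssim|s_2-s_1|^{2\alpha}|t_2-t_1|^{2\beta}$. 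The delicate work throughout is exactly this bookkeeping: matching each interpolant object with the piece supplied by the algebraic hypotheses and verifying the regularity at every $\Lambda$-step. Once $R^1$ and $R^2$ are estimated and combined with the bound for $R^{12}$ via $R_Q=R^1_Q+R^2_Q-R^{12}_Q$, one obtains $R\in\CCC_{2,2}^{2\alpha,2\beta}$; by the first paragraph this identifies $R$ with the remainder $\varphi(x)^\sharp$ and establishes $\varphi(x)\in\mathscr K_x^{\alpha,\beta}$.
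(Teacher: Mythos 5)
Your proposal follows essentially the same route as the paper: both reduce to the equivalent characterisation of $\mathscr K_x^{\alpha,\beta}$, set $\varphi(x)^x=\varphi'(x)$, $\varphi(x)^\omega=\varphi''(x)$, invoke the interpolants $x^1,x^2,x^{12}$ and the identity $R_\square=R^1_\square+R^2_\square-R^{12}_\square$ of Proposition~\ref{eq:rem}, apply Lemma~\ref{stab lemma} with $y=x^a$ and use Lemma~\ref{comp-shar-1} to control $\nu_a(x)$, and finally transfer the unit-square bound to a general rectangle (the paper does this by the change of variables $X_{st}=x_{s_1+s(s_2-s_1),\,t_1+t(t_2-t_1)}$, which is the concrete form of your "localise" step). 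The only place you say more than the paper is the explicit treatment of $R^{12}$, which the paper leaves as an implicit consequence of the same identity; this is a welcome gap-filling rather than a genuine deviation.
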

\begin{proof}
Let $R^{1}$ the remainder terms given by~\ref{eq:stab-1}  then if we put $y=x^{1}$ in the lemma \ref{stab lemma} we obtain 
$$
R^1=\int_2\varphi'(x^1)\int_1\dd x^1+\int_2\varphi''(x^1)\der_1x^1d_2x^1+\varphi'(x^1)^{\sharp1}\der_2x^{1}+r_1(x^1)
$$
Now if we observe that:
$$
(\varphi'(x^1)\der_2x^1)_{\square}=\left(\int_1\varphi'(x^1)\int_2\dd x^1+\int_1\varphi''(x^1)\der_2x^1dx^1\right)_{\square}-\nu_1(x)
$$
where $\nu_1(x)$ is given in the lemma~\ref{comp-shar-1} we get that $R^1_{\square}=r_1(x^1)_{\square}-\nu_1(x)_{\square}$ and $R^2_{\square}=(r_2(x^2)-\nu_2(x))_{\square}$ then by the proposition \label{eq:rem} we obtain that $R_{\square}=r_1(x^1)_{\square}-\nu_1(x)_{\square}+r_2(x^2)_{\square}-\nu_2(x)-R^{12}_{\square}$ of course this relation give us the needed regularity of $R$ on the unite square but if we take $X_{st}=x_{s_1+s(s_2-s_1);t_1+t(t_2-t_1)}$ for $(s,t)\in[0,1]^{2}$ then is easy to see that $X$ satisfy all the algebraic assumption and that $R^{x}_{s_1s_2t_1t_2}=R^{X}_{\square}$ which give us the result for any rectangle. 
\end{proof}
Now to simplify the notation we introduce the following definition.
\begin{definition}
Let $x\in \CCC_{1,1}$ a sheet satisfying the Hypothesis ~\ref{hyp:chen-2d}~,\ref{hyp:algebric-assumption}~,\ref{hyp:objects-MLO}  and~\ref{final-alg-ass} then we denote by $\mathbb X$ the collection of all iterated integrals giving in these Hypothesis and we call it Rough-Sheet associated to $x$ and then we define $\mathscr H_{\alpha,\beta}$ the space which contain the rough sheet as the product of the H\"older space giving in these Hypothesis equipped with the product topology.
\end{definition}
Now we have the following lemma:
\begin{lemma}\label{lemma:h-p-r}
Let $\rho_{1},\rho_{2}\in(0,1)$,  $x^1,x^2$ two increments lying in $\CCC_{1,1}^{\rho_1,\rho_2}$, and $\varphi\in C^{3}(\mathbb R)$.  Then we have:
\begin{equation}
||\der_1\varphi(x^1)||_{\rho_1,0}\lesssim \left(\sup_{s,t\in[0,1]^2}|\varphi(x^1_{st})|\right)||\der x^1||_{\rho_1,0}
\end{equation}
and 
\begin{equation}\label{eq:bnd-f(x1)-f(x2)}
\cn_{\rho_1,\rho_2}(\varphi(x^1)-\varphi(x^2))\lesssim 
c_{x^{1},x^{2}} \,\cn_{\rho_1,\rho_2}(x^1-x^2) \, 
\lc 1+\cn_{\rho_1,\rho_2}(x^1)+\cn_{\rho_1,\rho_2}(x^2) \rc^2
\end{equation}
where we recall that $\cn_{\rho_{1},\rho_{2}}(.)$ has been defined at equation~\eqref{eq:norm-def}. In the relation above we have also set
 $$
 c_{x^{1},x^{2}}=\sum_{i=1}^{3}\sup_{(s,t)\in[0,1]^{2}}|\varphi^{(i)}(x^1_{st})|+\sup_{(s,t)\in[0,1]^{2}}|\varphi^{(i)}(x^2_{st})|
 $$
 \end{lemma}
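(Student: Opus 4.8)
I first dispose of the first, elementary estimate. By the fundamental theorem of calculus,
$$
\der_1\varphi(x^1)_{s_1s_2t}=\Psi^{x^1}_{s_1s_2t}\,\der_1x^1_{s_1s_2t},\qquad
\Psi^{z}_{s_1s_2t}:=\int_0^1\varphi'\big(z_{s_1t}+u\,\der_1z_{s_1s_2t}\big)\,\dd u,
$$
so that $|\der_1\varphi(x^1)_{s_1s_2t}|\le\big(\sup_{s,t}|\varphi'(x^1_{st})|\big)\,|\der_1x^1_{s_1s_2t}|$; dividing by $|s_2-s_1|^{\rho_1}$ and taking the supremum gives the bound, with $\|\der_1x^1\|_{\rho_1,0}$ on the right-hand side.

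For~\eqref{eq:bnd-f(x1)-f(x2)} the plan is to estimate separately the four terms making up $\cn_{\rho_1,\rho_2}(\varphi(x^1)-\varphi(x^2))$, namely $\|\varphi(x^1)-\varphi(x^2)\|_\infty$, $\|\der_1(\varphi(x^1)-\varphi(x^2))\|_{\rho_1,0}$, $\|\der_2(\varphi(x^1)-\varphi(x^2))\|_{0,\rho_2}$ and $\|\der(\varphi(x^1)-\varphi(x^2))\|_{\rho_1,\rho_2}$. The first three are soft. For the sup norm one Taylor expansion gives $|\varphi(x^1_{st})-\varphi(x^2_{st})|\le(\sup|\varphi'|)\|x^1-x^2\|_\infty$. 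For $\|\der_1\cdot\|_{\rho_1,0}$, and symmetrically for $\|\der_2\cdot\|_{0,\rho_2}$, I would use the factorization $\der_1\varphi(x^i)=\Psi^{x^i}\der_1x^i$ above, subtract the two and add and subtract the cross term $\Psi^{x^1}\der_1x^2$: the difference $\Psi^{x^1}-\Psi^{x^2}$ is controlled, by Lipschitzianity of $\varphi'$, by $(\sup|\varphi''|)\big(\|x^1-x^2\|_\infty+|s_2-s_1|^{\rho_1}\|\der_1(x^1-x^2)\|_{\rho_1,0}\big)$ and is multiplied by $|\der_1x^2_{s_1s_2t}|\le|s_2-s_1|^{\rho_1}\|\der_1x^2\|_{\rho_1,0}$, while the remaining term $\Psi^{x^1}\der_1(x^1-x^2)$ satisfies $|\Psi^{x^1}|\le\sup|\varphi'|$; since $|s_2-s_1|\le1$ the excess factor $|s_2-s_1|^{2\rho_1}$ is dominated by $|s_2-s_1|^{\rho_1}$, so that altogether this term is $\lesssim c_{x^1,x^2}\,\cn_{\rho_1,\rho_2}(x^1-x^2)\big(1+\cn_{\rho_1,\rho_2}(x^1)+\cn_{\rho_1,\rho_2}(x^2)\big)$.

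The real work is the mixed increment $\der=\der_1\der_2$. Applying $\der_2$ to the factorization $\der_1\varphi(z)=\Psi^z\der_1z$, and using the product rule in the second variable, I would first record, for a single sheet $z$,
$$
\der\varphi(z)_{s_1s_2t_1t_2}=\big(\der_2\Psi^z\big)_{s_1s_2t_1t_2}\,\der_1z_{s_1s_2t_2}+\Psi^z_{s_1s_2t_1}\,\der z_{s_1s_2t_1t_2},
$$
and then observe that $\der_2\Psi^z$ is a difference of $\varphi'$ at two points whose separation equals $\der_2z_{s_1t_1t_2}+u\,\der z_{s_1s_2t_1t_2}$, hence that $\der_2\Psi^z$ is an average of $\varphi''$ times that separation. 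I would then write this identity for $z=x^1$ and $z=x^2$, subtract, and telescope each factor in turn. Every factor is either an increment of some $x^i$ — in $\der_1$-, $\der_2$- or $\der$-form, which by $|s_2-s_1|,|t_2-t_1|\le1$ produces the homogeneity $|s_2-s_1|^{\rho_1}|t_2-t_1|^{\rho_2}$ together with a contribution to $1+\cn_{\rho_1,\rho_2}(x^i)$ — or an average of $\varphi'$ or of $\varphi''$; a difference of such averages between $i=1$ and $i=2$ is expanded once more by Taylor, producing an average of $\varphi''$ respectively $\varphi'''$ multiplied by increments of $x^1-x^2$ measured in the $\|\cdot\|_\infty$-, $\|\der_1\cdot\|_{\rho_1,0}$-, $\|\der_2\cdot\|_{0,\rho_2}$- and $\|\der\cdot\|_{\rho_1,\rho_2}$-norms. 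This is precisely why the third derivative appears in $c_{x^1,x^2}$; collecting degrees, one gets exactly one factor $\cn_{\rho_1,\rho_2}(x^1-x^2)$, the constant $c_{x^1,x^2}$, and a polynomial of degree at most two in $\cn_{\rho_1,\rho_2}(x^1)$ and $\cn_{\rho_1,\rho_2}(x^2)$. Summing the four contributions gives~\eqref{eq:bnd-f(x1)-f(x2)}.

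The main obstacle is this last step: keeping the dependence on $\cn_{\rho_1,\rho_2}(x^1-x^2)$ exactly linear and on the $\cn_{\rho_1,\rho_2}(x^i)$ at most quadratic while carrying out the repeated add-and-subtract in the mixed second difference of a composition, and checking that every supernumerary Hölder power is legitimately absorbed by the boundedness $T=1$ of the domain.
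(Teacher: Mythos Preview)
The paper does not actually supply a proof of this lemma; it is stated and then immediately invoked in the continuity theorem that follows. Your argument is the standard one and is correct: factorize $\der_1\varphi(z)=\Psi^z\,\der_1z$ via the integral mean of $\varphi'$, apply $\der_2$ to this product to handle the rectangular increment, and then telescope the difference between $z=x^1$ and $z=x^2$ factor by factor, each subtraction of a $\varphi'$- or $\varphi''$-average costing one more derivative of $\varphi$ and one factor of $\cn_{\rho_1,\rho_2}(x^1-x^2)$. The degree count you describe---exactly one linear factor in $\cn_{\rho_1,\rho_2}(x^1-x^2)$, at most quadratic in the $\cn_{\rho_1,\rho_2}(x^i)$, and derivatives of $\varphi$ up to order three---works out just as you indicate once the telescoping is written down; the ``main obstacle'' you flag is purely bookkeeping and presents no genuine difficulty.

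You are also right to silently correct the statement: the first inequality should read $\sup|\varphi'(x^1)|$ and $\|\der_1 x^1\|_{\rho_1,0}$ rather than $\sup|\varphi(x^1)|$ and $\|\der x^1\|_{\rho_1,0}$; these are typos in the paper.
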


Using the concrete expression of the remainder term obtained previously, the continuity of the sewing map and this lemma we get easily the following continuity theorem. 
\begin{theorem}\label{th:cont-int}
Let $x\in\CCC_{1,1}^{\alpha,\beta}$ and $\overline x\in\CCC_{1,1}^{\alpha,\beta}$satisfying the hypothesis ~\eqref{hyp:chen-2d}~,\eqref{hyp:algebric-assumption}~,\eqref{hyp:objects-MLO}  and~\eqref{final-alg-ass} and $\varphi\in C^{8}(\mathbb R)$ then there exist a polynomial  function $K\in C([0,+\infty[,[0,+\infty[)$ such that 
$$
||\varphi(x)^{\sharp}-\varphi(\overline x)^{\sharp}||_{2\alpha,2\beta}\lesssim_{\alpha,\beta}CK(||\mathbb X||_{\mathscr H_{\alpha,\beta}}+||\overline{\mathbb X}||_{\mathscr H_{\alpha,\beta}})||\mathbb X-\overline{\mathbb X}||_{\mathscr H_{\alpha,\beta}}
$$
and then
\begin{equation*}
\begin{split}
\left|\left|\iint\varphi(x)\dd x-\iint\varphi(\overline x)\dd \overline x\right|\right|_{\alpha,\beta}+\left|\left|\iint\varphi(x)\dd \omega(x)-\iint\varphi(\overline x)\dd \omega(\overline x)\right|\right|_{\alpha,\beta}&\lesssim_{\alpha,\beta}CK(||\mathbb X||_{\mathscr H_{\alpha,\beta}}+||\overline{\mathbb X}||_{\mathscr H_{\alpha,\beta}})
\\&\times||\mathbb X-\overline{\mathbb X}||_{\mathscr H_{\alpha,\beta}}
\end{split}
\end{equation*}
where $\dd\omega(x):=\dd_1x\dd_2x$, $\omega(\overline x):=\dd_1\overline x\dd_2\overline x$ and $C=\sum_{k\in\{1,…,8\}}||\varphi^{(k)}||_{\infty,M}$ and $M=||x||_{\infty}+||\overline x||_{\infty}$
\end{theorem}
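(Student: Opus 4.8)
The plan is to leverage the explicit representations already assembled in Section~\ref{sec:stability}. First I would recall from the proof of Theorem~\ref{th:stab} that on the unit square
$$
\varphi(x)^{\sharp}=R_{\square}=r_1(x^1)_{\square}-\nu_1(x)_{\square}+r_2(x^2)_{\square}-\nu_2(x)_{\square}-R^{12}_{\square},
$$
and that, through the affine reparametrisation $X_{st}=x_{s_1+s(s_2-s_1);t_1+t(t_2-t_1)}$ used there, the same identity controls $\varphi(x)^{\sharp}\in\CCC_{2,2}^{2\alpha,2\beta}$ on any rectangle. Inspecting Lemmas~\ref{comp-shar-1}, \ref{stab lemma} and the Taylor-expansion lemma immediately following Lemma~\ref{stab lemma}, together with~\eqref{rem-12} and~\eqref{eq:stab-1}, I would observe that each of $r_a(x^a)$, $\nu_a(x)$ and each summand of $R^{12}$ is a \emph{finite} sum of expressions that are (a) multilinear in the components of $\mathbb X$ (the objects $A^x,A^{\omega},B_a^{\cdot},C^{\cdot},D_a^{\cdot},E_a^{\cdot},F_a^{\cdot},G_a,H_a,I_a,J_a,K_a,L_a,M_a,N_a,O_a,P_a,Q_a$ of Hypotheses~\ref{hyp:chen-2d}, \ref{hyp:algebric-assumption}, \ref{hyp:objects-MLO}, \ref{final-alg-ass}), possibly acted on by one or two of the operators $\Lambda,\Lambda_a,\mu_a,\Lambda_a\otimes_a1,\circ_a$, with (b) coefficients of the type $\int\varphi^{(k)}(x_{\cdot}+\theta\der_1x_{\cdot}+\theta'\der_2x_{\cdot}+\theta\theta'\der x_{\cdot})\,\dd\theta\,\dd\theta'$, $1\le k\le 5$, together with a few finite differences of such. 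Hence $\varphi(x)^{\sharp}$ is a \emph{fixed} polynomial map of $\mathbb X$ whose coefficients, and their first variation in $x$, are controlled by $C$ and a $C^{8}$-modulus of $\varphi$; and the regularity of every argument of $\Lambda,\Lambda_a,\dots$ is exactly the one certified by the four algebraic hypotheses, as already checked in Theorem~\ref{th:stab}.

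With this structure in hand, the first inequality follows from a standard ``telescoping of a multilinear form''. I would expand $\varphi(x)^{\sharp}-\varphi(\overline x)^{\sharp}$ as a sum in which, term by term, the objects attached to $x$ are swapped for those attached to $\overline x$ one factor at a time, so that each summand carries exactly one difference: either (i) a component difference $U-\overline U$, the other factors being components of $\mathbb X$ or $\overline{\mathbb X}$, or (ii) a coefficient difference $\varphi^{(k)}(x)-\varphi^{(k)}(\overline x)$ (or a difference of the associated Taylor remainders $\varphi(x)^{\sharp1},\der_2\varphi'(x)^{\sharp1},\dots$). For (i) I would use multiplicativity of the H\"older norms on $\CCC_{*,*}$ together with the continuity of $\Lambda$ (the Proposition after Lemma~\ref{lem2}), of $\Lambda_a$ (Lemma~\ref{lem2}), and Lemma~\ref{lemma:mu-lambda-commute} to keep the interleaved $\mu_a$'s and $\Lambda_a$'s under control; this bounds such a summand by $\|\mathbb X-\overline{\mathbb X}\|_{\mathscr H_{\alpha,\beta}}$ times a monomial in $\|\mathbb X\|_{\mathscr H_{\alpha,\beta}}$ and $\|\overline{\mathbb X}\|_{\mathscr H_{\alpha,\beta}}$. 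For (ii) I would invoke Lemma~\ref{lemma:h-p-r} (and the one-dimensional analogue used in its proof), which bounds $\mathscr N_{\rho_1,\rho_2}(\varphi(x)-\varphi(\overline x))$ by $c_{x,\overline x}\,\mathscr N_{\rho_1,\rho_2}(x-\overline x)[1+\mathscr N_{\rho_1,\rho_2}(x)+\mathscr N_{\rho_1,\rho_2}(\overline x)]^2$; since $\|x-\overline x\|_{\alpha,\beta}$ and $\|\der(x-\overline x)\|_{\alpha,\beta}$ are themselves among the components measured by $\|\mathbb X-\overline{\mathbb X}\|_{\mathscr H_{\alpha,\beta}}$, this too yields a factor $\|\mathbb X-\overline{\mathbb X}\|_{\mathscr H_{\alpha,\beta}}$ times a polynomial in the two norms (the $C^{8}$ hypothesis, as opposed to $C^{5}$ in Theorem~\ref{th:stab}, is precisely what makes these Lemma~\ref{lemma:h-p-r}-type estimates applicable to coefficients already involving up to $\varphi^{(5)}$). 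Summing the finitely many summands and setting $K$ to be the resulting polynomial gives the first inequality.

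For the second inequality I would repeat the argument one level higher. By Theorem~\ref{th:integral}, using the controlled-sheet data $\varphi(x)^{x},\varphi(x)^{\omega},\varphi(x)^{\sharp}$ identified in Theorem~\ref{th:stab}, the increments $\iint\varphi(x)\,\dd x$ and $\iint\varphi(x)\,\dd\omega$ are again fixed expressions built from $\varphi(x),\varphi'(x),\varphi''(x)$, the components of $\mathbb X$, the boundary integrals of Proposition~\ref{boundry}, and $\Lambda,\Lambda_a,\mu_a,\Lambda_a\otimes_a1$, with Corollary~\ref{cor:2d-integration} ensuring that the increments fed to $\Lambda,\Lambda_a$ lie in $\CCC^{z_1,z_2}$ with $z_i>1$. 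Telescoping exactly as before reduces the difference of the two integrals to a finite sum each of whose terms carries a factor $\|\mathbb X-\overline{\mathbb X}\|_{\mathscr H_{\alpha,\beta}}$ --- coming from a component difference, from a boundary-integral difference (controlled by the explicit formulas of Proposition~\ref{boundry} together with the first inequality), or from $\varphi(x)^{\sharp}-\varphi(\overline x)^{\sharp}$ itself (the first inequality) --- times a polynomial in $\|\mathbb X\|_{\mathscr H_{\alpha,\beta}}$ and $\|\overline{\mathbb X}\|_{\mathscr H_{\alpha,\beta}}$; enlarging $K$ if necessary concludes.

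The hard part will not be any individual estimate but the bookkeeping: one must verify, summand by summand through the long representations of Section~\ref{sec:stability}, that every term is indeed jointly multilinear in the rough-sheet components and Lipschitz in the $\varphi$-derivatives, and that after each one-factor swap the arguments of all the $\Lambda$'s retain the $z_i>1$ regularity dictated by Hypotheses~\ref{hyp:chen-2d}, \ref{hyp:algebric-assumption}, \ref{hyp:objects-MLO} and~\ref{final-alg-ass} --- but this requires no idea beyond those already used in proving Theorems~\ref{th:stab} and~\ref{th:integral}.
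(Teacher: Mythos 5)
Your proposal is correct and follows essentially the same route the paper intends: the paper gives no written proof beyond the one-line remark preceding the theorem ("Using the concrete expression of the remainder term obtained previously, the continuity of the sewing map and this lemma\ldots"), and your plan---the decomposition $\varphi(x)^{\sharp}=R_{\square}$ via $r_a(x^a)$, $\nu_a(x)$, $R^{12}$ from Theorem~\ref{th:stab}, telescoping the multilinear dependence on the rough-sheet components, Lemma~\ref{lemma:h-p-r} for the $\varphi$-coefficient differences, and continuity of $\Lambda$, $\Lambda_a$---is precisely the combination the authors are pointing to, with the $C^8$ versus $C^5$ bookkeeping correctly identified.
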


\section{Enhancement of the fractional Brownian Sheet and Stratonovich formula}
\label{sec:construction}
Let $(\Omega,\mathscr F,\mathbb P)$ a probability space, in this section we construct the rough-sheet associated to the fractional Brownian sheet $x$. Before staring with probabilistic computation let us recall the definition of such process.
\begin{definition}
The process $(x_{st})_{(s,t)\in[0,1]^2}$ defined on the probability space $(\Omega,\mathscr F,\mathbb P)$ is called fractional Brownian sheet with hurst parameter $\alpha,\beta\in[0,1]$   if $x$ is a Gaussian process with covariance function 
\begin{equation}\label{eq:def-cov-fBs}
R_{s_1s_2t_1t_2}=1/4(|s_1|^{\alpha}+|s_2|^{\alpha}+|s_2-s_1|^{\alpha})(|t_1|^\beta+|t_2|^{\beta}+|t_2-t_1|^\beta).
\end{equation}
\end{definition}
With this definition we recall the following harmonisable representation for the fractional Brownian sheet  
\begin{equation}\label{eq:harmonic-rep}
x_{st}\stackrel{law}{=}\int_{\mathbb R^2}\frac{e^{is\xi}-1}{|\xi|^{\alpha+1/2}}\frac{e^{it\eta}-1}{|\eta|^{\beta+1/2}}\hat W(\dd \xi,\dd \eta)
\end{equation}
where $\hat W$ is the Fourier transform of the white noise $W$ (see~\cite{ST94}). Let us now state some extension of Garsia-Rodemich-Rumsey Lemma (see~\cite{grr}) which will be useful to estimate the Hö\"older norm of random fields.
\begin{lemma}
\label{lemma:grr}
For $p >1$ and $\alpha,\beta\in(\frac{1}{3},\frac{1}{2}]$ there exist two  non negative constants $C_{1}=C_{1}(\alpha,\beta,p)$ and $C_{2}=C_{2}(\alpha,p)$ such as for every $y\in\CCC_{2}$ and $R\in\CC_{2}$   we have :
$$
 ||\delta y ||_{\alpha,\beta} \leq C_{1}U^{2}_{\alpha+\frac{2}{p},\beta+\frac{2}{p},p}(\der y)
$$
and
$$
||R||_{\alpha}\leq C_{2}(U^{1}_{\alpha+\frac{2}{p},p}(R)+||\der^{1d}R||_{\alpha})
$$
where
$$
U^{n}_{\alpha_{1},\alpha_{2},…,\alpha_{n},p}(V):=\left(\int_{[0,1]^{2n}}\frac{|V_{s_{1}^{1}s_{2}^{1}...s_{1}^{n}s_{2}^{n}}|^{p}}{\Pi_{i=1}^{i=n}|s_{2}^{i}-s_{1}^{i}|^{\alpha_{i}}}\dd s_{1}^{1}\dd s_{2}^{1}...\dd s_{1}^{n}\dd s_{2}^{n}\right)^{\frac{1}{p}}
$$
for $V\in\CC_{2}^{\otimes n}$
\end{lemma}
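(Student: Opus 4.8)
The plan is to reduce both estimates to the classical one--parameter Garsia--Rodemich--Rumsey lemma (as in~\cite{grr}), applied to \emph{exact} increments, and then to tensorize by Fubini. The only genuinely two--dimensional ingredient is the acyclicity of the rows and columns of the diagram~\eqref{eq:cd}, i.e. the homotopy formulas $\sigma_i\der_i-\der_i\sigma_i=1$ from Section~\ref{sec:two-dim}; this is what lets us treat $\der y$, one direction at a time, as the increment of an honest function.

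\textbf{First inequality.} Since $\der=\der_1\der_2$ and $\der_1^2=\der_2^2=0$ we have $\der_1(\der y)=\der_2(\der y)=0$. By the homotopy formulas the biincrement $\der y\in\CCC_{2,2}$ is therefore both a $\der_1$--coboundary and a $\der_2$--coboundary: there exist $g=\sigma_1(\der y)\in\CCC_{1,2}$ and $h=\sigma_2(\der y)\in\CCC_{2,1}$ with
\begin{equation*}
(\der y)_{(s_1,s_2)(t_1,t_2)}=g_{s_2(t_1,t_2)}-g_{s_1(t_1,t_2)}=h_{(s_1,s_2)t_2}-h_{(s_1,s_2)t_1}.
\end{equation*}
Thus, for each fixed $(t_1,t_2)$, the map $s\mapsto g_{s(t_1,t_2)}$ is an ordinary $V$--valued function whose first difference is $\der y$, so the classical GRR lemma (with $\Psi(x)=|x|^p$ and the gauge $u^{\alpha+2/p}$) gives, for $p$ large, a bound of $\der y$ in the first direction of the form $C|s_2-s_1|^{\alpha}$ times the one--dimensional GRR functional of $u\mapsto g_{u(t_1,t_2)}$, which is exactly the inner double integral $\bigl(\int_{[0,1]^2}|(\der y)_{(u_1,u_2)(t_1,t_2)}|^p|u_2-u_1|^{-(\alpha+2/p)p}\dd u_1\dd u_2\bigr)^{1/p}$. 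I would then apply, for each fixed $(s_1,s_2)$, the same classical GRR estimate to $t\mapsto h_{(s_1,s_2)t}$, producing the analogous bound with $|t_2-t_1|^{\beta}$ and the integral over the second pair of variables; inserting the first bound into the integrand of the second and using Fubini collapses the two one--dimensional functionals into the single four--fold integral $U^2_{\alpha+2/p,\beta+2/p,p}(\der y)$. This yields $\|\der y\|_{\alpha,\beta}\le C_1\,U^2_{\alpha+2/p,\beta+2/p,p}(\der y)$ with $C_1=C^2$.

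\textbf{Second inequality.} Here $R\in\CC_2$ is a genuine (not necessarily exact) one--parameter $2$--increment, so this is precisely the increment version of GRR established in~\cite{[Gubinelli-2004]}: running the usual dyadic chaining argument directly on $R$ along a suitably refined chain $s=t_0,\dots,t_N=t$, the GRR machinery controls $\sum_k|R_{t_kt_{k+1}}|$ by $C_2\,U^1_{\alpha+2/p,p}(R)\,|t-s|^{\alpha}$, while the errors created by reassociating consecutive increments telescope at geometric scales and are absorbed into $\|\der^{1d}R\|_{\alpha}$. I would simply invoke that statement, noting that when $\der^{1d}R=0$ it reduces to the plain function GRR used in the first part.

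\textbf{Main obstacle.} The only delicate point is the factorization step: one must know that $\der y$ is separately exact in \emph{each} single direction before the one--parameter lemma can be applied directionally without an extra coboundary correction term. This is exactly what the acyclicity of the rows and columns of~\eqref{eq:cd} supplies, and it is the reason the statement is phrased for $\der y$ (an exact biincrement) rather than for an arbitrary element of $\CCC_{2,2}$. The rest is routine exponent bookkeeping: choosing $p$ large enough that $\alpha+2/p,\beta+2/p<1$, which is harmless since in the applications $p$ will be taken arbitrarily large.
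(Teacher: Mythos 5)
Your proposal is correct and follows essentially the same route as the paper: factor $\der=\der_1\der_2$, apply the classical one--parameter GRR lemma once in each direction to the exact increments $\der_2 y$ and $\der_1 y$, and combine by Fubini; for the second inequality invoke the increment form of GRR from~\cite{[Gubinelli-2004]}. The detour through the homotopy formula $\sigma_i\der_i-\der_i\sigma_i=1$ to produce the primitives $g,h$ is harmless but unnecessary, since $g=\der_2 y$ and $h=\der_1 y$ are already explicit from $\der=\der_1\der_2=\der_2\der_1$, which is exactly what the paper uses.
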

\begin{proof}
 Let $(s_{1},s_{2},t_{1} t_{2}) \in [0,1]^{4} $ then by  the Garsia-Rodemich-Rumsey  Lemma we have that  :
$$
|\delta y_{(s_{1},s_{2}),(t_{1},t_{2})}|^{p}=|\delta_{1}y_{(s_{1},s_{2})t_{2}}-\delta_{1} y_{(s_{1},s_{2})t_{1}}|^{p}\lesssim_{\alpha,\beta,p} |t_{2}-t_{2}|^{\beta p}\iint_{[0,1]^{2}}\frac{|\delta_{1}y_{(s_{1},s_{2})v_{2}}-y_{(s_{1},s_{2})v_{1}}|^{p}}{|v_{2}-v_{1}|^{\beta p+2}}\dd v_{1}\dd v_{2}
$$
Now we remark that  :
$$
|\delta y_{(s_{1},s_{2}),(t_{1},t_{2})}|=|\delta_{1}y_{(s_{1},s_{2})t_{2}}-\delta_{1}y_{(s_{1},s_{2})t_{1}}| =|\delta_{2}y_{s_{2}(t_{1},t_{2})}-\delta_{2} y_{s_{1}(t_{1},t_{2})}|
$$
Then if we apply the Garsia-Rodemich-Rumsey again we obtain:
$$
|\delta y_{(s_{1},s_{2}),(t_{1},t_{2})}|^{p}\lesssim_{\alpha,\beta,p} |s_{1}-s_{2}|^{\alpha p }|t_{2}-t_{2}|^{\beta p}\iiiint_{[0,1]^{4}}\frac{|\delta y_{(u_{1},u_{2})(v_{1},v_{2})}|^{p}}{|u_{1}-u_{2}|^{\alpha p + 2}|v_{2}-v_{2}|^{\beta p+2}}\dd u_{1}\dd u_{2}\dd v_{1}dv_{2} 
$$
For the proof of second inequality we refer the reader to~\cite{[Gubinelli-2004]}. 
\end{proof}
Now our strategy is to regularize $x$ in the following way:
$\forall N\in \mathbb N$ we put
\begin{equation}\label{reg-sheet}
x_{st}^{N} :=K_{\alpha,\beta}\iint_{\{|\xi|,|\eta|\leq N\}}\frac{e^{is\xi} -1}{|\xi|^{\alpha+1/2}}\frac{e^{it\eta}-1}{|\eta|^{\beta +1/2}}\hat W(\dd \xi,\dd \eta)
\end{equation}
so we are able to define
$$
\partial _{1}\partial_{2} x^{N}_{st} :=K_{\alpha,\beta}\iint_{\{|\xi| ,|\eta|\leq N\} }\frac{i\xi e^{is\xi}}{|\xi|^{\alpha + 1/2}}\frac{i\eta e^{is\eta}}{|\eta|^{\beta + 1/2}}\hat W(\dd \xi,\dd \eta)
$$
And this allows us to construct rough sheet associated to $x^{N}$ denoted in the following by $\mathbb X^N$
Now we will set out the main theorem of this section which will allow us to say that the fractional Brownian sheet can be enhanced in a rough sheet.
\begin{theorem}
\label{conv-th}
Let $x^{N}$ the process given by the equation ~\eqref{reg-sheet} and $\mathbb X^{N}$ the associated rough sheet then there exists a random variable $\mathbb X\in \mathscr H_{h,h'}$ such that $\mathbb X^{N}$ converges to $\mathbb X$ in $L^{p}(\Omega,\mathscr H_{h,h'})$  for all $(h,h',p)\in(\frac{1}{3},\alpha)\times(\frac{1}{3},\beta)\times[1,+\infty)$.
\end{theorem}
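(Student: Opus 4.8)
The strategy is the standard two-step Kolmogorov-type argument familiar from the one-dimensional rough path construction, adapted to the two-parameter Gaussian setting. First I would fix $p$ large (relying on the freedom to enlarge $p$ in the statement) and, for each iterated-integral component $Y$ of the rough sheet $\mathbb{X}^N$ — that is, each of $A^{x,N}$, $A^{\omega,N}$, $B^{xx,N}_a$, $C^{xx,N}$, the various $D,E,F$ objects, and the auxiliary $G,H,I,J,K,L,M,O,P,Q$ appearing in Hypotheses \ref{hyp:chen-2d}, \ref{hyp:algebric-assumption}, \ref{hyp:objects-MLO}, \ref{final-alg-ass} — establish moment bounds of the form
$$
\E\left[\,|\der Y^N_{(s_1,u_1,t_1)(s_2,u_2,t_2)}|^{p}\,\right] \lesssim \prod_{i=1,2}|s_i-u_i|^{\gamma_i p}|u_i-t_i|^{\rho_i p},
\qquad
\E\left[\,|(\der Y^N - \der Y^M)|^{p}\,\right]\lesssim \varepsilon_{N,M}\cdot(\cdots),
$$
with $\gamma_i+\rho_i$ matching the prescribed Hölder exponent of the target space and $\varepsilon_{N,M}\to 0$ as $N,M\to\infty$. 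Since each component is a multiple Wiener–Itô integral of fixed order (first, second, or third chaos) of the white noise $W$, by Gaussian hypercontractivity it suffices to bound the second moment, i.e. the relevant $L^2(\Omega)$ norm, and then raise to the power $p/2$; this reduces everything to computing covariances using the harmonisable representation \eqref{eq:harmonic-rep} and the explicit kernel $K_{\alpha,\beta}\,(e^{is\xi}-1)|\xi|^{-\alpha-1/2}(e^{it\eta}-1)|\eta|^{-\beta-1/2}$ restricted to $\{|\xi|,|\eta|\le N\}$.

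Second, I would feed these moment bounds into the two-parameter Garsia–Rodemich–Rumsey estimate of Lemma \ref{lemma:grr} (and its evident tensorized variants for the $\otimes_a$-valued objects, obtained by applying the one-parameter GRR inequality separately in each slot, exactly as in Lemma \ref{lemma:grr}'s proof). This upgrades the pointwise moment bounds to bounds on $\E[\|\cdot\|_{\text{Hölder}}^p]$ in each of the component Hölder spaces listed in the definition of $\mathscr H_{\alpha,\beta}$; here it is essential that we work with exponents $h<\alpha$, $h'<\beta$ strictly below the critical values, so that the extra $2/p$ losses in the GRR exponents are absorbed for $p$ large. The same inequalities applied to the differences $\mathbb{X}^N-\mathbb{X}^M$ show that $(\mathbb{X}^N)_N$ is Cauchy in $L^p(\Omega,\mathscr H_{h,h'})$; since $\mathscr H_{h,h'}$ is a complete metric space (a product of Banach Hölder spaces), the limit $\mathbb{X}\in L^p(\Omega,\mathscr H_{h,h'})$ exists. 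Finally one must check that $\mathbb{X}$ still satisfies all the algebraic identities of Hypotheses \ref{hyp:chen-2d}–\ref{final-alg-ass}: each identity is a continuous (indeed polynomial, via the products of Section on computations in $\CCC_{*,*}$) relation among the components, so it passes to the $L^p$-and-a.s.\ limit along a subsequence.

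The main obstacle is the covariance bookkeeping in the first step: the higher iterated integrals ($C^{xx}$, $D$, $E$, $F$, and the wave-specific $K,M,O,P,Q$) are genuine second- and third-chaos objects, and one must verify that the double/triple frequency integrals arising from $\E[|\der Y^N|^2]$ converge and scale with the correct powers of the increments $|s_i-u_i|$, $|u_i-t_i|$ — this is precisely where the constraint $\alpha,\beta>1/3$ enters, ensuring integrability of kernels like $|\xi|^{-2\alpha-1}$ near the origin after the relevant cancellations from the factors $e^{is\xi}-1$. The product structure of the fractional Brownian sheet is a help here, since the $\xi$- and $\eta$-integrals factorize, so each two-dimensional estimate reduces to a product of two one-dimensional fractional-Brownian-motion estimates of the type already understood in the rough path literature; nonetheless the combinatorics of which increments pair with which frequency variable in the mixed objects (e.g. the $\otimes_1$-valued $D^{xx}_1$, $E^{xxx}_1$, which are 1-increments in one $s$-slot and 2-increments in another) requires care. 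I would organize this by treating the $\xi$-direction and $\eta$-direction estimates as black boxes and only tracking the combinatorial pairing, so that the bulk of the routine computation is quarantined into one-dimensional lemmas.
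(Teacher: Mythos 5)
Your plan reproduces the paper's two-step proof almost verbatim: reduce everything to $L^2$ covariance computations via the harmonisable representation and Wick's theorem, promote to $L^p$ by Gaussian hypercontractivity, and then upgrade the pointwise moment bounds to $L^p$ bounds on Hölder norms via Garsia--Rodemich--Rumsey inequalities before closing with a Cauchy argument in $L^p(\Omega,\mathscr H_{h,h'})$. One small caveat: the ``evident tensorized variants'' of GRR for the mixed objects such as $B_a^{xx}$, $C^{xx}$, $D_a^{xx}$ are not obtained by applying the one-dimensional GRR independently in each slot (these biincrements have nonvanishing $\der_2$, e.g.\ $\der_2 B_1^{xx}=-\mu_2 D_2^{xx}$ and $\der_1 C^{xx}=\mu_1 D_1^{xx}$), but rather by iterating the \emph{second} inequality of Lemma~\ref{lemma:grr} (the one involving $\|\der^{1d}R\|_\alpha$) through the Chen-type relations, which propagates bounds for $D$ into bounds for $B$ and $C$, bounds for $E$ into bounds for $D$, and so on --- this chained GRR is exactly the content of the paper's Lemma~\ref{extend:G-RR} and is the one genuinely non-routine ingredient your sketch glosses over.
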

To prove the theorem~\ref{conv-th} we will need the following lemma
\begin{lemma}
\label{lemma:basic-bound-integral}
Let $\alpha>1/3$ and the function defined on $\mathbb R^{2}$ by :
$$
\mathscr Q(\xi,\eta):=\int _{0}^{1}\dd se^{is\xi}\int_{0}^{s}\dd ve^{iv\eta}
$$
then $\mathscr Q$ satisfies 
\begin{enumerate}
\item $\mathscr Q(\xi,-\xi)=\frac{(1-cos(\xi))+i(\xi-sin(\xi))}{|\xi|^{2}}.$
\item $\mathscr Q(\xi_{1},\xi_{2})\lesssim\frac{1}{|x_{i}|}  .$
\item $\mathscr Q(\xi_{1},\xi_{2})\lesssim \frac{1}{|x_{1}||x_{2}|}+\frac{1}{|x_{1}+x_{2}||x_{i}|}.$
\item $\mathscr Q(\xi_{1},\xi_{2})\lesssim1.$
\item $\iint_{\mathbb R^{2}}\frac{|\mathscr Q(\xi,\eta)|^{2}}{|\xi|^{2\alpha-1}|\eta|^{2\alpha-1}}\dd \xi\dd \eta<+\infty $ 
\end{enumerate}
where $i\in\{1,2\}$.
\end{lemma}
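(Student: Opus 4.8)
The plan is to first reduce $\mathscr Q$ to a closed form by carrying out the two one-dimensional integrals explicitly, and then to read off (1)--(4) from it; item (5) is the only point requiring real work. Evaluating the inner integral gives $\int_0^s e^{iv\eta}\,\dd v=(e^{is\eta}-1)/(i\eta)$, whence
\[
\mathscr Q(\xi,\eta)\;=\;\frac{1}{i\eta}\left(\frac{e^{i(\xi+\eta)}-1}{i(\xi+\eta)}\;-\;\frac{e^{i\xi}-1}{i\xi}\right).
\]
Setting $\eta=-\xi$ and using $e^{i\xi}=\cos\xi+i\sin\xi$ gives (1); the trivial estimate $|\mathscr Q(\xi,\eta)|\le\int_0^1 s\,\dd s=\tfrac12$ gives (4). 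For (2) and (3) one only needs the elementary bound $|e^{ia}-1|\le|a|$, i.e. $|(e^{ia}-1)/(ia)|\le1$: applied to the two quotients in the closed form it yields $|\mathscr Q(\xi,\eta)|\le 2/|\eta|$, while integrating by parts in $s$ (before the inner integral is performed) and using $|e^{is(\xi+\eta)}|=1$ yields the symmetric estimate $|\mathscr Q(\xi,\eta)|\le 2/|\xi|$; combining $2/|\eta|$ with $|(e^{i(\xi+\eta)}-1)/(i(\xi+\eta))|\le 2/|\xi+\eta|$ and $|(e^{i\xi}-1)/(i\xi)|\le 2/|\xi|$ in the closed form produces (3) (the version with $\xi$ and $\eta$ interchanged following symmetrically).

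For (5) I would split $\mathbb R^2$ by the sizes of $|\xi|$ and $|\eta|$. On $\{|\xi|\le1,\,|\eta|\le1\}$ I use (4), and $\iint_{|\xi|,|\eta|\le1}|\xi|^{1-2\alpha}|\eta|^{1-2\alpha}\,\dd\xi\,\dd\eta<\infty$ because $2\alpha-1<1$. On $\{|\xi|\ge1,\,|\eta|\le1\}$ (and symmetrically on $\{|\xi|\le1,\,|\eta|\ge1\}$) I use $|\mathscr Q|\lesssim 1/|\xi|$, so the integrand is $\lesssim|\xi|^{-(2\alpha+1)}|\eta|^{1-2\alpha}$, integrable in $\xi$ (as $2\alpha+1>1$) and in $\eta$ (as $2\alpha-1<1$). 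The remaining region $\{|\xi|\ge1,\,|\eta|\ge1\}$ I split once more along the anti-diagonal $\xi+\eta=0$. On the strip $\{|\xi+\eta|\le1\}$ one has $|\eta|\asymp|\xi|$, so (2) makes the integrand $\lesssim|\xi|^{-4\alpha}$ on a set whose $\eta$-slices have length $\le2$, which is integrable in $\xi$ once $4\alpha>1$. Off the strip, $\{|\xi+\eta|\ge1\}$, I square (3) to get $|\mathscr Q|^2\lesssim|\xi|^{-2}|\eta|^{-2}+|\xi+\eta|^{-2}|\eta|^{-2}$: the first term gives $\lesssim|\xi|^{-(2\alpha+1)}|\eta|^{-(2\alpha+1)}$, plainly integrable; for the second, I fix $\eta$, substitute $u=\xi+\eta$, and split $\{1\le|u|\le|\eta|\}$ and $\{|u|\ge|\eta|\}$ to bound the $\xi$-integral by $\lesssim 1+|\eta|^{1-2\alpha}$, so that the surviving $\eta$-integral is $\lesssim\int_{|\eta|\ge1}\bigl(|\eta|^{-(2\alpha+1)}+|\eta|^{-4\alpha}\bigr)\,\dd\eta<\infty$, again provided $4\alpha>1$.

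The one genuinely delicate point is the region where both frequencies are large and close to the anti-diagonal: there the factor $1/|\xi+\eta|^2$ coming from (3) fails to be integrable, which forces the switch to the bound (2)---and (1) shows that no better estimate is available, since $|\mathscr Q(\xi,-\xi)|\asymp|\xi|^{-1}$ for large $\xi$. With (2) in hand the decay on that strip is exactly of order $|\xi|^{-4\alpha}$, so the double integral converges as soon as $4\alpha>1$, a condition much weaker than (hence implied by) the standing hypothesis $\alpha>1/3$, which is imposed elsewhere for the rough-sheet structure. All the remaining estimates are routine power counting, and it is worth recording along the way that the constant in (5) depends only on $\alpha$, so that the same bound can be reused in the convergence argument of Theorem~\ref{conv-th}.
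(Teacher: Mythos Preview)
Your argument is correct. For items (1)--(4) you go beyond the paper, which simply declares them ``easy by direct computation''; writing down the closed form
\[
\mathscr Q(\xi,\eta)=\frac{1}{i\eta}\left(\frac{e^{i(\xi+\eta)}-1}{i(\xi+\eta)}-\frac{e^{i\xi}-1}{i\xi}\right)
\]
and the integration-by-parts variant makes (1)--(3) transparent. For item (5) your decomposition genuinely differs from the paper's. The paper cuts $\mathbb R^2$ according to the \emph{relative} size of $|\xi+\eta|$ versus $|\xi|,|\eta|$ (regions $D,U,V$ defined by $|\xi+\eta|\le\min(|\xi|,|\eta|)/2$, etc.), and then has to subdivide the near--anti-diagonal region $D$ once more according to $|\xi+\eta|\gtrless 1$; on $D\cap\{|\xi+\eta|\le 1\}$ the variable $|\xi|$ is allowed to be small, so the bound $|\mathscr Q|\lesssim 1/(|\xi|\,|\xi+\eta|)$ alone is not integrable and the paper interpolates with (2) via an auxiliary exponent $\gamma\in(2\alpha-1,2\alpha-\tfrac12)$. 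You instead split first by the \emph{absolute} sizes $|\xi|,|\eta|\gtrless 1$: the three ``small'' quadrants are disposed of directly by (4) and (2), and only on $\{|\xi|\ge 1,|\eta|\ge 1\}$ do you look at the anti-diagonal. Because $|\xi|\ge 1$ there, the strip $\{|\xi+\eta|\le 1\}$ is handled by (2) alone, giving the integrand $\lesssim|\xi|^{-4\alpha}$ on a set of width $2$ in $\eta$, with no interpolation needed. Both routes land on the same threshold $4\alpha>1$; yours is a little more elementary, while the paper's partition is organised around the geometric feature (the anti-diagonal) that is also relevant for the higher-order kernels appearing later in the proof of Theorem~\ref{conv-th}.
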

\begin{proof}
The properties 1,2,3,4 are easy to establish by a direct computation only the prove of the last assertion claim a bit more work. Indeed we begin by decomposing the plane in three region $D,U$ and $V$ given by  :
\begin{enumerate}
\item $D=\left\{(\xi,\eta)\in\mathbb R^{2} ; |\xi+\eta|\leq \frac{\min(|\xi|,|\eta|)}{2}\right\}$
\item $U=\left\{(\xi,\eta)\in\mathbb R^{2} ;|\xi+\eta|\geq\frac{\max(|\xi|,|\eta|)}{2}\right\}$
\item $V=(D\cup V)^{c}$
\end{enumerate}
Now when $(\xi,\eta)\in D$ we have that $2/3|\xi|\leq |\eta|\leq 3/2|\xi|$ which leads us by the third property given in Lemma to obtain the following bound :
$$
|\mathscr Q(\xi,\eta)|\lesssim\frac{1}{|\xi||\xi+\eta|}
$$
and then
\begin{equation*}
\begin{split}
\iint_{D\cap\{|\xi+\eta|\geq1\}} \frac{|\mathscr Q(\xi,\eta)|^{2}}{|\xi|^{2\alpha-1}|\eta|^{2\alpha-1}}\dd \xi\dd \eta\lesssim & \iint_{|\{\xi+\eta|\geq1 ;|\xi|\geq2|\xi+\eta|\}}\frac{1}{|\xi|^{4\alpha}|\xi+\eta|^{2}}\dd \xi\dd \eta  \\&\lesssim \int_{1}^{+\infty}\dd v \frac{1}{v^{2}}\int_{2v}^{+\infty}\dd u\frac{1}{u^{4\alpha}}<+\infty
\end{split}
\end{equation*}
Now if  $|\xi+\eta|\leq1$ we can estimate the integrand in the following way :
\begin{equation*}
\frac{|\mathscr Q(\xi,\eta)|^{2}}{|\xi|^{2\alpha-1}|\eta|^{2\alpha-1}}\lesssim\frac{1}{|\xi|^{4\alpha-2\gamma}}
\end{equation*}
where $\gamma\in[0,1]$ and then we get :
$$
\iint_{D\cap\{|\xi+\eta|\leq 1\}} \frac{|\mathscr Q(\xi,\eta)|^{2}}{|\xi|^{2\alpha-1}|\eta|^{2\alpha-1}}\dd \xi\dd \eta\lesssim \int_{0}^{1}\dd v\int_{2v}^{+\infty}\dd u\frac{1}{u^{4\alpha-2\gamma}}<+\infty
$$
as soon as $\gamma \in (2\alpha-1, 2\alpha-1/2)$ which shows that the integral is finite on $ D $.
Now on $U$ and $V$ we have that $|\xi|,|\eta|\lesssim |\xi+\eta|$ and hence we can estimate $\mathscr Q$ by:
$$
|\mathscr Q(\xi,\eta)|\lesssim\frac{1}{|\xi||\eta|}
$$
then we obtain
$$
\iint_{(U\cup V)\cap\{|\xi|>1,|\eta|>1\}}\frac{|\mathscr Q(\xi,\eta)|^{2}}{|\xi|^{2\alpha-1}|\eta|^{2\alpha-1}}\dd \xi\dd \eta\lesssim\iint_{[1,+\infty)^{2}}|\xi\eta|^{-4\alpha-1}\dd \xi\dd \eta<+\infty
$$
 In the region of  $U$ and  $V$ where $|\xi|,|\eta|\leq1$ we bound the integrand in the following manner.
$$
\frac{|\mathscr Q(\xi,\eta)|^{2}}{|\xi|^{2\alpha-1}|\eta|^{2\alpha-1}}\lesssim|\xi\eta|^{1-2\alpha}
$$
then
$$
\iint_{(U\cup V)\cap\{|\xi|\leq1,|\eta|\leq1\}}\frac{|\mathscr Q(\xi,\eta)|^{2}}{|\xi|^{2\alpha-1}|\eta|^{2\alpha-1}}\dd \xi\dd \eta\lesssim(\int_{\{|\xi|\leq1\}}|\xi|^{1-2\alpha}\dd \xi)^{2}
$$
The same bound for the region $(U\cup V)\cap\{|y|\leq1,|x|\geq1\}$ combined with the fact that $\mathscr Q(\xi,\eta)\lesssim \min(|\xi|^{-1},|\eta|^{-1})$ gives 
$$
\frac{|\mathscr Q(\xi,\eta)|^{2}}{|\xi|^{2\alpha-1}|\eta|^{2\alpha-1}}\lesssim|\xi|^{-1-2\alpha}|\eta|^{-2\alpha+1}
$$
This shows that our kernel is integrable on $(U\cup V)\cap\{|\xi|\leq1,|\eta|\geq1\}$ and by symmetry we obtain the integrability in the remaining area which completes the proof.
\end{proof}

\subsection{Proof of theorem~\eqref{conv-th}}
\begin{proof}
We will decompose the proof of the theorem in two step. In a first step we give the bound for the rough sheet $\mathbb X^{N}$ in $L^{2}(\Omega)$ for fixed parameters and in the second step we will use a variant of  Garsia-Rodemich-Rumsey inequality to prove that the sequence $(\mathbb X^{N})_N$ is a Cauchy sequence in
$\mathbb L^{p}(\Omega,\mathscr H_{h,h'})$.
\paragraph{Step 1: Estimation.}
Let  $A^{NMx}:=A^{Nx}-A^{Mx}=\der (x^{N}-x^{M})$ for $M\leq N$ and similar notation for all other terms of the rough sheet. Now is not difficult to see that 
\begin{equation*}
\begin{split}
\expect [|A^{NMx}_{(s_{1},s_{2})(t_{1},t_{2})}|^{2}] &=\int_{\{||(\xi,\eta)||_{\infty}\in[M,N]\}}\frac{|e^{is_{2}\xi}-e^{is_{1}\xi}|^{2}}{|\xi|^{1+2\alpha}}\frac{|e^{it_{2}\eta}-e^{it_{1}\eta}|^{2}}{|\eta|^{1+2\beta}}\dd \xi\dd \eta
\\&\lesssim(s_2-s_1)^{2\alpha}(t_{2}-t_{1})^{2\beta}(I_{A}^{M})_{(s_1,s_2)(t_1,t_2)}
\end{split}
\end{equation*}
where
\begin{equation*}
(I_{A}^{M})_{(s_{1},s_{2})(t_{1},t_{2})}=\int_{\{||((t_{2}-t_{1})x,(s_{2}-s_{1})y)||_{\infty}\geq M(s_{2}-s_{1})(t_{2}-t_{1})\}}|x|^{-1-2\alpha}|e^{ix}-1|^{2}|y|^{-1-2\beta}|e^{iy}-1|^{2}\dd x\dd y
\end{equation*}
And let us remark that for $(\alpha,\beta)\in(1/3,1/2]$ we have : 
\begin{equation*}
\int_{\mathbb R^{2}}|\xi|^{-1-2\alpha}|e^{i\xi}-1|^{2}|\eta|^{-1-2\beta}|e^{i\eta}-1|\dd \xi\dd \eta<+\infty
\end{equation*}
these imply 
$$
\sup_{M\in\mathbb N,(s_{1},s_{2},t_{1},t_{2})\in[0,1]^{4}}|(I_{A}^{M})_{(s_{1},s_{2})(t_{1},t_{2})}|<+\infty
$$ 
and 
$$
\lim_{M\to\infty}(I_{A}^{M})_{(s_{1},s_{2})(t_{1},t_{2})}=0  
$$
for $s_{1}\ne s_{2}$ and $t_{1}\ne t_{2}$. Now in what follows we prove similar  bound  for  the other component of the rough sheet. By Wick theorem
\begin{equation}\label{eq:1-first-Wick-sum}
\begin{split}
\mathbb E[|C^{Nxx}_{(s_{1},s_{2})(t_{1},t_{2})}|^{2}] &=|\int_{\mathbb R^{2}}( K^{NM}_{(s_{1},s_{2})(t_{1},t_{2})})(\xi,-\xi)\dd \xi|^{2}+\int_{\mathbb R^{4}}|( K^{NM}_{(s_{1},s_{2})(t_{1},t_{2})})(\xi,\eta)|^{2}\dd \xi\dd \eta
\\&+|\int_{\mathbb R^{4}}( K^{NM}_{(s_{1},s_{2})(t_{1},t_{2})})(\xi,\eta)\overline {( K^{NM}_{(s_{1},s_{2})(t_{1},t_{2})})}(\eta,\xi)\dd \xi\dd \eta|
\end{split}
\end{equation}
where
\begin{equation*}
( K^{NM}_{(s_{1},s_{2})(t_{1},t_{2})})(\xi,\eta):=\frac{i\xi i\eta}{|\xi|^{\alpha+1/2}|\eta|^{\alpha+1/2}}\mathscr Q_{s_{1}s_{2}}(\xi,\eta)\frac{i\xi'i\eta'}{|\xi'|^{\beta+1/2}|\eta'|^{\beta+1/2}}\mathscr Q_{t_{1}t_{2}}(\xi',\eta')\chi_{\{||(\xi,\xi',\eta,\eta')||_{\infty}\in[M,N]\}}
\end{equation*}
for $\xi:=(\xi,\xi')\in\mathbb R^{2}$ et $\eta=(\eta,\eta')\in\mathbb R^{2}$ and $\mathscr Q_{s_{1}s_{2}}(\xi,\eta):=e^{is_{1}(\xi+\eta)}(s_{2}-s_{1})^{2}\mathscr Q((s_{2}-s_{1})\xi,(s_{2}-s_{1})\eta)$ with $\mathscr Q$ is the function defined in the Lemma~\eqref{lemma:basic-bound-integral}, which gives us by change of variable formula that
\begin{equation*}
\int_{\mathbb R^{4}}|( K^{NM}_{(s_{1},s_{2})(t_{1},t_{2})})(\xi,\eta)|^{2}\dd\xi \dd\eta\lesssim (s_{2}-s_{1})^{4\alpha}(t_{2}-t_{1})^{4\beta}(I^{1,M}_{C})_{(s_{1},s_{2})(t_{1},t_{2})}
\end{equation*}
where
\begin{equation*}
(I^{1,M}_{C})_{(s_{1},s_{2})(t_{1},t_{2})}:=\int_{\mathbb R^{4}} \chi_{\{||(s_{2}-s_{1})\xi,(s_{2}-s_{1})\eta,(t_{2}-t_{1})\xi',(t_{2}-t_{1})\eta')||_{\infty}\geq M\}}\frac{|\mathscr Q(\xi,\eta)|^{2}}{|\xi\eta|^{2\alpha-1}}\frac{|\mathscr Q(\xi',\eta')|^{2}}{|\xi'\eta'|^{2\beta-1}}\dd \xi\dd \eta
\end{equation*}
Now the Lemma~\ref{lemma:basic-bound-integral} gives :
$$
\int_{\mathbb R^{4}}\frac{|\mathscr Q(\xi,\eta)|^{2}}{|\xi\eta|^{2\alpha-1}}\frac{|\mathscr Q(\xi',\eta')|^{2}}{|\xi'\eta'|^{2\beta-1}}\dd \xi\dd \eta<+\infty
$$
and then 
\begin{equation*}
\sup_{M\in\mathbb N,(s_1,s_2,t_1,t_2)\in[0,1]^{4}}(I^{1,M}_{C})_{(s_{1},s_{2})(t_{1},t_{2})}<+\infty
\end{equation*}
and
\begin{equation*}
\lim_{M\to\infty}(I^{1,M}_{C})_{(s_{1},s_{2})(t_{1},t_{2})}=0
\end{equation*}
for $s_2\ne s_1$ and $t_2\ne t_1$ and by Cauchy-Schwartz inequality we have :
$$
|\int_{\mathbb R^{4}}( K^{NM}_{(s_{1},s_{2})(t_{1},t_{2})})(\xi,\eta)\overline {( K^{NM}_{(s_{1},s_{2})(t_{1},t_{2})})}(\eta,\xi)\dd \xi\dd \eta|\leq\int_{\mathbb R^{4}}|( K^{NM}_{(s_{1},s_{2})(t_{1},t_{2})})(\xi,\eta)|^{2}\dd \xi\dd \eta
$$
Then is remind to bound the first term appearing in the sum of the equation~\eqref{eq:1-first-Wick-sum}, indeed 
$$
|\int_{\mathbb R^{2}}( K^{NM}_{(s_{1},s_{2})(t_{1},t_{2})})(\xi,-\xi)\dd \xi|^{2}\lesssim(s_2-s_1)^{2\alpha}(t_2-t_1)^{2\beta}(I_C^{2,M})_{(s_1,s_2)(t_1,t_2)}
$$
where 
$$
(I_C^{2,M})_{(s_1,s_2)(t_1,t_2)}=\int_{\mathbb R^{2}}\chi_{\{||(s_{2}-s_{1})\xi,(t_{2}-t_{1})\xi'||_{\infty}\geq M\}}|\xi|^{-1-2\alpha}|\xi'|^{1-2\beta}(1-\cos(\xi))(1-\cos(\xi'))\dd \xi\dd \xi'
$$
then is no difficult to see that $I_C^{2,M}$ satisfies the same property that $I_C^{1,M}$.Now if we put 
$I_C^{M}:=I_C^{2,M}+I_C^{1,M}$ is easy to see that :
$$
\mathbb E[|C^{NMxx}_{(s_{1},s_{2})(t_{1},t_{2})}|^{2}]\lesssim(s_2-s_1)^{2\alpha}(t_2-t_1)^{2\beta}(I_C^{M})_{(s_1,s_2)(t_1,t_2)}
$$
when $(I_C^{M})$ satisfy :
$$
\sup_{M\in\mathbb N,(s_1,s_2,t_1,t_2)\in[0,1]^{4}}(I^{M}_{C})_{(s_{1},s_{2})(t_{1},t_{2})}<+\infty
$$
$$
\lim_{M\to\infty}(I^{M}_{C})_{(s_{1},s_{2})(t_{1},t_{2})}=0
$$
for $s_2\ne s_1$ and $t_2\ne t_1$. All the other term of $\mathbb X^N$ can be estimate by the same argument and  satisfy the same type of bound to see these let us
treat a more complex term of the rough sheet.
$$
\mathbb E[|(E_1^{NMxxx})_{(s_1,s_2,s_3,s_3)(t_1,t_2)}|^{2}]=\sum_{l=1}^{15}\mathscr I_{l}
$$
where $(\mathscr I_l)_l$ is the different Wick contraction given by:
\begin{enumerate}
\item $\mathscr I_{1}=\int_{\mathbb R^{6}}|( G^{NM}_{(s_{1},s_{2},s_{3},s_{4})(t_{1},t_{2})})(a,b,c)|^{2}\dd a\dd b\dd c$
\item $\mathscr I_{2}=\int_{\mathbb R^{2}}|\int_{\mathbb R^{2}}( G^{NM}_{(s_{1},s_{2},s_{3},s_{4})(t_{1},t_{2})})(a,b,-a)\dd a|^{2}\dd b$
\item $\mathscr I_{3}=\int_{\mathbb R^{2}}|\int_{\mathbb R^{2}}( G^{NM}_{(s_{1},s_{2},s_{3},s_{4})(t_{1},t_{2})})(a,-a,b)\dd a|^{2}\dd b$
\item $\mathscr I_{4}=\int_{\mathbb R^{2}}|\int_{\mathbb R^{2}}( G^{NM}_{(s_{1},s_{2},s_{3},s_{4})(t_{1},t_{2})})(a,b,-b)\dd b|^{2}\dd a$
\item $\mathscr I_{5}=\int_{\mathbb R^{6}}( G^{NM}_{(s_{1},s_{2},s_{3},s_{4})(t_{1},t_{2})})(a,b,-a)\overline {( G^{NM}_{(s_{1},s_{2},s_{3},s_{4})(t_{1},t_{2})})}(b,c,-c) \dd a\dd b\dd c$
\item $\mathscr I_{6}=\int_{\mathbb R^{6}}( G^{NM}_{(s_{1},s_{2},s_{3},s_{4})(t_{1},t_{2})})(a,b,-a)\overline {( G^{NM}_{(s_{1},s_{2},s_{3},s_{4})(t_{1},t_{2})})}(c,-c,b) \dd a\dd b\dd c$
\item $\mathscr I_{7}=\int_{\mathbb R^{6}}( G^{NM}_{(s_{1},s_{2},s_{3},s_{4})(t_{1},t_{2})})(a,-a,b)\overline {( G^{NM}_{(s_{1},s_{2},s_{3},s_{4})(t_{1},t_{2})})}(a,c,-c) \dd a\dd b\dd c$
\item $\mathscr I_{8}=\overline{\mathscr I_{5}}$, $\mathscr I_{9}=\overline{\mathscr I_{6}}$,  $\mathscr I_{10}=\overline{\mathscr I_{7}}$
\item $\mathscr I_{k}=\int_{\mathbb R^{6}}( G^{NM}_{(s_{1},s_{2},s_{3},s_{4})(t_{1},t_{2})})(a,b,c)\overline {( G^{NM}_{(s_{1},s_{2},s_{3},s_{4})(t_{1},t_{2})})}(\sigma_{k}(a),\sigma_{k}(b),\sigma_{k}) \dd a\dd b\dd c$
\end{enumerate}
where $\sigma_{k}$ , $k\in\{11,12,13,14,15\}$ is a permutation of  three elements  
different of identity and for $a=(a,a') , b=(b,b'),c=(c,c')\in \mathbb R^{2}$ the kernel is given by :
\begin{equation*}
\begin{split}
( G^{NM}_{(s_{1},s_{2},s_{3},s_{4})(t_{1},t_{2})})(a,b,c):=&\left(\frac{iaib}{|a|^{\alpha+1/2}|b|^{|\alpha+1/2}}\mathscr Q_{s_{1}s_{2}}(a,b)\right)\left(\frac{ia'ic'}{|b'|^{\beta+1/2}|z'|^{\beta+1/2}}\mathscr Q_{t_{1}t_{2}}(b',c')\right)
\\&\times\left(\frac{e^{is_{4}c}-e^{is_{3}c}}{|c|^{\alpha+1/2}}\right)\left(\frac{e^{it_{1}a'}-1}{|a'|^{\beta+1/2}}\right)\chi_{\{||(a,a',b,b',c,c')||_\infty\in[M,N]\}}
\end{split}
\end{equation*}
The study of theses integral is confined to study the first four term in fact if $k\in\{11,12,13,14,15\}$ then  by Cauchy-Schwartz we have $|\mathscr I_{k}|\leq \mathscr I_{1}$ , $|\mathscr I_{5}|^{2}\leq \mathscr I_{2}\mathscr I_{4}$ , $|\mathscr I_{6}|^{2}\leq \mathscr I_{2}\mathscr I_{3}$ and
 $|\mathscr I_{7}|^{2}\leq \mathscr I_{3}\mathscr I_{4}$. By variable change formula we have that 
\begin{equation*}
|\mathscr I_1|\lesssim(s_2-s_1)^{4\alpha}(s_4-s_3)^{2\alpha}(t_2-t_1)^{4\beta}t_1^{2\beta}(I_{E_1}^{M})_{(s_1,s_2,s_3,s_4)(t_1,t_2)}
\end{equation*}
where
$$
(I_{E_1}^{M})_{(s_1,s_2,s_3,s_4)(t_1,t_2)}:=\int_{\mathbb R^6}\chi_{\mathscr D^{M}_{s_1s_2s_3s_4t_1t_2}}\frac{|\mathscr Q(a,b)|^{2}}{|ab|^{2\alpha-1}}\frac{|\mathscr Q(b',c')|^{2}}{|a'b'|^{2\beta-1}}\frac{|e^{ic}-1|^{2}}{|c|^{2\alpha+1}}\frac{|e^{ic}-1|^{2}}{|c|^{2\beta+1}}\dd a\dd a'\dd b\dd b'\dd c\dd c'
$$
where $\mathscr X$ is the Indicator function and $\mathscr D^{M}_{s_1s_2s_3s_4t_1t_2}$ is decreasing to the empty set when M tend to infinity for $s_1\ne s_2$,$t_1\ne t_2$ and $s_3\ne s_4$
these statement with the fact that :
$$
\int_{\mathbb R^{6}}\frac{|\mathscr Q(a,b)|^{2}}{|ab|^{2\alpha-1}}\frac{|\mathscr Q(b',c')|^{2}}{|a'b'|^{2\beta-1}}\frac{|e^{ic}-1|^{2}}{|c|^{2\alpha-1}}
\frac{|e^{iz}-1|^{2}}{|z|^{2\beta-1}}\dd a\dd a'\dd b\dd b'\dd c\dd c'<+\infty
$$
give us that 
$$
\sup_{M\in \mathbb N,(s_1,s_2,s_3,s_4,t_1,t_2)\in[0,1]^{6}}(I_{E_1}^{M})_{(s_1,s_2,s_3,s_4)(t_1,t_2)}<+\infty
$$
and 
$$
\lim_{M\to+\infty}(I_{E_1}^{M})_{(s_1,s_2,s_3,s_4)(t_1,t_2)}=0
$$
Now by Cauchy-Schwartz we have :
\begin{equation*}
\begin{split}
\mathscr I_2\lesssim&\left(\int_{\mathbb R^4}|( K^{NM}_{(s_1,s_2)(t_1,t_2)}\right)(a,b)|^2\dd a\dd b)\int_{\mathbb R}|a|^{-1-2\alpha}|e^{-is_4a}-e^{-is_3a}|^2\dd a\\&
\left(\int_{\mathbb R}|a'|^{-1-2\beta}|e^{it_1a'}-1|^2\dd a'\right)
\end{split}
\end{equation*}
all these integrals have already studied and then we obtain the good estimate for $\mathscr I_2$. The estimation of $\mathscr I_3$ and $\mathscr I_4$ is obtained by the same technique.

\paragraph{Step 2: Convergence of $\mathbb X^{N}$.}
 We prove some Garsia-Rodemich-Rumsey inequalities for our objects and then we obtain
the convergence  of the sheet $X^{N}$. To give an idea we will first show the convergence of our first term in fact by the Lemma~\ref{lemma:grr} and the Gaussian hypercontractivity we have that :
\begin{equation*}
\begin{split}
\mathbb E[||A^{NMx}||^{p}_{h,h'}]&\lesssim_{h,h',p,\alpha,\beta}\iint_{[0,1]^{4}}\frac{\mathbb E[|A^{NMx}_{s_1s_2t_1t_2}|^{p}]}{|s_2-s_1|^{hp+2}|t_2-t_1|^{h'p+2}}\dd s_1\dd s_2\dd t_1\dd t_2
\\&\lesssim\iint_{[0,1]^{4}}|s_2-s_1|^{(\alpha-h)p-2}|t_2-t_1|^{(\beta-h')p-2}((I_{A}^{M})_{(s_1,s_2)(t_1,t_2)})^{\frac{p}{2}}\dd s_1\dd s_2\dd t_1\dd t_2
\end{split}
\end{equation*}
where $h<\alpha$ and $h'<\beta$. Then if $p$ is large enough these last integral go to zero when $M$ go to 
infinity by dominate convergence and this give us the convergence of $A^{NMx}$. Now we will use the same argument for the other terms and for this we must establish the following estimate :
\begin{lemma}\label{extend:G-RR}
let $z$ and $y$ two smooth sheets, $(h,h')\in(0,\infty)^{2}$ and $p>1$ then the following inequalities :
\begin{equation*}
\begin{split}
||B_1^{zz}-B_1^{yy}||^{p}_{2h,h'}&\lesssim_{h,h',p}(U^{2}_{2h,h',p}(B_1^{zz}-B_1^{yy}))^{p}+(U^{3}_{2h,h',h'}(D_2^{zz}-D_2^{yy}))^{p}+||\der_1z\der z-\der_1y\der y||^{p}_{2h,h'}
\\&+||\der z-\der y||^{p}_{h+\frac{2}{p},h'+\frac{2}{p}}(||\der z||_{h+\frac{2}{p},h'+\frac{2}{p}}+||\der y||_{h+\frac{2}{p},h'+\frac{2}{p}})^{p}
\end{split}
\end{equation*}
\begin{equation*}
\begin{split}
||C^{zz}-C^{yy}||^{p}_{2h,2h'}&\lesssim_{h,h',p}(U^{2}_{2h,2h',p}(C^{zz}-C^{yy}))^{p}+(U_{2h+\frac{2}{p},h',h',p}^{3}(D_2^{zz}-D_2^{yy}))^{p}
\\&+(U_{h,h,2h'+\frac{2}{p},p}^{3}(D_1^{zz}-D_1^{yy}))^{p}+(||\der z-\der y||_{h+\frac{2}{p},h'+\frac{2}{p}}(||\der z||_{h+\frac{2}{p},h'+\frac{2}{p}}+||\der y||_{h+\frac{2}{p},h'+\frac{2}{p}}))^{p}
\end{split}
\end{equation*}
$$
||D_1^{zz}-D_{1}^{yy}||^{p}_{2h,2h'}\lesssim_{h,h',p}(U^{3}_{h,h,2h'}(D_1^{zz}-D_1^{yy})^{p}+(||\der z-\der y||_{h+\frac{1}{p},h'}(||\der z||_{h+\frac{1}{p},h'}+||\der y||_{h+\frac{1}{p},h'}))^{p}
$$
hold.
\end{lemma}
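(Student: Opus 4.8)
The plan is to prove all three estimates by the same two--step mechanism. First I would compute the coboundaries $\der_1$, $\der_2$ and $\der$ of the biincrement under consideration by means of the algebraic relations of Hypothesis~\ref{hyp:chen-2d}; then I would apply the Garsia--Rodemich--Rumsey estimates of Lemma~\ref{lemma:grr} successively in the first and in the second variable --- the plain estimate (first inequality of Lemma~\ref{lemma:grr}) in a direction where the relevant coboundary vanishes, and the ``for increments'' estimate (second inequality, applied coordinate--wise) where it does not. At the outset I would use that, $z$ and $y$ being smooth \emph{sheets}, one may assume they vanish on the axes $\{s=0\}\cup\{t=0\}$, so that $(\der_1 z)_{s_1s_2t}=(\der z)_{s_1s_2 0 t}$ and $(\der_2 z)_{st_1t_2}=(\der z)_{0s t_1 t_2}$, whence $\|\der_1 z\|_{\rho,0}+\|\der_2 z\|_{0,\rho'}\lesssim\|\der z\|_{\rho,\rho'}$ (and likewise for $z-y$); this is what makes only $\|\der z\|$ and $\|\der z-\der y\|$ appear on the right--hand sides.

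For $R:=B_1^{zz}-B_1^{yy}$, relation~(2) of Hypothesis~\ref{hyp:chen-2d} (with $A^x=\der x$) gives $\der_1 R=\der_1 z\,\der z-\der_1 y\,\der y$ and $\der_2 R=-\mu_2(D_2^{zz}-D_2^{yy})$. Applying the ``for increments'' GRR in the first variable would control $\|R\|_{2h,h'}$ by a moment of $R$ (which, together with the vertical step, gives $U^2_{2h,h',p}(R)$) plus the first--direction norm of $\der_1 z\,\der z-\der_1 y\,\der y$; splitting that increment as $(\der_1 z-\der_1 y)\der z+\der_1 y\,(\der z-\der y)$ and using continuity of the $\CCC_{*,*}$--product together with $\|\der_1(z-y)\|\lesssim\|\der z-\der y\|$ would produce both $\|\der_1 z\,\der z-\der_1 y\,\der y\|_{2h,h'}$ and $\|\der z-\der y\|_{h+2/p,h'+2/p}(\|\der z\|_{h+2/p,h'+2/p}+\|\der y\|_{h+2/p,h'+2/p})$. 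To control the $h'$--dependence I would apply the ``for increments'' GRR in the second variable; the coboundary $\der_2 R=-\mu_2(D_2^{zz}-D_2^{yy})$ then enters, and since $\mu_2$ only collapses one pair of adjacent points in the second direction and is continuous for the H\"older norms its norm is dominated by a moment of $D_2^{zz}-D_2^{yy}$ itself, namely $U^3_{2h,h',h',p}(D_2^{zz}-D_2^{yy})$ (one pair of points in the first direction, two in the second). The mixed coboundary $\der$ of $\der_1 z\,\der z-\der_1 y\,\der y$, needed for the vertical step on that term, is by the product rules of Section~\ref{sec:two-dim} and relation~(4) a sum of increments $\der z\,\der z-\der y\,\der y$, again bounded by the same product term. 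Raising to the $p$--th power would give the first inequality.

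The second and third inequalities I would obtain the same way: for $C^{zz}-C^{yy}$ by invoking relation~(3), $\der_a C^{xx}=\mu_a D_a^{xx}$, in \emph{both} directions (yielding the two moment terms $U^3_{2h+2/p,h',h',p}(D_2^{zz}-D_2^{yy})$ and $U^3_{h,h,2h'+2/p,p}(D_1^{zz}-D_1^{yy})$), the full coboundary $\der(C^{zz}-C^{yy})$ being, by relations~(3)--(4), a combination of $\der z\,\der z-\der y\,\der y$; for $D_1^{zz}-D_1^{yy}$ --- which carries a $\otimes_1$ structure (two pairs of points) in the first direction and one pair in the second --- by using $(1\otimes_1\der_1)D_1^{xx}=0$ (so the inner first--direction slot is exact and only the plain GRR, hence the milder loss $1/p$, is needed there) together with $\der_2 D_1^{xx}=A^x\otimes_1 A^x=\der x\otimes_1\der x$, all nonvanishing coboundaries being products of increments of $z$ whose differences are controlled via continuity of the product by $\|\der z-\der y\|_{h+1/p,h'}(\|\der z\|_{h+1/p,h'}+\|\der y\|_{h+1/p,h'})$, while a single moment $U^3_{h,h,2h',p}(D_1^{zz}-D_1^{yy})$ absorbs the exact part.

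The main obstacle --- the only point where real care is needed --- is the bookkeeping of H\"older exponents across the two successive GRR applications: each application in a given direction costs $2/p$ (or $1/p$ in a tensor slot treated by the plain estimate) in that direction's exponent, so one must check that the first--direction exponent $2\alpha$ of $\der_2 B_1^{xx}$ and $\der_2 C^{xx}$ (respectively $2\beta$ in the second direction), together with the exponents carried by the $\otimes_a$ structures of the $D_a^{xx}$, still leave positive slack after subtracting $2/p$. This will follow by choosing $p$ large and from $\alpha,\beta>1/3$ and the strict inequalities $h<\alpha$, $h'<\beta$ --- the same conditions that make the moment integrals $U^n$ finite when these estimates are fed into the proof of Theorem~\ref{conv-th}.
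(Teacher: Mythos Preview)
Your proof sketch is correct and follows essentially the same route as the paper: compute the partial coboundaries $\der_1,\der_2$ of each object via the algebraic relations of Hypothesis~\ref{hyp:chen-2d} (in particular $\der_1 B_1^{xx}=\der_1 x\,\der x$, $\der_2 B_1^{xx}=-\mu_2 D_2^{xx}$, $\der_a C^{xx}=\mu_a D_a^{xx}$, $\der_{\hat a} D_a^{xx}=\der x\otimes_a\der x$), then iterate the Garsia--Rodemich--Rumsey inequalities of Lemma~\ref{lemma:grr} direction by direction, passing through the tensor norm to absorb the $\mu_a$ factors. The paper does not expand $\|\der_1 z\,\der z-\der_1 y\,\der y\|_{2h,h'}$ further (it leaves it as a standalone term on the right-hand side, the product term $\|\der z-\der y\|(\|\der z\|+\|\der y\|)$ arising instead from the $\der_1 D_2^{xx}=\der x\otimes_2\der x$ step), but this is a cosmetic difference only.
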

\begin{proof}
Let us prove the three first inequalities because  all the others are obtained by the same techniques.  Lemma~\ref{lemma:grr} applied in the first direction and using the fact that $\der_1 B_1^{zz}=\der_1z\der z$ we obtain that  
\begin{equation}
\begin{split}
|(B_1^{zz}-B_1^{yy})_{s_1s_2t_1t_2}|^{p}\lesssim_{h,h',p}&|s_2-s_1|^{2hp}(\int_{[0,1]^{2}}\frac{|(B_1^{zz}-B_1^{yy})_{u_1u_2t_1t_2}|^{p}}{|u_2-u_1|^{2hp+2}}\dd u_1\dd u_2
\\&+||(\der_1z\der z-\der_1y\der y)_{.t_1t_2}||^{p}_{2h})
\end{split}
\end{equation}
Now we have to deal with the other direction. The second term appearing in the right side of this inequality can be estimated by 
$|t_2-t_1|^{h'p}||\der_1x\der x-\der_1y\der y||^{p}_{2h,h'}$ for the first term we need to apply Lemma~\ref{lemma:grr} once again 
\begin{equation}
\begin{split}
\int_{[0,1]^{2}}\frac{|(B_1^{zz}-B_1^{yy})_{u_1u_2t_1t_2}|^{p}}{|u_2-u_1|^{2hp+2}}\dd u_1\dd u_2&\lesssim_{h,h',p}
|t_2-t_1|^{h'p}((U^{2}_{2h,h',p}(B^{zz}_1-B_1^{yy}))^{p}\\&+\int_{[0,1]^{2}}\frac{||\der_2(B^{zz}_1-B^{yy}_1)_{u_1u_2.}||^{p}_{h'}}{|u_2-u_1|^{hp+2}}\dd u_1\dd u_2)
\end{split}
\end{equation}
then it suffices to note that
\begin{equation}\label{equation:bound-GRR1}
\begin{split}
||\der_2(B^{zz}_1-B^{yy}_1)_{u_1u_2.}||^{p}_{h'}&=||(\mu_2D_2^{zz}-\mu_2D_2^{yy})_{u_1u_2.}||^{p}_{h'}
\\&\leq||(D_2^{zz}-D_2^{yy})_{u_1u_2.}||^{p}_{\mathscr C_{2}^{h'}\otimes\mathscr C_{2}^{h'}}
\\&\lesssim_{h,h',p}\int_{[0,1]^{4}}\frac{|(D_2^{zz}-D_2^{yy})_{u_1u_2v_1v_2v_3v_4}|^{p}}{|v_4-v_3|^{h'p+2}|v_2-v_1|^{h'p+2}}\dd v_1\dd v_2\dd v_3\dd v_4
\\&\lesssim_{h,h',p}|u_2-u_1|^{hp+2}((U^{3}_{h+\frac{2}{p},h',h',p}(D_2^{zz}-D_2^{yy}))^{p}\\&+\int_{[0,1]^{4}}\frac{||(\der z\otimes_2\der z-\der y\otimes_2\der y)_{.v_1v_2v_3v_4}||_{h+2/p}^{p}}{|v_4-v_3|^{h'p+2}|v_2-v_1|^{h'p+2}}\dd v_1\dd v_2\dd v_3\dd v_4).
\end{split}
\end{equation}
Now the last term in the right side of this inequality can be bounded by $||\der z-\der y||^{p}_{h+\frac{2}{p},h'+\frac{2}{p}}(||\der z||_{h+\frac{2}{p},h'+\frac{2}{p}}+||\der y||_{h+\frac{2}{p},h'+\frac{2}{p}})^{p}$
then putting  all these bound together gives 
\begin{equation*}
\begin{split}
||B_1^{zz}-B_1^{yy}||^{p}_{2h,h'}&\lesssim_{h,h',p}(U^{2}_{2h,h',p}(B_1^{zz}-B_1^{yy}))^{p}+(U^{3}_{2h,h',h'}(D_2^{zz}-D_2^{yy}))^{p}+||\der_1z\der z-\der_1y\der y||^{p}_{2h,h'}
\\&+||\der z-\der y||^{p}_{h+\frac{2}{p},h'+\frac{2}{p}}(||\der z||_{h+\frac{2}{p},h'+\frac{2}{p}}+||\der y||_{h+\frac{2}{p},h'+\frac{2}{p}})^{p}
\end{split}
\end{equation*}
then we have prove the first inequality. Now by the Lemma~\ref{lemma:grr} once again we have that 
$$
|C^{zz}_{s_1s_2t_1t_2}-C^{yy}_{s_1s_2t_1t_2}|\lesssim_{h,h',p}|s_2-s_1|^{2hp}((U_{2h,p}^{1}(C^{zz}_{.t_1t_2}-C^{yy}_{.t_1t_2})^{p}+||\der_{1}(C^{zz}-C^{yy})_{.t_1t_2}||_{2h}^{p})
$$
with $\der_1C_{zz}=\mu_1D_1^{zz}$ then by same argument has before we have 
\begin{equation}
\begin{split}
||\der_{1}(C^{zz}-C^{yy})_{.t_1t_2}||_{2h}^{p}&\lesssim_{h,h',p}|t_2-t_1|^{2h'p}(((U^{3}_{h,h,2h',p}(D_1^{zz}-D_1^{yy}))^{p}\\&
+||\der z-\der y||^{p}_{h+\frac{2}{p},h'+\frac{2}{p}}(||\der z||_{h+\frac{2}{p},h'+\frac{2}{p}}+||\der y||_{h+\frac{2}{p},h'+\frac{2}{p}})^{p})
\end{split}
\end{equation}
and 
\begin{equation}
\begin{split}
(U_{2h,p}^{1}(C^{zz}_{.t_1t_2}-C^{yy}_{.t_1t_2}))^{p}&\lesssim_{h,h',p}|t_2-t_1|^{2h'p}((U^{2}_{2h,2h',p}(C^{zz}-C^{yy}))^{p}\\&
+\int_{[0,1]^{2}}\frac{||\der_2(C^{zz}-C^{yy})_{u_1u_2.}||_{2h'}^{p}}{|u_2-u_1|^{hp+2}}\dd u_1\dd u_2)
\\&\lesssim_{h,h',p}|t_2-t_1|^{2h'p}((U^{2}_{2h,2h',p}(C^{zz}-C^{yy}))^{p}+(U^{3}_{2h+\frac{2}{p},h',h',p}(D_2^{zz}-D_2^{zz}))^{p}
\\&+||\der z-\der y||_{h+\frac{2}{p},h'+\frac{2}{p}}^{p}(||\der z||_{h+\frac{2}{p},h'+\frac{2}{p}}+||\der z||_{h+\frac{2}{p},h'+\frac{2}{p}})^{p})
\end{split}
\end{equation}
 Putting these two bound together  we obtain the second inequality. This concludes the proof of the Lemma
\end{proof}
 To obtain convergence of all the term of the rough sheet it suffices to take $x=X^{N}$ and $y=X^{M}$ in the lemma~\eqref{extend:G-RR}. Let us give an example :
\begin{equation*}
\begin{split}
&\mathbb E[||C^{NMxx}||^{p}_{2h,2h'}] \lesssim_{h,h',p}
\int_{[0,1]^{4}}|u_2-u_1|^{2(\alpha-h)p-2}|v_2-v_1|^{2(\beta-h')p-2}((I_C^{M})_{(u_1,u_2)(v_1,v_2)})^{\frac{p}{2}}du_1du_2dv_1dv_2
\\&+\int_{[0,1]^{6}}|u_2-u_1|^{2(\alpha-h)p-4}|v_2-v_1|^{(\beta-h')p-2}|v_4-v_3|^{(\beta-h')p-2}((I_{D_2}^{M})_{(u_1,u_2)(v_1,v_2,v_3,v_4)})^{\frac{p}{2}}\dd u_1\dd u_2\dd v_1\dd v_2\dd v_3\dd v_4
\\&+\int_{[0,1]^{6}}|v_2-v_1|^{2(\beta-h')p-4}|u_2-u_1|^{(\alpha-h')p-2}|u_4-u_3|^{(\alpha-h')p-2}((I_{D_1}^{M})_{(u_1,u_2,u_3,u_4)(v_1,v_2)})^{\frac{p}{2}}\dd v_1\dd v_2\dd u_1\dd u_2\dd u_3\dd u_4
\\&+\mathbb E[||A^{NMx}||_{h+2/p,h'+2/p}^{2p}]^{1/2}\mathbb E[(||\der x^{N}||_{h+2/p,h'+2/p}+||\der x^{M}||_{h+2/p,h'+2/p})^{2p}]^{1/2}
\end{split}
\end{equation*}
where we used that our term is in the second chaos of $\hat W$ and the Gaussian hypercontractivity (see for example~\cite{janson}). Then if  $p$ is large enough the three first terms of right side go to zero when $M\to\infty$ by dominate convergence and the last term was already studied this allow us to conclude that $C^{Nxx}$ is a Cauchy sequence in $L^p(\Omega,C\mathscr C_{2,2}^{2h,2h'})$.
The other terms can be treated similarly.
\end{proof}
 This construction and theorem~\eqref{th:integral} allows us to define the two integrals 
 $$
 \iint f'(x)\dd x,\quad\iint f''(x)\dd_1 x\dd_2x
 $$
for a function $f\in C^{10}(\mathbb R)$. Our goal is then to use the continuity result~\eqref{th:cont-int} to obtain the Stratonovich change of variable formula, for that we need an another assumption on the function $f$ which allow us to control the constant $C$ appearing in~\eqref{th:cont-int}. 
\begin{definition}\label{growuthcond}
Let $k\in\mathbb N$, we will say that a function $f\in C^k(\mathbb R)$ satisfies the growth
condition (GC) if there exist positive constants $c$ and $\lambda$
such that
\begin{equation}\label{eq-gc-1}
\lambda<\frac1{4\,\max_{s,t\in[0,1]}\lp R_{s}^{1}R_{t}^{2}\rp},
\quad\mbox{and}\quad
\max_{l=0,. . .,k}|f^{(l)}(\xi)|\le c\,e^{\lambda\,|\xi|^2}\,\ \text{for all }\xi\in\R.
\end{equation}
\end{definition}
And a preliminary result ensures that $x^{N}$ satisfy some uniform exponential integrability. 
\begin{proposition}\label{proposition:conve-reg}
 There exist  $\lambda>0$  such that 
\begin{equation}\label{eq:exp-moments-xn}
\sup_{N\in\mathbb N}\E\left[e^{\lambda\sup_{(s,t)\in[0,1]^{2}}|x^N_{s;t}|^2}\right]<+\infty.
\end{equation}
\end{proposition}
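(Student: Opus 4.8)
The plan is to view each $x^{N}$ as a centered Gaussian field on the compact square $\ou^{2}$ with pathwise smooth, hence bounded, sample paths, so that $M_{N}:=\sup_{(s,t)\in\ou^{2}}|x^{N}_{st}|$ is an a.s.\ finite supremum of a Gaussian process, and then to invoke the Borell--TIS concentration inequality (equivalently, Fernique's theorem) in a form whose constants depend only on two scalar quantities that I shall bound \emph{uniformly in} $N$. Concretely, Borell--TIS gives
\begin{equation*}
\prob\big(M_{N}>\E[M_{N}]+u\big)\le 2\,e^{-u^{2}/(2\sigma_{N}^{2})},\qquad u>0,\quad \sigma_{N}^{2}:=\sup_{(s,t)\in\ou^{2}}\E\big[(x^{N}_{st})^{2}\big].
\end{equation*}
Hence it suffices to show $\sup_{N}\sigma_{N}^{2}<\infty$ and $\sup_{N}\E[M_{N}]<\infty$: once these are in hand, integrating $\prob(M_{N}>v)$ against $2\lambda v\,e^{\lambda v^{2}}\,\dd v$ yields $\sup_{N}\E[e^{\lambda M_{N}^{2}}]<\infty$ for every $\lambda$ small enough (any $\lambda$ below half the reciprocal of $\sup_{N}\sigma_{N}^{2}$ works after absorbing the shift by $\E[M_{N}]$, and for the mere existence claimed an even cruder $\lambda$ is fine).

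The variance bound is immediate from the harmonisable representation~\eqref{reg-sheet}: since $e^{i0\cdot\xi}-1=0$ one has $x^{N}_{0t}=x^{N}_{s0}=x^{N}_{00}=0$, and
\begin{equation*}
\E\big[(x^{N}_{st})^{2}\big]=K_{\alpha,\beta}^{2}\iint_{|\xi|,|\eta|\le N}\frac{|e^{is\xi}-1|^{2}}{|\xi|^{1+2\alpha}}\,\frac{|e^{it\eta}-1|^{2}}{|\eta|^{1+2\beta}}\,\dd\xi\,\dd\eta\ \le\ \E\big[x_{st}^{2}\big]=R_{sstt}=|s|^{\alpha}\,|t|^{\beta}\le 1
\end{equation*}
on $\ou^{2}$, so $\sup_{N}\sigma_{N}^{2}\le 1$ (this is exactly the variance entering the growth condition~\eqref{eq-gc-1}, with $R^{1}_{s}=|s|^{\alpha}$, $R^{2}_{t}=|t|^{\beta}$). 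For the first moment, the vanishing of $x^{N}$ on the two axes gives $x^{N}_{st}=(\der x^{N})_{(0,s)(0,t)}$, whence $M_{N}\le\|\der x^{N}\|_{h,h'}$ for any $h,h'>0$ (the factors $|s-0|^{h}$, $|t-0|^{h'}$ being $\le1$ on $\ou$). Taking $h<\alpha$, $h'<\beta$, applying the Garsia--Rodemich--Rumsey estimate of Lemma~\ref{lemma:grr}, raising to the $p$-th power, taking expectations and using Gaussian hypercontractivity together with the bound $\E\big[|(\der x^{N})_{(s_{1},s_{2})(t_{1},t_{2})}|^{2}\big]\lesssim|s_{2}-s_{1}|^{2\alpha}|t_{2}-t_{1}|^{2\beta}$ (established uniformly in $N$ in Step~1 of the proof of Theorem~\ref{conv-th}, with lower cutoff $M=0$), one obtains for $p$ large enough
\begin{equation*}
\sup_{N}\E\big[\|\der x^{N}\|_{h,h'}^{p}\big]\ \lesssim_{h,h',p}\ \sup_{N}\iint_{\ou^{4}}\frac{\E\big[|(\der x^{N})_{(s_{1},s_{2})(t_{1},t_{2})}|^{p}\big]}{|s_{2}-s_{1}|^{hp+2}\,|t_{2}-t_{1}|^{h'p+2}}\,\dd s_{1}\dd s_{2}\dd t_{1}\dd t_{2}\ <\ \infty ,
\end{equation*}
and therefore $\sup_{N}\E[M_{N}]\le\sup_{N}\big(\E[\|\der x^{N}\|_{h,h'}^{p}]\big)^{1/p}<\infty$.

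Feeding these two uniform estimates into the Borell--TIS tail bound and integrating then produces the claim: for $\lambda$ small, $\E[e^{\lambda M_{N}^{2}}]=1+\int_{0}^{\infty}2\lambda v\,e^{\lambda v^{2}}\prob(M_{N}>v)\,\dd v$ is bounded by a quantity depending only on $\sup_{N}\E[M_{N}]$ and $\sup_{N}\sigma_{N}^{2}$, hence uniformly in $N$.

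The only genuinely delicate point is that \emph{every} constant appearing along the way must be independent of $N$; this is precisely what the concentration route guarantees, the $N$-dependence being confined to $\sigma_{N}^{2}$ and $\E[M_{N}]$, both controlled above. In other words, no estimate beyond the Gaussian moment computations already carried out in the proof of Theorem~\ref{conv-th} is needed, and the uniform first-moment bound on the supremum is the main (and essentially only) step requiring attention.
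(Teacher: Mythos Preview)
Your proof is correct. The paper itself does not prove this proposition but simply refers the reader to the companion paper~\cite{Itoplane}, so there is no ``paper's own proof'' to compare against here; your argument via Borell--TIS (uniform control of the pointwise variance plus a uniform first-moment bound on the supremum obtained from the GRR estimate of Lemma~\ref{lemma:grr}) is the natural route and is carried out cleanly.

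Two minor remarks. First, in the Borell--TIS line the factor $2$ is unnecessary: since $M_{N}=\sup_{(s,t),\,\varepsilon\in\{\pm1\}}\varepsilon\,x^{N}_{st}$ is itself the supremum of a centred Gaussian process, the one-sided inequality applies directly to $M_{N}$. Second, the identification $R_{sstt}=|s|^{\alpha}|t|^{\beta}$ relies on the paper's covariance convention~\eqref{eq:def-cov-fBs}; all you actually need is the monotone domination $\E[(x^{N}_{st})^{2}]\le\E[x_{st}^{2}]\le C$ on $\ou^{2}$, which follows immediately from the harmonisable representation, so the precise value of $R_{sstt}$ is irrelevant for the existence of \emph{some} $\lambda>0$.
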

\begin{proof}
See~\cite{Itoplane}.
\end{proof}
Now is not difficult to obtain the following result 
\begin{theorem}\label{theorem:Stratonovich formula-rough-case}
Let $f\in C^{10}(\mathbb R)$ function satisfying the (GC) with a small parameter $\lambda>0$ then 
\begin{equation}\label{eq:Star-form}
\der f(x)=\iint f'\left(x\right)\dd x+\iint f''\left(x\right)\dd_1x\dd_2x
\end{equation}
moreover the following convergence holds 
\begin{equation}\label{equation:app-int}
\iint f'(x^N)\dd x^n\to^{N\to+\infty}\iint f'\left(x\right)\dd x,\quad \iint f''(x^N)\dd_1x^N\dd_2x^N\to^{N\to+\infty}\iint f''(x)\dd_1 x\dd_2x
\end{equation}
in $L^p(\Omega,\mathscr C_{2,2}^{\alpha-\eps,\beta-\eps})$ for $\eps>0$ and $0<\lambda<\lambda(p)$.
\end{theorem}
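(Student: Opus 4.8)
The plan is to obtain \eqref{eq:Star-form} and \eqref{equation:app-int} together by passing to the limit in the elementary change of variable formula along the truncations $x^N$ of \eqref{reg-sheet}, using the continuity of the rough integral in the rough sheet (Theorem~\ref{th:cont-int}) and the uniform exponential integrability coming from the growth condition. Throughout I fix $\eps>0$ and work with exponents $h=\alpha-\eps$, $h'=\beta-\eps$, both $>1/3$, so that all the abstract results of the previous sections apply with $(h,h')$ in place of $(\alpha,\beta)$.

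First, since each $x^N$ is smooth, the identity $\partial_1\partial_2 f(x^N)=f'(x^N)\,\partial_1\partial_2 x^N+f''(x^N)\,\partial_1x^N\,\partial_2x^N$ together with Fubini yields the Riemann--Stieltjes change of variable formula $\der f(x^N)=\iint f'(x^N)\dd x^N+\iint f''(x^N)\dd_1x^N\dd_2x^N$. By Theorem~\ref{th:stab} (applicable since $f\in C^{10}$, hence $f',f''\in C^8$) we have $f'(x^N),f''(x^N)\in\mathscr K_{x^N}^{h,h'}$, and by the last sentence of Theorem~\ref{th:integral} the rough integrals built from the rough sheet $\mathbb X^N$ associated to $x^N$ coincide with these classical integrals; so the displayed formula is also the \emph{rough} identity for $\mathbb X^N$. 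At the same time, $x$ carries a limiting rough sheet $\mathbb X\in\mathscr H_{h,h'}$ by Theorem~\ref{conv-th} (the algebraic relations of Hypotheses~\ref{hyp:chen-2d}--\ref{final-alg-ass} pass to the limit, being continuous bi/linear relations), and Theorem~\ref{th:stab} gives $f'(x),f''(x)\in\mathscr K_x^{h,h'}$, so $\iint f'(x)\dd x$ and $\iint f''(x)\dd_1x\dd_2x$ are well defined in $\CCC_{2,2}^{h,h'}$.

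Next I would take limits. By Theorem~\ref{conv-th}, $\mathbb X^N\to\mathbb X$ in $L^q(\Omega,\mathscr H_{h,h'})$ for every $q$. Apply the estimate of Theorem~\ref{th:cont-int} with $x=x^N$, $\bar x=x^M$ (both smooth, hence satisfying all the hypotheses), giving pathwise
$$
\Big\|\iint f'(x^N)\dd x^N-\iint f'(x^M)\dd x^M\Big\|_{h,h'}\lesssim C_{NM}\,K\!\big(\|\mathbb X^N\|_{\mathscr H}+\|\mathbb X^M\|_{\mathscr H}\big)\,\|\mathbb X^N-\mathbb X^M\|_{\mathscr H},
$$
and likewise for the $\iint f''\dd_1x\dd_2x$ term, where $C_{NM}=\sum_{k=1}^{8}\|f^{(k)}\|_{\infty,M_{NM}}$ with $M_{NM}=\|x^N\|_\infty+\|x^M\|_\infty$. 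Split the right-hand side by Hölder with $1/p=1/p_1+1/p_2+1/p_3$: by Theorem~\ref{conv-th} the factor $K(\cdots)$ has uniformly bounded $L^{p_1}$-norm and $\|\mathbb X^N-\mathbb X^M\|_{\mathscr H}$ has $L^{p_3}$-norm tending to $0$; it remains to bound $\sup_N\E[C_{NM}^{p_2}]$. Here the growth condition enters: by \eqref{eq-gc-1}, $C_{NM}\le 8c\,\big(e^{\lambda\sup_{s,t}|x^N_{st}|^2}+e^{\lambda\sup_{s,t}|x^M_{st}|^2}\big)$, so $\E[C_{NM}^{p_2}]\lesssim\sup_N\E[e^{p_2\lambda\sup|x^N|^2}]$, which is finite uniformly in $N$ by Proposition~\ref{proposition:conve-reg} provided $p_2\lambda$ is below the threshold there — precisely the constraint $0<\lambda<\lambda(p)$. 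Hence $(\iint f'(x^N)\dd x^N)_N$ and $(\iint f''(x^N)\dd_1x^N\dd_2x^N)_N$ are Cauchy in $L^p(\Omega,\CCC_{2,2}^{h,h'})$, so they converge; running the same estimate with $\bar x=x^N$ and the limiting rough sheet $\mathbb X$ identifies the limits as $\iint f'(x)\dd x$ and $\iint f''(x)\dd_1x\dd_2x$, which is \eqref{equation:app-int}. Finally, Lemma~\ref{lemma:h-p-r} with \eqref{eq-gc-1} and Proposition~\ref{proposition:conve-reg} give $f(x^N)\to f(x)$ in $L^p(\Omega,\CCC_{1,1}^{h,h'})$, hence $\der f(x^N)\to\der f(x)$ in $L^p(\Omega,\CCC_{2,2}^{h,h'})$; passing to the limit in the smooth formula yields \eqref{eq:Star-form}.

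The main obstacle is the middle step: Theorem~\ref{th:cont-int} is only local in the rough sheet, with a constant $C$ that depends on $\|x\|_\infty$, so turning it into an $L^p$ statement needs a uniform-in-$N$ moment bound on $C_{NM}$, and it is exactly the interplay of the growth condition (GC) on $f$ with the uniform Gaussian exponential integrability of the regularisations (Proposition~\ref{proposition:conve-reg}) — hence the smallness requirement on $\lambda$ — that delivers it. Everything else is a routine assembly of the results already established.
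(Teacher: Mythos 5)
Your proof is correct and follows essentially the same route as the paper's: truncate to the smooth case, apply the continuity estimate of Theorem~\ref{th:cont-int}, and control the three factors via a Hölder split using the $L^p$-convergence of $\mathbb X^N\to\mathbb X$ (Theorem~\ref{conv-th}), the uniform $L^p$-bound on $K(\cdot)$, and the growth condition (GC) combined with the uniform exponential integrability of Proposition~\ref{proposition:conve-reg}; the Stratonovich formula then follows by passing to the limit in the smooth identity using Lemma~\ref{lemma:h-p-r}. The intermediate Cauchy argument (comparing $x^N$ to $x^M$) is a harmless detour the paper skips, since Theorem~\ref{conv-th} already gives the limit $\mathbb X$ and the same estimate can be applied directly with $\bar x = x$ — which is exactly what the paper does.
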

\begin{proof}
Due to  Theorem~\eqref{th:cont-int} we have that 
$$
\left|\left|\iint f'(x^N)\dd x^N-\iint f'(x)\dd x\right|\right|_{\alpha-\eps,\beta-\eps}\lesssim C^NK(||\mathbb X^N||+||\mathbb X||)||\mathbb X^N-\mathbb X||_{\mathscr H_{\alpha-\eps,\beta-\eps}}
$$
Then using the H\"older inequality we can see that the needed convergence is only due to the fact that 
$$
\E[||\mathbb X^N-\mathbb X||^a_{\alpha-\eps,\beta-\eps}]\to^{N\to+\infty}0,\quad  \sup_N\mathbb E[K(||\mathbb X^N||+||\mathbb X||)^b]<+\infty,\quad \sup_N\mathbb E[(C^N)^c]<\infty
$$
for some $a,b,c>0$, Now the two first affirmation are given by the theorem~\eqref{conv-th} and for the third it suffices to recall that 
$$
C=\sum_{i=1}^{10}\sup_{|\xi|\leq||x||_{\infty}+||x^N||_{\infty}}|f^{(i)}(\xi)|\lesssim e^{\lambda ||x||^2_{\infty}}e^{\lambda||x^N||^2}
$$
where we have used that $f$ satisfy the (GC) and then using H\"older inequality, Fernique's Theorem and ~\eqref{eq:exp-moments-xn} we can see that  $\sup_N\E[(C^N)^c]<+\infty$ which gives the convergence \eqref{equation:app-int}. Now to obtain the formula ~\eqref{eq:Star-form} it suffices to use the fact that 
$$
\mathscr N_{\alpha-\eps,\beta-\eps}(f(x^N)-f(x))\to^{N\to+\infty}0
$$ 
in $L^p(\Omega)$ for $p>1$ due to the Lemma~\eqref{lemma:h-p-r} and the fact that $\sup_{N}\E[(C^N)^p]<+\infty$, and this end the proof.
\end{proof}

\subsection{ The Brownian case}
In~\cite{CW}  the authors give a definition a la It\^o for the multidimensional integral in the case of the Brownian sheet, the aim of this section is to compare their notion of integration with the integral that we defined. More precisely we will show that the two concepts coincide when the rough sheet is understood a la It\^o. In all this subsection $x$ is a Brownian sheet  and 
$\mathbb X^{It\hat o}$ the rough sheet be associated where all the iterated integrals are understood in the sense given in~\cite{CW}.

Of course using the same argument as in the lemma~\ref{extend:G-RR} we can see that our objects satisfy the regularity expected. For example 
\begin{equation}\label{Ito-E-I}
\begin{split}
\expect[||B^{It\hat o xx}||^p_{2h,h'}]&\lesssim_{h,h',p}\int_{[0,1]^4}\frac{\expect[|(B_1^{It\hat oxx})_{u_1u_2v_1v_2}|^p]}{|u_2-u_1|^{2hp+2}|v_2-v_1|^{h'p+2}}\dd u_1d\dd u_2\dd v_1\dd v_2
\\&+\int_{[0,1]^6}\frac{\expect[|(D_2^{It\hat oxx})_{u_1u_2v_1v_2v_3v_4}|^p]}{|u_2-u_1|^{2hp+2}|v_4-v_3|^{h'p+2}|v_2-v_1|^{h'p+2}}\dd u_1\dd u_2\dd u_3\dd u_4\dd v_1\dd v_2
\\&+\expect[||\der_1x\der x||^p_{2h,h'}]+\expect[||\der x||_{h+2/p,h'+2/p}^{2p}]
\end{split}
\end{equation}
but by a simple computation we have that 
$$
\expect[|(B_1^{It\hat oxx})_{u_1u_2v_1v_2}|^p]=c^1_pu_1^{p/2}(u_2-u_1)^{p}(v_2-v_1)^{p/2}
$$
and
$$
\expect[|(D_2^{It\hat oxx})_{u_1u_2v_1v_2v_3v_4}|^p]=c^2_p(u_2-u_1)^p(v_2-v_1)^{p/2}(v_4-v_3)^{p/2}
$$
then if $h,h'<1/2$ and $p$ large enough then the r.h.s of the equation \label{Ito-E-I} is finite and this gives that $B_1^{It\hat oXX}\in\CCC_{2,2}^{2h,h'}$. To compare the two definition of integration we will show first that the two definition of boundary integral coincide.
\begin{proposition}\label{prop:boundry}
Let $\varphi\in C^{2}(\mathbb R)$ satisfying the (GC) then the integral 
$$
I^{It\hat o,b1}_{s_1s_2t_1t_2}=\int_{s_1}^{s_2}\varphi(x_{st_1})\int_{t_1}^{t_2}\hat \dd_{st}x_{st}
$$
with $\hat \dd$ is the It\^o differential admit a continuos version which coincide with 
$$
I^{Rough,b1}_{s_1s_2t_1t_2}=\int_{s_1}^{s_2}\varphi(x_{st_1})\int_{t_1}^{t_2}\dd_{st}x_{st}
$$
where $I^{rough,X,b1}$ are given by the Proposition~\ref{boundry}.
\end{proposition}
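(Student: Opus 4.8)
The plan is to identify both $I^{It\hat o,b1}$ and $I^{Rough,b1}$ with the limit of one and the same family of Riemann sums --- the former in $L^2(\Omega)$, the latter pathwise for the fixed realization of the It\^o rough sheet --- and then to match the two limits.

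First I would record the preliminary facts. Since $\varphi\in C^2$ satisfies (GC), the composition $\varphi(x)$ belongs to $\mathscr Q_x^{\alpha,\beta}$ with Gubinelli derivative $\varphi(x)^{x_1}=\varphi'(x)$ and a Taylor remainder $\varphi(x)^{\sharp1}$, and (GC) together with Fernique's theorem makes $\sup_{[0,1]^2}|\varphi^{(k)}(x)|$ a member of every $L^p(\Omega)$, so that all the random quantities below have moments of every order and the Cairoli--Walsh integral defining $I^{It\hat o,b1}$ is well posed. Next, unwinding Proposition~\ref{boundry} in the case $a=1$ and using $\der_1A^x=0$ and $\der_1B_1^{xx}=(\der_1x)A^x$ (Hypothesis~\ref{hyp:chen-2d}), one checks that
$$
I^{Rough,b1}=(1-\Lambda_1\der_1)\bigl(\varphi(x)\,A^x+\varphi'(x)\,B_1^{xx}\bigr),
$$
the two summands being $(2,2)$-biincrements and $\Lambda_1$ acting in the first variable. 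Applying Corollary~\ref{cor:integration} in the first variable (with values in $2$-increments of the second variable; the correction $\Lambda_1\der_1(\cdot)$ lies in the relevant $\CCC^{1+}$-space since $3\alpha>1$) gives, for fixed $(t_1,t_2)$ and a.e.\ $\omega$,
$$
I^{Rough,b1}_{s_1s_2t_1t_2}=\lim_{|\Pi_1|\to0}\sum_i\Bigl[\varphi(x_{s_it_1})\,\der x_{s_is_{i+1}t_1t_2}+\varphi'(x_{s_it_1})\,(B_1^{xx})_{s_is_{i+1}t_1t_2}\Bigr]
$$
over partitions $\Pi_1=\{s_i\}$ of $[s_1,s_2]$ (here $A^x=\der x$). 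On the other side, since $\der x$ is an exact biincrement the inner sum over any partition of $[t_1,t_2]$ telescopes, $\sum_j\der x_{s_is_{i+1}t_jt_{j+1}}=\der x_{s_is_{i+1}t_1t_2}$, so the Cairoli--Walsh definition collapses to
$$
I^{It\hat o,b1}_{s_1s_2t_1t_2}=L^2\text{-}\lim_{|\Pi_1|\to0}\sum_i\varphi(x_{s_it_1})\,\der x_{s_is_{i+1}t_1t_2}.
$$

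Comparing the last two displays, everything reduces to one point, which I expect to be the main obstacle: showing that the correction $\sigma_{\Pi_1}:=\sum_i\varphi'(x_{s_it_1})(B_1^{xx})_{s_is_{i+1}t_1t_2}$ tends to $0$ in $L^2(\Omega)$ as $|\Pi_1|\to0$. Here I would argue that the summands are orthogonal in $L^2$: $(B_1^{xx})_{s_is_{i+1}t_1t_2}$ is a second--chaos It\^o integral over the rectangle $[s_i,s_{i+1}]\times[t_1,t_2]$, $\varphi'(x_{s_it_1})$ is measurable with respect to the past of $x$ in the first coordinate up to $s_i$, and $(B_1^{xx})_{s_is_{i+1}t_1t_2}$ has zero conditional mean given that past. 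Combining orthogonality with the second moment bound $\expect[|(B_1^{xx})_{s_is_{i+1}t_1t_2}|^2]\lesssim|s_{i+1}-s_i|^2|t_2-t_1|$ (computed for the Brownian sheet exactly as in Step~1 of the proof of Theorem~\ref{conv-th}), Gaussian hypercontractivity, and the uniform $L^4$-bound on $\varphi'(x)$, one gets $\expect[|\sigma_{\Pi_1}|^2]\lesssim|t_2-t_1|\sum_i|s_{i+1}-s_i|^2\to0$. A bare triangle inequality $\|\sigma_{\Pi_1}\|_{L^2}\le\sum_i\|\cdot\|_{L^2}$ would only give $O(|s_2-s_1|)$, which does not vanish, so the orthogonality is essential.

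Finally I would assemble the pieces. Picking a sequence of partitions with mesh $\to0$ along which both the $L^2$-convergence of the It\^o Riemann sums and $\sigma_{\Pi_1}\to0$ hold almost surely, the two displays force $I^{Rough,b1}_{s_1s_2t_1t_2}=I^{It\hat o,b1}_{s_1s_2t_1t_2}$ a.s.\ for each fixed $(s_1,s_2,t_1,t_2)\in[0,1]^4$. Since $I^{Rough,b1}$, built from $\mathbb X^{It\hat o}\in\mathscr H_{\alpha,\beta}$ through the continuous map of Proposition~\ref{boundry}, has a version in $\CCC_{2,2}^{\alpha,\beta}$ and is in particular jointly continuous in its four arguments, intersecting this a.s.\ coincidence over a countable dense set of arguments produces the asserted continuous version of $I^{It\hat o,b1}$ and identifies it with $I^{Rough,b1}$. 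The remaining boundary integrals of Proposition~\ref{boundry} are treated verbatim, with $B_1^{xx}$ replaced by the appropriate higher iterated integral and the correction estimated as the higher-order remainders in the proof of Theorem~\ref{conv-th}.
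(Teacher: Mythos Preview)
Your proposal is correct and follows essentially the same route as the paper: both represent $I^{Rough,b1}$ as the limit of Riemann sums $\sum_i\bigl[\varphi(x_{s_it_1})\der x_{s_is_{i+1}t_1t_2}+\varphi'(x_{s_it_1})(B_1^{xx})_{s_is_{i+1}t_1t_2}\bigr]$ and $I^{It\hat o,b1}$ as the $L^2$-limit of $\sum_i\varphi(x_{s_it_1})\der x_{s_is_{i+1}t_1t_2}$, and then show the correction $\sigma_{\Pi_1}$ vanishes in $L^2$ via orthogonality of the summands together with the bound $\expect[|(B_1^{xx})_{s_is_{i+1}t_1t_2}|^2]\lesssim(s_{i+1}-s_i)^2(t_2-t_1)$. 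The only cosmetic difference is that the paper exploits the independence of $\varphi'(x_{s_it_1})$ and $(B_1^{xx})_{s_is_{i+1}t_1t_2}$ (disjoint supports of the underlying white noise) to factor the second moment exactly, whereas you pass through conditional-mean-zero orthogonality and an $L^4$ bound on $\varphi'(x)$; both work.
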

\begin{proof}
Let $\pi=(s_i)_i$ a dissection of the interval $[s_1,s_2]$. Now by definition we have that 
$$
I^{It\hat ox,b_1}_{s_1s_2t_1t_2}=\mathbb P-\lim_{|\pi|\to0}\sum_{i}\varphi(x_{s_it_1})\der x_{s_is_{i+1}t_1t_2}
$$
and
$$
I^{Rough,x,b_1}_{s_1s_2t_1t_2}=a.s-\lim_{|\pi|\to0}\sum_{i}\varphi(x_{s_it_1})\der x_{s_is_{i+1}t_1t_2}+\varphi'(x_{s_it_1})(B^{It\hat o,xx}_1)_{s_is_{i+1}t_1t_2}
$$
but 
\begin{equation*}
\begin{split}
\expect [|\sum_{i}\varphi'(x_{s_it_1})(B^{It\hat oxx}_1)_{s_is_{i+1}t_1t_2}|^2]&=\sum_{i}\expect[\varphi'(x_{s_it_1})^2]\expect[|(B^{It\hat o,xx}_1)_{s_is_{i+1}t_1t_2}|^2]
\\&=1/2t_1(t_2-t_1)\sum_{i}\expect[\varphi'(x_{s_it_1})^2](s_{i+1}-s_{i})^2
\\&\leq t_1(t_2-t_1)(s_2-s_1)|\pi|\sup_{s\in[0,1]^{2}}\expect[\varphi'(x_{st_1})^2].
\end{split}
\end{equation*}
Then it suffices to remark that the term appearing in the r.h.s vanish when $|\pi|$ go to zero and this finishes the proof.
\end{proof}
\begin{proposition}\label{l2-conv}
Let $\varphi\in C^{3}(\mathbb R)$ satisfying the (GC) and $\Pi=\{(s_i,t_j)\}_{ij}$ a dissection of the rectangle 
$[s_1,s_2]\times[t_1,t_2]$ then we have :
$$
 L^2(\Omega)-\lim_{|\Pi|\to0}\sum_{i,j}\varphi(x_{s_it_j})C^{It\hat o,xx}_{s_is_{i+1}t_jt_{j+1}}=0
$$
$$
 L^2(\Omega)-\lim_{|\Pi|\to0}\sum_{i,j}\varphi(x_{s_it_j})C^{It\hat o,\omega x}_{s_is_{i+1}t_jt_{j+1}}=0
$$
and
$$
 L^2(\Omega)-\lim_{|\Pi|\to0}\sum_{i,j}J^{Rough,xx,b_a}_{s_is_{i+1}t_jt_{j+1}}=0
$$
$$
L^2(\Omega)-\lim_{|\Pi|\to0}\sum_{i,j}J^{Rough,\omega x,b_a}_{s_is_{i+1}t_jt_{j+1}}=0
$$
where $J^{Rough,xx,b_a}=\int_a\varphi'(x)\dd_ax\int_{\hat a}\dd x\dd x$ and $J^{Rough,\omega x,b_a}=\int_a\varphi'(x)\dd_ax\int_{\hat a}\dd\omega \dd x$ are given by the Proposition~\ref{boundry} 
\end{proposition}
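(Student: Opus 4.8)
The plan is to estimate the second moment of the Riemann sum directly and to show that its off--diagonal part cancels exactly, leaving a diagonal contribution that vanishes by scaling. Write $B_{ij}:=[s_i,s_{i+1}]\times[t_j,t_{j+1}]$ for the cells of the grid $\Pi=\{(s_i,t_j)\}$, and let $S_\Pi$ denote the relevant Riemann sum in each of the four cases.

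\textbf{Step 1 (structure of the cell increments).} First I would record two facts. Since the Brownian sheet has independent increments over rectangles with disjoint interiors, the families of increments attached to two distinct cells of $\Pi$ are independent. Moreover each of the It\^o iterated integrals $C^{It\hat o,xx}_{s_is_{i+1}t_jt_{j+1}}$, $C^{It\hat o,\omega x}_{s_is_{i+1}t_jt_{j+1}}$ is a function of $x$ restricted to $B_{ij}$ and has mean zero, the outermost operation being an It\^o integral against the sheet. For $J^{Rough,xx,b_a}_{s_is_{i+1}t_jt_{j+1}}$ and $J^{Rough,\omega x,b_a}_{s_is_{i+1}t_jt_{j+1}}$ the only non--local ingredient is the boundary coefficient $\varphi'(x_{\cdot\,t_j})$, which lives on the rectangle $[0,s_{i+1}]\times[0,t_j]$; hence, conditionally on $\mathcal F_{\le t_j}:=\sigma\big(x|_{[0,1]\times[0,t_j]}\big)$, each such term is a second--order Wiener integral with respect to $x|_{B_{ij}}$ with $\mathcal F_{\le t_j}$--measurable coefficients, and therefore has vanishing conditional mean given $\mathcal F_{\le t_j}$.

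\textbf{Step 2 (vanishing of the off--diagonal terms).} Expanding $\expect[|S_\Pi|^2]$ as a double sum over pairs of cells (the outer factors being $\varphi(x_{s_it_j})$ in the two $C$--cases and absent in the two $J$--cases, whose $\varphi'$--dependence is already built into $J^{Rough,\cdot\,,b_a}$), I would show that every summand indexed by $(i,j)\ne(i',j')$ is zero. If $j<j'$ one conditions on $\mathcal F_{\le t_{j'}}$: the factors $\varphi(x_{s_it_j})$, $\varphi(x_{s_{i'}t_{j'}})$ (or the corresponding boundary coefficients) and the cell increment over $B_{ij}$ all live on $\{t\le t_{j'}\}$, because $t_{j+1}\le t_{j'}$, hence are $\mathcal F_{\le t_{j'}}$--measurable, whereas the cell increment over $B_{i'j'}$ is independent of $\mathcal F_{\le t_{j'}}$ with vanishing conditional mean; the case $j>j'$ is symmetric. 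If $j=j'$ and $i<i'$ one conditions instead on $\sigma\big(x|_{\{t\le t_j\}}\big)\vee\sigma\big(x|_{B_{ij}}\big)$ and argues identically, using that $B_{i'j}$ has disjoint interior from both $\{t\le t_j\}$ and $B_{ij}$; the case $i>i'$ is symmetric. Only the diagonal survives, so for instance $\expect[|S_\Pi|^2]=\sum_{i,j}\expect\big[\varphi(x_{s_it_j})^2\,|C^{It\hat o,xx}_{s_is_{i+1}t_jt_{j+1}}|^2\big]$, and similarly in the other three cases.

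\textbf{Step 3 (diagonal estimate).} On the diagonal, $\varphi(x_{s_it_j})$ is independent of the cell increment (their supports are disjoint rectangles), and by the growth condition (GC) together with Fernique's theorem applied to $\sup_{(s,t)}|x_{st}|$ one has $\sup_{s,t}\expect[\varphi(x_{st})^2]<\infty$ and $\expect\big[\sup_{s,t}\varphi'(x_{st})^2\big]<\infty$, the latter controlling, after conditioning on $\mathcal F_{\le t_j}$, the random boundary coefficients of the $J$--terms. For the cell increments one invokes the $L^2$--scaling already available from the explicit It\^o moments and the a priori H\"older estimates of the preceding subsections: in each of the four cases $\expect\big[|(\,\cdot\,)_{s_is_{i+1}t_jt_{j+1}}|^2\big]\lesssim(s_{i+1}-s_i)^{p_1}(t_{j+1}-t_j)^{p_2}$ with $p_1,p_2\ge 1$ and $p_1+p_2>2$ (in fact $p_1=p_2=2$, these being at least double iterated integrals). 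Hence
\[
\expect[|S_\Pi|^2]\;\lesssim\;\sum_{i,j}(s_{i+1}-s_i)^{p_1}(t_{j+1}-t_j)^{p_2}\;\le\;|\Pi|^{\,p_1+p_2-2}\,(s_2-s_1)(t_2-t_1)\;\xrightarrow{\;|\Pi|\to0\;}\;0 ,
\]
which yields the four claimed limits.

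The delicate point is the off--diagonal cancellation in Step 2: one must choose, for each configuration of a pair of cells, the filtration with respect to which one of the two cell increments has vanishing conditional expectation while all remaining factors are measurable; once the independence--over--disjoint--rectangles of the Brownian sheet and the It\^o (martingale--type) nature of the iterated integrals over a single cell are correctly exploited, Step 3 is a routine scaling computation.
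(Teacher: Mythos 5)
Your proof is correct, but for the last two limits it follows a genuinely different route from the paper's. For the first two limits (the $C^{It\hat o,xx}$ and $C^{It\hat o,\omega x}$ sums) the paper does exactly what you describe: the second moment factors by independence of increments over disjoint rectangles, off-diagonal pairs vanish, and the diagonal scales as $(s_{i+1}-s_i)^2(t_{j+1}-t_j)^2\le\abs\Pi^2$; these are the same argument.

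For the $J^{Rough,\cdot,b_a}$ sums the two proofs diverge. You enforce off-diagonal cancellation in \emph{both} coordinate directions — in particular for $j=j'$, $i\ne i'$ by conditioning on $\mathcal F_{\le t_j}\vee\sigma(x|_{B_{ij}})$ — so that $\expect[|S_\Pi|^2]=\sum_{i,j}\expect[|J_{ij}|^2]$, and you then finish by scaling. The paper does not bother with cancellation in the $s$-direction: it uses the pathwise H\"older bound coming from the $\Lambda_1$-sewing operator, $\abs{J^{Rough,xx,b_1}_{s_is_{i+1}t_jt_{j+1}}}\lesssim_h (s_{i+1}-s_i)^{3h}\,Y_j$ with the random factor $Y_j$ independent of $i$, sums over $i$ inside the absolute value (which is summable because $3h>1$), and only then uses independence of increments in the $t$-direction to get $\expect[|S_\Pi|^2]\lesssim\bigl(\sum_i(s_{i+1}-s_i)^{3h}\bigr)^2(a_1+\dots+a_4)$. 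Your route is cleaner probabilistically and makes the mechanism uniform across all four claims; the paper's route leans harder on the deterministic $\Lambda_1$-estimate already established in Proposition~\ref{boundry}, and only needs one direction of martingale cancellation. Both arguments hinge on the same key fact, which you state and the paper uses implicitly: $\expect[J^{Rough,xx,b_1}_{ij}\mid\mathcal F_{\le t_j}]=0$, since all non-$\mathcal F_{\le t_j}$-measurable ingredients entering $\Lambda_1$ are It\^o iterated integrals over $B_{ij}$ with adapted coefficients; neither proof spells out the interchange of $\Lambda_1$ with conditional expectation. One point you should fix is the direction index: for $a=2$ the boundary coefficient is $\varphi'(x_{s_i,\cdot})$, not $\varphi'(x_{\cdot,t_j})$, so the correct conditioning filtration is the one-parameter filtration in the $s$-variable, $\sigma(x|_{\{s\le s_i\}})$; the rest of your Step~2 then goes through with the two coordinates exchanged. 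Also, the parenthetical ``in fact $p_1=p_2=2$'' in Step~3 is not accurate for the $J$-terms (the available exponent from the sewing estimate is $p_1=6h$), though the claim $p_1+p_2>2$ you actually use is correct.
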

\begin{proof}
We will prove only the first and third statement, for the two other we have exactly the same proof. Now definition we have that $C^{Ito,XX}_{s_1s_2t_1t_2}=\iint_{(s_1,t_1)}^{(s_2,t_2)}\iint_{(s_1,t_1)}^{(s,t)}\hat \dd_{uv}x_{uv}\hat \dd_{st}x_{st}$ and then by independence of the increment of the Brownian sheet we have that 
\begin{equation*}
\begin{split}
\expect[|\sum_{i,j}\varphi(x_{s_it_j})C^{It\hat o,xx}_{s_is_{i+1}t_jt_{j+1}}|^2]&=\sum_{i,j}1/4\expect[\varphi(x_{s_it_j})^2](s_{i+1}-s_i)^2(t_{j+1}-t_j)^2
\\&\lesssim |\Pi|\sup_{(s,t)\in[0,1]^2}\expect[\varphi(x_{st})^2]
\end{split}
\end{equation*}
This give us the first convergence. Now we will focus on the third convergence which require more work. In fact by definition 
\begin{equation*}
\begin{split}
J^{Rough,xx,b_1}_{s_is_{i+1}t_jt_{j+1}}&=\Lambda_1[\der_1\varphi(x)C^{It\hat o,xx}+\varphi'(X)\mu_1E_1^{It\hat o,xxx}
\\&+\mu_1(\Lambda_1\otimes_11)(\varphi(x)^{\sharp1}D_1^{It\hat o,xx}+\der_1\varphi'(x)E_1^{It\hat o,xx})]_{s_is_{i+1}t_jt_{j+1}}
\\&=\Lambda^{1d}[\der_1\varphi(x)_{.t_j}C^{It\hat o,xx}_{.t_jt_{j+1}}+
\varphi'(x)_{.t_j}(\mu_1E_1^{It\hat o,xxx})_{.t_jt_{j+1}}
\\&+\mu_1(\Lambda_1\otimes_11)(\varphi(X)^{\sharp1}_{.t_j}(D_1^{It\hat  o,xx})_{.t_jt_{j+1}}+\der_1\varphi'(X)_{.t_j}(E_1^{It\hat o,xx})_{.t_jt_{j+1}})]_{s_is_{i+1}}
\end{split}
\end{equation*}
and then we have the bound 
\begin{equation}
\begin{split}
|J^{Rough,xx,b_1}_{s_is_{i+1}t_jt_{j+1}}|&\lesssim_h(s_{i+1}-s_i)^{3h}(||\der_1\varphi(X)_{.t_j}||_{h}||C^{It\hat o,xx}_{.t_jt_{j+1}}||_{2h}+(\sup_{(s,t)\in[0,1]^2}|\varphi'(x_{st})|)||(E_1^{It\hat o,xxx})_{.t_jt_{j+1}}||_{\CC_{2}^{2h}\otimes\CC_{2}^{h}}
\\&+||\varphi(x)^{\sharp1}_{.t_{j}}||_{2h}||(D_1^{It\hat o,xx})_{.t_jt_{j+1}}||_{\CC^h\otimes\CC^h}
+||\der_1\varphi'(x)_{.t_j}||_{h}||(E_1^{It\hat o,xxx})_{.t_jt_{j+1}}||_{\CC^{2h}\otimes\CC^h})
\end{split}
\end{equation}
By independence of increments in the second direction, this gives:
\begin{equation}
\expect[|\sum_{i,j}J^{Rough,xx,b_a}_{s_is_{i+1}t_jt_{j+1}}|^{2}]\lesssim_h|\sum_i(s_{i+1}-s_{i})^{3h}|^2(a_1+a_2+a_3+a_4)
\end{equation}
where 
\begin{enumerate}
\item $a_1=\sum_j\expect[||\der_1\varphi(x)_{.t_j}||_{h}^2]\expect[||C^{It\hat o,xx}_{.t_jt_{j+1}}||_{2h}^2]$
\item $a_2=\expect[\sup_{(s,t)\in[0,1]^2}|\varphi'(x_{st})|^2]\sum_j\expect[||(E_1^{It\hat o,xxx})_{.t_jt_{j+1}}||^2_{\CC_{2}^{2h}\otimes\CC_{2}^{h}}]$
\item $a_3=\sum_j\expect[||\varphi(x)^{\sharp1}_{.t_{j}}||_{2h}^2]\expect[||(D_1^{It\hat o,xx})_{.t_jt_{j+1}}||^2_{\CC^h\otimes\CC^h}]$
\item $a_4=\sum_j\expect[||\der_1\varphi'(x)_{.t_j}||_{h}^2]\expect[||(E_1^{It\hat o,xxx})_{.t_jt_{j+1}}||^2_{\CC^{2h}\otimes\CC^h}]$
\end{enumerate}
To obtain our convergence it suffices to show that all these terms are bounded. A simple computation gives : 
$$
\expect[||\der_1\varphi(x)_{.t_j}||^2]\lesssim\expect[\sup_{(s,t)\in[0,1]^2}|\varphi'(x_{st})|^4]^{1/2}\expect[||\der_1x||_{h}^4]^{1/2}
$$
where the r.h.s is finite. Now the Lemma~\ref{lemma:grr} gives 
\begin{equation*}
||C^{Ito,xx}_{.t_jt_{j+1}}||_{2h}^{p}\lesssim_{h,p}\int_{[0,1]^2}\frac{|C^{It\hat o,xx}_{u_1u_2t_jt_{j+1}}|^p}{|u_2-u_1|^{2hp+2}}\dd u_1\dd u_2+\int_{[0,1]^4}\frac{|(D_1^{It\hat o,xx})_{u_1u_2u_3u_4t_jt_{j+1}}|^p}{|u_4-u_3|^{hp+2}|u_2-u_1|^{hp+2}}\dd u_1\dd u_2\dd u_3\dd u_4
\end{equation*}
then taking the expectation in this last equality and using Jensen inequality  we obtain that 
$$
\expect[||C^{Ito,XX}_{.t_jt_{j+1}}||_{2h}^2]\lesssim_{h}c_p(t_{j+1}-t_j)^2<\infty
$$
where $c_p<+\infty $ for $p$ large enough and then 
$$
a_1\lesssim_{h,p}\expect[\sup_{(s,t)\in[0,1]^2}|\varphi'(x_{st})|^4]^{1/2}\expect[||\der_1x||_{h}^4]^{1/2}\sum_j(t_{j+1}-t_j)^{2}
$$  
the terms $a_2,a_3$ and $a_4$ can be treated similarly and then we obtain the wanted convergence of the boundary integral .
\end{proof}
\begin{corollary}
Let $\varphi\in C^{5}(\mathbb R)$ satisfying the (GC) then the integral
$$
\mathscr J^{It\hat o}_{s_1s_2t_1t_2}=\int_{s_1}^{s_2}\int_{t_1}^{t_2}\varphi(x_{st})\hat \dd_{st}x_{st}
$$
where $\hat d$ is a Ito differential admit a continuous version with coincide with
$$
\mathscr J^{Rough}_{s_1s_2t_1t_2}=\int_{s_1}^{s_2}\int_{t_1}^{t_2}\varphi(x_{st}) \dd_{st}x_{st}
$$
where this last integral are given by Theorem~\ref{th:integral}
\end{corollary}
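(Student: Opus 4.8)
The plan is to show that, for each fixed $(s_1,s_2,t_1,t_2)\in[0,1]^4$, the continuous field $\mathscr J^{Rough}_{s_1s_2t_1t_2}:=\big(\iint\varphi(x)\,\dd x\big)_{s_1s_2t_1t_2}$ produced by Theorem~\ref{th:integral} from the It\^o rough sheet $\mathbb X^{It\hat o}$ of this subsection is also the $L^2(\Omega)$-limit of the Riemann sums $\sum_{i,j}\varphi(x_{s_it_j})\der x_{s_is_{i+1}t_jt_{j+1}}$ over partitions $\Pi=\{(s_i,t_j)\}$ of $[s_1,s_2]\times[t_1,t_2]$. Since, by the definition of~\cite{CW}, $\mathscr J^{It\hat o}_{s_1s_2t_1t_2}$ is the limit in probability of exactly these sums, this identifies the two random variables a.s.; and because $\mathscr J^{Rough}\in\CCC_{2,2}^{\alpha,\beta}$ is H\"older continuous in its four arguments, it is then the sought continuous version.

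First I would invoke Theorem~\ref{th:stab} to get $\varphi(x)\in\mathscr K_x^{\alpha,\beta}$ (with $y^x=\varphi'(x)$, $y^\omega=\varphi''(x)$), so that $\mathscr J^{Rough}$ is well defined and, by the Remark following Theorem~\ref{th:integral}, is an \emph{exact} biincrement. Consequently, for \emph{every} partition $\Pi$,
$$\mathscr J^{Rough}_{s_1s_2t_1t_2}=\sum_{i,j}\big(\iint\varphi(x)\,\dd x\big)_{s_is_{i+1}t_jt_{j+1}},$$
and I would decompose the integrand as in the proof of Theorem~\ref{th:integral} (the splitting~\eqref{eq:int-decomp} iterated along Section~\ref{sec:2-d-dissection}):
$$\iint\varphi(x)\,\dd x=-\varphi(x)A^x+\int_1\varphi(x)\int_2\dd x+\int_2\varphi(x)\int_1\dd x+\iint\dd\varphi(x)\,\dd x,$$
where $\iint\dd\varphi(x)\,\dd x=\varphi'(x)C^{xx}+\varphi''(x)C^{\omega x}+\sum_{a}\big(\int_a\varphi'(x)\int_{\hat a}\dd x\,\dd x+\int_a\varphi''(x)\int_{\hat a}\dd\omega\,\dd x\big)+r^{\flat}$ with $r^{\flat}\in\CCC_{2,2}^{3\alpha,3\beta}$.

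I would then evaluate the four cell-sums. The term $-\varphi(x)A^x$ contributes exactly $-\sum_{i,j}\varphi(x_{s_it_j})\der x_{s_is_{i+1}t_jt_{j+1}}$, which converges to $-\mathscr J^{It\hat o}_{s_1s_2t_1t_2}$ in $L^2$ by definition. The boundary biincrement $\int_1\varphi(x)\int_2\dd x$ lies in $\ker\der_1$, so the sum over $i$ telescopes and leaves $\sum_j\big(\int_1\varphi(x)\int_2\dd x\big)_{s_1s_2t_jt_{j+1}}$; by Proposition~\ref{prop:boundry} this equals $\sum_j\int_{s_1}^{s_2}\varphi(x_{st_j})\int_{t_j}^{t_{j+1}}\hat\dd_{st}x_{st}$, a left-endpoint (in $t$) Riemann approximation of $\mathscr J^{It\hat o}_{s_1s_2t_1t_2}$ which converges in $L^2$ by the orthogonality of the It\^o increments over disjoint $t$-slabs; the term $\int_2\varphi(x)\int_1\dd x$ is symmetric and also yields $+\mathscr J^{It\hat o}_{s_1s_2t_1t_2}$. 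Finally, the cell-sum of $\iint\dd\varphi(x)\,\dd x$ tends to $0$ in $L^2$: the $\varphi'(x)C^{xx}$ and $\varphi''(x)C^{\omega x}$ pieces by Proposition~\ref{l2-conv}; the higher boundary integrals (after telescoping in the direction $a$) again by Proposition~\ref{l2-conv}, applied with $\varphi'$, $\varphi''$ in place of $\varphi$ (both $C^3$ and satisfying (GC) since $\varphi\in C^5$ does); and $\sum_{i,j}r^{\flat}_{s_is_{i+1}t_jt_{j+1}}$ vanishes deterministically since $3\alpha,3\beta>1$. Adding up, $\mathscr J^{Rough}_{s_1s_2t_1t_2}=-\mathscr J^{It\hat o}_{s_1s_2t_1t_2}+\mathscr J^{It\hat o}_{s_1s_2t_1t_2}+\mathscr J^{It\hat o}_{s_1s_2t_1t_2}+0=\mathscr J^{It\hat o}_{s_1s_2t_1t_2}$ in $L^2$, hence a.s., and the continuity of $\mathscr J^{Rough}$ upgrades this to the asserted continuous version.

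The main difficulty I anticipate is bookkeeping rather than conceptual: one must match every boundary integral and area term of the expansion with one of the already-established limits --- Proposition~\ref{prop:boundry} for the identification of the first-order boundary integrals with their It\^o versions, Proposition~\ref{l2-conv} together with the Garsia-type estimates of Lemma~\ref{extend:G-RR} for the $L^2$-vanishing of the sums built from the one-directionally-small objects (the $B_a$, $C$, $D_a$, $E_a$-type biincrements), keeping careful track of which of $\varphi'$, $\varphi''$ plays the role of ``$\varphi$'' in each invocation --- and one has to ensure, via (GC) and Fernique's theorem, the uniform exponential integrability of $\varphi$ and its derivatives evaluated at the Gaussian field $x$ that makes all the $L^2$ expectations finite. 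All of these are precisely the inputs the preceding propositions of this subsection were designed to supply, so once assembled the argument is essentially routine.
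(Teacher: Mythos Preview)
Your approach is essentially identical to the paper's: exploit that $\mathscr J^{Rough}$ is an exact biincrement to rewrite it as a cell-sum, use Proposition~\ref{prop:boundry} to identify the first-order boundary integrals with their It\^o counterparts, use Proposition~\ref{l2-conv} to kill the $C^{xx}$, $C^{\omega x}$ and higher-order boundary contributions in $L^2$, and let $r^\flat\in\CCC_{2,2}^{3\alpha,3\beta}$ vanish deterministically. The final bookkeeping $-\mathscr J^{It\hat o}+\mathscr J^{It\hat o}+\mathscr J^{It\hat o}=\mathscr J^{It\hat o}$ is exactly what the paper does (it shows $\sum_{i,j}I^{It\hat o,x,b_a}\to\mathscr J^{It\hat o}$ for each $a$ and combines).

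One small slip: you write that the \emph{higher} boundary integrals $\int_a\varphi'(x)\int_{\hat a}\dd x\,\dd x$ and $\int_a\varphi''(x)\int_{\hat a}\dd\omega\,\dd x$ ``telescope in the direction $a$''. They do not: a short computation with Proposition~\ref{boundry} gives $\der_a\big(\int_a y\int_{\hat a}\dd x\,\dd x\big)=y\,\mu_a D_a^{xx}+y^{x_a}\mu_a E_a^{xxx}+\mu_a(\Lambda_a\otimes_a1)(\cdots)\neq 0$. (By contrast your claim that the \emph{first}-order boundary term $\int_a\varphi(x)\int_{\hat a}\dd x$ lies in $\ker\der_a$ is correct.) Fortunately this telescoping is not needed: Proposition~\ref{l2-conv} is stated and proved for the \emph{double} sums $\sum_{i,j}J^{Rough,xx,b_a}$ and $\sum_{i,j}J^{Rough,\omega x,b_a}$ directly, using the $(s_{i+1}-s_i)^{3h}$ smallness in direction $a$ together with It\^o orthogonality in direction $\hat a$; once you split $\int_a\varphi'(x)\int_{\hat a}\dd x\,\dd x=\varphi'(x)C^{xx}+J$-type term, both pieces are covered there. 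So just drop the parenthetical and invoke Proposition~\ref{l2-conv} (with $\varphi$ replaced by $\varphi'$, resp.\ $\varphi''$) on the double cell-sums as written.
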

\begin{proof}
let $\Pi=\{(s_i,t_j)\}_{ij}$ a dissection of the rectangle 
$[s_1,s_2]\times[t_1,t_2]$ then by definition we have that 
\begin{equation}
\begin{split}
\mathscr J^{Rough}_{s_1s_2t_1t_2}&=\sum_{ij}(-\varphi(x_{s_it_j})\der x_{s_is_{i+1}t_jt_{j+1}}+\varphi'(x_{s_it_j})C^{It\hat o,xx}_{s_is_{i+1}t_jt_{j+1}}+\varphi'(x_{s_it_j})C^{It\hat o,\omega x}_{s_is_{i+1}t_jt_{j+1}}
\\&+\sum_{a=1,2}(I^{Rough,x,b_a}_{s_is_{i+1}t_jt_{j+1}}+J^{Rough,xx,b_a}_{s_is_{i+1}t_jt_{j+1}}
+J^{Rough,\omega x,b_a}_{s_is_{i+1}t_jt_{j+1}})+r^{\flat}_{s_is_{i+1}t_jt_{j+1}})
\end{split}
\end{equation}
where $r^{\flat}\in\CCC_{2,2}^{1+,1+}$ this fact combined with the proposition~\eqref{l2-conv}
and~\eqref{prop:boundry} give us 
$$
\mathscr J^{Rough}_{s_1s_2t_1t_2}=\mathbb P-\lim_{|\Pi|\to0}\sum_{ij}-\varphi(x_{s_it_j})\der x_{s_is_{i+1}t_jt_{j+1}}+I^{It\hat o,x,b_1}_{s_is_{i+1}t_jt_{j+1}}+I^{It\hat o,x,b_2}_{s_is_{i+1}t_jt_{j+1}}
$$
Now this last converge to the Ito integral in fact 
$$
\expect[|\mathscr J^{It\hat o}_{s_1s_2t_1t_2}-\sum_{ij}I^{It\hat o,x,b_1}_{s_is_{i+1}t_jt_{j+1}}|^2]\lesssim\expect[\sup_{(s,t)\in[0,1]^2}|\varphi(x_{st})|^4]^{1/2}\expect[\sup_{|t-t'|+|s-s'|\leq|\Pi|}|x_{st}-x_{s't'}|^4]^{1/2}
$$
which the r.h.s vanish when the mesh of the partition go to zero then we have the result.
\end{proof}

\begin{bibdiv}
\begin{biblist}

\bib{bony}{article}{
author={J.-M. Bony}, 
title= {Calcul symbolique et propagation des singularités
pour les équations aux dérivées partielles
non linéaires}, 
journal={Ann. Sci. École Norm. Sup.}, 
number={14}, 
year={1981},
pages={209–246},
}

\bib{CW}{article}{
   author={Cairoli, R.},
   author={Walsh, John B.},
   title={Stochastic integrals in the plane},
   journal={Acta Math.},
   volume={134},
   date={1975},
   pages={111--183},
   issn={0001-5962},
   review={\MR{0420845 (54 \#8857)}},
}

\bib{Itoplane}{unpublished}{
title={SKOROHOD AND STRATONOVICH INTEGRATION IN THE PLANE}
author={K.Chouk}
author={S.Tindel}
url= {http://arxiv.org/abs/1309.6604},
}
\bib{grr}{article}{
author={A. M. Garsia}, 
author={E. Rodemich}, 
author={H. Rumsey}
title={A real variable lemma and the continuity
of paths of some Gaussian processes}, 
journal={Indiana Univ. Math. J. 20}
year={1970},
number={6}, 
pages={565–578},
}

\bib{samyandme}{article}{
   author={Gubinelli, Massimiliano},
   author={Tindel, Samy},
   title={Rough evolution equations},
   journal={Ann. Probab.},
   volume={38},
   date={2010},
   number={1},
   pages={1--75},
   issn={0091-1798},
   review={\MR{2599193}},
   doi={10.1214/08-AOP437},
}

\bib{Ramification}{article}{
   author={Gubinelli, Massimiliano},
   title={Ramification of rough paths},
   journal={J. Differential Equations},
   volume={248},
   date={2010},
   number={4},
   pages={693--721},
   issn={0022-0396},
   review={\MR{2578445}},
   doi={10.1016/j.jde.2009.11.015},
}

\bib{glt}{article}{
   author={Gubinelli, Massimiliano},
   author={Lejay, Antoine},
   author={Tindel, Samy},
   title={Young integrals and SPDEs},
   journal={Potential Anal.},
   volume={25},
   date={2006},
   number={4},
   pages={307--326},
   issn={0926-2601},
   review={\MR{2255351 (2007k:60182)}},
   doi={10.1007/s11118-006-9013-5},
}

\bib{[Gubinelli-2004]}{article}{
   author={Gubinelli, M.},
   title={Controlling rough paths},
   journal={J. Funct. Anal.},
   volume={216},
   date={2004},
   number={1},
   pages={86--140},
   issn={0022-1236},
   review={\MR{2091358 (2005k:60169)}},
   doi={10.1016/j.jfa.2004.01.002},
}
		
\bib{Para}{unpublished}{
	title = {Paracontrolled distributions and singular PDEs},
	url = {http://arxiv.org/abs/1210.2684},
	journal = {{arXiv} preprint {arXiv:1210.2684}},
	author = {Gubinelli, M.},
	author={Imkeller, P.},
	author={Perkowski, N.},
	year = {2012},
}

\bib{[LionsStFlour]}{book}{
   author={Lyons, Terry J.},
   author={Caruana, Michael},
   author={L{\'e}vy, Thierry},
   title={Differential equations driven by rough paths},
   series={Lecture Notes in Mathematics},
   volume={1908},
   note={Lectures from the 34th Summer School on Probability Theory held in
   Saint-Flour, July 6--24, 2004;
   With an introduction concerning the Summer School by Jean Picard},
   publisher={Springer},
   place={Berlin},
   date={2007},
   pages={xviii+109},
   isbn={978-3-540-71284-8},
   isbn={3-540-71284-4},
   review={\MR{2314753 (2009c:60156)}},
}

\bib{LyonsBook}{book}{
   author={Lyons, Terry},
   author={Qian, Zhongmin},
   title={System control and rough paths},
   series={Oxford Mathematical Monographs},
   note={Oxford Science Publications},
   publisher={Oxford University Press},
   place={Oxford},
   date={2002},
   pages={x+216},
   isbn={0-19-850648-1},
   review={\MR{2036784 (2005f:93001)}},
   doi={10.1093/acprof:oso/9780198506485.001.0001},
}
	
\bib{Lyons1998}{article}{
   author={Lyons, Terry J.},
   title={Differential equations driven by rough signals},
   journal={Rev. Mat. Iberoamericana},
   volume={14},
   date={1998},
   number={2},
   pages={215--310},
   issn={0213-2230},
   review={\MR{1654527 (2000c:60089)}},
}
	
\bib{Friz}{book}{
   author={Friz, Peter K.},
   author={Victoir, Nicolas B.},
   title={Multidimensional stochastic processes as rough paths},
   series={Cambridge Studies in Advanced Mathematics},
   volume={120},
   note={Theory and applications},
   publisher={Cambridge University Press},
   place={Cambridge},
   date={2010},
   pages={xiv+656},
   isbn={978-0-521-87607-0},
   review={\MR{2604669}},
}

\bib{hajek_stochastic_1982}{article}{
	title = {Stochastic equations of hyperbolic type and a two-parameter Stratonovich calculus},
	volume = {10},
	issn = {0091-1798},
	url = {http://www.ams.org/mathscinet-getitem?mr=647516},
	number = {2},
	journal = {The Annals of Probability},
	author = {Hajek, Bruce},
	year = {1982},
	pages = {451?463},
}

\bib{Twisted}{article}{
   author={Norris, J. R.},
   title={Twisted sheets},
   journal={J. Funct. Anal.},
   volume={132},
   date={1995},
   number={2},
   pages={273--334},
   issn={0022-1236},
   review={\MR{1347353 (96f:60094)}},
   doi={10.1006/jfan.1995.1107},
}
\bib{Towghi}{article}{
   author={Towghi, Nasser},
   title={Multidimensional extension of L. C. Young's inequality},
   journal={JIPAM. J. Inequal. Pure Appl. Math.},
   volume={3},
   date={2002},
   number={2},
   pages={Article 22, 13 pp. (electronic)},
   issn={1443-5756},
   review={\MR{1906391 (2003c:26035)}},
}
	
 \bib{janson}{book}{,
	title = {Gaussian Hilbert Spaces},
	isbn = {9780521561280},
	language = {en},
	publisher = {Cambridge University Press},
	author = {Janson, Svante},
	year = {1997},
	keywords = {Mathematics / Calculus, Mathematics / Mathematical Analysis, Mathematics / Probability \& Statistics / General}
}

\bib{hairer_theory_2013}{article}{
	title = {A theory of regularity structures},
	url = {http://arxiv.org/abs/1303.5113},
	urldate = {2013-06-10},
	journal = {Inventiones mathematicae},
	author = {Hairer, Martin},
	year = {2013}
}

\bib{Wave}{article}{
   author={Quer-Sardanyons, Llu{\'{\i}}s},
   author={Tindel, Samy},
   title={The 1-d stochastic wave equation driven by a fractional Brownian
   sheet},
   journal={Stochastic Process. Appl.},
   volume={117},
   date={2007},
   number={10},
   pages={1448--1472},
   issn={0304-4149},
   review={\MR{2353035 (2008j:60152)}},
   doi={10.1016/j.spa.2007.01.009},
}
\bib{TVp1}{article}{
	title = {It\^o Formula and Local Time for the Fractional Brownian Sheet},
	volume = {8},
	issn = {1083-6489},
	url = {http://ejp.ejpecp.org/article/view/155},
	doi = {10.1214/EJP.v8-155}
	number = {0},
	journal = {Electronic Journal of Probability}
	author = {Tudor, Ciprian A.}
	author =  {Viens, Frederi G}
	year = {2003}
}

\bib{TVp2}{article}{
	title = {It\^o formula for the two-parameter fractional Brownian motion using the extended divergence operator}
	volume = {78}
	issn = {1744-2508}
	url = {http://www.tandfonline.com/doi/abs/10.1080/17442500601014912},
	doi = {10.1080/17442500601014912},
	number = {6}
	journal = {Stochastics An International Journal of Probability and Stochastic Processes},
	author = {Tudor, Ciprian A.}
	author =  {Viens, Frederi G.}
	year = {2006},
	pages = {443--462}
}

\bib{Young}{article}{
   author={Young, L. C.},
   title={An inequality of the H\"older type, connected with Stieltjes
   integration},
   journal={Acta Math.},
   volume={67},
   date={1936},
   number={1},
   pages={251--282},
   issn={0001-5962},
   review={\MR{1555421}},
   doi={10.1007/BF02401743},
}
\bib{ST94}{article}{
author={Samorodnitsky, G}
author={Taqqu, M} 
title={Stable non-Gaussian random
processes.} 
journal={Chapman and Hall}
}

\end{biblist}
 \end{bibdiv}
\end{document}